\DeclareSymbolFont{rsfs}{U}{rsfs}{m}{n}
\DeclareSymbolFontAlphabet{\mathscr}{rsfs}
\def\dd{{\mathfrak d}}
\def\gg{{\mathfrak g}} 
\def\ii{{\mathfrak i}}
\def\rr{{\mathfrak r}}
\def\sl{{\mathfrak sl}}
\def\VV{{\mathfrak V}}
\def\WW{{\mathfrak W}}
\def\Vir{{\VV \ii \rr}}
\def\CCC{{\mathbb C}}
\def\NNN{{\mathbb N}}
\def\RRR{{\mathbb R}} 
\def\SSS{{\mathbb S}}
\def\ZZZ{{\mathbb Z}} 
\def\A{{\mathcal A}}
\def\C{{\mathcal C}}
\def\F{{\mathcal F}}
\def\L{{\mathcal L}}
\def\S{{\mathcal S}}
\def\U{{\mathcal U}}
\def\t1{{\frac{1}{3}}}
\def\1/2{{\frac{1}{2}}}
\def\3/2{{\frac{3}{2}}}
\def\CCCC{{\mathscr C}} 
\def\Ž{{\' e}}
\def\{{\` e}}
\def\{{\^ e}}
\def\ˆ{{\` a}}
 \newtheorem{lemma}{Lemma}[section]
 \newtheorem{proposition}[lemma]{Proposition}
  \newtheorem{examples}[lemma]{Examples}
  \newtheorem{example}[lemma]{Example} 
  \newtheorem{theorem}[lemma]{Theorem}
\newtheorem{definition}[lemma]{Definition}
\newtheorem{notation}[lemma]{Notation}
 \newtheorem{pcorollary}[lemma]{Proof's corollary}
 \newtheorem{corollary}[lemma]{Corollary}
 \newtheorem{remark}[lemma]{Remark}
 \newtheorem{formula}[lemma]{Formula}
  \newtheorem{summary}[lemma]{Summary}
\title{Neveu-Schwarz and operators algebras I    \\ \textit{Vertex operators superalgebras}   }
\author{S\'ebastien Palcoux }
 \date{}
\begin{document}
 \maketitle
\begin{abstract}
This paper is the first of a series giving a self-contained way from the Neveu-Schwarz algebra to a new series of irreducible subfactors.
Here we present an elementary, progressive and self-contained approch to vertex operator superalgebra. We then build such a structure from the loop algebra $L \gg$ of any simple finite dimensional Lie algebra $\gg$. The Neveu-Schwarz algebra $\Vir_{1/2}$ emerges naturally on. 
As application, we obtain unitary actions of $\Vir_{1/2}$ on the unitary  series of $L \gg$.

\end{abstract}

\tableofcontents

\section{Introduction}

\subsection{Background of the series}
In the $90$'s,  V. Jones and A. Wassermann started a program whose goal is to understand the unitary conformal field theory from the point of view of operator algebras (see \cite{2c}, \cite{2b}).  In \cite{2}, Wassermann defines and computes the Connes fusion of the irreducible positive energy representations of the loop group $LSU(n)$ at fixed level $\ell$, using primary fields, and with consequences in the theory of subfactors.  In  \cite{vtl} V. Toledano Laredo proves the Connes fusion rules for $LSpin(2n)$ using similar methods. Now, let Diff$(\SSS^{1})$ be the diffeomorphism group on the circle, its Lie algebra is the Witt algebra $\WW$ generated by $d_{n}$ ($n \in \ZZZ$), with $[d_{m} , d_{n}] = (m-n)d_{m+n}$. It admits a unique central extension called the Virasoro algebra $\Vir$. Its unitary positive energy representation theory and the character formulas can be deduced by a so-called Goddard-Kent-Olive (GKO) coset construction from the theory of $LSU(2)$ and the Kac-Weyl formulas (see \cite{1}, \cite{3a}). In \cite{loke}, T. Loke uses the coset construction to compute the Connes fusion for $\Vir$. 
Now, the Witt algebra admits two supersymmetric extensions $\WW_{0}$ and $\WW_{1/2}$ with central extensions called the Ramond and the Neveu-Schwarz algebras, noted $\Vir_{0}$ and $\Vir_{1/2}$. 

In this series (this paper, \cite{NSOAII} and  \cite{NSOAIII} ), we naturally introduce $\Vir_{1/2}$ in the vertex superalgebra context of $L\sl_{2}$, we give a complete proof of the classification of its unitary positive energy representations, we obtain directly their character; then we give the Connes fusion rules, and an irreducible finite depth type II$_{1}$ subfactors for each representation of the discrete series.
Note that we could do the same for the Ramond algebra $\Vir_{0} $, using twisted vertex module over the vertex operator algebra of the Neveu-Schwarz algebra $\Vir_{1/2}$, as  R. W. Verrill  \cite{verrill} and  Wassermann \cite{wass2} do for twisted loop groups.

\subsection{Overview of the paper}
$\begin{array}{c} \end{array}$ \hspace{0,25cm}  First, we look unitary, projective, positive energy representations of $\WW_{1/2}$.  \\ The projectivity gives  $2$-cocycles, so that $\WW_{1/2}$ admits a unique central extension $\Vir_{1/2}$. Such representations are completely reducible, and the irreducibles are given by the unitary highest weight representations of $\Vir_{1/2}$: Verma modules $V(c,h)$ quotiented by null vectors, in no-ghost cases. 

 From the  fermion algebra on $H= \F_{NS}$, we build the fermion field $\psi(z)$. Locality and Dong's lemma permit, via OPE,  to generate a set of fields $\S$, so that there is a $1-1$ map $V:   H   \rightarrow    \S  $, with $Id = V(\Omega)$ and a  Virasoro field $L = V(\omega)$. Then, we give vertex operator superalgebra's axioms, permitting to come so far, in a general framework $( H, V, \Omega, \omega) $, with $H$ prehilbert. 
 
Let $\gg$ a simple  finite-dimensional Lie algebra, $\widehat{\gg}_{+}$ the $\gg$-boson algebra \\Ê (central extension of the loop algebra $L\gg$) and $\widehat{\gg}_{-}$ the $\gg$-fermion algebra. \\Ê  We build a module vertex operator superalgebra from $\widehat{\gg} = \widehat{\gg}_{+} \ltimes \widehat{\gg}_{-} $ on $H = L(V_{\lambda}, \ell) \otimes \F_{NS}^{\gg}$, so that $\Vir_{1/2}$ acts on  with ÊÊ  $(c,h) =(\frac{3}{2}  \cdot \frac{\ell +1/3 g}{\ell + g} dim(\gg), \frac{c_{V_{\lambda}}}{2(\ell + g)})$, with $g$  the dual Coxeter number and $c_{V_{\lambda}}$ the Casimir number. 

\subsection{The Neveu-Schwarz algebra}  
We start with $\WW_{1/2}$, the Witt superalgebra of sector (NS):  \begin{center}
$  \left\{  \begin{array}{l} 
\lbrack d_{m},d_{n} \rbrack \hspace{0,25cm}  = (m-n)d_{m+n}   \hspace{0,45cm}  m, n \in \ZZZ \\ 
\lbrack \gamma_{m},d_{n} \rbrack \hspace{0,25cm}   = (m-\frac{n}{2})\gamma_{m+n}  \hspace{0,45cm}  m \in \ZZZ + \1/2,  n \in \ZZZ   \\ 
\lbrack \gamma_{m},\gamma_{n} \rbrack_{+} = 2d_{m+n}  \hspace{1,72cm} m, n \in \ZZZ + \1/2
\end{array}   \right.  $ \end{center}
together with $d^{\star}_{n} =d_{-n}$ and  $\gamma^{\star}_{m} = \gamma_{-m}$;  we study representations which are: \\
 \textbf{(a)} Unitary:  $\pi (A)^{\star} = \pi (A^{\star})$ \\
 \textbf{(b)} Projective:  $A \mapsto \pi (A)$ is linear and $[\pi (A) , \pi (B)] - \pi ([A,B]) \in \CCC$. \\
 \textbf{(c)} Positive energy : $H$ admits an orthogonal decomposition $ H = \bigoplus_{n \in  \frac{1}{2}\NNN} H_{n}$ \\
 such that $\exists D$  acting on $H_{n}$ as multiplication by $n$,   $H_{0} \ne \{ 0 \} $,   dim$(H_{n}) < + \infty $\\   Here, $ \exists h \in \CCC $  such that  $D   = \pi (d_{0}) -  hI $. \\  
Now, the projectivity gives the 2-cocycles and we see that $H_{2}(\WW_{1/2}, \CCC)$ is $1$-dimensional, $\WW_{1/2}$ admits a unique central extension up to equivalence:
\begin{center} $0  \rightarrow  H_{2}(\WW_{1/2}, \CCC)  \rightarrow   \Vir_{1/2}    \rightarrow  \WW_{1/2}     \rightarrow  0  $  \end{center}
$\Vir_{1/2}$ is the SuperVirasoro (of sector NS) or Neveu-Schwarz algebra: 
\begin{center}  $ \left\{  \begin{array}{l} 
\lbrack L_{m},L_{n} \rbrack \hspace{0,3cm} = (m-n)L_{m+n} +\frac{C}{12}(m^{3} - m) \delta_{m+n}  Ê \\
\lbrack G_{m},L_{n} \rbrack \hspace{0,28cm} = (m-\frac{n}{2})G_{m+n}    \\
\lbrack G_{m},G_{n}\rbrack_{+} = 2L_{m+n} + \frac{C}{3}(m^{2}- \frac{1}{4})\delta_{m+n} 
\end{array}   \right.  $ \end{center}
 with   $L_{n}^{\star} = L_{-n} $,  $G_{m}^{\star} = G_{-m} $ and  $C=cI$,  $c \in \CCC$ called the \textbf{central charge}. \\ 
The representations are completely reducible,  the irreducibles are determined by the two numbers $c, h$,  and are completely given by unitary highest weight representations of 
$\Vir_{1/2}  $, described as follows: 
The Verma modules  $H= V(c,h)$ are freely generated by:  $0 \neq \Omega \in H$ (cyclic vector),  $C\Omega = c\Omega$ , $L_{0}\Omega = h\Omega$ and $\Vir^{+}_{1/2}\Omega = \{0 \}$. Now, $(\Omega , \Omega )= 1$, $\pi (A)^{\star} = \pi(A^{\star})$ and $(u,v) = \overline{(v,u)} $ give  the sesquilinear form $( . , . )$.  $V(c,h)$ can admit ghost: $(u,u) < 0$, and null vectors:  $(u,u) = 0$ .   In no ghost case, the set of null vectors is $ K(c,h)$ the kernel of $( . , . )$, the maximal proper submodule. \\  Let $L(c,h)= V(c,h)/K(c,h)$, the unitary highest weight representations. \\ Theorem $1.2$ of \cite{NSOAII}  will be proved classifying no ghost cases. 

\subsection{Vertex operators superalgebras} 
Our approch of vertex operators superalgebras is freely inspired by the followings references: Borcherds \cite{3d}, Goddard \cite{3c}, Kac \cite{6}.
We start by working on the femion algebra: 
$
 [  \psi_{m} ,  \psi_{n}  ]_{+} = \delta_{m+n}I   \quad  \textrm{and} \quad    \psi_{n} ^{\star} =  \psi_{-n}   \quad   (m, n \in \ZZZ + \1/2 ).
$
 As for $\WW_{1/2}$, we build its Verma module $H = \F_{NS}$ and the sesquilinear form $( . , . )$, which is a scalar product. $H$ is a prehilbert space, the unique unitary highest weight representation of the fermion algebra. Let the formal power series $\psi(z) = \sum_{n \in \ZZZ} \psi_{n+\frac{1}{2}}z^ {-n-1} $ called  fermion field. We inductively defined operators $D$ giving positive energy structure ($ \Leftrightarrow [D, \psi ] = z.\psi ' + \frac{1}{2} \psi$ ) and $T$ giving derivation ($ [T, \psi] = \psi' $). We compute $ (\psi(z) \psi(w)\Omega , \Omega ) =   \frac {1}{z-w}  $ ($ \vert z \vert >  \vert w \vert $), which permits to  prove inductively an anticommutation relation shortly written as: $\psi(z) \psi(w) = - \psi(w) \psi(z)$.
  We define this relation in a general framework as locality:  Let $H$  prehilbert space, and let  $A  \in (End H)[[z,z^{-1}]]$ formal power series  of the form $A(z) = \sum_{n \in \ZZZ} A(n)z^{-n-1} \ $ with $A(n) \in End(H) $  . Such fields $A $ and $B$ are  \textbf{local} if $\exists \varepsilon \in \ZZZ_{2}, \ \exists N \in \NNN $ such that $\forall c, d \in H$,
$\exists X(A,B,c,d) \in (z-w)^{-N} \CCC[z^{\pm 1}, w^{\pm 1}]$  such that:   \begin{center}
$
       X(A,B,c,d)(z,w) = \left\{  \begin{array}{c} (A(z) B(w)c , d )  \textrm{  \  if  }  \vert z \vert >  \vert w \vert    \\   (-1)^{\varepsilon}(B(w)A(z)c , d )  \textrm{  \  if  }  \vert w \vert >  \vert z \vert     \end{array}  \right. 
$  
\end{center}  Now, using locality and a contour integration argument, we can explicitly construct a field $A_{n}B$ from $A$ and $B$,  with $(A_{n}B)(m) =$ \begin{center} $ \left\{  \begin{array}{c} \sum_{p=0}^{n} (-1)^{p}C_{n}^{p}[A(n-p),B(m+p)]_{\varepsilon} \textrm{  \  if  }  n \ge 0   \\ \\   \sum_{p \in \NNN} C_{p-n-1}^{p}(A(n-p)B(m+p)-(-1)^{\varepsilon+n}B(m+n-p)A(p))  \textrm{  \  if  }  n < 0    \end{array}  \right. 
     $  \end{center}
     We obtain the operator product expansion (OPE) shortly written as \\   $  A(z)B(w) \sim  \sum_{n=0}^{N-1} \frac{(A_{n}B)(w)}{(z-w)^{n+1} }   $; and by an other contour integration argument: \begin{center} $
[A(m),B(n)]_{\varepsilon} = \left\{  \begin{array}{c} \sum_{p=0}^{N-1}  C_{m}^{p} (A_{p}B)(m+n-p) \textrm{  \  if  }  m \ge 0   \\  \\    \sum_{p=0}^{N-1} (-1)^{p} C_{p-m-1}^{p} (A_{p}B)(m+n-p) \textrm{  \  if  }  m < 0    \end{array}  \right.
$ \end{center} 
Thanks to Dong's lemma,  the operation $(A,B) \mapsto A_{n}B$ permits to generate many fields. To have a good behaviour, we define a system of generators as: 
$ \{A_{1}$,..., $ A_{r} \} \subset (EndH)[[z,z^{-1}]]$ with $ D, T \in End(H) $, $ \Omega \in H $ such that: \\  
 \textbf{(a)}   $\forall i,j \  A_{i}$ and $A_{j}$ are local with $N=N_{ij}$ and $\varepsilon= \varepsilon_{ij}=\varepsilon_{ii} . \varepsilon_{jj} $
\\  \textbf{(b)}   $\forall i \ [ T , A_{i}  ] = A'_{i} $
\\  \textbf{(c)}  $H = \bigoplus_{ n  \in \NNN + \frac{1}{2} } $ for $D$,  $dim( H_{n} )< \infty $  
\\  \textbf{(d)} $\forall i$  \  $[D, A_{i} ] = z.A_{i} ' + \alpha_{i} A_{i}$ with $\alpha_{i} \in \NNN+\frac{\varepsilon_{ii}}{2} $
\\  \textbf{(e)} $ \Omega \in H_{0} $,  $\Vert \Omega \Vert = 1$, and $\forall i \ \forall m \in \NNN $, $
A_{i}(m) \Omega = D \Omega = T\Omega= 0$
\\  \textbf{(f)}  $\A = \{ A_{i}(m), \forall i \ \forall m \in \ZZZ \}$ acts irreducibly on $H$, so that $<\A> . \Omega = H$
 \\ Hence, we generate a space $\S$, with $V:   H   \longrightarrow    \S  $ a state-field correspondence linear map.
$V(a)(z)$ is noted $V(a,z)$ and $V(a,z)\Omega_{\vert z=0} = a$. \\ 
Now,  $ \{ \psi  \} $ is a system of generator, we generate $\S$ and the map $V$ with  $ \psi (z)  = V(\psi_{-\1/2} \Omega, z) $; 
but, $\psi (z)\psi (w)  \sim     \frac{Id}{z-w} + 2 L(w) $, with $ L(z) =  \sum_{n \in \ZZZ} L_{n} z^{-n-2} =  \1/2 \psi_{-2}\psi (z)= V(\omega, z)   $ with $\omega  = \1/2 \psi_{-\3/2}\psi_{-\1/2}  \Omega $. Then, using OPE and Lie bracket, we find that $D = L_{0}$,  $T = L_{-1} $,  $L(z) L(w)  \sim     \frac{(c/2) Id }{(z-w)^{4}} +   \frac{2  L(w) }{(z-w)^{2}} +   \frac{ L'(w) }{(z-w)}$, and $[L_{m} , L_{n} ] = (m-n) L_{m+n}  +  \frac{c}{12} m(m^{2}-1) \delta_{m+n}$ with $c=  2 \Vert L_{-2} \Omega  \Vert ^{2}=\1/2$,  the central charge. ÊAs corollary, $\Vir$  acts on $H =  \F_{NS} $, and admits its unitary highest weight representation $L(c,h) = L( \1/2 , 0)$  as minimal submodule containing $ \Omega$.
We call $\omega \in H$ the Virasoro vector, and $L$  the Virasoro field. \\ We are now able to define  vertex operators superalgebras in general. \\ 
A vertex operator superalgebra is an $( H, V,  \Omega, \omega) $  with: 
\\  \textbf{(a)}  $H = H_{\bar{0}} \oplus  H_{\bar{1}} $ a prehilbert superspace.
\\  \textbf{(b)} $ V:    H    \rightarrow      (End H)[[z,z^{-1}]] $ a  linear map.
\\  \textbf{(c)} $ \Omega, \hspace{0,065cm} \omega \in H$  the  vacuum and Virasoro vectors. \\ 
 Let $\S_{\varepsilon} = V (H_{\varepsilon} )$,  $\S = \S_{\bar{0}} \oplus \S_{\bar{1}}$ and  $  A(z) = V(a ,z) = \sum_{n \in \ZZZ}A(n)z^{-n-1} $,  \\
 then $( H, V, \Omega, \omega) $ satisfies the followings axioms: 
\\  \textbf{(1)} $\forall  n \in \NNN$, $\forall A \in \S$, $A(n) \Omega = 0$,   $V(a,z)\Omega_{\vert z=0} = a $,   and  $V(\Omega , z) = Id $  
\\  \textbf{(2)}  $\A = \{ A(n) \vert  A \in \S ,  n \in  \ZZZ \}$ acts irreducibly on $H$,  so that $\A . \Omega = H$.
\\  \textbf{(3)} $\forall A \in \S_{\varepsilon_{1}}$, $\forall B \in \S_{\varepsilon_{2}}$,  $A$ and $B$ are local  with $\varepsilon = \varepsilon_{1} . \varepsilon_{2} $, $A_{n}B \in \S_{\varepsilon_{1} +  \varepsilon_{2}}$  	
\\  \textbf{(4)} $V(\omega , z) = \sum_{n \in \ZZZ}L_{n} z^{-n-2} $,  $  [L_{m} , L_{n} ] = (m-n) L_{m+n}  +  \frac{\Vert  2\omega \Vert^{2}}{12} m(m^{2}-1) \delta_{m+n}   $  
\\  \textbf{(5)} $H=\bigoplus_{ n  \in \NNN + \frac{1}{2} } H_{n} $ for $L_{0}$, $dim( H_{n} )< \infty $ and  $ H_{\varepsilon } = \bigoplus_{ n \in  \NNN + \frac{\varepsilon }{2}} H_{n} $  
\\ \textbf{(6)} $[L_{0}, V(a,z) ] = z.V'(a,z)  + \alpha.V(a,z)$  for $ a \in H_{\alpha}$
\\ \textbf{(7)} $[ L_{-1}, V(a,z) ] = V'(a,z) = V( L_{-1}.a , z)      \in \S   $ \\   
 As corollaries, we have that a system of generators, generating a Virasoro field $L \in \S $, with $D = L_{0}$ and $T = L_{-1}$, generates a vertex operator superalgebra;  the fermion field $\psi$ and  the Virasoro field $L$ generate one, each; and we prove the Borcherds associativity:  $V(a,z)V(b,w)  = V(V(a, z-w)b,w) $.
 
\subsection{Vertex $\gg$-superalgebras  and modules} \label{c=d?}  
Let $\gg$ be a simple Lie algebra of dimension $N$, a basis $( X_{a} )$, well normalized (see remark \ref{casimir}),   such that 
$ [X_{a} , X_{b} ] = i \sum_{c} \Gamma_{ab}^{c} X_{c}$    with   $\Gamma_{ab}^{c}  \in \RRR$ totally antisymmetric. 
Let its dual coxeter number $g = \frac{1}{4}\sum_{a,c} (\Gamma_{ac}^{b})^{2} $:  \begin{center}
\begin{tabular}{|c|c|c|c|c|c|c|c|c|c|}
\hline
  $\gg$ & $A_{n}$ & $B_{n}$  & $C_{n}$  & $D_{n}$  & $ E_{6}$ & $E_{7}$  & $E_{8}$ & $F_{4}$ & $G_{2}$    \\
  \hline
  $dim(\gg)$ & $n^{2} +  2n$ &$2n^{2} + n $&$2n^{2} + n $&$2n^{2} - n$ &$78$ & $133$&$ 248$ &$52$ &$14 $ \\
  \hline
   $g$ & $n+1$  & $2n-1$  & $n+1$  & $2n-2$  &$12$ & $18$&$30$ &$9$ & $4$ \\
\hline
\end{tabular}
\end{center}  For example,  $\gg = A_{1} =  \sl_{2}$, $dim(\gg) = 3$ and $g=2$. \\
Let $\widehat{\gg} _{+}$ the $\gg$-boson algebra: $ [X_{m}^{a} , X_{n}^{b}] = [ X_{a} , X_{b}]_{m+n} + m\delta_{ab}\delta_{m+n}.\L $, unique central extension (by $\L$) of the loop algebra $L \gg = C^{\infty}(\SSS^{1} , \gg)$  (see \cite{1} p 43). The unitary highest weight representations of $\widehat{\gg} _{+}$ are $H=L(V_{\lambda} , \ell)$, with $ \ell \in \NNN$ such that $\L \Omega  = \ell \Omega $ (the level of $H$), $H_{0} = V_{\lambda}$ irreducible representation of $\gg$ such that $(\lambda , \theta) \le \ell$   with $\lambda$ the highest weight and $ \theta$ the highest root (see \cite{1} p 45).
 The category $\CCCC_{\ell} $ of  representations for fixed $\ell$ is finite.
For example $\gg =  \sl_{2}$, $H=L(j , \ell)$, with $V_{\lambda} = V_{j}$ representations  of spin $j \le \frac{\ell}{2}$. 

We define the  $\gg$-fermion algebra   $\widehat{\gg} _{-}$ and the fermion fields, composed by $N$ fermions; and as for $N=1$, we generate a vertex operator superalgebra, but now, it contains  $\gg$-boson fields $(S^{a})$ whose related algebra is represented with $L(V_{0} , g)$; and thanks to $\widehat{\gg} _{-}$vertex background, the fields $(S^{a})$ generate a vertex operator superalgebra; by this way, we are  able to generate one, from $\widehat{\gg} _{+}$ and  $H=L(V_{0} , \ell)$, $\forall \ell \in \NNN$. Remark that because of the vacuum axiom,  the vertex structure need to take $V_{\lambda} = V_{0}$ trivial representation; in general, we have vertex modules (see further).  

Now, let $\widehat{\gg} = \widehat{\gg}_{+} \ltimes \widehat{\gg}_{-} $ the $\gg$-supersymmetric algebra; we prove it admits $H = L(V_{\lambda}, \ell) \otimes \F_{NS}^{\gg}$ as unitary highest weight representations. We generate a vertex operator superalgebra, with a Virasoro field $L$, and also a SuperVirasoro field $G$, which gives the supersymmetry boson-fermion:  Let $B^{a} =  X^{a} + S^{a}$ boson fields of level $d = \ell + g $, then: 

\begin{center} $
  B^{a}(z)G(w) \sim   d^{\1/2} \frac{\psi^{a}(w)}{(z-w)^{2}}  \quad   \textrm{and}   \quad   \psi^{a}(z)G(w) \sim   d^{-\1/2} \frac{B^{a}(w)}{(z-w)} 
 $.  \end{center}
 
  Finally, from $H^{\lambda} = L(V_{\lambda}, \ell)\otimes \F_{NS}^{\gg}$, we define the vertex module $(H^{\lambda} , V^{\lambda} )$ over $(H^{0} , V , \Omega, \omega) $, and we prove that $ \Vir_{\1/2}$ acts unitarily on $H^{\lambda} $ and admits $L(c,h)$  as minimal submodule containing the cyclic vector  Ê$\Omega^{\lambda}$,  with \\   $c = \frac{3}{2}  \cdot \frac{\ell +\t1 g}{\ell + g} dim(\gg)$,  $h = \frac{c_{V_{\lambda}}}{2(\ell + g)}  $ and    $c_{V_{\lambda}}$ the Casimir number of $V_{\lambda}$.

\newpage

\section{The Neveu-Schwarz algebra}
\subsection{Witt superalgebras and representations}  
\begin{definition}    A Lie superalgebra is a $\ZZZ_{2}$-graded vector space $ \dd = \dd_{\bar{0}} \oplus   \dd_{\bar{1}} $, 
together with a graded Lie bracket $[ . , . ]: \dd \times  \dd \to \dd $, such that $[ . , . ]$ is a bilinear map with $[ \dd_{i} , \dd_{j}] \subseteq \dd_{i+j}$,
and for homogeneous elements \\ $X \in   \dd_{x}$,   $Y \in   \dd_{y}$,  $Z \in   \dd_{z}$ : 
\begin{itemize}
\item  $[ X , Y ] = -(-1)^{xy}[ Y , X ] $  
 \item $(-1)^{xz}[X,[Y,Z]]+(-1)^{xy}[Y,[Z,X]]+(-1)^{yz}[Z,[X,Y]] = 0$ 
  \end{itemize}   \end{definition}
\begin{definition}  The Witt algebra $\WW$ is the Lie $\star$-algebra of vector fields on the circle,  generated by $d_{n}= ie^{i\theta n} \frac{d}{d\theta}$ $( n \in \ZZZ  )$. \end{definition}
\begin{remark} $\WW$ admits two supersymmetrics extensions, $\WW_{0}$ the Ramond sector (R) and $\WW_{1/2}$ the Neveu-Schwarz sector (NS) ((see \cite{kacv}, \cite{24b} chap 9). \end{remark}
Here, we trait only the (NS) sector. 
 \begin{definition}  Let $\dd = \WW_{1/2}$ the Witt superalgebra with: \begin{displaymath}   \left\{  \begin{array}{l} 
\lbrack d_{m},d_{n} \rbrack  = (m-n)d_{m+n} \\ 
\lbrack \gamma_{m},d_{n} \rbrack  = (m-\frac{n}{2})\gamma_{m+n}  \\ 
\lbrack \gamma_{m},\gamma_{n} \rbrack_{+} = 2d_{m+n}
\end{array}   \right.   \end{displaymath} 
together with the $\star$-structure,   $d^{\star}_{n} =d_{-n}$ and  $\gamma^{\star}_{m} = \gamma_{-m}$,  and\\Ê
$\textrm{ the super-structure: }  \dd_{\bar{0}} = \WW =  \bigoplus_{n \in \ZZZ}\CCC d_{n}, \ \  \dd_{\bar{1}} =  \bigoplus_{m \in \ZZZ + 1/2}\CCC \gamma_{m} $ \end{definition} 
 Now we investigate representations $\pi$  of $\WW_{1/2}$, which are :
 \begin{definition} Let $H$ be a  prehilbert  space.
      \label{proj}
\begin{description}
\item[(a)] Unitary:  $\pi (A)^{\star} = \pi (A^{\star})$ 
\item[(b)] Projective:  $A \mapsto \pi (A)$ is linear and $[\pi (A) , \pi (B)] - \pi ([A,B]) \in \CCC$.
\item[(c)] Positive energy : $H$ admits an orthogonal decomposition $ H = \bigoplus_{n \in  \frac{1}{2}\NNN} H_{n}$
 such that,  $ \exists D  $  acting on $H_{n}$ as multiplication by $n$,   $H_{0} \ne \{ 0 \} $ and  dim$(H_{n}) < + \infty $. Here,  $\exists h \in \CCC $  such that  $D   = \pi (d_{0}) -  hI.  $ 
\end{description}   \end{definition}
  
 \subsection{Investigation}  
  \begin{definition} Let $b: \WW_{1/2} \times \WW_{1/2} \to \CCC $ be the bilinear map defined by 
 \begin{displaymath}     [\pi (A) , \pi (B)] - \pi ([A,B]) = b(A,B)I  \ \  \textrm{ ($b$ is a $2$-cocycle) } 
\end{displaymath} \end{definition}
 \begin{definition}   Let $f : \WW_{1/2} \to \CCC $ be a $\star$-linear form. \\   $\partial f = (A,B) \mapsto  f([A,B])  $ is a 2-coboundary. \end{definition}
 \begin{remark}  $A \mapsto \pi (A) + f(A)I$ define also a projective, unitary, positive energy representation, where $b(A,B)$ becomes 
$b(A,B)-f([A,B])$. \end{remark}
\begin{proposition}  \label{suvi} (SuperVirasoro extension)   \ \ 
$\WW_{1/2}$ has a unique central extension, up to equivalent, i.e. $H_{2}(\WW_{1/2}, \CCC)$ is $1$-dimensional.  This extension admits  the basis   
$(L_{n})_{n \in \ZZZ}$, $(G_{m})_{m \in \ZZZ + \1/2}$,  $C$ central, with  $L_{n}^{\star} = L_{-n} $,  $G_{m}^{\star} = G_{-m} $, \\   $C=cI$,  $c \in \CCC$ called the \textbf{central charge};   and relations: 

\begin{displaymath} \left\{  \begin{array}{l} 
\lbrack L_{m},L_{n} \rbrack  = (m-n)L_{m+n} +\frac{C}{12}(m^{3} - m) \delta_{m+n}  Ê \\
\lbrack G_{m},L_{n} \rbrack = (m-\frac{n}{2})G_{m+n}    \\
\lbrack G_{m},G_{n}\rbrack_{+} = 2L_{m+n} + \frac{C}{3}(m^{2}- \frac{1}{4})\delta_{m+n} 
\end{array}   \right.    \end{displaymath}
\end{proposition}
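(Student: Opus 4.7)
The plan is to compute $H_2(\WW_{1/2}, \CCC)$ by a Gelfand-Fuchs style cocycle analysis, adapted to the super setting. The bilinear form $b$ satisfies super-antisymmetry $b(A,B) = -(-1)^{|A||B|} b(B,A)$ and the super-Jacobi cocycle identity inherited from the Jacobi identity of $\pi$: a graded cyclic sum of the terms $b([A,B],C)$ vanishes. Two cocycles that differ by a coboundary $\partial f$ for a $\star$-linear $f$ give equivalent central extensions, and the goal is to prove that the quotient is one-dimensional.

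First, I would diagonalize $b$ under $\mathrm{ad}(d_0)$. After subtracting a coboundary built from $f(d_0) = \mu$ I may assume $b(d_0, d_n) = 0$ and $b(d_0, \gamma_r) = 0$ for all $n, r$. Plugging $(d_0, X, Y)$ into the cocycle identity for homogeneous basis elements $X, Y$ then forces $b$ to vanish whenever the total $\mathrm{ad}(d_0)$-weight is nonzero. Consequently only $a_m := b(d_m, d_{-m})$, $c_r := b(\gamma_r, \gamma_{-r})$ and $e_m := b(d_m, \gamma_{-m})$ can survive.

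Second, the triples $(d_m, d_n, d_{-m-n})$ yield the classical Gelfand-Fuchs recursion
\begin{displaymath}
(n-m)\,a_{m+n} = (2m+n)\,a_n - (2n+m)\,a_m,
\end{displaymath}
whose solution space is two-dimensional, spanned by $m$ and $m^3$. The linear summand is a coboundary, absorbable by a further adjustment of $f(d_0)$, so up to coboundary $a_m = \frac{c}{12}(m^3 - m)$ for some $c \in \CCC$; the normalization $m^3 - m$ is the unique one within this family vanishing on the $\sl_2$-triple $\{d_{-1}, d_0, d_1\}$. I would then process the mixed triples $(d_m, \gamma_r, \gamma_{-m-r})$ and the purely odd triples $(\gamma_r, \gamma_s, \gamma_t)$ with $r+s+t = 0$: the mixed cocycle identities kill $e_m$ and couple $c_r$ to $a_m$, while the odd ones constrain $c_r$ to a polynomial of degree $\le 2$ in $r$; together they force $c_r = \frac{c}{3}(r^2 - \frac{1}{4})$ with the \emph{same} $c$ as in $a_m$.

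Existence of the extension is then immediate by defining $\Vir_{1/2} = \WW_{1/2} \oplus \CCC C$ with the stated brackets and verifying super-Jacobi directly on generators; the $\star$-relations $L_n^{\star} = L_{-n}$, $G_m^{\star} = G_{-m}$ combined with unitarity force $c \in \RRR$. The main obstacle is this last step: carefully tracking super-signs in the Jacobi cocycle identity for the mixed $(d,\gamma,\gamma)$ triples, and choosing enough independent triples to simultaneously kill $e_m$, pin down the quadratic shape of $c_r$, and lock its coefficient to $c/3$ so that the bosonic and fermionic central terms share a single central charge $c$.
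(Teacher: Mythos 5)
Your route is sound and genuinely different from the paper's. The paper works inside a unitary positive-energy representation: it normalizes $b$ by an explicit choice of $f$, then uses the energy grading $L_{n}: H_{k} \to H_{k-n}$, $G_{m}: H_{k} \to H_{k-m}$ to conclude that the central terms vanish unless $m+n=0$, and finally runs the recursion $(n-1)A(n+1)=(n+2)A(n)-(2n+1)A(1)$ together with the mixed identity $[[G_{r},G_{s}]_{+},L_{n}] = [G_{r},[G_{s},L_{n}]]_{+}+[G_{s},[G_{r},L_{n}]]_{+}$ specialized at $n=2s$, $r=-3s$ to get $C(s)=\frac{c}{3}(s^{2}-\frac{1}{4})$. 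You instead do an abstract Gelfand--Fuchs computation using $\mathrm{ad}(d_{0})$-invariance of cohomology classes; this is arguably cleaner, since it establishes that $H_{2}(\WW_{1/2},\CCC)$ is one-dimensional as a statement about the Lie superalgebra itself, whereas the paper's argument literally only treats cocycles arising from positive-energy representations. Two slips to fix, neither fatal. First, there is no surviving mixed term $e_{m}=b(d_{m},\gamma_{-m})$ for any cocycle identity to kill: the index $-m$ would have to lie in $\ZZZ+\frac{1}{2}$, so the $\mathrm{ad}(d_{0})$-weight of a mixed pair is never zero and these terms vanish for the same trivial reason as the off-diagonal ones (the paper notes exactly this: $0 \notin \ZZZ+1/2 \ni m+n$). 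Second, the purely odd triples $(\gamma_{r},\gamma_{s},\gamma_{t})$ give no information: every bracket $[\gamma_{r},\gamma_{s}]_{+}=2d_{r+s}$ lands you back in a mixed term $b(d_{r+s},\gamma_{t})$, which is identically zero, so that cocycle identity reads $0=0$. Both the quadratic shape of $c_{r}$ and its coefficient come from the mixed triples $(d_{n},\gamma_{r},\gamma_{s})$ alone, via $2a_{n}+(s-\frac{n}{2})c_{r}+(r-\frac{n}{2})c_{s}=0$ for $n+r+s=0$; your own specialization $n=2s$, $r=-3s$ then yields $c_{s}=\frac{c}{3}(s^{2}-\frac{1}{4})$ directly, so the argument still closes once the labor is reassigned to the correct identities.
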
 
 \begin{proof} $ \begin{array}{l}  \end{array} $ \\  Let $L_{n} = \pi (d_{n})$ and $G_{m} = \pi (\gamma_{m})$ then: \\
$  \left.  \begin{array}{l}
\lbrack L_{m},L_{n} \rbrack = (m-n)L_{m+n} + b(d_{m} , d_{n})I \\
\lbrack G_{m},L_{n} \rbrack = (m-\frac{n}{2})G_{m+n} + b(\gamma_{m} , d_{n})I \\
\lbrack G_{m},G_{n}\rbrack_{+} = 2L_{m+n} + b(\gamma_{m} , \gamma_{n})I
\end{array}   \right. $ \\
In particular: \\  $  \left.  \begin{array}{l}
\lbrack L_{0},L_{n}\rbrack= -nL_{n} + b(d_{0} , d_{n})I \\
\lbrack L_{0},G_{n}\rbrack = -nG_{n} + b( d_{0} , \gamma_{n})I \\
\lbrack L_{1},L_{-1}\rbrack = 2L_{0} + b(d_{1} , d_{-1})I
\end{array}   \right. $ \\
We choose :  \\  $  \left. \begin{array}{l}
f(d_{n})=-n^{-1}b(d_{0} , d_{n})  \\
f(\gamma_{m})=-m^{-1}b( d_{0} , \gamma_{m})  \\
f(d_{0})=\frac{1}{2}b(d_{1} , d_{-1})
\end{array}   \right. $ \\
Then, after adjustment by $f$:  \\  $  \left. \begin{array}{l}
\lbrack L_{0},L_{n}\rbrack = -nL_{n}  \\
\lbrack L_{0},G_{n}\rbrack = -nG_{n}  \\
\lbrack L_{1},L_{-1}\rbrack = 2L_{0} 
\end{array}   \right. $  \newpage
Now  $D = L_{0} -  hI$ and if $v \in H_{k} $,  $Dv = kv$, then:  \\
   $DL_{n}v = L_{n} Dv + [D,L_{n}]v =  kL_{n} v + [L_{0},L_{n}]v = (k-n)L_{n}v$ \\
So, $L_{n} : H_{k} \to H_{k-n} \ ( =\{ 0 \}$  if $n>k$ ). \\
 Similary,  $G_{m} : H_{k} \to H_{k-m}$, then: \begin{displaymath} 
  \left\{  \begin{array}{l}
\lbrack L_{m},L_{n}\rbrack - (m-n)L_{m+n} : H_{m+n+k} \to H_{k}  \\
\lbrack G_{m},L_{n}\rbrack - (m-\frac{n}{2})G_{m+n}: H_{m+n+k} \to H_{k}   \\
\lbrack G_{m},G_{n}\rbrack_{+} - 2L_{m+n} : H_{m+n+k} \to H_{k} 
\end{array}   \right. \end{displaymath}
But  $ b(d_{m} , d_{n})I$, $b(\gamma_{m} , d_{n})I$, $b(\gamma_{m} , \gamma_{n})I : H_{m+n+k} \to H_{m+n+k} $, so: \begin{displaymath} 
 \left \{   \begin{array}{l}
 b(d_{m} , d_{n}) = A(m) \delta_{m+n}  \\
b(\gamma_{m} , d_{n}) = B(m) \delta_{m+n} = 0 \   \textrm{because} \  0 \notin \ZZZ+1/2 \ni m+n \\
b(\gamma_{m} , \gamma_{n}) = C(m) \delta_{m+n} 
\end{array}   \right. \end{displaymath}
Now, on  $\WW = \dd_{\bar{0}}$, $b(A,B) = -b(B,A) $, so, $A(m)= -A(-m)$ and $A(0) = 0$, 
and Jacobi identity implies $b([A,B],C)+b([B,C],A)+b([C,A],B) =0 $, 
then, for $d_{k}, d_{n}, d_{m}  $ with $k+n+m = 0$ :
 \begin{displaymath} (n-m)A(k)+(m-k)A(n)+(k-n)A(m) = 0  \end{displaymath}
Now, with $k=1 $ and $m=-n-1$,  $(n-1)A(n+1)=(n+2)A(n)-(2n+1)A(1)$. \\
Then $A(n)$ is completely determined by the knowledge of $A(1)$ and $A(2)$, and so, the solutions are a $2$-dimensional space. \\
Now, $n $ and $n^{3}$ are solutions, so $A(n) = a.n+b.n^{3}$ .\\
Finally, because $[ L_{1},L_{-1}] = 2L_{0} $,   $A(1)=0$ and $a+b=0$, we obtain:
 \begin{displaymath}     A(n) = b(n^{3} - n)= \frac{c}{12}(n^{3} - n), \quad  \textrm{$c \in \CCC$ the central charge}. \end{displaymath} 
 \textbf{Process \ref{suvi}.}\\ $[[A ,B ]_{+}, C  ] = [  A , [B,  C]  ] _{+}+[  B , [A,  C]  ] _{+}$  then: \\
 $[[ G_{r},G_{s}]_{+},L_{n}] = [ G_{r},[G_{s} , L_{n}]]_{+} + [ G_{s},[G_{r} , L_{n}]]_{+} $ \\
$= [ 2L_{r+s}, L_{n}] = [ G_{r},(s-\frac{1}{2}n)G_{n+s}]_{+} +[ G_{s},(r-\frac{1}{2}n)G_{n+r}]_{+}$ \\
$=2(r+s-n) L_{r+s+n} - \delta_{r+s+n} \frac{c}{6}(n^{3}-n) \\   = (s-\frac{1}{2}n)(2L_{r+s+n} + C(r) \delta_{r+s+n}) - (r-\frac{1}{2}n)(2L_{r+s+n} + C(s) \delta_{r+s+n})$ \\
Then taking $r+s+n = 0 $,   $\frac{c}{6}(n^{3}-n) + (s-\frac{1}{2}n)C(r) +  (r-\frac{1}{2}n)C(s) = 0$. \\
Finally, with $n=2s$ and $r=-3s$,   $C(s) =  \frac{c}{3}(s^{2}- \frac{1}{4})$. \end{proof} \newpage 
\begin{definition}  The central extension of $\WW_{1/2}$ is called  $\Vir_{1/2}$,  the SuperVirasoro algebra (on sector NS),  also called Neveu-Schwarz  algebra.  \end{definition}

\begin{theorem} (Complete reducibility) 
\begin{description}
\item[ (a) ] If $H$ is a unitary, projective, positive energy representation of $\WW_{1/2}$,
 then any non-zero vector $v$ in the lowest energy subspace $H_{0}$ generates an irreducible submodule.
 \item[ (b) ]  $H$ is an orthogonal direct sum of irreducibles such representations.
  \end{description}  \end{theorem}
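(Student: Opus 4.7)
The plan rests on two preliminary structural observations that I would establish first. Unitarity together with the $\star$-structure $L_n^\star = L_{-n}$, $G_m^\star = G_{-m}$ ensures that the orthogonal complement of any submodule is itself a submodule: if $u \in N$ and $w \in N^\perp$, then $(Xw,u) = (w,X^\star u) = 0$ for every $X \in \Vir_{1/2}$. Since $L_0$ acts semisimply with finite-dimensional eigenspaces $H_n$, every submodule is $L_0$-invariant, hence graded: $N = \bigoplus_n (N \cap H_n)$.

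For part (a), fix nonzero $v \in H_0$. The relations $[L_0, L_n] = -nL_n$ and $[L_0, G_m] = -mG_m$ give $L_n \colon H_k \to H_{k-n}$ and $G_m \colon H_k \to H_{k-m}$; applied to $v \in H_0$ with $n, m > 0$, the targets vanish, so $v$ is a highest weight vector. By super-PBW for the triangular decomposition $\Vir_{1/2} = \Vir_{1/2}^- \oplus \Vir_{1/2}^0 \oplus \Vir_{1/2}^+$, with $\Vir_{1/2}^-$ the span of the $L_{-n}, G_{-m}$ $(n, m > 0)$, the cyclic submodule $M(v) = \U(\Vir_{1/2}) v$ reduces to $\U(\Vir_{1/2}^-) v$, is graded, and has one-dimensional bottom layer $M(v)_0 = \CCC v$. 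For a nonzero submodule $N \subset M(v)$, I would decompose each finite-dimensional level as finite-dimensional Hilbert spaces, $M(v)_n = (N \cap M(v)_n) \oplus (N^\perp \cap M(v)_n)$, and sum over $n$ to obtain a graded orthogonal splitting $M(v) = N \oplus (N^\perp \cap M(v))$ into two submodules. At level zero, $\CCC v$ decomposes into two orthogonal subspaces of itself, so exactly one summand equals $\CCC v$. If $v \in N$, then $N \supset M(v)$ by cyclicity, forcing $N = M(v)$; if $v \in N^\perp$, then $M(v) \subset N^\perp$, whence $N \subset M(v) \subset N^\perp$, so $N = 0$ by positive definiteness.

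For part (b), I would apply Zorn's lemma to the family of sets of pairwise orthogonal irreducible cyclic submodules of $H$, each generated by a highest weight vector. Let $\{M_\alpha\}$ be a maximal such family and set $K = \bigoplus_\alpha M_\alpha$; this is a graded submodule and $K^\perp = \bigoplus_n (K \cap H_n)^{\perp_{H_n}}$. If $K \neq H$, some $K \cap H_n \neq H_n$ shows $K^\perp \neq 0$. The lowest occupied level $H_{n_0} \cap K^\perp$ contains a nonzero $w$ with $L_k w = G_m w = 0$ for $k, m > 0$, because the images lie in $K^\perp \cap H_{n_0 - k}$, which is zero by minimality of $n_0$ when $k \leq n_0$ and by positive energy when $k > n_0$. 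The argument of (a) then applies to $w$ verbatim, using only one-dimensionality of the bottom layer of $M(w)$, and shows $M(w)$ is irreducible; orthogonality $M(w) \perp K$ follows from $(Xw, u) = (w, X^\star u) = 0$ for $u \in K$. This contradicts maximality, so $K = H$.

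The main obstacle is justifying the orthogonal splitting $M(v) = N \oplus (N^\perp \cap M(v))$ in the infinite-dimensional prehilbert module $M(v)$ without invoking Hilbert completion: the crucial point is that $N$ and $N^\perp$ are automatically graded, which reduces the splitting to finite-dimensional orthocomplementation inside each level $M(v)_n$. A secondary subtlety in (b) is to observe that the irreducibility argument of (a) is uniform in the energy of the generating highest weight vector, so it applies equally well at any $H_{n_0}$ above the bottom.
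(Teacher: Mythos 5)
Your proof is correct and follows essentially the same route as the paper: unitarity makes orthocomplements into submodules, the one-dimensional bottom level forces any submodule of the cyclic module generated by $v$ to be everything or zero, and for (b) one peels off irreducibles from the orthocomplement at its lowest occupied level. You are somewhat more careful than the paper in justifying the orthogonal splitting in the prehilbert setting (via gradedness and finite-dimensionality of the levels, where the paper simply invokes the orthogonal projection onto a submodule) and you use Zorn's lemma where the paper uses a sequential exhaustion, but these are refinements of the same argument, not a different one.
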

\begin{proof} 
 (a)  Let $K$ be the minimal  $\WW_{1/2}$-submodule containing $v$. \\ 
 Clearly, since $L_{n}v=G_{m}v = 0$ for $m,n > 0$ and $L_{0}v = hv$, we see that $K$ is spanned by all products $R.v$ with :
 \begin{displaymath} R = G_{-j_{\beta}} \dots G_{-j_{1}}L_{-i_{\alpha}} \dots L_{-i_{1}}, \quad   Ê0 <  i_{1} \le \dots \le i_{\alpha},   \quad  \frac{1}{2} \le j_{1} < \dots < j_{\beta}
\end{displaymath}
  But then, $K_{0} = \CCC v $. Let $K'$ be a submodule of $K$, and let $p$ be the orthogonal projection onto $K'$.
  By unitarity, $p$ commutes with the action of $\WW_{1/2} $, and hence with $D$. Thus $p$ leaves  $K_{0} = \CCC v $ invariant, so $pv=0$ or $v$. \\
  But  $pRv = Rpv  $,  hence $K'= {0}$ or $K$ and $K$ is irreducible. \\ \\
  (b) Take the irreducible module $M_{1}$ generated by a vector of lowest energy. Now (changing $h$ into $h'=h+m$ if necessary),we repeat this process for $M^{\bot}_{1}$, to get  $M_{2}, M_{3}, \dots$ 
    The positive energy assumption shows that $H = \oplus M_{i} $  \end{proof}   
  \begin{theorem}(Uniqueness)   If $H$ and $H'$ are irreducibles   with $c=c'$ and $h=h'$, then they are unitarily equivalents as  $\WW_{1/2} $-modules.  \end{theorem}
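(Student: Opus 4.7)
The plan is to build an explicit unitary equivalence via the common Verma-type realisation. Let $v \in H_0$ and $v' \in H'_0$ be unit lowest energy vectors (these are $1$-dimensional by the proof of complete reducibility, since the irreducible submodule generated by any nonzero vector in $H_0$ has $1$-dimensional lowest component $\CCC v$). By the description of irreducibles recalled above, $H$ is linearly spanned by vectors $R.v$ with
\[
R = G_{-j_{\beta}}\cdots G_{-j_{1}}L_{-i_{\alpha}}\cdots L_{-i_{1}},\quad 0 < i_{1}\le \cdots\le i_{\alpha},\ \tfrac{1}{2}\le j_{1}<\cdots<j_{\beta},
\]
and similarly for $H'$ with the same monomials $R$. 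I will define a linear map $U:H\to H'$ on this spanning set by $U(R.v) = R.v'$, then show it is well defined, isometric and surjective.

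The key computation is that for two such monomials $R,S$,
\[
(R.v,\, S.v)_{H} = (v,\, R^{\star}S\, v)_{H},
\]
and by iteratively applying the Neveu--Schwarz relations from Proposition \ref{suvi} one rewrites $R^{\star}S$ as a linear combination (over the integers and half-integers, with coefficients that are polynomials in $c$ and $h$ only through the structure constants) of normally ordered monomials of the same form. Annihilating with $v$ on the right and $v$ on the left uses only $L_{n}v = G_{m}v = 0$ for $n,m>0$, $L_{0}v = hv$, $Cv = cv$, and $(v,v)=1$. Hence $(R.v,S.v)_{H}$ equals a universal polynomial $P_{R,S}(c,h)$ depending only on the pair $(c,h)$; the same polynomial computes $(R.v',S.v')_{H'}$. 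Consequently, on the free vector space $F$ spanned by formal symbols $[R]$, the sesquilinear forms pulled back from $H$ and $H'$ coincide.

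From this, well-definedness and isometry of $U$ follow simultaneously. Indeed, let $K\subset F$ be the kernel of this common form; then the surjections $F\twoheadrightarrow H$ and $F\twoheadrightarrow H'$ both factor through $F/K$, and the induced forms on $F/K$ are positive definite. Because $H$ and $H'$ are \emph{irreducible} positive energy quotients of $F$, both kernels must coincide with $K$ (any larger invariant kernel would produce a proper submodule, contradicting irreducibility; any smaller one would yield a non-positive form, contradicting unitarity). The identification $F/K \xrightarrow{\sim} H$ and $F/K \xrightarrow{\sim} H'$ therefore produces a unitary isomorphism $U:H\to H'$ with $Uv=v'$, and by construction $U\pi(A) = \pi'(A)U$ for every $A\in \WW_{1/2}$, since this commutation is built into the definition $U(R.v)=R.v'$.

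The main obstacle is the well-definedness of $U$, i.e.\ the claim that a linear relation $\sum_{k}\lambda_{k}R_{k}.v = 0$ in $H$ forces $\sum_{k}\lambda_{k}R_{k}.v' = 0$ in $H'$. The argument above reduces this precisely to the identification of both kernels with the radical $K$, which in turn rests on the fact that in a \emph{unitary} representation the null space of the invariant form is automatically a $\WW_{1/2}$-submodule (by the Schwarz inequality applied to the positive semidefinite form), and so maximality of a proper invariant subspace is forced by irreducibility. Everything else is routine bookkeeping with the Neveu--Schwarz commutation relations.
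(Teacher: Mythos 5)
Your proposal is correct and follows essentially the same route as the paper: define $U$ on the spanning set by $R.v\mapsto R.v'$ and verify isometry by normally ordering $R^{\star}S$ so that the matrix elements $(R.v,S.v)$ reduce to universal expressions in $c$ and $h$ alone. Your explicit treatment of well-definedness via the common radical $K$ of the pulled-back form is a careful elaboration of what the paper leaves implicit (there the identification of kernels really follows from positive-definiteness of the forms on $H$ and $H'$ rather than from irreducibility, which is only needed to get surjectivity of $F\twoheadrightarrow H$), but it is not a different method.
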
 
   \begin{proof} $H_{0}= \CCC u$ and  $H'_{0}= \CCC u'$ with $u,u'$ unitary. \\ 
    Let $ U : H \to H' , Au \mapsto Au'$, we want to prove that $U^{\star}U=UU^{\star}=Id$. \\ 
   Let $Au \in H_{n}$, $Bu \in H_{m}$: \\
   If $n \neq m$, for example, $n<m$, then $B^{\star}Au \in H_{n-m}= {0}$ and \\  $(Au,Bu) = (B^{\star}Au , u) = 0 = (Au', Bu')$. \\
 If $n = m $, then $D = B^{\star}A$  is a constant energy operator, so in $  \CCC L_{0} \oplus \CCC C $. \\
 Now, $(L_{0}u,u)=h=(L_{0}u',u')$ iff $h=h'$ and $(Cu,u)=c=(Cu',u')$ iff $c=c'$.
 Finally,  $(v,w) = (Uv,Uw) \  \forall v,w \in H$ and $(v',w') = (U^{\star}v',U^{\star}w') \  \forall v',w' \in H'$  iff  $h=h'$ and $c=c'$. \\
 So,  $U^{\star}U=UU^{\star}=Id$, ie, $H$ and $H'$ are unitarily equivalents. \end{proof}
 \begin{definition}  \label{changer}  $\Vir_{1/2} = \Vir^{-}_{1/2} \oplus \Vir^{0}_{1/2} \oplus \Vir^{+}_{1/2}  \ \ \   \mathrm{with}  \ \ \    \Vir^{0}_{1/2}= \CCC L_{0} \oplus \CCC C$
  \begin{displaymath}
     \Vir^{+}_{1/2}= \bigoplus_{m,n>0}\CCC L_{m} \oplus \CCC G_{n}  \ \ \ \ \   \Vir^{-}_{1/2}= \bigoplus_{m,n<0}\CCC L_{m} \oplus \CCC G_{n} 
\end{displaymath}      \end{definition}   \begin{remark} 
This decomposition pass to the universal envelopping : 
 \begin{displaymath}
 \U (\Vir_{1/2} ) = \U (\Vir^{-}_{1/2} ) \cdot  \U (\Vir^{0}_{1/2}) \cdot  \U (\Vir^{+}_{1/2})
\end{displaymath}       \end{remark} 
\begin{remark}We see that an  irreducible, unitary, projective, positive energy representation of  $\WW_{1/2} $ is exactly given by a unitary highest weight representation of $\Vir_{1/2} $ (see the following section).  \end{remark}

\subsection{Unitary highest weight representations} \label{unit}  
\begin{definition}   Let the Verma module $ H= V(c,h)$ be the $\Vir_{1/2} $-module freely generated by followings conditions:
\begin{description}
\item[(a)]  $\Omega \in H$,  called the cyclic vector   ($\Omega \neq 0$).
\item[(b)]  $L_{0}\Omega = h\Omega$,   $C\Omega = c\Omega   \ \ ( h,c \in \RRR) $
\item[(c)]  $\Vir^{+}_{1/2}\Omega = \{0 \}$
\end{description}    \end{definition}
\begin{lemma}$ \U (\Vir^{-}_{1/2} )\Omega  =  H $ and a set of generators is given by: \\  
$G{-j_{\beta}} \dots G_{-j_{1}}L_{-i_{\alpha}} \dots L_{-i_{1}}\Omega, \quad   Ê0 <  i_{1} \le \dots \le i_{\alpha},   \quad  \frac{1}{2} \le j_{1} < \dots < j_{\beta}$ \end{lemma}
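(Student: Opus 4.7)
The plan is to combine cyclicity of $\Omega$ with the triangular decomposition of $\U(\Vir_{1/2})$ noted in the preceding remark, and then apply a super-PBW argument to the negative part.

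First I would observe that by construction the Verma module is $H = \U(\Vir_{1/2})\Omega$. Using $\U(\Vir_{1/2}) = \U(\Vir^{-}_{1/2}) \cdot \U(\Vir^{0}_{1/2}) \cdot \U(\Vir^{+}_{1/2})$, condition (c) of the definition gives $\U(\Vir^{+}_{1/2})\Omega = \CCC\Omega$, and condition (b) gives $\U(\Vir^{0}_{1/2})\Omega = \CCC\Omega$ (since $L_0$ and $C$ act on $\Omega$ as scalars). Therefore $H = \U(\Vir^{-}_{1/2})\Omega$, which is the first assertion.

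Second, I would produce a spanning set of $\U(\Vir^{-}_{1/2})$ of the stated form. Fix the ordering on the basis of $\Vir^{-}_{1/2}$ that puts every $G_{-j}$ ($j \in \frac{1}{2}+\NNN$) before every $L_{-i}$ ($i \in \NNN^{\star}$), with indices increasing. Any monomial in the $L_{-i}$'s and $G_{-j}$'s can be rewritten in this order by repeatedly using the (super)commutation relations of $\Vir_{1/2}$; each swap either preserves the total energy and rewrites the monomial with the two factors transposed, up to a sign and a lower-length correction term coming from the structure constants $(m-n)L_{m+n}$, $(m-\tfrac{n}{2})G_{m+n}$, $2L_{m+n}$, plus central terms (which land in $\U(\Vir^{0}_{1/2})$ but, after being pushed to the right and applied to $\Omega$, contribute scalars). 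An induction on total energy $\sum i_\alpha + \sum j_\beta$ (a nonnegative integer, well-founded here because $\Vir^{-}_{1/2}$ has only negative-energy generators) then reduces every monomial to ordered ones.

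Third, I would use the parities of the generators to refine the ordering inside each block. The $L_{-i}$'s are even, so they can be put in weakly increasing order $i_1 \le \dots \le i_\alpha$; the $G_{-j}$'s are odd with $[G_{-j},G_{-j}]_+ = 2G_{-j}^2 = 2L_{-2j}$ (no central contribution since $-j-j \ne 0$), so any repeated fermion $G_{-j}^2$ is rewritten as $L_{-2j}$, which is absorbed into the bosonic block at the cost of lowering the fermionic length; the remaining $G$-indices can therefore be assumed pairwise distinct, and by anticommutation put in strictly increasing order $\tfrac{1}{2} \le j_1 < \dots < j_\beta$, up to a sign. This gives the claimed generating set.

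The only delicate step is the super-PBW rewriting: one has to be careful that the reordering procedure actually terminates, which is ensured by a standard lexicographic/energy induction, and that the fermionic squares collapse correctly to even elements. Everything else is bookkeeping in $\Vir_{1/2}$.
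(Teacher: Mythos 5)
Your argument is correct and is exactly the standard triangular-decomposition plus super-PBW reordering that the paper's proof (``It's clear'') leaves implicit. The only wording to tighten is the induction variable: the (super)commutation relations preserve total energy, so termination of the rewriting comes from induction on the length of the monomial (at fixed energy the correction terms are strictly shorter), which is the ``standard lexicographic/energy induction'' you already invoke at the end.
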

\begin{proof} It's clear. \end{proof}
\begin{lemma}  $V(c,h)$ admits a canonical sesquilinear form $( . , . ) $, \\ completely  defined by: 
\begin{description}
\item[(a)]  $(\Omega , \Omega )= 1$
\item[(b)]  $\pi (A)^{\star} = \pi(A^{\star})$
\item[(c)]  $(u,v) = \overline{(v,u)} \ \  \forall u, v \in H$    (in particular  $(u,u)=  \overline{(u,u)}  \in \RRR  $). 
\end{description} \end{lemma}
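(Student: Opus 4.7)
My plan is to establish the lemma by leaning on the triangular decomposition $\U(\Vir_{1/2}) = \U(\Vir^-_{1/2}) \cdot \U(\Vir^0_{1/2}) \cdot \U(\Vir^+_{1/2})$ noted in Definition~\ref{changer}, together with the PBW basis of the Verma module given by the preceding lemma. Since $V(c,h)$ is freely generated, the vectors $R\Omega$ for $R$ ranging over PBW-ordered monomials in $\U(\Vir^-_{1/2})$ form a basis; any sesquilinear form is then completely determined by its values $(R\Omega, S\Omega)$ on such pairs.

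For uniqueness: iterating (b) gives $(R\Omega, S\Omega) = (\Omega, R^\star S\Omega)$ with $R^\star \in \U(\Vir^+_{1/2})$. Using the Lie-bracket relations of $\Vir_{1/2}$, I would re-expand the product $R^\star S$ in PBW form $\sum_\alpha M_\alpha^- M_\alpha^0 M_\alpha^+$. Because $\Vir^+_{1/2}\Omega = \{0\}$, only terms with $M_\alpha^+ \in \CCC$ contribute after applying this expression to $\Omega$, and then each $M_\alpha^0 \in \CCC[L_0, C]$ acts as multiplication by a polynomial $p_\alpha(h,c)$. A second application of (b), now to $(\Omega, M_\alpha^-\Omega) = ((M_\alpha^-)^\star \Omega, \Omega)$ with $(M_\alpha^-)^\star \in \U(\Vir^+_{1/2})$, kills every term for which $M_\alpha^-$ is not itself scalar. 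Combined with (a), this produces an explicit formula for $(R\Omega, S\Omega)$ depending only on $R, S, h, c$.

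For existence: I would take the preceding formula as a definition. Concretely, let $\phi: \U(\Vir_{1/2}) \to \CCC$ extract the $\CCC\Omega$-component of $X\Omega$ via the PBW decomposition, and set $(R\Omega, S\Omega) := \overline{\phi(R^\star S)}$ on the basis, extending sesquilinearly. Well-definedness is exactly the freeness of the Verma module; sesquilinearity and condition (a) are immediate. Condition (b) holds because $(AR)^\star S = R^\star (A^\star S)$ and $\phi$ is invariant under this rebracketing via associativity in $\U(\Vir_{1/2})$. For (c), the identity $(R^\star S)^\star = S^\star R$ reduces the claim to $\phi(X^\star) = \overline{\phi(X)}$: the star operation exchanges the $\U(\Vir^\pm_{1/2})$ factors and conjugates scalars, while the $\U(\Vir^0_{1/2})$-contribution $p_\alpha(h,c)$ is real because $h, c \in \RRR$.

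The main obstacle I expect is making the super-PBW theorem for $\U(\Vir_{1/2})$ rigorous, which underwrites the whole argument. In the $\ZZZ_2$-graded setting the fermionic generators anticommute nontrivially -- in particular $G_{-j}^2 = L_{-2j}$ via $[G_{-j}, G_{-j}]_+ = 2L_{-2j}$ -- so the ordered-monomial basis requires the $G$-indices to be strictly increasing, as in the preceding lemma. Once this triangular decomposition is established in the super context, every remaining step of both uniqueness and existence is purely computational, and the reality of $c$ and $h$ closes the Hermitian condition.
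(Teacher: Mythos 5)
Your argument is correct: it is the standard Shapovalov-form construction, and it is consistent with the paper, whose own proof of this lemma is literally the single line ``It's clear.'' Since the paper supplies no argument at all, your writeup is best read as filling in the omitted details: uniqueness by pushing $R^{\star}$ through the PBW re-expansion and using $\Vir^{+}_{1/2}\Omega=\{0\}$ together with $L_{0}\Omega=h\Omega$, $C\Omega=c\Omega$; existence by defining $(R\Omega,S\Omega):=\overline{\phi(R^{\star}S)}$ via the Harish-Chandra-type projection $\phi$, with (b) following from associativity in $\U(\Vir_{1/2})$ and (c) from $\phi(X^{\star})=\overline{\phi(X)}$, which uses $h,c\in\RRR$ exactly as the paper's definition of $V(c,h)$ stipulates. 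The one point you rightly flag---the super-PBW theorem for $\Vir_{1/2}$, with strictly increasing $G$-indices because $G_{-j}^{2}=L_{-2j}$---is precisely the content of the paper's preceding lemma on the generating set of $\U(\Vir^{-}_{1/2})\Omega$ (itself also proved by ``It's clear''), so invoking it is legitimate within the paper's own logic.
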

\begin{proof} It's clear. \end{proof}

\begin{definition}
$u \in V(c,h)$ is a ghost if $(u,u) <  0 $. \end{definition}
\begin{lemma} If $ V(c,h)$ admits no ghost then $ c,h \ge 0 $  \end{lemma}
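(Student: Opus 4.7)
The plan is to exhibit explicit vectors in $V(c,h)$ whose squared norms are simple polynomials in $c$ and $h$, and deduce the sign constraints by evaluating the form on them.

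First I would handle $h \geq 0$ by looking at the first nontrivial descendant $G_{-1/2}\Omega$. Using $\pi(A)^{\star} = \pi(A^{\star})$ together with the anticommutator relation $[G_{1/2}, G_{-1/2}]_{+} = 2L_{0} + \tfrac{C}{3}(\tfrac{1}{4} - \tfrac{1}{4})\delta_{0} = 2L_{0}$ and the highest weight condition $G_{1/2}\Omega = 0$, one computes
\begin{displaymath}
(G_{-1/2}\Omega, G_{-1/2}\Omega) = (\Omega, G_{1/2}G_{-1/2}\Omega) = 2(\Omega, L_{0}\Omega) = 2h.
\end{displaymath}
The no-ghost hypothesis forces $2h \geq 0$, hence $h \geq 0$.

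Next I would obtain $c \geq 0$ by testing the form on the family $G_{-n-1/2}\Omega$ (or equivalently on $L_{-n}\Omega$). The anticommutator
\begin{displaymath}
[G_{n+1/2}, G_{-n-1/2}]_{+} = 2L_{0} + \tfrac{C}{3}\bigl((n+\tfrac{1}{2})^{2} - \tfrac{1}{4}\bigr)
\end{displaymath}
and the vanishing $G_{n+1/2}\Omega = 0$ for $n \geq 0$ give
\begin{displaymath}
\|G_{-n-1/2}\Omega\|^{2} = 2h + \tfrac{c}{3}\,n(n+1).
\end{displaymath}
If $c < 0$ the right-hand side becomes arbitrarily negative for large $n$, contradicting no ghost; thus $c \geq 0$. (Alternatively, $\|L_{-n}\Omega\|^{2} = 2nh + \tfrac{c}{12}n(n^{2}-1)$ has leading term $\tfrac{c}{12}n^{3}$, giving the same conclusion.)

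The real content is purely computational: all that is needed are the commutation relations of $\Vir_{1/2}$ and the definition of the sesquilinear form. There is no serious obstacle here — the only thing to be careful about is the cancellation of the central term at $m = 1/2$ in the $G$--$G$ anticommutator, which is why the cleanest bound on $h$ comes from $G_{-1/2}\Omega$ rather than from $L_{-1}\Omega$ (whose norm $2h$ one could also use, but which would require separately handling $c$).
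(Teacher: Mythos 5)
Your proof is correct and is essentially the paper's argument: the paper tests the form on $L_{-n}\Omega$, computing $\Vert L_{-n}\Omega\Vert^{2} = 2nh + \frac{c}{12}n(n^{2}-1)$ and taking $n=1$ for $h \ge 0$ and $n$ large for $c \ge 0$, exactly parallel to your use of $G_{-n-1/2}\Omega$. One small correction to your closing remark: the central term $\frac{c}{12}n(n^{2}-1)$ also vanishes at $n=1$, so $\Vert L_{-1}\Omega\Vert^{2} = 2h$ with no $c$-contribution, and the $L$-route requires no separate handling of $c$ either.
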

\begin{proof}   Since $L_{n}L_{-n} \Omega =  L_{-n}L_{n} \Omega +2nh \Omega +c \frac{n(n^{2}-1)}{12}\Omega$,
  \\Ê we have $(L_{-n} \Omega,L_{-n} \Omega)  = 2nh + \frac{n(n^{2}-1)}{12}c \ge 0$.
   \\  Now, taking $n$ first equal to $1$ and then very large, we obtain the lemma.  \end{proof}
\begin{definition}  \label{Lhc} Let $K(c,h) = ker(. , .) = \{ x \in V(c,h) ; (x,y) = 0 \  \forall y   \}$ \\ the maximal proper submodule of $V(c,h)$,      
and $L(c,h)= V(c,h)/K(c,h)$, irreducible highest weight representation of $\Vir_{1/2}$, with $(. , .)$ well-defined on. \end{definition}              
\begin{definition}  $ u \in V(c,h)$ is a null vector if $(u,u) = 0 $. \end{definition}
\begin{lemma} On no ghost case,  the set of null vectors is  $ K(c,h)$.   \end{lemma}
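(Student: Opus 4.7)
The inclusion $K(c,h) \subseteq \{\text{null vectors}\}$ is tautological: if $u \in K(c,h)$ then $(u,y)=0$ for every $y \in V(c,h)$, so in particular $(u,u)=0$. The content of the statement is the reverse inclusion, which I will deduce from the Cauchy--Schwarz inequality applied to the positive semidefinite Hermitian form provided by the no-ghost hypothesis.

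The plan is to fix $u$ with $(u,u)=0$ and show $(u,v)=0$ for every $v \in V(c,h)$, which then gives $u \in K(c,h)$ by the definition of the kernel. Pick a unimodular complex number $\lambda$ with $\lambda (v,u) = |(u,v)|$, and expand, for arbitrary $t \in \RRR$,
\begin{displaymath}
0 \leq (u + t\lambda v,\, u + t\lambda v) = (u,u) + 2t|(u,v)| + t^{2}(v,v),
\end{displaymath}
where we used $(u,u), (v,v) \in \RRR_{\geq 0}$ (no-ghost), and $\overline{\lambda(v,u)} = \bar\lambda(u,v)$. Since $(u,u)=0$ by hypothesis, the inequality reduces to $2t|(u,v)| + t^{2}(v,v) \geq 0$ for all real $t$. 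Taking $t$ to be a small negative real number forces $|(u,v)|=0$, hence $(u,v)=0$.

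Since $v$ was arbitrary, $u \in K(c,h)$, finishing the reverse inclusion. There is no real obstacle here; the only subtlety is to invoke the no-ghost hypothesis to guarantee positive semidefiniteness (so that the Cauchy--Schwarz argument applies), and then the result is essentially a one-line consequence of the standard degenerate Cauchy--Schwarz principle. Note that without the no-ghost hypothesis, the statement would fail: a vector with $(v,v)<0$ combined with a null $u$ satisfying $(u,v)\neq 0$ would contradict it, but in our setting such $v$ is excluded.
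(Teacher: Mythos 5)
Your proof is correct and takes essentially the same route as the paper: both deduce the reverse inclusion from positive semidefiniteness of $(\,.\,,.\,)$ by expanding the norm of a suitable linear combination of $u$ and $v$, i.e.\ the degenerate Cauchy--Schwarz argument. The only cosmetic difference is that the paper splits into the cases $(v,v)>0$ and $(v,v)=0$ with explicit choices of coefficients, whereas your one-real-parameter family with a unimodular phase handles both cases at once.
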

\begin{proof}   Let $x$ be a null vector, and $y \in V(c,h)$. \\ 
 By assumption $\forall \alpha , \beta \in \CCC $,  $(\alpha x + \beta y ,  \alpha x + \beta y) \ge 0$.  
  We develop it, with $\alpha = (y,y) $ and $ \beta = - (x,y) $,  we obtain :  $ \vert (x,y) \vert^{2} (y,y)  \le  (x,x)(y,y)^{2} = 0$.  \\ 
  So if $y$ is not a null vector then $(x,y) = 0 $. 
  Else, $(x,x)= (y,y)=0$, so taking  $\alpha = 1 $ and $ \beta = - (x,y) $, we obtain $ 2\vert (x,y) \vert^{2}   \le 0 $ and so $(x,y) = 0  $  \end{proof}
\begin{corollary} $L(c,h)$ is a  unitary highest weight representation. \end{corollary}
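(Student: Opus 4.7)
The plan is to read off the corollary directly from the preceding lemma on null vectors, taking care of the three conditions that make up the phrase ``unitary highest weight representation'': well-definedness of the form on the quotient, positive-definiteness, and the highest weight structure.

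First, I would observe that by Definition \ref{Lhc}, $K(c,h) = \ker(.,.)$ is a $\Vir_{1/2}$-submodule of $V(c,h)$, so the action of $\Vir_{1/2}$ descends to $L(c,h) = V(c,h)/K(c,h)$; moreover, the star-structure is preserved, since $\pi(A)^{\star} = \pi(A^{\star})$ already on $V(c,h)$ and $K(c,h)$ is stable under the adjoint. Because $K(c,h)$ is exactly the radical of $(.,.)$, the sesquilinear form passes unambiguously to $L(c,h)$, where by construction it has trivial kernel.

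Next, I would invoke the preceding lemma (null vectors $=$ $K(c,h)$ in the no-ghost case) to promote $(.,.)$ on $L(c,h)$ from merely non-degenerate to positive-definite: if $[u] \in L(c,h)$ satisfies $([u],[u])=0$, then $u$ is a null vector of $V(c,h)$, hence $u \in K(c,h)$, hence $[u] = 0$. Combined with the no-ghost hypothesis (so $(u,u)\ge 0$ for all $u \in V(c,h)$), this makes $L(c,h)$ a genuine prehilbert space on which $\pi$ acts by operators satisfying $\pi(A)^{\star} = \pi(A^{\star})$, i.e.\ unitarily.

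Finally, I would check the highest weight structure: the image $[\Omega]$ is non-zero in $L(c,h)$ (since $(\Omega,\Omega)=1$, so $\Omega \notin K(c,h)$), it is still annihilated by $\Vir^{+}_{1/2}$, and it is still an eigenvector for $L_0$ and $C$ with the same eigenvalues $h$ and $c$. Irreducibility is automatic because $K(c,h)$ is the maximal proper submodule, so $L(c,h)$ has no nontrivial $\Vir_{1/2}$-submodule. There is essentially no obstacle here: all the work has already been done in the null-vector lemma, and the corollary is really a bookkeeping statement. The only subtle point is remembering that the positive-definiteness on the quotient relies on the coincidence ``null vectors $=$ kernel of the form,'' which is precisely the content of the no-ghost hypothesis invoked one lemma above.
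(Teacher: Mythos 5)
Your argument is correct and follows essentially the same route as the paper, which simply observes that in the no-ghost case $(.,.)$ becomes a scalar product on the quotient $L(c,h)$; your write-up just makes explicit the bookkeeping (descent of the form and the $\star$-structure, positive-definiteness via the null-vector lemma, survival of the highest weight vector) that the paper leaves implicit.
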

\begin{proof} Without ghost,   $( . , . )$ is a scalar product on  $L(c,h)$. \end{proof}
 \begin{remark} Theorem 1.2 of \cite{NSOAII} will be proved  classifying no ghost cases.  \end{remark}
 
\newpage      
\section{Vertex operators superalgebras }
We  give a progressive introduction to  vertex operators superalgebras structure.
We start with the fermion algebra as example. We work on to obtain, at the end of the section,  vertex axioms naturally.
\label{vosa}
\subsection{Investigation on fermion algebra}   \label{fermion} \label{sec1}
\begin{definition}
Let the fermion algebra (of sector NS),  generated by $( \psi_{n})_{ n \in \ZZZ + \frac{1}{2} } $,  \\ and $I$ central, with the relations:
\begin{displaymath}
 [  \psi_{m} ,  \psi_{n}  ]_{+} = \delta_{m+n}I    \quad  \textrm{and} \quad    \psi_{n} ^{\star} =  \psi_{-n}
 \end{displaymath}     \end{definition}
\begin{definition} (Verma module)  Let $H = \F_{NS}$ freely generated by: 
\begin{description}
\item[(a)]  $\Omega \in H$  is called the vacuum vector , $\Omega \neq 0$.
\item[(b)]  $\psi_{m}\Omega = 0 $  $\forall m > 0$ 
\item[(c)]  $ I \Omega = \Omega$
\end{description}  \end{definition}
\begin{lemma} A set of generators of $H$  is given by:  \\ 
  $ \psi_{-m_{1}} \dots \psi _{-m_{r}}\Omega    \ \ \    m_{1} < \dots < m_{r}  \   \       r \in \NNN, \ Êm_{i} \in \NNN + \frac{1}{2}  $  \end{lemma}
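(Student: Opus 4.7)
The plan is a standard PBW-style reduction: show that any element of the form $\psi_{n_1}\cdots\psi_{n_r}\Omega$ with $n_i \in \ZZZ+\frac{1}{2}$ arbitrary can be rewritten as a linear combination of the claimed normal-ordered monomials, using only the defining relations.

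First I would note that, as a module over the fermion algebra freely generated by $\Omega$ subject to relations (a)--(c), $H$ is spanned by all finite products $\psi_{n_1}\psi_{n_2}\cdots\psi_{n_r}\Omega$ with $n_i \in \ZZZ+\frac{1}{2}$ and $r \in \NNN$ (including the empty product $\Omega$). I would then argue by induction on the pair $(r,k)$, where $k$ is the number of indices $n_i$ that are positive, in lexicographic order. The base case $k=0$ reduces to monomials with all indices negative.

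\textbf{Step 1: eliminating positive indices.} If some $n_i>0$, pick the rightmost such index, and use $[\psi_{n_i},\psi_{n_{i+1}}]_+ = \delta_{n_i+n_{i+1}}I$ to move $\psi_{n_i}$ one step to the right: this replaces the monomial by $-\psi_{n_1}\cdots\psi_{n_{i+1}}\psi_{n_i}\cdots\Omega$ plus a length-$(r-2)$ term coming from the anticommutator (if $n_{i+1}=-n_i$). The new term has strictly smaller $r$, while the first term has the same $k$ but the positive index is now further right. Iterating, we push $\psi_{n_i}$ all the way to $\Omega$, where $\psi_{n_i}\Omega = 0$ by (b) kills it. By induction this reduces everything to monomials $\psi_{-m_1}\cdots\psi_{-m_r}\Omega$ with every $m_i>0$ (or to $\Omega$ itself, corresponding to $r=0$).

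\textbf{Step 2: ordering and deduplicating.} In this situation all index sums $-m_i-m_j$ are strictly negative, so $\delta_{-m_i-m_j}=0$ and therefore $\psi_{-m_i}\psi_{-m_j} = -\psi_{-m_j}\psi_{-m_i}$. Taking $i=j$ forces $\psi_{-m_i}^2 = 0$, so any monomial with a repeated index vanishes; taking $i\neq j$ allows us to reorder the remaining indices (up to an overall sign) into the strictly increasing order $m_1<\cdots<m_r$. This yields the desired spanning family.

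The only subtle point is bookkeeping in Step 1, namely that the induction measure $(r,k)$ really does decrease — the anticommutator contribution drops $r$ by two, while the swap keeps $r$ fixed but strictly decreases the position of the rightmost positive index; both motions are well-founded. The lemma asserts only that these vectors generate $H$, not that they are linearly independent, so no further construction of a concrete free module is required here.
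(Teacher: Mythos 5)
Your proof is correct. The paper dismisses this lemma with ``It's clear,'' and your PBW-style reduction (pushing the rightmost positive-index $\psi_{n_i}$ to the right until it annihilates $\Omega$, absorbing the $\delta_{m+n}I$ terms as shorter monomials, then using $[\psi_{-m_i},\psi_{-m_j}]_+=0$ to sort and to kill repeated indices) is exactly the standard argument being left implicit; your closing remark correctly patches the induction measure so that the swap step is also well-founded.
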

  \begin{proof} It's clear.  \end{proof}
\begin{lemma} $H$ admits the sesquilinear form $(.,.)$ completely defined by : 
\begin{description}
\item[(a)] $ (\Omega , \Omega )= 1$
\item[(b)]  $(u,v) = \overline{(v,u)} \ \  \forall u, v \in H$
\item[(c)]  $(\psi_{n}u,v) = (u,\psi_{-n}v)    \ \  \forall u, v \in H \ \  $    ie  $\pi( \psi_{n})^{\star} = \pi( \psi_{n}^{\star})$
\end{description}
$(.,.)$ is a scalar product and $H$ is a prehilbert space. \end{lemma}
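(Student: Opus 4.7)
The plan is to first establish existence and uniqueness of the sesquilinear form $(.,.)$ satisfying properties (a), (b), (c), then verify that it is positive definite.

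\textbf{Existence and uniqueness.} Using (c) iteratively, any pairing of the form
\[
(\psi_{-m_1}\cdots\psi_{-m_r}\Omega,\; \psi_{-n_1}\cdots\psi_{-n_s}\Omega)
\]
reduces to $(\Omega, P\Omega)$ for some polynomial $P$ in the $\psi_k$'s, obtained by repeatedly applying the anticommutation relation $[\psi_m,\psi_n]_+ = \delta_{m+n}I$. Then $\psi_m\Omega = 0$ for $m > 0$, together with $I\Omega = \Omega$ and $(\Omega,\Omega) = 1$, fixes each such value uniquely. Conversely, I would define $(.,.)$ on the spanning set of the previous lemma by this recipe and extend sesquilinearly; well-definedness follows since the reduction invokes only the defining relations of the fermion algebra acting on $H$. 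Hermitian symmetry (b) then follows from the invariance of these defining relations under the $\star$-operation.

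\textbf{Positive definiteness.} Write $e_M := \psi_{-m_1}\cdots\psi_{-m_r}\Omega$ for $M = (m_1 < \cdots < m_r)$, $m_i \in \NNN + \frac{1}{2}$; these vectors span $H$. The key calculation is
\[
(e_M, e_N) = (\psi_{-m_2}\cdots\psi_{-m_r}\Omega,\; \psi_{m_1}\psi_{-n_1}\cdots\psi_{-n_s}\Omega),
\]
where I would push $\psi_{m_1}$ to the right using $\psi_{m_1}\psi_{-n_j} = -\psi_{-n_j}\psi_{m_1} + \delta_{m_1, n_j}I$. If $m_1 \notin \{n_1,\ldots,n_s\}$, then $\psi_{m_1}$ eventually reaches $\Omega$ and kills it, giving $0$. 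If $m_1 = n_j$ for a (necessarily unique) $j$, the collision contributes $\pm$ an inner product of two vectors of lengths $r-1$ and $s-1$. An induction on $r+s$ yields $(e_M, e_N) = \delta_{M,N}$, so the $e_M$ form an orthonormal family spanning $H$. Hence $(.,.)$ is a scalar product and $H$ is prehilbert.

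The main (mild) obstacle is the well-definedness step: in principle different orders of applying (c) together with the anticommutation rule could yield different intermediate expressions, but all reductions bottom out in the same polynomial identity on $\Omega$ because the fermion relations form a complete set. The remaining computation is just routine sign bookkeeping.
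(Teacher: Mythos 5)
Your proposal is correct. The paper itself offers no argument here (its proof reads ``It's clear''), so there is nothing to diverge from: what you have written is the standard justification, and the orthonormality computation $(e_M,e_N)=\delta_{M,N}$ is carried out correctly --- in particular the sign $(-1)^{j-1}$ from the collision never affects the conclusion, since for $M=N$ the collision always occurs at $j=1$. The one place where you are slightly informal is the well-definedness of the reduction recipe; you flag it honestly, and the confluence claim is a standard normal-form fact for the Clifford algebra, but you could sidestep it entirely by running your two steps in the opposite order: \emph{define} $(e_M,e_N):=\delta_{M,N}$ on the basis of the freely generated module and extend sesquilinearly, so that (a), (b) and positivity are immediate, and then verify axiom (c) on basis vectors by the same push-$\psi_{m}$-to-the-right computation you already perform. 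Uniqueness then follows from your reduction argument. Either order is fine; yours proves slightly more than needed at the cost of the confluence remark.
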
  \begin{proof} It's clear.  \end{proof}
\begin{remark} $H$ is an irreducible representation of the fermion algebra. \\  It is its unique unitary highest weight
representation.  \end{remark}
\begin{remark} 
 $\psi_{n}^{2} = \frac{1}{2} [  \psi_{n} ,  \psi_{n}  ]_{+} = 0 $  if $n \neq 0$  \end{remark}
\begin{definition}(Operator $D$)  \label{D}  Let $D \in End(H)  $ inductively defined by :   
\begin{description}
\item[(a)]  $ D \Omega = 0 $
\item[(b)]  $D \psi_{-m}a =  \psi_{-m}Da + m \psi_{-m}a \ \ \ $   $\forall m \in \NNN + \frac{1}{2}   $  and $\forall a \in H$
\end{description}   \end{definition}
\begin{lemma} $D$ decomposes $H$ into $\bigoplus_{ n  \in \NNN + \frac{1}{2} }  H_{n}$  with $D \xi = n \xi $ \\   $\forall \xi \in H_{n}$,  $dim( H_{n} )< \infty $
 and  $ H_{n} \perp H_{m} $   if $n \neq m$  \end{lemma}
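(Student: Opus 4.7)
The plan has four pieces, corresponding to the four assertions of the lemma.

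First, I would prove by induction on $r$ that every spanning vector $v = \psi_{-m_{1}} \cdots \psi_{-m_{r}}\Omega$ (with $0 < m_{1} < \cdots < m_{r}$ in $\NNN + \frac{1}{2}$) is an eigenvector of $D$ with eigenvalue $n := m_{1}+\cdots+m_{r}$. The base case $r=0$ is clause (a) of Definition~\ref{D}. For the inductive step, if $Dw = (m_{2}+\cdots+m_{r})w$ where $w = \psi_{-m_{2}}\cdots\psi_{-m_{r}}\Omega$, then clause (b) applied to $a = w$ gives $D(\psi_{-m_{1}}w) = \psi_{-m_{1}}Dw + m_{1}\psi_{-m_{1}}w = n\,\psi_{-m_{1}}w$.

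Second, since the previous lemma furnishes these vectors as a spanning set of $H$ and each eigenvalue lies in $\frac{1}{2}\NNN$, it follows that $H = \sum_{n} H_{n}$ with $H_{n} := \ker(D - n\cdot\mathrm{Id})$. For finite-dimensionality, note that $H_{n}$ is spanned by those basis vectors whose labels $\frac{1}{2} \le m_{1} < \cdots < m_{r}$ form a strictly increasing partition of $n$ in $\NNN + \frac{1}{2}$; since each part is at least $\frac{1}{2}$, one has $r \le 2n$, and the number of such partitions is manifestly finite.

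Third, and this is the step that carries the real content, I would prove orthogonality of distinct eigenspaces by showing $D$ is symmetric for $(\,\cdot\,,\,\cdot\,)$, from which $(n-m)(\xi,\eta)=0$ forces $\xi\perp\eta$ whenever $\xi\in H_{n}$, $\eta\in H_{m}$, $n\ne m$. To check $(Du,v)=(u,Dv)$ it suffices to verify it on the spanning set above; by the first step both sides then equal $n\,(u,v)$ once we know that $(u,v)=0$ unless $u$ and $v$ have the same eigenvalue. The cleanest way to pin this down is to compute the inner products
\[
(\psi_{-m_{1}}\cdots\psi_{-m_{r}}\Omega,\ \psi_{-n_{1}}\cdots\psi_{-n_{s}}\Omega)
\]
directly: using $\pi(\psi_{k})^{\star}=\pi(\psi_{-k})$ one moves $\psi_{n_{s}},\dots,\psi_{n_{1}}$ to the right, applies the anticommutation relation $[\psi_{k},\psi_{\ell}]_{+}=\delta_{k+\ell}I$, and kills any leftover $\psi_{k}$ with $k>0$ against $\Omega$. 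Tracking the resulting combinatorics shows that the product vanishes unless $r=s$ and the strictly increasing sequences $(m_{i})$ and $(n_{j})$ coincide, in which case both vectors lie in the same eigenspace.

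The main obstacle is precisely this last bookkeeping: the anticommutation relations introduce signs and cross-terms, and one must verify that all contributions in which the multisets $\{m_{i}\}$ and $\{n_{j}\}$ do not match cancel out (or are annihilated against $\Omega$). Everything else — the eigenvalue computation, the decomposition, and the counting of partitions — is routine.
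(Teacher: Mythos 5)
Your proposal is correct and follows the same route as the paper, whose entire proof is your first step (the computation $D\,\psi_{-m_{1}}\cdots\psi_{-m_{r}}\Omega=(\sum m_{i})\,\psi_{-m_{1}}\cdots\psi_{-m_{r}}\Omega$ on the spanning set); the finiteness count and the orthogonality of the basis vectors are left implicit there, the latter being already contained in the preceding lemma asserting that $(\,\cdot\,,\,\cdot\,)$ is a scalar product. One small remark: your detour through symmetry of $D$ is circular as stated and also unnecessary, since the direct Gram-matrix computation you then describe already gives $H_{n}\perp H_{m}$ on its own.
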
  
 \begin{proof}  Let $a = \psi_{-m_{1}} \dots \psi _{-m_{r}}\Omega$ be a generic element of the base of $H$, \\Ê then $ D.a = ( \sum m_{i})a $.     \end{proof}
\begin{remark}   $[D,\psi_{m}] = -m \psi_{m}  $ and  $ \Omega \in H_{0}  $,  so   $\psi_{m} :  H_{m+n} \to  H_{n}$.  \end{remark}
\begin{definition}(Operator $T$)  Let $T \in End(H)  $ inductively defined by :   
\begin{description}
\item[(a)]  $ T \Omega = 0 $
\item[(b)]  $T \psi_{-m}a =  \psi_{-m}Ta + (m-\frac{1}{2}) \psi_{-m-1}a \ \ \ $   $\forall m \in \NNN + \frac{1}{2}   $  and $\forall a \in H$
\end{description}   \end{definition}
\begin{remark}   $[T,\psi_{m}] = -(m-\frac{1}{2}) \psi_{m-1} $.  \end{remark}
\begin{definition} Let $\psi(z) = \sum_{n \in \ZZZ} \psi_{n+\frac{1}{2}}.z^ {-n-1} $ the fermion operator.  \end{definition}
\begin{remark}  $\psi \in (End H) [[z,z^{-1}]] $  is a formal power series.  \end{remark}
\begin{lemma} (Relations with $\psi_{n}$,  $D$ and $T$) 
\begin{description} 
\item[(a)]  $[\psi_{m+\frac{1}{2}}, \psi ]_{+} = z^{m}$
\item[(b)]   $[D, \psi ] = z.\psi ' + \frac{1}{2} \psi$
\item[(c)]     $ [T, \psi] = \psi ' $
\end{description}   \end{lemma}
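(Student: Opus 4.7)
The plan is to derive each of the three identities by direct substitution into the formal series $\psi(z) = \sum_{n \in \ZZZ} \psi_{n+1/2} z^{-n-1}$, using the three bracket relations that are already in hand: the fermionic anticommutator $[\psi_m,\psi_n]_+ = \delta_{m+n} I$ from the definition of the fermion algebra, and the bracket identities $[D,\psi_m] = -m\,\psi_m$ and $[T,\psi_m] = -(m-\tfrac{1}{2})\,\psi_{m-1}$ recorded in the remarks following the definitions of $D$ and $T$. Since $\psi(z)$ is linear in the modes and each generator $\psi_{m+1/2}$, $D$, $T$ interacts termwise with $\psi(z)$, the proofs are nothing more than careful bookkeeping.

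For (a), I would expand $[\psi_{m+1/2}, \psi(z)]_+ = \sum_{n \in \ZZZ} [\psi_{m+1/2}, \psi_{n+1/2}]_+ z^{-n-1}$ and substitute $[\psi_{m+1/2}, \psi_{n+1/2}]_+ = \delta_{m+n+1}\, I$. Only the index $n = -m-1$ survives, giving $z^{-(-m-1)-1} = z^m$ as claimed.

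For (b), I compute $[D, \psi(z)] = \sum_n [D, \psi_{n+1/2}]\, z^{-n-1} = \sum_n -(n+\tfrac{1}{2})\, \psi_{n+1/2}\, z^{-n-1}$. Then I split the coefficient as $-(n+\tfrac{1}{2}) = (-n-1) + \tfrac{1}{2}$, recognize $\sum_n (-n-1)\,\psi_{n+1/2}\,z^{-n-1} = z\,\psi'(z)$ (since $\psi'(z) = \sum_n (-n-1)\,\psi_{n+1/2}\,z^{-n-2}$) and $\sum_n \tfrac{1}{2}\,\psi_{n+1/2}\,z^{-n-1} = \tfrac{1}{2}\,\psi(z)$, yielding the stated formula. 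For (c), the same template gives $[T,\psi(z)] = \sum_n -(n+\tfrac{1}{2}-\tfrac{1}{2})\,\psi_{(n+1/2)-1}\,z^{-n-1} = \sum_n -n\,\psi_{n-1/2}\,z^{-n-1}$, and reindexing $k = n-1$ produces $\sum_k -(k+1)\,\psi_{k+1/2}\,z^{-k-2} = \psi'(z)$.

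There is essentially no conceptual obstacle here; the only thing to watch is the shift between the bracket relations, which are indexed by the mode index $m$ of $\psi_m$ (a half-integer), and the series expansion, which is naturally indexed by the integer $n = m - \tfrac{1}{2}$. All three identities reduce to making that reindexing explicit, so the proof is just a compact calculation rather than an argument.
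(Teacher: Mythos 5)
Your proposal is correct and follows exactly the paper's own argument: a termwise expansion of $\psi(z)$ using the mode relations $[\psi_{m},\psi_{n}]_{+}=\delta_{m+n}I$, $[D,\psi_{m}]=-m\psi_{m}$, $[T,\psi_{m}]=-(m-\frac{1}{2})\psi_{m-1}$, followed by the same coefficient split for (b) and the same reindexing for (c). No differences worth noting.
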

\begin{proof} $[\psi_{m+\frac{1}{2}}, \psi (z)]_{+} = \sum [\psi_{m+\frac{1}{2}}, \psi_{n+\frac{1}{2}}]_{+} .z^{-n-1} = z^{m}$  \\
$[D, \psi (z)] = \sum (-n-\frac{1}{2})\psi_{n+\frac{1}{2}}.z^{-n-1} = z.\psi '(z) + \frac{1}{2} \psi (z) $ \\  
$[T, \psi (z)] = \sum (-n)\psi_{n-\frac{1}{2}}.z^{-n-1} = \sum (-n-1)\psi_{n+\frac{1}{2}}.z^{-n-2} = \psi '(z)$ 
  \end{proof}
   \begin{remark} 
   $(.,.)$ induces  $(\psi(z_{1}) ... \psi(z_{n})c,d) \in \CCC [[z_{1}^{\pm 1},...,z_{n}^{\pm 1}]]$, $\forall c, d \in H$.  \end{remark} 
\begin{lemma}  \label{manifestation} 
$(\psi(z) \Omega , \Omega ) = 0  $ \ \  and \ \  
  $ (\psi(z) \psi(w)\Omega , \Omega ) =   \frac {1}{z-w}  \   $   if  $ \vert z \vert >  \vert w \vert $  .
\end{lemma}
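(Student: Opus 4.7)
The plan is to expand both expressions as formal power series in the modes $\psi_{k}$, kill almost every term using the vacuum relations $\psi_{k}\Omega=0$ for $k>0$ together with the $\star$-structure $(\psi_{k}u,v)=(u,\psi_{-k}v)$, and then recognize the surviving sum as the geometric expansion of $1/(z-w)$ in the region $|z|>|w|$.

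For the first identity I would write
$$(\psi(z)\Omega,\Omega)=\sum_{n\in\ZZZ}(\psi_{n+\tfrac12}\Omega,\Omega)\,z^{-n-1}.$$
Split the sum according to the sign of $n+\tfrac12$. If $n\ge 0$ then $\psi_{n+\tfrac12}\Omega=0$ by the highest weight condition, so the term vanishes. If $n\le -1$, then $-n-\tfrac12>0$ and I use the $\star$-structure: $(\psi_{n+\tfrac12}\Omega,\Omega)=(\Omega,\psi_{-n-\tfrac12}\Omega)=0$. Hence every coefficient vanishes.

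For the second identity I expand
$$(\psi(z)\psi(w)\Omega,\Omega)=\sum_{m,n\in\ZZZ}(\psi_{m+\tfrac12}\psi_{n+\tfrac12}\Omega,\Omega)\,z^{-m-1}w^{-n-1}.$$
If $n\ge 0$ the inner factor is zero on the vacuum; so I may assume $n\le -1$. Then I use the anticommutation relation $[\psi_{m+\tfrac12},\psi_{n+\tfrac12}]_{+}=\delta_{m+n+1}I$ to swap the two modes and get
$$(\psi_{m+\tfrac12}\psi_{n+\tfrac12}\Omega,\Omega)=-(\psi_{n+\tfrac12}\psi_{m+\tfrac12}\Omega,\Omega)+\delta_{m+n+1}.$$
In the remaining inner product, move $\psi_{n+\tfrac12}$ onto $\Omega$ on the right via the $\star$-structure: $(\psi_{n+\tfrac12}\psi_{m+\tfrac12}\Omega,\Omega)=(\psi_{m+\tfrac12}\Omega,\psi_{-n-\tfrac12}\Omega)=0$ since $-n-\tfrac12>0$. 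So only the diagonal $m+n+1=0$ with $m\ge 0$ survives and contributes $1$.

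Substituting back,
$$(\psi(z)\psi(w)\Omega,\Omega)=\sum_{m\ge 0}z^{-m-1}w^{m}=\frac{1}{z}\sum_{m\ge 0}\bigl(w/z\bigr)^{m}=\frac{1}{z-w},$$
the last equality being the usual geometric series, valid precisely in the domain $|w/z|<1$, i.e.\ $|z|>|w|$. The only subtlety is the bookkeeping of the two cases $n\ge 0$ and $n\le -1$ and making sure the anticommutator applied in the second case is compatible with the answer in the first; this is the step I would write out most carefully, but it is entirely mechanical once the splitting is set up.
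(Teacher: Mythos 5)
Your proof is correct and follows essentially the same route as the paper: a direct mode expansion in which all but the diagonal terms are killed by the vacuum relations, leaving the geometric series $\sum_{m\ge 0}z^{-m-1}w^{m}=\frac{1}{z-w}$ for $\vert z\vert>\vert w\vert$. The only cosmetic difference is that you normal-order via the anticommutator $[\psi_{m+\frac12},\psi_{n+\frac12}]_{+}=\delta_{m+n+1}I$ to extract the surviving $\delta$-term, whereas the paper moves the outer mode across the inner product with the $\star$-structure and invokes orthonormality of the vectors $\psi_{-n-\frac12}\Omega$; both reduce to the same bookkeeping.
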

\begin{proof}  
$(\psi(z) \Omega , \Omega ) = \sum_{n \in \ZZZ} (\psi_{n+\1/2} \Omega , \Omega ).z^{-n-1} = 0  $ \\ 
 $ (\psi(z) \psi(w)\Omega , \Omega ) =  \sum_{m,n \in \ZZZ} (\psi_{m+\1/2} \Omega , \psi_{-n-\1/2}\Omega ).z^{-n-1} w^{-m-1} 
 \\  =  \sum_{m,n \in \ZZZ} (\psi_{m-\1/2} \Omega , \psi_{-n-\1/2}\Omega ).z^{-n-1} w^{-m} =  \sum_{n \in \NNN} (\psi_{-n-\1/2} \Omega , \psi_{-n-\1/2}\Omega ).z^{-n-1} w^{n} 
\\   =   z^{-1} \sum_{n \in \NNN} (\frac{w}{z})^{n} =  \frac {1}{z-w}  \   $   if  $ \vert z \vert >  \vert w \vert $   
\end{proof}
\begin{lemma} $\forall c, d \in H$,      \label{pour}
$(\psi(z) c , d )  \in \CCC[z,z^{-1}] $.
 \end{lemma}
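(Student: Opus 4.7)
The plan is to combine the positive-energy decomposition $H = \bigoplus_{n \in \frac{1}{2}\NNN} H_n$ with the fact that each mode $\psi_m$ shifts the grading by a fixed amount, so that all but finitely many coefficients of the Laurent series $(\psi(z)c,d) = \sum_{n \in \ZZZ}(\psi_{n+\1/2}c,d)z^{-n-1}$ vanish automatically.

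First I would reduce to homogeneous vectors. By construction every element of $H$ is a \emph{finite} $\CCC$-linear combination of basis monomials $\psi_{-m_1}\cdots\psi_{-m_r}\Omega$, each of which lies in a single $H_n$. So I write $c = \sum_i c_i$ with $c_i \in H_{k_i}$ and $d = \sum_j d_j$ with $d_j \in H_{l_j}$, both finite sums. By sesquilinearity it suffices to check $(\psi(z)c_i, d_j) \in \CCC[z, z^{-1}]$ for each pair $(i,j)$.

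Second, I invoke the remark already established from $[D,\psi_m]=-m\psi_m$: the operator $\psi_{n+\1/2}$ sends $H_{k_i}$ into $H_{k_i - n - \1/2}$. Since distinct graded pieces are mutually orthogonal (positive-energy hypothesis), the scalar $(\psi_{n+\1/2}c_i, d_j)$ vanishes unless $k_i - n - \1/2 = l_j$, i.e.\ unless $n = k_i - l_j - \1/2$. Thus at most one value of $n$ contributes a nonzero coefficient to the series $\sum_n (\psi_{n+\1/2}c_i, d_j)z^{-n-1}$, so $(\psi(z)c_i, d_j)$ is either zero or a single monomial in $z$.

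Summing over the finitely many pairs $(i,j)$ produces a finite $\CCC$-linear combination of monomials $z^{-n-1}$, which is exactly an element of $\CCC[z, z^{-1}]$. There is no real obstacle: the only point that needs care is the bookkeeping that any vector in $H$ has finitely many nonzero homogeneous components, which is immediate from the way $\F_{NS}$ was constructed as a module freely generated by the basis above.
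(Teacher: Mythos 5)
Your proof is correct, but it takes a genuinely different route from the paper. The paper argues by induction on the length of the generating monomials: using $[\psi_m,\psi_n]_+=\delta_{m+n}$ it strips a mode $\psi_{-n-\1/2}$ off $c$ or $d$ at the cost of a single monomial in $z$ plus a shorter matrix coefficient, and reduces to the vacuum computation $(\psi(z)\Omega,\Omega)=0$ of Lemma \ref{manifestation}. You instead exploit the energy grading: since $\psi_{n+\1/2}\colon H_{k}\to H_{k-n-\1/2}$ and distinct graded pieces are orthogonal, a homogeneous pair $(c_i,d_j)$ contributes at most the single monomial $n=k_i-l_j-\1/2$, and finiteness of the homogeneous decompositions of $c$ and $d$ does the rest. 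Your argument is shorter and more conceptual, and it even yields the sharper statement that for homogeneous $c,d$ the matrix coefficient is a single monomial; it is essentially the mechanism the paper later isolates in Lemma \ref{3}. What the paper's inductive commutation scheme buys is that it extends verbatim to the two-variable case in the proposition that follows, where the grading alone only constrains the support of $(\psi(z)\psi(w)c,d)$ to the lines $m+n=\mathrm{const}$ (an infinite set) and one still needs the recursion to land in $(z-w)^{-1}\CCC[z^{\pm 1},w^{\pm 1}]$. Both proofs are valid for the statement at hand.
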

   \begin{proof}   $(\psi(z) \psi_{-n-\1/2}c , d )  = (c,d). z^{-n-1}  -  (\psi(z)c , \psi_{n+\1/2}d ) $ \\   
   $(\psi(z)c, \psi_{-n-\1/2}d )  = (c,d). z^{n}  -  (\psi(z)\psi_{n+\1/2}c,d ) $  \\
 Then, the result follows by lemma  \ref{manifestation} and induction.      \end{proof}
 \begin{proposition} $\forall c, d \in H$, 
 $\exists X(c,d) \in (z-w)^{-1}\CCC[z^{\pm 1}, w^{\pm 1}]$  such that:   
 \begin{displaymath}
      X(c,d)(z,w) = \left\{  \begin{array}{c} (\psi(z) \psi(w)c , d )  \textrm{  \  if  }  \vert z \vert >  \vert w \vert    \\   -(\psi(w) \psi(z)c , d )  \textrm{  \  if  }  \vert w \vert >  \vert z \vert     \end{array}  \right. 
\end{displaymath}   
 \end{proposition}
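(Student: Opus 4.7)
My strategy is induction on the total degree of $c$ and $d$ in the standard PBW--type basis $\{\psi_{-m_{1}}\cdots\psi_{-m_{r}}\Omega\}$ of $H$. By sesquilinearity it suffices to treat basis vectors. The base case $c = d = \Omega$ is precisely Lemma \ref{manifestation}, which gives
\[
(\psi(z)\psi(w)\Omega,\Omega) = \frac{1}{z-w}\quad(|z|>|w|),
\]
and an analogous geometric-series computation (swapping roles of $z,w$) yields
\[
(\psi(w)\psi(z)\Omega,\Omega) = \frac{1}{w-z}\quad(|w|>|z|),
\]
so the rational function $X(\Omega,\Omega)(z,w) = (z-w)^{-1}$ lies in $(z-w)^{-1}\CCC[z^{\pm 1},w^{\pm 1}]$ and realises both sides with the required sign $\varepsilon = 1$.

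\textbf{Inductive step.} Write $c = \psi_{-m}c'$ with $m \in \NNN+\tfrac{1}{2}$. The CAR relation gives, as formal series in $z$,
\[
\psi(z)\psi_{-m} + \psi_{-m}\psi(z) \;=\; [\psi(z),\psi_{-m}]_{+} \;=\; z^{m-\1/2},
\]
and similarly for $w$. Using this to move each of $\psi(z),\psi(w)$ past $\psi_{-m}$, together with the adjoint relation $\psi_{n}^{\star}=\psi_{-n}$ to shift creation operators from $d$ into annihilation on the left, I rewrite
\[
(\psi(z)\psi(w)c,d) = -(\psi(z)\psi(w)c',\psi_{m}d) + w^{m-\1/2}(\psi(z)c',d) - z^{m-\1/2}(\psi(w)c',d)
\]
(and a symmetric identity for $(\psi(w)\psi(z)c,d)$). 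The first term has strictly smaller total degree and is handled by the induction hypothesis, producing a contribution in $(z-w)^{-1}\CCC[z^{\pm 1},w^{\pm 1}]$; the remaining two terms lie already in $\CCC[z^{\pm 1},w^{\pm 1}]$ by Lemma \ref{pour}. Adding them gives the desired $X(c,d)$, and the only pole in $(z,w)$ is along $z=w$ and of order one, inherited from the base case.

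\textbf{Agreement on the two regions.} The same rearrangement applied in the region $|w|>|z|$ to $-(\psi(w)\psi(z)c,d)$ produces, using the \emph{same} CAR identities and induction hypothesis, the Laurent expansion of the \emph{same} rational function $X(c,d)$ in that region (the polynomial correction terms $w^{m-\1/2}(\psi(z)c',d)$ and $-z^{m-\1/2}(\psi(w)c',d)$ are genuine Laurent polynomials, so they admit a unique expansion, while the inductive piece $(\psi(z)\psi(w)c',\psi_{m}d)$ and its swap already agree as rational functions by hypothesis). Thus both conditional formulas in the piecewise definition of $X(c,d)(z,w)$ match a single rational function in $(z-w)^{-1}\CCC[z^{\pm 1},w^{\pm 1}]$.

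\textbf{Main obstacle.} The delicate point is bookkeeping: verifying that after the induction one genuinely lands in $(z-w)^{-1}\CCC[z^{\pm 1},w^{\pm 1}]$ (and not in a larger space with higher-order poles or extra singularities), and checking that the Laurent expansions of the inductive remainder in the two overlap regions $|z|>|w|$ and $|w|>|z|$ really come from \emph{the same} rational function. This hinges on the fact that the only singular contribution comes from the vacuum pairing $(\psi(z)\psi(w)\Omega,\Omega)=(z-w)^{-1}$, all other reductions introducing only Laurent polynomials, so the order-one pole is preserved throughout the induction.
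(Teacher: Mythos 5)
Your proof follows essentially the same route as the paper's: induction on the PBW degree of $c$ and $d$, stripping a creation operator off $c$ (or, via the adjoint $\psi_{n}^{\star}=\psi_{-n}$, off $d$) using the anticommutation relations, so that the only singular contribution is the vacuum pairing $(z-w)^{-1}$ of Lemma \ref{manifestation} and all correction terms are Laurent polynomials by Lemma \ref{pour}. Two harmless slips worth fixing: since $\psi(z)=\sum_{n\in\ZZZ}\psi_{n+\1/2}z^{-n-1}$ one has $[\psi_{-m},\psi(z)]_{+}=z^{-m-\1/2}$ rather than $z^{m-\1/2}$, and the inductive term carries a plus sign, $(\psi_{-m}\psi(z)\psi(w)c',d)=+(\psi(z)\psi(w)c',\psi_{m}d)$; neither affects the argument, since only the Laurent-polynomial nature of the corrections and the strict drop in total degree are used.
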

  \begin{proof}  $ (\psi(z) \psi(w)  \psi_{-n-\1/2}c , d ) = (\psi(z) c , d ) w^{-n-1} -  (\psi(w) c , d ) z^{-n-1} + (\psi(z) \psi(w) c , \psi_{n+\1/2} d )  $  \\ 
  $ (\psi(z) \psi(w) c ,  \psi_{-n-\1/2}d ) = (\psi(w) c , d ) z^{n} -  (\psi(z) c , d ) w^{n} + (\psi(z) \psi(w)  \psi_{n+\1/2}c , d ) $ \\ 
 Then, the result follows by lemma  \ref{manifestation}, \ref{pour}, symmetry and induction.        
         \end{proof} 
         
\subsection{General framework} 

\begin{definition} \label{localocal} Let $ H$  prehilbert and $A \in (End H)[[z,z^{-1}]]$ a formal power series defined as 
 $A(z) = \sum_{n \in \ZZZ} A(n)z^{-n-1} \ $ with $A(n) \in End(H) $.  \end{definition}
\begin{definition}  \label{local} Let  $ A, B   \in (End H)[[z,z^{-1}]]$  \\ 
$A $ and $B$ are  \textbf{local} if $\exists \varepsilon \in \ZZZ_{2}, \ \exists N \in \NNN $ such that $\forall c, d \in H$:  \\
$\exists X(A,B,c,d) \in (z-w)^{-N} \CCC[z^{\pm 1}, w^{\pm 1}]$  such that:   
 \begin{displaymath}
       X(A,B,c,d)(z,w) = \left\{  \begin{array}{c} (A(z) B(w)c , d )  \textrm{  \  if  }  \vert z \vert >  \vert w \vert    \\   (-1)^{\varepsilon}(B(w)A(z)c , d )  \textrm{  \  if  }  \vert w \vert >  \vert z \vert     \end{array}  \right. 
\end{displaymath}   \end{definition}
\begin{example}  \label{ex}  $ \psi $ is local with itself, with $N=1$ and $\varepsilon  =  \bar{1}  $  \end{example}
\begin{notation}  $ [  X  , Y  ]_{\varepsilon} = \left\{  \begin{array}{c} XY-YX  \textrm{  \  if  }  \varepsilon = \bar{0}   \\    XY+YX  \textrm{  \  if  }  \varepsilon = \bar{1}   \end{array}  \right.  $ \end{notation}  
\begin{remark}   \label{trivial}
Let $n \in \NNN$,  then, $(z-w)^{n} = \sum_{p=0}^{n}  C_{n}^{p} (-1)^{p}w^{p}z^{n-p} $   and,  \\ 
$(z-w)^{-n} = \left\{  \begin{array}{c} \sum_{p \in \NNN} C_{p+n-1}^{p}w^{p}z^{-p-n}  \textrm{  \  if  }  \vert z \vert >  \vert w \vert    \\    (-1)^{n} \sum_{p \in \NNN} C_{p+n-1}^{p}z^{p}w^{-p-n}   \textrm{  \  if  }  \vert w \vert >  \vert z \vert     \end{array}  \right.  $ \end{remark}

\begin{proposition}  \label{investig} 
   Let $A, B$ local and $c, d \in H$ then: \\ 
   $X(A,B,c,d)(z,w)=\sum_{n \in \ZZZ}X_{n}(A,B,c,d)(w)(z-w)^{-n-1} $,  \\
   $X_{n}(A,B,c,d)(w) = (A_{n}B(w)c,d) $, \\ 
   $A_{n}B(w)= \sum_{m \in \ZZZ} (A_{n}B)(m)w^{-m-1}  $  and 
$(A_{n}B)(m) =$ \begin{displaymath} \left\{  \begin{array}{c} \sum_{p=0}^{n} (-1)^{p}C_{n}^{p}[A(n-p),B(m+p)]_{\varepsilon} \textrm{  \  if  }  n \ge 0   \\ \\   \sum_{p \in \NNN} C_{p-n-1}^{p}(A(n-p)B(m+p)-(-1)^{\varepsilon+n}B(m+n-p)A(p))  \textrm{  \  if  }  n < 0    \end{array}  \right. 
     \end{displaymath} 
     \end{proposition}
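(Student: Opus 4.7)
The plan has three stages. \textbf{First}, I establish existence of the Laurent expansion of $X$ around $z = w$. Since $P(z,w) := (z-w)^N X(z,w) \in \CCC[z^{\pm 1}, w^{\pm 1}]$ is a Laurent polynomial, I substitute $z = w + \zeta$ and expand each monomial $z^j w^k$ of $P$ as a formal series in $\zeta = z - w$: for $j \geq 0$ this is the usual polynomial expansion, while for $j < 0$ I use the formal binomial series $(w+\zeta)^j = \sum_{p \geq 0} C_j^p w^{j-p}\zeta^p$ inside $\CCC[w^{\pm 1}][[\zeta]]$. This yields $P = \sum_{p \geq 0} Q_p(w)\zeta^p$ with $Q_p \in \CCC[w^{\pm 1}]$, hence the announced expansion $X = \sum_{n \in \ZZZ} X_n(w)(z-w)^{-n-1}$ with $X_n(w) := Q_{N-1-n}(w)$, vanishing for $n \geq N$.

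\textbf{Second}, I identify $X_n(w)$ via the residue formula $X_n(w) = \mathrm{Res}_{z=w}(z-w)^n X(z,w)$ (immediate from the previous expansion), and then split this residue between the two expansion regimes using the standard formal identity
\[\mathrm{Res}_{z=w}(z-w)^n X \; = \; \mathrm{Res}_z\bigl(\iota_{z,w}(z-w)^n \cdot \iota_{z,w} X\bigr) \; - \; \mathrm{Res}_z\bigl(\iota_{w,z}(z-w)^n \cdot \iota_{w,z} X\bigr),\]
where $\iota_{z,w}$ (resp.\ $\iota_{w,z}$) denotes expansion in $|z|>|w|$ (resp.\ $|w|>|z|$) and $\mathrm{Res}_z$ on the right extracts the coefficient of $z^{-1}$; this identity is verified by linearity on the basis $\{(z-w)^{-a}z^{-b}\}$. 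Locality then lets me substitute $\iota_{z,w}X = (A(z)B(w)c,d)$ and $\iota_{w,z}X = (-1)^\varepsilon(B(w)A(z)c,d)$, producing the operator-valued formula
\[A_n B(w) \; = \; \mathrm{Res}_z\,\iota_{z,w}(z-w)^n\,A(z)B(w) \; - \; (-1)^\varepsilon\,\mathrm{Res}_z\,\iota_{w,z}(z-w)^n\,B(w)A(z).\]

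\textbf{Third}, I unpack this expression by expanding $(z-w)^n$ via Remark \ref{trivial}. For $n \geq 0$ the two expansions coincide with the polynomial $\sum_{p=0}^n (-1)^p C_n^p w^p z^{n-p}$, so the two terms fuse into $\mathrm{Res}_z (z-w)^n [A(z), B(w)]_\varepsilon$; using $\mathrm{Res}_z A(z) z^{n-p} = A(n-p)$ and extracting the coefficient of $w^{-m-1}$ yields the first stated formula. For $n < 0$ the expansions $\iota_{z,w}(z-w)^n = \sum_{p\geq 0} C_{p-n-1}^p w^p z^{n-p}$ and $\iota_{w,z}(z-w)^n = (-1)^n\sum_{p\geq 0} C_{p-n-1}^p z^p w^{n-p}$ differ, so the two residues must stay as a difference; substituting, computing residues mode-by-mode, and extracting the coefficient of $w^{-m-1}$ produces the second formula, with the combined sign $-(-1)^\varepsilon(-1)^n = -(-1)^{\varepsilon+n}$ emerging naturally. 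The main obstacle is purely bookkeeping: tracking the three signs ($(-1)^\varepsilon$ from locality, $(-1)^n$ from the negative-power binomial, $(-1)^p$ from the $w^p$ factor in $(z-w)^n$) and re-indexing the nested sums to isolate the $w^{-m-1}$ coefficient, while the underlying mathematical content (Taylor expansion around $z=w$ plus the residue-splitting identity) is standard formal calculus.
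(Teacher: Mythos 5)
Your proposal is correct and follows essentially the same route as the paper: expand $X$ around $z=w$, extract $X_n(w)$ as a residue, split that residue into the two ordering regimes (which is exactly the paper's contour-deformation $\oint_{w} = \int_{\vert z \vert = R > \vert w \vert} - \int_{\vert z \vert = r < \vert w \vert}$, which you recast as the formal identity $\mathrm{Res}_{z=w} = \mathrm{Res}_z\,\iota_{z,w} - \mathrm{Res}_z\,\iota_{w,z}$), invoke locality to substitute the two products, and finish with the binomial expansions of Remark \ref{trivial}. The only difference is that you phrase the contour argument in purely formal-residue language, which is a faithful algebraic translation rather than a new idea.
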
 
     $\begin{array}{l}  \end{array} $
 \begin{proof}  
$X(A,B,c,d) \in \CCC[z^{\pm 1}, w^{\pm 1}, (z-w)^{-1}]$, we  develop it around $z=w$:   \\
$X(A,B,c,d)(z,w)=\sum_{n \in \ZZZ}X_{n}(A,B,c,d)(w)(z-w)^{-n-1} $  \\
 with $X_{n}(A,B,c,d)(w) = \frac{1}{2 \pi i} \oint_{w}(z-w)^{n}X(A,B,c,d)(z,w)dz $. \\  By contour integration argument 
($  \oint_{w} = \int_{\vert z \vert = R > \vert w \vert} - \int_{\vert z \vert = r < \vert w \vert}$),   we obtain: \\  \\ 
$X_{n}(A,B,c,d)(w) = \frac{1}{2 \pi i} (\int_{\vert z \vert = R > \vert w \vert} - \int_{\vert z \vert = r < \vert w \vert})(z-w)^{n}X(A,B,c,d)(z,w)dz $ \\
$ = \frac{1}{2 \pi i}\int_{\vert z \vert = R > \vert w \vert}(z-w)^{n}(A(z) B(w)c , d )dz -  \frac{(-1)^{\varepsilon}}{2 \pi i} \int_{\vert z \vert = r < \vert w \vert}(z-w)^{n} (B(w)A(z)c , d ) dz \\ 
=  \frac{1}{2 \pi i} \sum_{q \in \ZZZ,  p=0}^{n}(\int_{\vert z \vert = R > \vert w \vert} C_{n}^{p} (-1)^{p}z^{n-p}w^{p}(A(q) B(w)c , d )z^{-q-1}dz  \\ -(-1)^{\varepsilon} \int_{\vert z \vert = r < \vert w \vert}C_{n}^{p} (-1)^{p}z^{n-p}w^{p}(B(w)A(q) c , d )z^{-q-1}dz) \\ =  (\sum_{p=0}^{n} (-1)^{p}w^{p}C_{n}^{p}[A(n-p),B(w)]_{\varepsilon}c ,d)$,   \   with $n \in \NNN$. \\ \\ 
$X_{-n}(A,B,c,d)(w) =  \frac{1}{2 \pi i} \sum_{q \in \ZZZ,  p \in \NNN}(\int_{\vert z \vert = R > \vert w \vert} C_{p+n-1}^{p} z^{-n-p}w^{p}(A(q) B(w)c , d )z^{-q-1}dz \\Ê - (-1)^{\varepsilon} \int_{\vert z \vert = r < \vert w \vert}C_{p+n-1}^{p} (-1)^{n}w^{-n-p}z^{p}(B(w)A(q) c , d )z^{-q-1}dz) \\  =(\sum_{p \in \NNN} C_{p+n-1}^{p}(w^{p}A(-n-p)B(w)-(-1)^{\varepsilon+n}w^{-n-p}B(w)A(p))c ,d)$ 
    \end{proof}
 \begin{definition} Let the operation $(A,B) \to A_{n}B$ as for proposition \ref{investig}.   \end{definition}  
  \begin{formula}   \label{formula}  
The  formula of  $(A_{n}B)(m)$ on proposition \ref{investig}. \end{formula}   

\begin{corollary}(Operator product expansion)  \label{OPE} Let $A, B$ local, and  $c, d \in H$:
 \begin{displaymath}   (A(z)B(w)c,d) \sim ( \sum_{n=0}^{N-1} \frac{(A_{n}B)(w)}{(z-w)^{n+1} }c,d) \ \  \   \textrm{near } z=w      \end{displaymath}   \end{corollary}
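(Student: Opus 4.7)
The plan is to read off the corollary directly from the Laurent expansion produced in Proposition \ref{investig}, once the finite-pole condition built into locality is exploited.

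First I would recall that by Definition \ref{local}, locality of $A$ and $B$ means $X(A,B,c,d)(z,w) \in (z-w)^{-N}\CCC[z^{\pm 1}, w^{\pm 1}]$. Multiplying through by $(z-w)^N$ therefore yields a Laurent polynomial in $z,w$, which is in particular regular at $z=w$. So $X(A,B,c,d)(z,w)$ has a pole of order at most $N$ along the diagonal $z=w$.

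Next I would invoke Proposition \ref{investig}, which already provides the full expansion around $z=w$:
\begin{displaymath}
X(A,B,c,d)(z,w) \;=\; \sum_{n \in \ZZZ} (A_n B(w)c, d)\,(z-w)^{-n-1}.
\end{displaymath}
Combining this with the previous step, the bound on the pole order forces the coefficients of $(z-w)^{-n-1}$ to vanish for all $n \geq N$. Only the singular terms with $n = 0, 1, \ldots, N-1$ and the regular part (the terms with $n < 0$) survive.

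Finally, in the region $|z| > |w|$, locality gives $X(A,B,c,d)(z,w) = (A(z)B(w)c, d)$. Interpreting $\sim$ as equality of principal parts at $z=w$, that is, modulo terms regular at $z=w$, we absorb the $n < 0$ contributions and obtain
\begin{displaymath}
(A(z)B(w)c, d) \;\sim\; \Bigl(\sum_{n=0}^{N-1} \frac{(A_n B)(w)}{(z-w)^{n+1}}\,c, d\Bigr),
\end{displaymath}
as claimed. There is no real obstacle here: the corollary is essentially just the extraction of the principal part of the expansion already established in Proposition \ref{investig}, and the only point requiring care is fixing the convention that $\sim$ denotes agreement of singular parts along $z=w$.
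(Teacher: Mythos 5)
Your argument is correct and is essentially the paper's own proof: both read the expansion of Proposition \ref{investig} together with the membership $X(A,B,c,d)\in (z-w)^{-N}\CCC[z^{\pm 1},w^{\pm 1}]$ to conclude that $A_nB=0$ for $n\ge N$, leaving exactly the stated singular part near $z=w$. No gap; your extra remark fixing the meaning of $\sim$ as agreement of singular parts is a harmless clarification of what the paper leaves implicit.
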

 \begin{proof}  $X(A,B,c,d)(z,w) = \sum_{n \in \ZZZ} (A_{n}B)(w)c,d)(z-w)^{-n-1} \\   \in (z-w)^{-N} \CCC[z^{\pm 1}, w^{\pm 1}]$,   so $A_{n}B = 0$ for $-n-1<-N$ ie $n \ge N $.  \end{proof}
 \begin{remark} We  write OPE as:  $  A(z)B(w) \sim  \sum_{n=0}^{N-1} \frac{(A_{n}B)(w)}{(z-w)^{n+1} }   $.  \end{remark}
  \begin{remark}    $z^{m} = \left\{  \begin{array}{c} \sum_{k=0}^{m} C_{m}^{k}(z-w)^{k}w^{m-k} \textrm{  \  if  }  m \ge 0   \\    \sum_{k \in \NNN} (-1)^{k}C_{k-m-1}^{k}(z-w)^{k}w^{m-k}  \textrm{  \  if  }  m < 0    \end{array}  \right. $  \end{remark}
\begin{proposition} (Lie bracket )  \label{litre}  
Let $A, B$ local, with $\varepsilon \in \ZZZ_{2}$  then: 
\begin{displaymath} 
[A(m),B(n)]_{\varepsilon} = \left\{  \begin{array}{c} \sum_{p=0}^{N-1}  C_{m}^{p} (A_{p}B)(m+n-p) \textrm{  \  if  }  m \ge 0   \\  \\    \sum_{p=0}^{N-1} (-1)^{p} C_{p-m-1}^{p} (A_{p}B)(m+n-p) \textrm{  \  if  }  m < 0    \end{array}  \right.
 \end{displaymath}   \end{proposition}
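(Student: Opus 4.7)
The plan is to mimic the contour integration argument used in Proposition \ref{investig}, applied to the combination $[A(m),B(n)]_{\varepsilon}$ rather than to the individual products. Write mode coefficients as residues via
$$ A(m) = \frac{1}{2\pi i}\oint z^{m} A(z)\,dz,\qquad B(n) = \frac{1}{2\pi i}\oint w^{n} B(w)\,dw, $$
and express, for $c,d \in H$,
$$ (A(m)B(n)c,d) = \frac{1}{(2\pi i)^{2}}\oint_{|w|=r}\oint_{|z|=R>r} z^{m}w^{n}\,(A(z)B(w)c,d)\,dz\,dw, $$
$$ (-1)^{\varepsilon}(B(n)A(m)c,d) = \frac{1}{(2\pi i)^{2}}\oint_{|w|=r}\oint_{|z|=r'<r} z^{m}w^{n}\,(-1)^{\varepsilon}(B(w)A(z)c,d)\,dz\,dw. $$
By locality (Definition \ref{local}) the integrands in the two inner integrals are the restrictions of a single rational function $X(A,B,c,d)(z,w)$ to the two disjoint annuli. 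Subtracting, the inner contour $\oint_{|z|=R}-\oint_{|z|=r'}$ collapses, at fixed $w$ with $r'<|w|<R$, to a small contour $C_{w}$ around $z=w$. Thus
$$ ([A(m),B(n)]_{\varepsilon}c,d) = \frac{1}{(2\pi i)^{2}}\oint_{|w|=r}\oint_{C_{w}} z^{m}w^{n}\,X(A,B,c,d)(z,w)\,dz\,dw. $$

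The next step is to insert the OPE from Corollary \ref{OPE}. Since $(A_{n}B)=0$ for $n\geq N$ and the non-singular part of $X$ (powers $(z-w)^{k}$ with $k\geq 0$) contributes nothing to the residue at $z=w$, only the $N$ singular terms $\sum_{p=0}^{N-1}(A_{p}B)(w)(z-w)^{-p-1}$ survive. Hence the inner contour integral equals $\sum_{p=0}^{N-1}\alpha_{p}(m,w)\,((A_{p}B)(w)c,d)$, where
$$ \alpha_{p}(m,w) = \frac{1}{2\pi i}\oint_{C_{w}} \frac{z^{m}}{(z-w)^{p+1}}\,dz $$
is the coefficient of $(z-w)^{p}$ in the Taylor/Laurent expansion of $z^{m}$ around $z=w$.

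Now I use the two formulas from Remark \ref{trivial} (equivalently, the Newton binomial expansion for $m\geq 0$ and its analytic continuation for $m<0$), writing $z^{m}=(w+(z-w))^{m}$. For $m\geq 0$ one gets $\alpha_{p}(m,w)=C_{m}^{p}w^{m-p}$ (zero when $p>m$), and for $m<0$, using the generalized binomial $\binom{m}{p}=(-1)^{p}C_{p-m-1}^{p}$, one gets $\alpha_{p}(m,w)=(-1)^{p}C_{p-m-1}^{p}w^{m-p}$. Substituting and performing the outer contour integral over $w$ picks out the coefficient of $w^{-(m+n-p)-1}$ in $(A_{p}B)(w)$, which by definition is $(A_{p}B)(m+n-p)$. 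This yields the two claimed formulas directly.

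The only delicate point is the step where $\oint_{|z|=R}-\oint_{|z|=r'}$ is replaced by a contour around $z=w$: this uses the fact, guaranteed by locality and established in Proposition \ref{investig}, that $X(A,B,c,d)(z,w)$ is a genuine rational function whose only singularity in $z$ for fixed $w$ is at $z=w$, so the two annular Laurent expansions are restrictions of one and the same meromorphic function. Beyond this, the entire argument is bookkeeping of residues, and separating the two cases $m\geq 0$ and $m<0$ reflects exactly the two cases of Remark \ref{trivial}.
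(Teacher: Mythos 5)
Your proposal is correct and follows essentially the same route as the paper: the commutator is written as a difference of double contour integrals of $z^{m}w^{n}X(A,B,c,d)(z,w)$, the inner contour is collapsed to one around $z=w$, the OPE truncated at $p=N-1$ is inserted, and $z^{m}$ is expanded around $z=w$ using the two binomial expansions of Remark \ref{trivial} to produce the two cases. The only difference is presentational (you make explicit the role of $\alpha_{p}(m,w)$ and the rationality of $X$), not mathematical.
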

\begin{proof} 
 $ \forall c, d \in H$, 
$([A(m),B(n)]_{\varepsilon}c,d) = \\Ê  \frac{1}{(2 \pi i)^{2}} (\int \int_{\vert z \vert = R > \vert w \vert} - \int  \int_{\vert z \vert = r < \vert w \vert})z^{m}w^{n}X(A,B,c,d)(z,w)dzdw$ \\ 

By contour integration argument 
($   \int \int_{\vert z \vert = R > \vert w \vert} - \int  \int_{\vert z \vert = r < \vert w \vert} =   \oint_{0} \oint_{w} $):  \\ \\  
$([A(m),B(n)]_{\varepsilon}c,d) =   \ \frac{1}{2 \pi i}  \oint_{0}  w^{n}  \frac{1}{2 \pi i}    \oint_{w}z^{m} ( \sum_{p=0}^{N-1} \frac{(A_{p}B)(w)}{(z-w)^{p+1} }c,d)dzdw $\\ \\ 
We suppose $m \ge 0$, then by previous remark, $([A(m),B(n)]_{\varepsilon}c,d) = \\Ê
=   \frac{1}{2 \pi i}  \oint_{0}  w^{n}  \frac{1}{2 \pi i}    \oint_{w}\sum_{k=0}^{m} C_{m}^{k}w^{m-k} ( \sum_{p=0}^{N-1} \frac{(A_{p}B)(w)}{(z-w)^{p+1-k} }c,d)dzdw  \\
  =   \frac{1}{2 \pi i}  \oint_{0} (  \sum_{p=0}^{N-1} w^{n+m-p}C_{m}^{p} (A_{p}B)(w)c,d)dw \\ 
   = \frac{1}{2 \pi i}  \oint_{0} (  \sum_{r \in \ZZZ  p=0}^{N-1} w^{n+m-p-r-1}C_{m}^{p} (A_{p}B)(r)c,d)dw  \\ 
   =   ( \sum_{p=0}^{N-1}  C_{m}^{p} (A_{p}B)(m+n-p) c , d) $  \ \ (we take  $C_{m}^{p} = 0$ if $p>m$ ). \\ \\
   Similary for $m<0$..., and the result follows.   \end{proof}
   \begin{formula}   \label{lie}
   The formula of $[A(m),B(n)]_{\varepsilon}$ on proposition \ref{litre}.   \end{formula}   
   
 \begin{definition} (Operator D) \  Let  $ D \in End(H)$ decomposing $H$ into $\bigoplus_{ n  \in \NNN + \frac{1}{2} }  H_{n}$  with $D \xi = n \xi $  $\forall \xi \in H_{n}$,  $dim( H_{n} )< \infty $
 and  $ H_{n} \perp H_{m} $   if $n \neq m$.    \end{definition}
 
 \begin{notation} 
Let $A'(z) = \frac{d}{dz} A(z)  = \sum_{n \in \ZZZ}(-n)A(n-1)z^{-n-1}$.
    \end {notation} 

\begin{definition} \label{L0}
$A \in (End H)[[z,z^{-1}]]$ is graded if: \\
 $ \exists \alpha \in \frac{1}{2} \NNN  $ such that  $ \lbrack D , A(z) \rbrack = zA'(z)+ \alpha A(z)  $
 \end{definition} 

 \begin{lemma} \label{1} $A$ is graded with $\alpha$  $\iff$ \\Ê  $A(n): H_{m} \to  H_{m-n+\alpha -1} \ \   \forall n \in \ZZZ ,  \forall m  \in \1/2\NNN$  
 \end{lemma}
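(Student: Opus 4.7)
The plan is to unwind both sides coefficient-by-coefficient in the formal variable $z$ and then use that $H$ decomposes into $D$-eigenspaces, so the statement reduces to comparing eigenvalues on each $H_{m}$.

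First I would compute the right-hand side explicitly. Since $A(z) = \sum_{n \in \ZZZ} A(n) z^{-n-1}$, the formal derivative is $A'(z) = \sum_{n}(-n-1)A(n)z^{-n-2}$, hence
$$zA'(z) + \alpha A(z) = \sum_{n \in \ZZZ} (\alpha - n - 1)\,A(n)\, z^{-n-1}.$$
On the other hand $[D, A(z)] = \sum_{n \in \ZZZ} [D, A(n)]\, z^{-n-1}$, so the graded condition $[D,A(z)] = zA'(z) + \alpha A(z)$ is equivalent, by equating coefficients of $z^{-n-1}$, to the operator identity
$$[D, A(n)] = (\alpha - n - 1)\,A(n) \qquad \forall n \in \ZZZ.$$

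Next I would translate this operator identity into the claimed mapping property using the eigenspace decomposition $H = \bigoplus_{m \in \NNN + 1/2} H_m$ with $D_{|H_m} = mI$. For any $v \in H_m$, one has
$$D(A(n)v) = A(n)Dv + [D,A(n)]v = mA(n)v + [D,A(n)]v.$$
Thus $[D,A(n)] = (\alpha-n-1)A(n)$ holds on $H_m$ iff $D(A(n)v) = (m - n + \alpha - 1)A(n)v$ for every $v \in H_m$, which, since the $H_k$ are exactly the $D$-eigenspaces, is equivalent to $A(n)v \in H_{m-n+\alpha-1}$. Doing this for every $m$ gives both implications simultaneously: the forward direction comes from evaluating the coefficient identity on an arbitrary $v \in H_m$, and the reverse direction comes from collecting the $A(n)v \in H_{m-n+\alpha-1}$ data over all $m$, which determines $[D,A(n)]$ on all of $H$ by linearity and the direct sum decomposition.

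There is essentially no obstacle here; the only subtle point is being careful that the coefficient-matching on formal power series is legitimate, which is immediate since both sides lie in $(\End H)[[z,z^{-1}]]$ and coefficients of distinct powers of $z$ are independent. The argument is in fact a chain of equivalences, so both directions are obtained at once without a separate argument.
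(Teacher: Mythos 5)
Your proof is correct and follows essentially the same route as the paper: extract the coefficient identity $[D,A(n)]=(\alpha-n-1)A(n)$ from the formal series, then evaluate on $D$-eigenvectors in each $H_m$ to convert it into the mapping property, with every step an equivalence. No issues.
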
 
\begin{proof}  
$[D,A(z)]= zA'(z)+ \alpha A(z) = \sum_{n \in \ZZZ} (\alpha -1-n) A(n)z^{-n-1} \\
\iff   [D,A(n)] = (\alpha -1-n) A(n)   \ \   \forall n \in \ZZZ   \\
\iff  \forall n \in \ZZZ , \forall m  \in \1/2\NNN ,  \forall \xi \in H_{m}  \\ 
 DA(n) \xi = A(n)D \xi +   [D,A(n)]\xi = (m-n+\alpha -1)A(n) \xi  \\ 
\iff  A(n): H_{m} \to  H_{m-n+\alpha -1} \ \   \forall n \in \ZZZ ,  \forall m  \in \1/2\NNN$.
 \end{proof}   
 
  \begin{lemma} \label{2} Let $A$, $B$ local and graded with $\alpha$ and $\beta$  then: \\Ê  $  [D,A_{n}B(z)]= z(A_{n}B)'(z)+ ( \alpha + \beta -n-1) A_{n}B(z)$. 
 \end{lemma}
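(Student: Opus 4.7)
The plan is to reduce the identity to a mode-by-mode claim via Lemma \ref{1}. Writing $A_nB(z)=\sum_m (A_nB)(m)z^{-m-1}$ and extracting the coefficient of $z^{-m-1}$ from both sides, the desired identity is equivalent to
\begin{displaymath}
[D,(A_nB)(m)] = (\alpha+\beta-n-m-2)\,(A_nB)(m) \qquad \forall m\in\ZZZ,
\end{displaymath}
which by Lemma \ref{1} says precisely that $(A_nB)(m):H_k\to H_{k-m+(\alpha+\beta-n-1)-1}$, i.e.\ that $A_nB$ is graded with parameter $\alpha+\beta-n-1$. So it suffices to verify this mode identity from the explicit Formula \ref{formula}.

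The key ingredient is that $[D,\cdot]$ is a derivation of both the commutator and the anticommutator, while by Lemma \ref{1} the gradedness of $A$ and $B$ reads, in mode form, $[D,A(k)]=(\alpha-k-1)A(k)$ and $[D,B(l)]=(\beta-l-1)B(l)$. Hence, for $n\geq 0$, each summand appearing in $(A_nB)(m)$ satisfies
\begin{displaymath}
[D,[A(n-p),B(m+p)]_\varepsilon] = \bigl((\alpha-(n-p)-1)+(\beta-(m+p)-1)\bigr)\,[A(n-p),B(m+p)]_\varepsilon,
\end{displaymath}
and the scalar collapses to $(\alpha+\beta-n-m-2)$, independent of $p$, since the $p$'s cancel in the sum of mode indices. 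The common scalar factors out of the finite sum defining $(A_nB)(m)$, giving exactly the mode identity required.

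For $n<0$ the same cancellation works term by term: in both $A(n-p)B(m+p)$ and $B(m+n-p)A(p)$ the mode indices sum to $n+m$, so $[D,\cdot]$ acts again by the same scalar $(\alpha+\beta-n-m-2)$ on each summand, and the scalar pulls through the series. The infinite sum is not a real obstacle because, on a fixed $v\in H_k$, only finitely many $p$ contribute by the positive-energy decomposition ($A(p)v=0$ once the target grade would drop below zero). In sum, I do not foresee any genuine difficulty: this is essentially a bookkeeping consequence of Lemma \ref{1}, Formula \ref{formula}, and the derivation property of $[D,\cdot]$; the only mildly delicate point is checking that the mode-index cancellation is uniform across the two families of terms in the $n<0$ expansion, which is exactly what lets the scalar be extracted from the (pointwise finite) sum.
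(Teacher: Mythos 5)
Your proof is correct and is essentially the paper's own argument: the paper likewise invokes Lemma \ref{1} to translate gradedness into the statement that $A(k)\colon H_{m}\to H_{m-k+\alpha-1}$, observes via Formula \ref{formula} that every term of $(A_{n}B)(m)$ has mode indices summing to $n+m$ so that $(A_{n}B)(m)\colon H_{k}\to H_{k-m+(\alpha+\beta-n-1)-1}$, and concludes by Lemma \ref{1} again. Your phrasing through the eigenvalue relation $[D,A(k)]=(\alpha-k-1)A(k)$ and the derivation property of $[D,\cdot]$ is just the other side of the equivalence stated in Lemma \ref{1}, so there is nothing substantively different.
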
 
\begin{proof}    $A(n): H_{m} \to  H_{m-n+\alpha -1}$ and    $B(n): H_{m} \to  H_{m-n+\beta -1}$ \\
Now, by formula \ref{formula},   $A_{p}B(n): H_{m} \to  H_{m-n+(\alpha + \beta - p-1) -1}$   \\
The result follows by the previous lemma. \end{proof} 

\begin{lemma} \label{3} Let $A$, $B \in (EndH)[[z,z^{-1}]]$,  graded with $\alpha$ and $\beta$,   then: 
\\  $A$ and $B$ are local  $  \iff   \exists \varepsilon \in \ZZZ_{2}, \ \exists N \in \NNN $ such that $\forall c, d \in H$:  \\
$   (z-w)^{N}(A(z)B(w)c,d) = (-1)^{\varepsilon}   (z-w)^{N}(B(w)A(z)c,d)   $ as formal  series.
\end{lemma}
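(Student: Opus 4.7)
I first reduce to the case $c \in H_{k}$, $d \in H_{l}$ homogeneous, using bilinearity of both conditions in $(c,d)$. By Lemma \ref{1}, $B(n)c \in H_{k-n+\beta-1}$ and $A(m)B(n)c \in H_{k-m-n+\alpha+\beta-2}$, so $(A(m)B(n)c,d)=0$ unless $m+n = M := k+\alpha+\beta-l-2$: both formal series $(A(z)B(w)c,d)$ and $(B(w)A(z)c,d)$ are concentrated on the single antidiagonal $a+b=-M-2$ in $(z^{a}w^{b})$-exponents. Positive energy ($H_{n}=0$ for $n<0$) further localizes the supports: $B(n)c\neq 0$ forces $n \le k+\beta-1$, so $(A(z)B(w)c,d)$ lives in $a \le l-\alpha$, and symmetrically $A(m)c\neq 0$ forces $m\le k+\alpha-1$, so $(B(w)A(z)c,d)$ lives in $a \ge -k-\alpha$. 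Since $(z-w)^{N}$ shifts the $z$-exponent by values in $[0,N]$, the products $(z-w)^{N}(A(z)B(w)c,d)$ and $(z-w)^{N}(B(w)A(z)c,d)$ have $a$-supports in $a' \le l-\alpha+N$ and $a' \ge -k-\alpha$ respectively.

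$(\Rightarrow)$: Assume locality and set $P:=(z-w)^{N}X(A,B,c,d) \in \CCC[z^{\pm 1},w^{\pm 1}]$. On $|z|>|w|$ the series $(A(z)B(w)c,d)$ converges to $X$, so $(z-w)^{N}(A(z)B(w)c,d)$ converges to $P$; uniqueness of Laurent expansion then forces $(z-w)^{N}(A(z)B(w)c,d) = P$ as formal series. The mirror argument in $|w|>|z|$ yields $(z-w)^{N}(B(w)A(z)c,d) = (-1)^{\varepsilon}P$ as formal series, and combining these two identities (using $(-1)^{2\varepsilon}=1$) produces the claimed formal equality.

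$(\Leftarrow)$: Assume the formal identity. The two opposing support bounds trap the common value inside the finite interval $-k-\alpha \le a' \le l-\alpha+N$, so it is a Laurent polynomial $P \in \CCC[z^{\pm 1},w^{\pm 1}]$. Define $X(A,B,c,d) := P/(z-w)^{N}\in (z-w)^{-N}\CCC[z^{\pm 1},w^{\pm 1}]$. To check that in $|z|>|w|$ the rational function $X$ has $(A(z)B(w)c,d)$ as its absolutely convergent Laurent expansion, expand $(z-w)^{-N}$ in that region as $z^{-N}\sum_{k\ge 0}\binom{k+N-1}{k}(w/z)^{k}$, producing a formal series $\widetilde{Y}$ with bounded-above $a$-support satisfying $(z-w)^{N}\widetilde{Y}=P$. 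The series $(A(z)B(w)c,d)$ also has bounded-above $a$-support and satisfies the same equation; and multiplication by $(z-w)^{N}$ is injective on antidiagonal formal series with bounded-above $a$-support (after the substitution $t=w/z$ it becomes multiplication by the nonzero polynomial $(1-t)^{N}$ in the integral domain $\CCC[[t]]$), so $\widetilde{Y}=(A(z)B(w)c,d)$. The mirror argument, using bounded-below $a$-support and the $|w|>|z|$ expansion of $(z-w)^{-N}$, handles the other region, proving locality with the same $N$ and $\varepsilon$.

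The main obstacle is the $(\Leftarrow)$ direction: promoting a purely formal identity to an honest rational function with matching convergent expansions in both regions. Gradedness and positive energy are essential precisely here, as they trap the common value between two opposing half-lines into a finite interval, producing the Laurent polynomial $P$; the injectivity of multiplication by $(z-w)^{N}$ on each half-space of antidiagonal formal series then identifies the rational expansion of $X$ with $(A(z)B(w)c,d)$. The $(\Rightarrow)$ direction and the bookkeeping from homogeneous to general $(c,d)$ are routine.
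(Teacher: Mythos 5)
Your proof is correct and follows essentially the same route as the paper's: reduce to homogeneous $c,d$, use gradedness to concentrate both series on a single antidiagonal with opposing half-line bounds coming from positive energy, deduce that the common value of the two sides is a Laurent polynomial $P$, and recover locality from $X = P/(z-w)^{N}$. Your treatment is somewhat more explicit than the paper's at the two points it glosses over --- the $(\Rightarrow)$ direction (dismissed there as ``true by definition'') and the final identification of the regional expansions of $P/(z-w)^{N}$ with the operator series, which you justify cleanly via injectivity of multiplication by $(1-t)^{N}$ in $\CCC[[t]]$ --- but the underlying argument is the same.
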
 
\begin{proof}  
$( \Rightarrow )$ True by definition. \\
$( \Leftarrow )$ Let $c \in H_{p}$,  $ d \in H_{q}$ \\
$A(n)c \in H_{p-n+\alpha -1} = {0} $ for $n > p+ \alpha -1 $, \\   $B(m)c \in H_{p-m+\beta -1} = {0} $ for $m > p+ \beta -1 $, \\
$A(n)B(m)c$,  $B(m)A(n)c  \in H_{p-(m+n)+\alpha +  \beta -2}$, $ d \in H_{q}$ and $H_{r}  \perp H_{q} $ if $q \neq r$. \\ \\
Let $ S = \{ (m,n) \in \ZZZ^{2};   m+n = p-q + \alpha + \beta -2,  m \le p+ \beta -1 \}  $ \\  and   $ S' = \{ (m,n) \in \ZZZ^{2};   m+n = p-q + \alpha + \beta -2,  n \le p+ \alpha -1 \}  $ \\
 \begin{displaymath} 
(z-w)^{N}(A(z)B(w)c, d) = \sum_{S, k=0}^{N}C_{N}^{k}(A(n)B(m)c,d)z^{-n-1-k}w^{-m-1+N-k}   \end{displaymath}
 \begin{displaymath} 
(z-w)^{N}(B(w)A(z)c, d) =(-1)^{\varepsilon} \sum_{S', k=0}^{N}C_{N}^{k}(B(m)A(n)c,d)z^{-n-1-k}w^{-m-1+N-k}
  \end{displaymath}
But, $S \cap S' $ is a finite subset of $\ZZZ^{2}$, so the formal series is a polynom: $P(A,B,c,d) \in \CCC[z^{\pm 1}, w^{\pm 1}]$; 
now,  using remark \ref{trivial},  and  the fact that \\  $A(n)c = 0 $ for $n > p+ \alpha -1 $  and    $B(m)c  = 0 $ for $m > p+ \beta -1 $, then:
\begin{displaymath}
       (z,w)^{-N}P(A,B,c,d)(z,w) = \left\{  \begin{array}{c} (A(z) B(w)c , d )  \textrm{  \  if  }  \vert z \vert >  \vert w \vert    \\   (-1)^{\varepsilon}(B(w)A(z)c , d )  \textrm{  \  if  }  \vert w \vert >  \vert z \vert     \end{array}  \right. 
\end{displaymath} \end{proof}

 \begin{remark}  (associativity)  \   $(A_{n}B)_{m}C = A_{n}( B_{m}C) =A_{n}B_{m}C   $  \end{remark}   
    
   \begin{lemma}   \label{4}   Let $A_{1} $,..., $A_{R} $ graded,  $A_{i} $ and $A_{j} $ local with $N = N_{ij} \in \NNN$.  Then, $\forall c, d \in H$: 
\begin{displaymath} 
\prod_{i<j}(z_{i} - z_{j} )^{N_{ij}}(A_{1}(z_{1}) ... A_{R}(z_{R}) c , d)  \in \CCC  [ z_{1}^{\pm 1}, ... ,  z_{R}^{\pm 1} ]   
\end{displaymath}       \end{lemma}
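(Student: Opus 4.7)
My plan is to fix $c \in H_p$, $d \in H_q$ (we may assume $c,d$ homogeneous since $H = \bigoplus_n H_n$) and consider the formal Laurent series
\begin{displaymath}
F(z_1,\ldots,z_R) := \prod_{i<j}(z_i-z_j)^{N_{ij}}(A_1(z_1)\cdots A_R(z_R)c,d).
\end{displaymath}
I will show that its support in $\ZZZ^R$ is finite by obtaining, for each variable $z_k$ separately, a two-sided bound on the corresponding exponent. The strategy is to reorder the operator string in every possible way by iterating lemma \ref{3} on adjacent pairs, then exploit that having $A_k$ sit at the rightmost (resp.\ leftmost) end of the string gives an upper (resp.\ lower) bound on its mode $n_k$ via the energy grading.

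\textbf{Step 1 (adjacent reordering identity).} For fixed modes $n_j$ with $j \notin \{k,k+1\}$, rewrite
\begin{displaymath}
(A_1(n_1)\cdots A_k(z_k) A_{k+1}(z_{k+1})\cdots A_R(n_R)c,d) = (A_k(z_k)A_{k+1}(z_{k+1})v,\, d'),
\end{displaymath}
where $v = A_{k+2}(n_{k+2})\cdots A_R(n_R)c \in H$ and $d' = A_{k-1}(n_{k-1})^\star \cdots A_1(n_1)^\star d \in H$; both are well-defined since each $A_i(n_i)$ is a map between finite-dimensional graded pieces, so its adjoint is well-defined on $H$. Applying lemma \ref{3} to the local pair $(A_k,A_{k+1})$ on the states $v,d'$ and then summing over the remaining modes weighted by $\prod_{j\notin\{k,k+1\}} z_j^{-n_j-1}$ produces the formal-series identity
\begin{displaymath}
(z_k-z_{k+1})^{N_{k,k+1}}(A_1(z_1)\cdots A_k(z_k)A_{k+1}(z_{k+1})\cdots A_R(z_R)c,d) = (-1)^{\varepsilon_{k,k+1}}(z_k-z_{k+1})^{N_{k,k+1}}(A_1(z_1)\cdots A_{k+1}(z_{k+1})A_k(z_k)\cdots A_R(z_R)c,d).
\end{displaymath}
Multiplying through by the remaining factors $(z_i-z_j)^{N_{ij}}$ and iterating adjacent swaps, for any permutation $\sigma \in S_R$ one obtains, as formal series,
\begin{displaymath}
F(z_1,\ldots,z_R) = \pm \prod_{i<j}(z_i-z_j)^{N_{ij}}\,(A_{\sigma(1)}(z_{\sigma(1)})\cdots A_{\sigma(R)}(z_{\sigma(R)})c,\,d).
\end{displaymath}

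\textbf{Step 2 (two-sided support bounds).} Fix $k$. Choosing $\sigma$ with $\sigma(R)=k$ places $A_k$ rightmost, and the grading $A_k(n_k)c \in H_{p-n_k+\alpha_k-1}$ forces $n_k \le p+\alpha_k-1$ for a nonzero contribution. Since $\prod_{i<j}(z_i-z_j)^{N_{ij}}$ has $z_k$-degree at most $\sum_{j\neq k} N_{jk}$, this translates into a finite lower bound on the $z_k$-exponent in the support of $F$. Symmetrically, for $\sigma$ with $\sigma(1)=k$, putting $A_k$ leftmost gives that $(A_k(n_k)v,d)$ is nonzero only when $v \in H_{q+n_k-\alpha_k+1}$, whose grading $\ge 0$ yields $n_k \ge \alpha_k-q-1$, hence a finite upper bound on the $z_k$-exponent in $F$.

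\textbf{Step 3 and main obstacle.} Since each $z_k$-exponent in the support of $F$ lies in a bounded interval, the support is finite and $F \in \CCC[z_1^{\pm 1},\ldots,z_R^{\pm 1}]$, as claimed. The main obstacle is step 1: legitimising the formal-series identity for an adjacent swap \emph{in the middle} of the $R$-fold operator product, which requires (i) correctly tracking the super-signs $(-1)^{\varepsilon_{ij}}$ under composition of swaps, and (ii) checking that the partial sums defining the auxiliary states $v$ and $d'$ are meaningful coefficient-by-coefficient. Point (ii) rests crucially on the grading and on $\dim(H_n) < \infty$, which ensure that only finitely many modes contribute to each coefficient of $F$.
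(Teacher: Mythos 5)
Your proof is correct and takes essentially the same route as the paper's: reorder the operator string by iterated adjacent swaps justified by lemma \ref{3}, then use the grading $A_k(n)\colon H_m \to H_{m-n+\alpha_k-1}$ to constrain the support. The only (minor) difference is in the final counting step: the paper places each $A_i$ only in the position acting first on $c$, getting upper bounds $m_i \le k_i$ on all modes, and concludes finiteness from the fixed total energy $m_1+\dots+m_R = K$, whereas you also place each $A_k$ leftmost to extract a lower bound from $d$'s grading, giving a two-sided bound on each mode separately.
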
                                                                                                                                            
\begin{proof}  It is exactly as the previous lemma: \\
We can put each  $A_{i}(z_{i})$ on the first place by commutations. \\ 
We obtain equalities between $R$ series with support $S_{i} \cup T  $, with $T$ the support due to $ \prod_{i<j}(z_{i} - z_{j} )^{N_{ij}}$ (finite), and as the previous lemma: \\ $ S_{i}=  \{ (m_{1},...,m_{R}) \in \ZZZ^{R};   m_{1}+...+m_{R} = K,  m_{i} \le k_{i} \}  $ \\
So, $ \bigcap  S_{i}$   is a finite subset of $ \ZZZ^{R}$, and the result follows. \end{proof} 
\begin{lemma} (Dong's lemma) \ 
Let $A$, $B$, $C$ graded and pairwise local, then $A_{n}B$ and $C$ are local.
  \end{lemma}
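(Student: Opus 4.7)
The strategy is to combine the contour-integral formula for $A_n B$ from Proposition \ref{investig} with the three-variable locality established in Lemma \ref{4}. Denote the pairwise locality data of $(A,B)$, $(A,C)$, $(B,C)$ by $(N_{12},\varepsilon_{12})$, $(N_{13},\varepsilon_{13})$, $(N_{23},\varepsilon_{23})$. The target is to exhibit locality of $A_n B$ and $C$ with parity $\varepsilon := \varepsilon_{13}\varepsilon_{23}$ and the (non-optimal but sufficient) locality bound $N := N_{13}+N_{23}$.

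By Lemma \ref{4}, for every $c,d\in H$ the formal series
\[
F(z_1,z_2,z_3) := (z_1-z_2)^{N_{12}}(z_1-z_3)^{N_{13}}(z_2-z_3)^{N_{23}}\bigl(A(z_1)B(z_2)C(z_3)c,d\bigr)
\]
is a Laurent polynomial in $z_1,z_2,z_3$. The same polynomial $F$ represents the matrix element for each of the six orderings of $A(z_1),B(z_2),C(z_3)$ up to the signs dictated by the transpositions; in particular, moving $C(z_3)$ past both $A(z_1)$ and $B(z_2)$ costs the sign $(-1)^{\varepsilon_{13}\varepsilon_{23}}$.

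Next I extract $A_n B(z_2)$ from the first two factors via the contour integral
\[
A_n B(z_2) \;=\; \frac{1}{2\pi i}\oint_{z_2}(z_1-z_2)^n A(z_1)B(z_2)\,dz_1,
\]
interpreted (as in the proof of Proposition \ref{investig}) as the difference of integrals on circles $|z_1|=R>|z_2|$ and $|z_1|=r<|z_2|$. Multiplying by $(z_2-z_3)^{N_{13}+N_{23}}C(z_3)$ and expanding the factor $(z_1-z_3)^{N_{13}}$ inside $F$ via the binomial identity $(z_1-z_3)^{N_{13}}=\sum_{k=0}^{N_{13}}\binom{N_{13}}{k}(z_1-z_2)^k(z_2-z_3)^{N_{13}-k}$ converts the residue in $z_1$ into a finite sum of terms of the form $(z_2-z_3)^{N_{13}-k}\bigl((A_{n+k-N_{12}}B)(z_2)C(z_3)c,d\bigr)$. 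Carrying out exactly the same manipulation on the opposite ordering $C(z_3)A(z_1)B(z_2)$ produces the identical combination multiplied by $(-1)^\varepsilon$, whence
\[
(z_2-z_3)^{N}\bigl(A_nB(z_2)C(z_3)c,d\bigr) \;=\; (-1)^\varepsilon (z_2-z_3)^{N}\bigl(C(z_3)A_nB(z_2)c,d\bigr)
\]
as formal series. Since $A_nB$ is graded by Lemma \ref{2} and $C$ is graded by hypothesis, Lemma \ref{3} upgrades this formal identity to the locality of $A_n B$ with $C$ in the sense of Definition \ref{local}.

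The main obstacle is the careful bookkeeping of contours and signs. Concretely, one must check that the residue extraction in $z_1$ commutes with moving $C(z_3)$ across the product $A(z_1)B(z_2)$ in each of the relevant annuli of absolute convergence, and that the three pairwise signs produced by the transpositions combine precisely to $(-1)^{\varepsilon_{13}\varepsilon_{23}}$ rather than some other product. Once Lemma \ref{4} supplies the underlying Laurent-polynomial identity in three variables, the rest is routine residue and binomial manipulation; the only remaining care is verifying that the polynomials on both sides of the displayed identity have the correct degree bounds in $(z_2-z_3)$ to justify pulling out the common factor $(z_2-z_3)^N$.
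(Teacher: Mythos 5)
Your route is the same as the paper's: use Lemma \ref{4} to obtain the three-variable Laurent polynomial, expand around $z_{1}=z_{2}$ so that the coefficient of $(z_{1}-z_{2})^{-n-1}$ is $(A_{n}B(z_{2})C(z_{3})c,d)$ for one ordering and $\pm(C(z_{3})A_{n}B(z_{2})c,d)$ for the other, and conclude by Lemmas \ref{2} and \ref{3}. But the bookkeeping that you yourself flag as the main obstacle contains two genuine errors. The parity is wrong: moving $C(z_{3})$ across $A(z_{1})B(z_{2})$ involves two transpositions (not three) and costs $(-1)^{\varepsilon_{13}}(-1)^{\varepsilon_{23}}=(-1)^{\varepsilon_{13}+\varepsilon_{23}}$, so $A_{n}B$ and $C$ are local with the \emph{sum} $\varepsilon_{13}+\varepsilon_{23}\in\ZZZ_{2}$, not the product $\varepsilon_{13}\varepsilon_{23}$ that you assert. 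Taking $\varepsilon_{13}=\bar{1}$ and $\varepsilon_{23}=\bar{0}$, the correct sign is $-1$ while your formula gives $+1$. The sum is exactly what Proof's corollary \ref{pc} records, and Lemma \ref{7} relies on it, so this is not a cosmetic slip.

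The second error is the claimed uniform bound $N=N_{13}+N_{23}$, which fails for $n<0$. Take $B=Id$, so that $N_{23}=0$ and $A_{-k-1}Id$ is a multiple of the $k$-th derivative of $A$: already $A_{-2}Id=A'$, and $\psi'$ is local with $\psi$ only from $N=2$ onwards, whereas $N_{13}+N_{23}=1$ in that case. What the expansion of $F/Q$ around $z_{1}=z_{2}$ actually produces in the coefficient of $(z_{1}-z_{2})^{-n-1}$ is a pole at $z_{2}=z_{3}$ of order up to $N_{12}+N_{13}+N_{23}-n-1$, which is uniformly bounded only over $n\ge 0$ and grows as $n\to-\infty$. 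This second point is harmless for the statement, which only needs some $N$ for each fixed $n$, so the repair is simply not to commit to a value of $N$; the parity error, however, must be corrected.
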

\begin{proof}
Let $Q(z_{1}, z_{2}, z_{3}) =   \prod_{i<j}(z_{i} - z_{j} )^{N_{ij}}  $, by lemma  \ref{4}, $\forall d, e \in H$: 
\begin{displaymath}   Q.(A(z_{1})B( z_{2})C(z_{3})d,e)= Q. (-1)^{\varepsilon_{1}+\varepsilon_{2}}(C(z_{3})A(z_{1})B( z_{2})d,e)  \in \CCC  [ z_{1}^{\pm 1},  z_{2}^{\pm 1} ,  z_{3}^{\pm 1} ]  \end{displaymath}
Now, we divide this polynom by $Q$, we fix  $z_{2}$ and we develop around $z_{1}=z_{2}$.  \\
Then $\exists N \in \NNN $ such that  $\forall n \in \ZZZ$ if  $P_{n} $ is  the coefficient of $(z_{1} - z_{2})^{-n-1}$ then $S_{n}= (z_{2} - z_{3})^{N} P_{n} \in \CCC  [ z_{2}^{\pm 1} ,  z_{3}^{\pm 1} ]$. \\
Now, on one hand $S_{n} =   (z_{2} - z_{3} )^{N}(A_{n}B(z_{2})C(z_{3})d,e)$ and on the other hand $S_{n} =   (-1)^{\varepsilon}   (z_{2} - z_{3} )^{N}(C(z_{3})A_{n}B(z_{2})d,e)  $, with 
$\varepsilon = \varepsilon_{1}+\varepsilon_{2}$. \\ 
Then, the result follows by lemmas  \ref{2} and \ref{3}.
 \end{proof}
\begin{pcorollary}  \label{pc}  If in addition,  $A$ and $C$ are local with  $\varepsilon_{1} \in \ZZZ_{2}$ , and, $B$ and $C$, local with $\varepsilon_{2} $,
 then, $A_{n}B$ and $C$ are local with  $\varepsilon = \varepsilon_{1}+\varepsilon_{2}$.
  \end{pcorollary}
 
   \begin{lemma} \label{5}  If $A$ and $B$ are local with $\varepsilon \in \ZZZ_{2}$, so is $A'$ and $B$ 
  \end {lemma}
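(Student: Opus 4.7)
The plan is to take the rational witness $X(A,B,c,d)$ for the locality of $A$ and $B$, differentiate it in $z$, and verify that the result serves as $X(A',B,c,d)$. Concretely, fix $c,d \in H$ and let $X(z,w) := X(A,B,c,d)(z,w) \in (z-w)^{-N}\CCC[z^{\pm 1}, w^{\pm 1}]$ be the rational function supplied by Definition \ref{local}. I propose that $Y(z,w) := \partial_z X(z,w)$ witnesses the locality of $A'$ and $B$, with the same parity $\varepsilon$ and with the order of the pole along $z=w$ allowed to grow by one.

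First I would dispatch the algebraic closure step. Writing $X = (z-w)^{-N} P(z,w)$ with $P \in \CCC[z^{\pm 1}, w^{\pm 1}]$, one has
\[\partial_z X \;=\; -N(z-w)^{-N-1}P(z,w) \;+\; (z-w)^{-N}\partial_z P(z,w),\]
and each summand lies in $(z-w)^{-(N+1)}\CCC[z^{\pm 1}, w^{\pm 1}]$. Hence the ambient ring in which $X$ lives is stable under $\partial_z$ at the cost of a single extra unit of pole order along $z=w$.

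Next I would match the two expansions. In the annulus $|z|>|w|$, the expansion of $X$ reproduces the formal series $(A(z)B(w)c,d) = \sum_{m,n \in \ZZZ}(A(n)B(m)c,d)z^{-n-1}w^{-m-1}$; applying $\partial_z$ termwise gives $\sum_{m,n}-(n+1)(A(n)B(m)c,d)z^{-n-2}w^{-m-1}$, which by the very definition of $A'(z)$ is exactly $(A'(z)B(w)c,d)$. The same manipulation in the annulus $|w|>|z|$ turns $(-1)^\varepsilon(B(w)A(z)c,d)$ into $(-1)^\varepsilon(B(w)A'(z)c,d)$. Thus $Y$ satisfies Definition \ref{local} for the pair $(A',B)$ with parameters $\varepsilon$ and $N+1$.

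The only conceptual subtlety is that differentiating a rational function in $(z-w)^{-N}\CCC[z^{\pm 1}, w^{\pm 1}]$ and then expanding it in a given annulus agrees with first expanding and then differentiating termwise; this follows because the two natural expansion maps from the rational function to the Laurent series rings commute with $\partial_z$. No combinatorial identity is needed and no grading hypothesis is used, so the lemma drops out immediately from the differentiation of the locality witness.
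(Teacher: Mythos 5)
Your argument is correct, and it is essentially complete: differentiating the rational witness $X(A,B,c,d)$ in $z$ raises the pole order along $z=w$ by at most one, and the termwise derivative of the annulus expansions reproduces exactly $(A'(z)B(w)c,d)$ and $(-1)^{\varepsilon}(B(w)A'(z)c,d)$, since $A'(z)=\sum_{n}-(n+1)A(n)z^{-n-2}$. The paper takes a slightly different route: it invokes the formal-series criterion of Lemma \ref{3}, namely $(z-w)^{N}(A(z)B(w)c,d)=(-1)^{\varepsilon}(z-w)^{N}(B(w)A(z)c,d)$, applies $\frac{d}{dz}$ to that identity, and appeals to Lemma \ref{3} again to conclude. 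That proof is a one-liner, but it silently relies on two things your version does not: first, Lemma \ref{3} carries a gradedness hypothesis on the fields (its converse direction uses the grading to control the supports of the series), so the paper's argument implicitly assumes $A$, $B$, and hence $A'$ are graded; second, after differentiating one must still dispose of the cross-terms $N(z-w)^{N-1}(\cdots)$ on both sides (e.g.\ by first multiplying the identity by $(z-w)$ to work at exponent $N+1$, so that the cross-terms cancel by the original identity), a step the paper leaves to the reader. Your proof, working directly from Definition \ref{local} with the witness $\partial_z X$ and the observation that the two expansion maps commute with $\partial_z$, avoids both issues and is the more self-contained of the two; the paper's buys brevity at the cost of importing the grading hypothesis.
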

  \begin{proof} $   (z-w)^{N}(A(z)B(w)c,d) = (-1)^{\varepsilon}   (z-w)^{N}(B(w)A(z)c,d)   $ \\
  Then, applying $\frac{d}{dz}$ and the lemma \ref{3}, the result follows. \end{proof}
  
 \begin{definition} (Operator T) \ Let $T \in  End(H)$. \end{definition}
  \begin{lemma}   Let $A$, $B$ local such that $[T,A] = A' $ and  $[T,B] = B' $. \\
  Then, $[T,A_{n}B]=(A_{n}B)' = A'_{n}B+A_{n}B'  $ and [T,A'] = A''
  \end {lemma}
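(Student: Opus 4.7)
First, $[T,A'] = A''$ is immediate: since $T \in \mathrm{End}(H)$ does not depend on the formal variable $z$, $[T,\cdot]$ commutes with $\partial_z$, so differentiating both sides of $[T,A(z)] = A'(z)$ in $z$ yields $[T,A'(z)] = A''(z)$ directly.

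For the remaining identities $[T,A_nB] = (A_nB)'$ and $(A_nB)' = A'_nB + A_nB'$, my plan is to work mode by mode via Formula \ref{formula}. Comparing coefficients of $z^{-k-1}$ in $[T,A(z)] = A'(z)$ gives $[T,A(k)] = -k\, A(k-1)$ and similarly for $B$. The operator $T$ must be even (so that $[T,A]=A'$ preserves parity), hence $[T,\cdot]$ is a super-derivation of $[\cdot,\cdot]_\varepsilon$:
\begin{displaymath}
[T,[X,Y]_\varepsilon] = [[T,X],Y]_\varepsilon + [X,[T,Y]]_\varepsilon.
\end{displaymath}
Apply this to each summand of Formula \ref{formula}: for $n \ge 0$, the $[T,A(n-p)]$ pieces contribute $-(n-p)[A(n-p-1),B(m+p)]_\varepsilon$, which after the re-indexing $q=p+1$ combined with the identity $(n-q+1)\, C_n^{q-1} = q\, C_n^{q}$ reassemble precisely as the mode $A'_nB(m)$; the $[T,B(m+p)]$ pieces contribute $-(m+p)[A(n-p),B(m+p-1)]_\varepsilon$, which is manifestly $A_nB'(m)$. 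Summing the two families and regrouping via the same binomial identity yields $-m (A_nB)(m-1)$, i.e.\ the mode $(A_nB)'(m)$, which simultaneously establishes both $[T,A_nB] = A'_nB + A_nB'$ and $(A_nB)' = A'_nB + A_nB'$.

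The main technical nuisance is the case $n < 0$, where Formula \ref{formula} involves an a priori infinite sum $\sum_{p\in\mathbb{N}}$. There I would first invoke Lemma \ref{1} (graded fields shift degree by a fixed amount, so on any fixed homogeneous vector only finitely many $p$ contribute) to reduce to a finite sub-sum, then redo the telescoping with $C_{p-n-1}^{p}$ in place of $C_n^{p}$ and the analogous Pascal-type identity. Once that is in hand the rest of the argument goes through verbatim, and nothing beyond Formula \ref{formula} and Lemma \ref{1} is needed.
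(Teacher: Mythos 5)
Your argument is correct, but it takes a genuinely different route from the paper's. You work mode by mode through Formula \ref{formula}: using $[T,A(k)]=-kA(k-1)$, the fact that $[T,\cdot]$ is a derivation of the bracket $[\cdot,\cdot]_{\varepsilon}$ (and of the plain products appearing in the $n<0$ case), and a Pascal-type identity to reassemble the two families of terms both into $(A'_{n}B)(m)+(A_{n}B')(m)$ and into $-m(A_{n}B)(m-1)=(A_{n}B)'(m)$; the telescoping does close up in both regimes (for $n<0$ one uses $(q-n-1)C_{q-n-2}^{q-1}=q\,C_{q-n-1}^{q}$ in place of $(n-q+1)C_{n}^{q-1}=q\,C_{n}^{q}$). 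The paper avoids all of this combinatorics by staying at the level of matrix coefficients: it computes $(z-w)^{N}([T,A(z)B(w)]c,d)$ once as $(z-w)^{N}((A'(z)B(w)+A(z)B'(w))c,d)=(z-w)^{N}\sum_{n}((A'_{n}B+A_{n}B')(w)c,d)(z-w)^{-n-1}$, and once by applying $\frac{d}{dz}+\frac{d}{dw}$ to $\sum_{n}(A_{n}B(w)c,d)(z-w)^{-n-1}$, where the $(z-w)^{-n-2}$ terms cancel in pairs and leave $\sum_{n}((A_{n}B)'(w)c,d)(z-w)^{-n-1}$; identifying coefficients of $(z-w)^{-n-1}$ then yields all three equalities at once and treats $n\ge 0$ and $n<0$ uniformly. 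Your route buys an explicit, self-contained verification at the level of modes, and it forces you to confront the finiteness of the $\sum_{p\in\NNN}$ sums, which you correctly resolve via gradedness and Lemma \ref{1} (a hypothesis the statement does not list but which the paper uses tacitly, since $(A_{n}B)(m)$ is only a well-defined endomorphism under such a finiteness condition); the paper's route buys brevity. Two small corrections to your write-up: the $[T,A(n-p)]$ family is already, summand by summand, the expansion of $(A'_{n}B)(m)$ given by Formula \ref{formula} applied to $A'$ (since $A'(k)=-kA(k-1)$), so no re-indexing is needed for that identification --- the Pascal identity is only needed to match the \emph{total} against $-m(A_{n}B)(m-1)$; and the derivation identity $[T,[X,Y]_{\varepsilon}]=[[T,X],Y]_{\varepsilon}+[X,[T,Y]]_{\varepsilon}$ holds for the ordinary commutator by bilinearity alone, so no parity assumption on $T$ is required.
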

  \begin{proof}   
  $(z-w)^{N}([T,A(z)B(w)]c,d) = (z-w)^{N}((A'(z)B(w)+A(z)B'(w))c,d) $ \\ \\
  $ =(z-w)^{N}\sum_{n \in \ZZZ}(( A'_{n}B+A_{n}B')(w)c,d)(z-w)^{-n-1} \ \ \ $  on one hand \\
  $ = (z-w)^{N} (\frac{d}{dz}+\frac{d}{dw} ) (\sum_{n \in \ZZZ}  A_{n}B(w) (z-w)^{-n-1} c,d) \ \ \ $   on the other hand \\ \\
  $ =  (z-w)^{N} [ (\sum_{n \in \ZZZ} (-n-1) A_{n}B(w) (z-w)^{-n-2} c,d) +  \\ (\sum_{n \in \ZZZ}  (A_{n}B)'(w) (z-w)^{-n-1} c,d) + (\sum_{n \in \ZZZ}  (n+1)A_{n}B(w) (z-w)^{-n-2} c,d) ]$  \\
  $ = (z-w)^{N} \sum_{n \in \ZZZ} ( (A_{n}B)'(w) c,d)(z-w)^{-n-1}$ \\ \\ 
  By identification:  $[T,A_{n}B]=(A_{n}B)' = A'_{n}B+A_{n}B'  $ \\
  Now, $[T,A] = A'   \Rightarrow [T,A(n)] =-nA(n-1) $, so  $[T,A'] = A''$  \end{proof}

 \begin{lemma}    \label{6}
Let $\Omega \in H$;    $A$, $B$ local with    $A(m)\Omega =  B(m)\Omega = 0 \   \forall m \in \NNN$, \\ 
then $ A'(m)\Omega = A_{n}B(m)\Omega = 0 \  \forall m \in \NNN,   \forall n \in \ZZZ $.
\end {lemma}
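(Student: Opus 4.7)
The plan is to obtain both assertions directly from the explicit formulas recorded earlier, namely Formula \ref{formula} for $(A_nB)(m)$ and the elementary expansion of $A'(z)$.

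\textbf{Step 1: Vanishing of $A'(m)\Omega$.} I would start by re-reading the paper's notation $A'(z) = \sum_{n \in \ZZZ}(-n)A(n-1)z^{-n-1}$, which immediately gives $A'(m) = -m\, A(m-1)$ for every $m \in \ZZZ$. For $m = 0$ this is already the zero operator, while for $m \geq 1$ we have $m - 1 \in \NNN$, so $A(m-1)\Omega = 0$ by hypothesis. Hence $A'(m)\Omega = 0$ for every $m \in \NNN$.

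\textbf{Step 2: Vanishing of $(A_nB)(m)\Omega$, case $n \geq 0$.} Here Formula \ref{formula} reads
\begin{displaymath}
(A_nB)(m)\Omega = \sum_{p=0}^{n} (-1)^{p}C_{n}^{p}\bigl(A(n-p)B(m+p) - (-1)^{\varepsilon}B(m+p)A(n-p)\bigr)\Omega.
\end{displaymath}
For each $p \in \{0,\dots,n\}$ both $n - p$ and $m + p$ are non-negative integers, so $A(n-p)\Omega = 0$ and $B(m+p)\Omega = 0$ by hypothesis. Therefore each term annihilates $\Omega$.

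\textbf{Step 3: Vanishing of $(A_nB)(m)\Omega$, case $n < 0$.} Now Formula \ref{formula} gives
\begin{displaymath}
(A_nB)(m)\Omega = \sum_{p \in \NNN} C_{p-n-1}^{p}\bigl(A(n-p)B(m+p) - (-1)^{\varepsilon+n}B(m+n-p)A(p)\bigr)\Omega.
\end{displaymath}
In the first summand $m + p \in \NNN$, so $B(m+p)\Omega = 0$, killing $A(n-p)B(m+p)\Omega$. In the second summand $p \in \NNN$, so $A(p)\Omega = 0$, killing $B(m+n-p)A(p)\Omega$. Thus each term vanishes on $\Omega$, and the sum is $0$.

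\textbf{Main obstacle.} There is no deep obstacle: once Formula \ref{formula} is at hand, the proof is purely bookkeeping, the key observation being simply that within every monomial appearing in $(A_nB)(m)$, the operator acting first on $\Omega$ is either some $A(i)$ with $i \geq 0$ or some $B(j)$ with $j \geq 0$. The only point worth a line of care is the index shift $A'(m) = -mA(m-1)$, which handles the edge case $m = 0$ automatically thanks to the factor $m$.
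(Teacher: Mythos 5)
Your proof is correct and follows exactly the paper's own argument: the identity $A'(m)=-mA(m-1)$ for the derivative, and the observation that in every monomial of Formula \ref{formula} the operator hitting $\Omega$ first has a non-negative index. The only difference is that you spell out the two cases of the formula and the $m=0$ edge case explicitly, which the paper leaves implicit.
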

 \begin{proof}
$A'(m) = -mA(m-1) $, so $A'(m)\Omega = 0  \ \forall m \in \NNN$    \\
On the formula \ref{formula} ,  $A(n-p)\Omega = B(m+p)\Omega = A(p)\Omega = 0$ because \\ $n-p, m+p, p \in \NNN $,
then, $A_{n}B(m)\Omega = 0 \  \forall m \in \NNN,   \forall n \in \ZZZ $. \end{proof}

\subsection{System of generators}   
\begin{definition} \label{def}  Let $H$ prehilbert space;  $ \{A_{1}$,..., $ A_{r} \} \subset (EndH)[[z,z^{-1}]]$ \\ 
 is a system of generators if $ \exists D, T \in End(H) $, $ \Omega \in H $ such that:
 \begin{description}
\item[(a)]   $\forall i,j \  A_{i}$ and $A_{j}$ are local with $N=N_{ij}$ and $\varepsilon= \varepsilon_{ij}=\varepsilon_{ii} . \varepsilon_{jj} $
\item[(b)]   $\forall i \ [ T , A_{i}  ] = A'_{i} $
\item[(c)]  $D$ decomposes $H = \bigoplus_{ n  \in \NNN + \frac{1}{2} }  H_{n}$  with $D \xi = n \xi $  $\forall \xi \in H_{n}$,  $dim( H_{n} )< \infty $, \\  $ H_{n} \perp H_{m} $   if $n \neq m$ and   
$\forall i  \  A_{i}$  is graded with $\alpha_{i} \in \NNN+\frac{\varepsilon_{ii}}{2} $
\item[(d)]  $ \Omega \in H_{0} $,  $\Vert \Omega \Vert = 1$, and $\forall i \ \forall m \in \NNN $, $
A_{i}(m) \Omega = D \Omega = T\Omega= 0$
\item[(e)]  $\A = \{ A_{i}(m), \forall i \ \forall m \in \ZZZ \}$ acts irreducibly on $H$, so that $H$ is \\  the minimal space containing $\Omega$ and stable by the action of $\A $
\end{description}
\end{definition}
\begin{definition} Let  $S \subset (End H)[[z,z^{-1}]] $, the minimal subset containing $Id$, $A_{1}$,..., $A_{r}$, stable by the operations:
\begin{displaymath} 
  ( A, B) \mapsto (A_{n}B)  \ (\forall n \in \ZZZ ) \quad \quad Ê, \quad  \quad A  \mapsto A'
\end{displaymath}
Let $ S_{\varepsilon} = \{ A \in S \ \vert \    A \  \textrm{is local with itself with} \ \varepsilon \in \ZZZ_{2} \}  $, so that $S = S_{\bar{0}} \amalg  S_{\bar{1}} $. \\
Let $\S_{\varepsilon} = lin< S_{\varepsilon} >$ and $ \S =  \S_{\bar{0}} \oplus  \S_{\bar{1}}$.  \end{definition}
\begin{remark}   All is well defined by previous lemmas.  \end{remark}
 \begin{lemma} $
  \forall A, B \in \S$,   they are local, $A_{n}B \in \S  \ \textrm{and}  \  [T,A] = A' \in \S $
\end{lemma}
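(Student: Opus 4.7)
The plan is to verify the three claims by induction on the construction of $S$ and then extend by linearity to $\S = \S_{\bar{0}} \oplus \S_{\bar{1}}$. I will first establish that any two elements of $S$ are pairwise local with an $\varepsilon$ obeying the multiplicative rule $\varepsilon_{AB}=\varepsilon_{AA}\cdot\varepsilon_{BB}$, and simultaneously that $A \in S$ implies $[T,A]=A'$ and $A'\in S$. Both are to be proved by simultaneous induction on the generation process $(A,B)\mapsto A_nB$, $A\mapsto A'$, starting from $\{Id, A_1,\dots,A_r\}$.

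For the base case, $Id$ is local with everything with $\varepsilon=\bar{0}$ and $N=0$, and $[T,Id]=0=Id'$; for the generators $A_i,A_j$, hypothesis (a) of Definition \ref{def} gives locality with the prescribed $\varepsilon_{ij}=\varepsilon_{ii}\cdot\varepsilon_{jj}$, and (b) gives $[T,A_i]=A_i'$. For the inductive step, if $A,B,C\in S$ are pairwise local and graded (the grading property propagates through lemma \ref{2} and lemma \ref{5}), then Dong's lemma together with Proof's corollary \ref{pc} show that $A_nB$ and $C$ are local with $\varepsilon=\varepsilon_{AC}\cdot\varepsilon_{BC}$; applied with $C=A_nB$ itself this yields $\varepsilon_{A_nB,A_nB}=\varepsilon_{AA}\cdot\varepsilon_{BB}$, so the multiplicative rule is preserved. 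The previous lemma shows $[T, A_nB]=(A_nB)'$ and $[T,A']=A''$, so the $T$-intertwining is preserved. Lemma \ref{5} propagates locality under $A\mapsto A'$ with the same $\varepsilon$. This closes the induction for $S$.

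To pass from $S$ to $\S$, I use that the formulas in \ref{formula} defining $(A_nB)(m)$ are bilinear in $(A,B)$ and the derivative $A\mapsto A'$ is linear, so the operations extend unambiguously to $\S$; the decomposition $\S=\S_{\bar{0}}\oplus\S_{\bar{1}}$ lets one check locality component-wise, and locality with prescribed $\varepsilon$ (being a linear condition on $(A(z)B(w)c,d)$ by lemma \ref{3}) passes to linear combinations within each parity sector. Likewise $[T,\cdot]=(\cdot)'$ and closure under $(A,B)\mapsto A_nB$ are linear in each argument, hence pass to $\S$.

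The one delicate point, which I expect to be the main obstacle, is keeping the $\ZZZ_2$-grading clean: one must check that $A_nB$ lies in the correct parity sector $\S_{\varepsilon_{AA}+\varepsilon_{BB}}$ and that locality between two elements of $\S$ of mixed parity is not well-defined with a single $\varepsilon$ unless one first decomposes them along $\S_{\bar{0}}\oplus\S_{\bar{1}}$. The multiplicative identity $\varepsilon_{AB}=\varepsilon_{AA}\cdot\varepsilon_{BB}$, propagated inductively via Dong's lemma as above, is exactly what allows the statement ``$A,B\in\S$ are local'' to mean ``each homogeneous component of $A$ is local with each homogeneous component of $B$, with the parity-product sign'', making the conclusion meaningful.
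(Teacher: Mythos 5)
Your proof is correct and follows essentially the same route as the paper, whose own proof is just the one-line ``By previous lemmas and linearizing Dong's lemma'': induction on the generating process $(A,B)\mapsto A_nB$, $A\mapsto A'$ starting from $\{Id, A_1,\dots,A_r\}$, using Dong's lemma with the Proof's corollary \ref{pc}, the lemma on $[T,A_nB]=(A_nB)'$, lemma \ref{5} for derivatives, and linearity to pass to $\S$. The only difference is that you additionally carry the multiplicative rule $\varepsilon_{AB}=\varepsilon_{AA}\cdot\varepsilon_{BB}$ and the parity of $A_nB$ through the induction, which the paper defers to the immediately following lemma \ref{7}; this is harmless and arguably makes the statement ``$A,B\in\S$ are local'' cleaner for inhomogeneous elements.
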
  \begin{proof} By previous lemmas and  linearizing Dong's lemma.
  \end{proof}
\begin{lemma}   \label{7}  Let $E \in \S_{\varepsilon_{1}}$ and $F \in \S_{\varepsilon_{2}} $ then: 
\begin{description}
\item[(a)]   $E_{n}F \in \S_{\varepsilon_{1} + \varepsilon_{2}}$
\item[(b)]  $E$ and $F$ are local with  $\varepsilon = \varepsilon_{1} . \varepsilon_{2}$
\end{description}
\end{lemma}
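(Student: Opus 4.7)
The plan is to prove the analogous statements first for the generating set $S$ (before linearizing to $\S$), and then extend by bilinearity. Inductively define a parity function $p : S \to \ZZZ_{2}$ by $p(Id) = \bar{0}$, $p(A_i) = \varepsilon_{ii}$, $p(E') = p(E)$, and $p(E_n F) = p(E) + p(F)$. The core claim I would prove is the joint statement: for all $E, F \in S$, the fields $E$ and $F$ are local with $\varepsilon = p(E)\cdot p(F)$. Once this is established, specializing $F = E$ shows that each $E \in S$ is self-local with parity $p(E)^{2} = p(E)$, hence $S_{\varepsilon} = \{E \in S : p(E) = \varepsilon\}$, and (a) for elements of $S$ is immediate from $p(E_{n}F) = p(E)+p(F)$.

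The joint claim is established by induction on the sum of the construction lengths of $E$ and $F$. The base case, where both are generators or $Id$, is supplied by axiom (a) of definition \ref{def}, together with the trivial locality of $Id$ with any field with $\varepsilon=\bar{0}$. For the inductive step, if $F = G_{m}H$, I apply the proof's corollary \ref{pc} to the triple $(G, H, E)$: by the inductive hypothesis $E, G$ are local with $p(E)p(G)$ and $E, H$ are local with $p(E)p(H)$, so $F = G_{m}H$ and $E$ are local with
\[
p(E)p(G) + p(E)p(H) \;=\; p(E)\bigl(p(G)+p(H)\bigr) \;=\; p(E)p(F).
\]
If $F = G'$, lemma \ref{5} transfers locality of $(E,G)$ with parity $p(E)p(G) = p(E)p(F)$ to $(E, G')$. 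A symmetric argument handles reductions on the $E$-side. The induction closes because $c(G), c(H) < c(G_{m}H)$, so every pair that appears on the right strictly decreases the sum of construction lengths; in particular this stays valid on the diagonal $E=F$, where pairs $(E,G)$ and $(E,H)$ still have smaller total complexity than $(E,E)$.

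To pass from $S$ to $\S = \S_{\bar{0}} \oplus \S_{\bar{1}}$, expand $E = \sum_{i} E_{i}$ with $E_{i} \in S_{\varepsilon_{1}}$ and $F = \sum_{j} F_{j}$ with $F_{j} \in S_{\varepsilon_{2}}$. The operation $(A,B) \mapsto A_{n}B$ is bilinear by formula \ref{formula}, so $E_{n}F = \sum_{i,j}(E_{i})_{n}F_{j}$ lies in $\S_{\varepsilon_{1}+\varepsilon_{2}}$, giving (a); and the pairing $X(A,B,c,d)$ in definition \ref{local} is bilinear in $(A,B)$, so taking a common $N$ on the finitely many summands and summing the locality relations yields (b) with $\varepsilon = \varepsilon_{1}\cdot\varepsilon_{2}$. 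The main obstacle is the correct organization of the induction: locality of $E_{n}F$ with another constructed field must be deduced by peeling off the construction of both fields simultaneously, which forces induction on the sum of construction lengths rather than on either one separately, and one needs the diagonal pair $(E,E)$ to remain well-founded under this ordering.
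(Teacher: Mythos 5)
Your proof is correct and uses the same key ingredients as the paper's: the parity-tracking refinement of Dong's lemma (corollary \ref{pc}), lemma \ref{5} for derivatives, axiom (a) of definition \ref{def} as base case, and an induction over the construction of fields in $S$ followed by linearity to pass to $\S$. The only organizational difference is that the paper obtains (a) directly by three applications of corollary \ref{pc} (to the triples $(E,F,E)$, $(E,F,F)$ and $(E,F,E_{n}F)$) and then runs a separate induction for (b), whereas you run a single joint induction with an a priori parity function $p$ and recover (a) from the diagonal case; both are valid.
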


\begin{proof}
 (a) \       $E$ and $F$ are local with an   $ \varepsilon \in \ZZZ_{2} $.  \\ 
We use the corollary \ref{pc} with $A=E$,  $B=F$, $C=E$,with  $A=E$,  $B=F$, $C=F$ and finally with $A=E$,  $B=F$, $C=E_{n}F$.  
Then we see that $E_{n}F$ is local with itself with $\varepsilon ' = \varepsilon_{1} +\varepsilon +  \varepsilon_{2} + \varepsilon = \varepsilon_{1} + \varepsilon_{2} $, 
so, $E_{n}F \in \S_{\varepsilon_{1} + \varepsilon_{2}}$ \\ 
(b)  By induction:  \\ 
Base case: $\forall i, j \ A_{i} \in \S_{\varepsilon_{ii} } $, $A_{j} \in \S_{\varepsilon_{jj} } $ and are local with $\varepsilon= \varepsilon_{ij}=\varepsilon_{ii} . \varepsilon_{jj} $ by definition \ref{def}.\\
Inductive step:  We suppose the property for $E \in \S_{\varepsilon_{1}}$,  $F \in \S_{\varepsilon_{2}} $ and  $G \in \S_{\varepsilon_{3}} $.
We prove it for $E_{n}F$ and $G$:  \\
$E$ and $G$ are local with $\varepsilon = \varepsilon_{1} . \varepsilon_{3}$ \\ 
$F$ and $G$ are local with $\varepsilon = \varepsilon_{2} . \varepsilon_{3}$ \\
Now,  $E_{n}F \in \S_{\varepsilon_{1} + \varepsilon_{2}}$,   $G \in \S_{\varepsilon_{3}} $ and by corollary \ref{pc} with $A=E$,  $B=F$, $C=G$, 
$E_{n}F$ and $G$ are local with  $\varepsilon = \varepsilon_{1} . \varepsilon_{3} + \varepsilon_{2} . \varepsilon_{3} = (\varepsilon_{1} + \varepsilon_{2}).\varepsilon_{3} $ \\
The following lemma completes the proof. \end{proof}

\begin{lemma}
$A \in \S_{\varepsilon} \Rightarrow  A' \in \S_{\varepsilon}$
\end{lemma}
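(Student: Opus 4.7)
The plan is a two-line argument once the preceding lemmas are in place, so the main task is to organize the reduction cleanly. Since $\S_\varepsilon = \mathrm{lin}\langle S_\varepsilon\rangle$ and the map $A \mapsto A'$ is linear, I would first reduce to the case of a single generator $A \in S_\varepsilon$ and prove $A' \in S_\varepsilon$. That $A' \in S$ is immediate from the very definition of $S$ as the minimal subset of $(EndH)[[z,z^{-1}]]$ stable under $A \mapsto A'$. So the only substantive content is to show that $A'$ is local with itself with the same parity $\varepsilon$.

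To obtain this I would invoke Lemma \ref{5} twice, with the symmetry of locality used in between. Starting from $A \in S_\varepsilon$, $A$ is local with itself with parity $\varepsilon$; Lemma \ref{5} then gives that $A'$ and $A$ are local with parity $\varepsilon$. Next, locality is symmetric in its two arguments with the same parity: if $X(A,B,c,d)(z,w)$ witnesses locality of $A$ and $B$ in the sense of Definition \ref{local}, then the rational function $X(A,B,c,d)(w,z)$ obtained by swapping $z$ and $w$ witnesses locality of $B$ and $A$ with the same $\varepsilon$. Applied to the pair $(A',A)$, this shows that $A$ and $A'$ are local with parity $\varepsilon$. A second application of Lemma \ref{5} then yields that $A'$ and $A'$ are local with parity $\varepsilon$, i.e.\ $A' \in S_\varepsilon$, and hence $A' \in \S_\varepsilon$.

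I do not anticipate a real obstacle. The only mild subtlety is the initial reduction from a linear combination to a single generator, which is legitimate because both differentiation and the property of being self-local with parity $\varepsilon$ behave correctly under linear combinations: if $A_1, A_2 \in S_\varepsilon$ are both self-local with parameter $(\varepsilon, N_i)$, then any linear combination is self-local with $(\varepsilon, \max(N_1,N_2))$, and $(c_1 A_1 + c_2 A_2)' = c_1 A_1' + c_2 A_2'$.
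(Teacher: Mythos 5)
Your proposal is correct and follows essentially the same route as the paper: the paper's proof is precisely a double application of Lemma \ref{5} (first with $B=A$, then with $B=A'$), with the symmetry of locality used implicitly in between. Your additional remarks on reducing from $\S_{\varepsilon}=\mathrm{lin}\langle S_{\varepsilon}\rangle$ to single generators and on taking $N=\max(N_{1},N_{2})$ for linear combinations are legitimate points the paper leaves tacit.
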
  
\begin{proof}  By lemma \ref{5},  if $A$ and $B$ are local with $\varepsilon \in \ZZZ_{2}$, so is $A'$ and $B$. \\   The result follows by taking $B= A$ and then $B=A'$.\end{proof}
 
 \begin{definition}   (well defined by lemma \ref{6})  \label{R}
 \begin{displaymath}  \begin{array}{cccc}
    
R:  & \S  &  \longrightarrow  &  H     \\
    &    A  &    \longmapsto  &  a:= A(z)\Omega_{\vert z=0}     \end{array}      \quad  \mathrm{linear.}    \end{displaymath}
\end{definition} \newpage

\begin{examples}        $ \begin{array}{c}    \end{array} $ 
\begin{description} 
\item[(a)]  $R(Id) = \Omega $, \   $R(A) = A(-1)  \Omega $
\item[(b)] $R(A') = A(-2) \Omega = T. R(A)$
 \item[(c)]  $R(A_{n}B) = A(n)R(B) $ (by formula \ref{formula}) 
 \item[(d)]  $R(A_{n}Id) = A(n) \Omega $
 \end{description}
\end{examples}  

\begin{lemma}  \label{8}
$A$ is  graded with $\alpha$ $\iff$  $R(A) \in H_{\alpha}$ \end{lemma}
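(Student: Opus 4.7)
The forward implication is immediate from Lemma \ref{1}: specializing the equivalence $A(n):H_m\to H_{m-n+\alpha-1}$ at $n=-1$, $m=0$, and using $\Omega\in H_0$ (axiom (d) of the system of generators), yields $R(A)=A(-1)\Omega\in H_\alpha$.

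For the converse, I first refine the decomposition of $\S$ by the grading index. By induction on the recursive construction of $S$---each generator $A_i$ is graded with $\alpha_i$ by axiom (c); Lemma \ref{2} shows $A_nB$ is graded with $\alpha+\beta-n-1$ whenever $A,B$ are graded with $\alpha,\beta$; and differentiating the identity $[D,A(z)]=zA'(z)+\alpha A(z)$ produces $[D,A'(z)]=zA''(z)+(\alpha+1)A'(z)$, so $A'$ is graded with $\alpha+1$---every element of $S$ is homogeneous. Consequently $\S=\bigoplus_{\alpha\in\frac{1}{2}\NNN}\S^{\alpha}$, where $\S^{\alpha}$ denotes the linear span of the elements of $S$ graded with~$\alpha$.

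Given $A\in\S$ with $R(A)\in H_\alpha$, decompose $A=\sum_i A^{(\alpha_i)}$ with $A^{(\alpha_i)}\in\S^{\alpha_i}$ and the $\alpha_i$ pairwise distinct. By the forward direction each $R(A^{(\alpha_i)})\in H_{\alpha_i}$, and since $H=\bigoplus_n H_n$ is an orthogonal direct sum and $R(A)\in H_\alpha$, we obtain $R(A^{(\alpha_i)})=0$ for every $\alpha_i\neq\alpha$. To conclude $A^{(\alpha_i)}=0$, and hence that $A=A^{(\alpha)}$ is graded with $\alpha$, it remains to prove that $R$ is injective on $\S$.

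Injectivity is the main obstacle. Suppose $B\in\S$ with $R(B)=0$. An induction on the construction of $\S$ (using axiom (b) together with the identity $[T,A_nB]=(A_nB)'$ established earlier) shows $[T,B]=B'$, hence $[T,B(n)]=-nB(n-1)$; combined with $T\Omega=0$ this gives $T^{k}R(B)=k!\,B(-k-1)\Omega$, so $B(n)\Omega=0$ for all $n\leq -1$. Combined with Lemma \ref{6} ($B(n)\Omega=0$ for $n\geq 0$), we obtain $B(z)\Omega=0$ as a formal series. For any $C\in\S$, locality then yields $(z-w)^{N}(B(z)C(w)\Omega,d)=(-1)^{\varepsilon}(z-w)^{N}(C(w)B(z)\Omega,d)=0$, and the formal-series criterion of Lemma \ref{3} forces $B(z)C(w)\Omega=0$. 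Iterating over words $C_{i_1}(n_1)\cdots C_{i_r}(n_r)\Omega$ and invoking the irreducibility axiom (e) ($\A\cdot\Omega=H$) gives $B(n)\xi=0$ for every $\xi\in H$ and every $n$, so $B=0$, completing the argument.
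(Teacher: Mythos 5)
Your proof is correct, and its skeleton is exactly the paper's one--line proof (``by Lemmas \ref{1} and \ref{2}, inductions and linear combinations''): Lemma \ref{1} specialised at $n=-1$, $m=0$ for the forward direction, and the inductive homogeneous decomposition of $\S$ via Lemma \ref{2} for the converse. The one substantive addition is that you correctly identify the converse as resting on the injectivity of $R$ --- without it, $R(A^{(\alpha_i)})=0$ does not give $A^{(\alpha_i)}=0$ --- and you supply a proof of that injectivity. This is precisely the paper's later Unicity lemma ($R(A)=R(B)\Rightarrow A=B$), which appears only \emph{after} Lemma \ref{8} and is silently omitted from its proof; your argument for it (from $R(B)=0$ deduce $B(z)\Omega=0$ via $T^{k}R(B)=k!\,B(-k-1)\Omega$, then kill $B$ on all of $H$ by locality and irreducibility) is essentially the paper's own, routed through $T$ rather than through the closed form $B(z)\Omega=e^{zT}R(B)$. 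There is no circularity, since the paper's Unicity proof does not use Lemma \ref{8}. One small point of care: when you cancel $(z-w)^{N}$ from $(z-w)^{N}(B(z)C(w)\Omega,d)=0$, the justification is not really Lemma \ref{3} but the fact that $C(w)\Omega$ involves only non-negative powers of $w$ (so multiplication by $z-w$ is injective on $\CCC[[z,z^{-1}]][[w]]$ and no delta-function cancellation can occur); with that noted, the argument is complete.
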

\begin{proof}  By lemma \ref{1} and \ref{2}, inductions and linear combinations.  \end{proof}

\paragraph{State-Field correspondence:}
\begin{lemma} (Existence)  
 $\forall a \in H$, Ê$\exists A \in \S $ such that  $ R(A) = a$.
\end{lemma}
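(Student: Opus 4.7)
The plan is to use the irreducibility/cyclicity hypothesis (e) of Definition \ref{def} together with the multiplicative-like behavior of $R$ recorded in Examples (a)--(d). Since $\mathcal{A} = \{A_i(m) \mid 1 \le i \le r,\ m \in \ZZZ\}$ acts irreducibly and $H$ is the minimal $\mathcal{A}$-stable subspace containing $\Omega$, every $a \in H$ can be written as a finite linear combination of monomials of the form
\[
a = A_{i_1}(m_1)\,A_{i_2}(m_2)\cdots A_{i_k}(m_k)\,\Omega,\qquad k\ge 0,\ m_j\in\ZZZ.
\]
So by linearity of $R$, it suffices to exhibit, for each such monomial, an element $A\in\S$ with $R(A)=a$.

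I would do this by induction on the length $k$. For $k=0$ we have $a=\Omega=R(Id)$ since $Id\in\S$ and $R(Id)=\Omega$ (Example (a)). For the inductive step, assume we have produced $B\in\S$ with $R(B)=A_{i_2}(m_2)\cdots A_{i_k}(m_k)\,\Omega$. Then the candidate is
\[
A \;=\; (A_{i_1})_{m_1} B \;\in\; \S,
\]
which lies in $\S$ because $\S$ is closed under the operation $(X,Y)\mapsto X_nY$ for every $n\in\ZZZ$. Applying Example (c), namely $R(X_nY)=X(n)R(Y)$, gives
\[
R(A) \;=\; A_{i_1}(m_1)\,R(B) \;=\; A_{i_1}(m_1)\,A_{i_2}(m_2)\cdots A_{i_k}(m_k)\,\Omega \;=\; a,
\]
completing the induction. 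Linearity of $R$ then handles arbitrary linear combinations.

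There is no genuine obstacle: the only delicate point is making sure that every $a\in H$ really is reached, and this is guaranteed precisely by hypothesis (e) of Definition \ref{def}, which tells us that monomials $A_{i_1}(m_1)\cdots A_{i_k}(m_k)\Omega$ span $H$. The algebraic content of the proof is the closure of $\S$ under $(X,Y)\mapsto X_n Y$ and the identity $R(X_n Y)=X(n)R(Y)$, both of which are already in place from earlier lemmas and examples. One minor sanity check to include is that the case $m_j\ge 0$ at the end of the monomial causes no problem, since $A_i(m)\Omega=0$ for $m\in\NNN$ by (d), so such monomials equal zero in $H$ and are trivially in the image; the inductive step still works for any $m_1\in\ZZZ$ without case distinction.
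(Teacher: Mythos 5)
Your proof is correct and is essentially the paper's own argument: the paper likewise takes $A=(A_{i_{1}})_{m_{1}}\cdots(A_{i_{k}})_{m_{k}}Id\in\S$ and applies $R(X_{n}Y)=X(n)R(Y)$ repeatedly to get $R(A)=A_{i_{1}}(m_{1})\cdots A_{i_{k}}(m_{k})\Omega$, then invokes hypothesis (e) of Definition \ref{def} to conclude that such vectors span $H$. Your inductive phrasing and the remark about $m_{j}\ge 0$ are just more explicit versions of the same steps.
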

\begin{proof}
$R((A_{i_{1}})_{m_{1}} (A_{i_{2}})_{m_{2}} ... (A_{i_{k}})_{m_{k}} Id) = A_{i_{1}}( m_{1}) R((A_{i_{2}})_{m_{2}} ... (A_{i_{k}})_{m_{k}} Id) \\  =...= A_{i_{1}}( m_{1}) ...A_{i_{k}} ( m_{k}) \Omega $ \\
Now, the action of the $A_{i}( m)$ on $\Omega$ generates $H$ by definition \ref{def}. \end{proof}
\begin{lemma}
Let $A \in \S$, then $A(z) \Omega = e^{zT}R(A) $.
\end{lemma}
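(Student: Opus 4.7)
The plan is to show that both $A(z)\Omega$ and $e^{zT}R(A)$ are formal power series in $z$ (no negative powers) satisfying the same first-order ODE with the same initial value at $z=0$, and conclude equality by matching coefficients.

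First I would observe that $A(z)\Omega$ lies in $H[[z]]$. Indeed, by the vacuum axiom in definition \ref{def}, $A(m)\Omega = 0$ for every $m \in \NNN$, so
\begin{displaymath}
A(z)\Omega \;=\; \sum_{n\in\ZZZ} A(n)\Omega \, z^{-n-1} \;=\; \sum_{k\ge 0} A(-k-1)\Omega \, z^{k},
\end{displaymath}
which is a genuine power series, and at $z=0$ it reduces to $A(-1)\Omega = R(A)$ by the first of the examples following definition \ref{R}. On the other side, $e^{zT}R(A) = \sum_{k\ge 0} \frac{T^k}{k!}R(A)\,z^{k}$ also lies in $H[[z]]$ with value $R(A)$ at $z=0$.

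Next I would use the compatibility $[T,A] = A'$, valid for every $A\in\S$ by the lemma preceding definition \ref{R}. Writing $A'(z) = \sum_{m\in\ZZZ}(-m)A(m-1)z^{-m-1}$ and identifying coefficients in $[T,A(z)] = A'(z)$ yields
\begin{displaymath}
[T,A(m)] \;=\; -m\,A(m-1) \qquad \forall m\in\ZZZ.
\end{displaymath}
Applying this to $\Omega$, and using $T\Omega = 0$, gives the recurrence
\begin{displaymath}
T\bigl(A(-k-1)\Omega\bigr) \;=\; [T,A(-k-1)]\Omega \;=\; (k+1)\, A(-k-2)\Omega,
\end{displaymath}
so inductively $A(-k-1)\Omega = \frac{1}{k!}\, T^{k}\, A(-1)\Omega = \frac{T^{k}}{k!}R(A)$ for all $k\ge 0$.

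Substituting back into the power series expansion gives
\begin{displaymath}
A(z)\Omega \;=\; \sum_{k\ge 0} \frac{T^{k}}{k!}\,R(A)\,z^{k} \;=\; e^{zT}R(A),
\end{displaymath}
as claimed. There is no real obstacle here: the only subtlety is checking that $A(z)\Omega$ contains no negative powers of $z$, which is exactly what the vacuum condition $A_i(m)\Omega = 0$ for $m \in \NNN$ provides; the rest is a clean induction using $[T,A(m)] = -m A(m-1)$. Equivalently, one can phrase the argument as: $f(z) := A(z)\Omega$ satisfies $f'(z) = [T,A(z)]\Omega = T f(z)$ with $f(0)=R(A)$, and this formal ODE has a unique solution $e^{zT}R(A)$ in $H[[z]]$.
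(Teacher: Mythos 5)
Your proof is correct and follows essentially the same route as the paper: both rest on $A(m)\Omega=0$ for $m\in\NNN$, the relation $[T,A]=A'$, and $T\Omega=0$; the paper packages the computation as a formal ODE $\frac{d}{dz}(F_A(z),b)=(TF_A(z),b)$ with $F_A(0)=R(A)$, while you carry out the equivalent coefficient recursion $A(-k-1)\Omega=\frac{T^k}{k!}R(A)$ explicitly (and note the ODE phrasing yourself). The only pedantic remark is that $A(m)\Omega=0$ for general $A\in\S$ comes from Lemma \ref{6} propagating the vacuum condition from the generators, not from Definition \ref{def} alone.
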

\begin{proof}
Let $F_{A}(z) = A(z) \Omega = \sum_{n \in \NNN}A(-n-1) \Omega z^{n} $, \\
Then, $\forall b \in H$, $(F_{A}(z),b) \in \CCC[z] $\\ 
Now, $\frac{d}{dz}(F_{A}(z),b) = (\frac{d}{dz}F_{A}(z),b) = (A'(z)\Omega, b) \\  = ([T , A(z)]\Omega, b) = (T.A(z) \Omega, b)= (T.F_{A}(z) \Omega, b)$ \\
But,  $F_{A}(0) = R(A) $, so we see that:  $ (F_{A}(z),b) = (e^{zT}R(A),b) $ $\forall b \in H $ \\
Finally,  $F_{A}(z) = e^{zT}R(A) $ \end{proof}

\begin{lemma} (Unicity) \ \     
 $R(A) = R(B) \Rightarrow A=B$.  
 \end{lemma}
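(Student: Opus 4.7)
The plan is to reduce the claim by linearity to showing $R(C) = 0 \Rightarrow C = 0$. Combining $R(C) = 0$ with the preceding lemma ($A(z)\Omega = e^{zT}R(A)$) gives immediately $C(z)\Omega = 0$, i.e.\ $C(n)\Omega = 0$ for every $n \in \ZZZ$. Since $\A \cdot \Omega = H$ by condition (e) of definition \ref{def}, every vector of $H$ is a linear combination of monomials $A_{i_1}(m_1)\cdots A_{i_k}(m_k)\Omega$, so it suffices to prove by induction on $k$ that $C(z)$ annihilates each such vector. The base case $k=0$ is the vanishing $C(z)\Omega = 0$ just obtained.

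For the inductive step, with $u = A_{i_2}(m_2) \cdots A_{i_k}(m_k)\Omega$ satisfying $C(z) u = 0$, the mutual locality of $C$ and $A_{i_1}$ (both lie in $\S$, and this locality was established in an earlier lemma) furnishes $N \in \NNN$ and $\varepsilon \in \ZZZ_{2}$ with
\begin{displaymath}
(z-w)^{N} C(z) A_{i_1}(w) u = (-1)^{\varepsilon}(z-w)^{N} A_{i_1}(w) C(z) u = 0.
\end{displaymath}
The task is then to cancel the prefactor $(z-w)^{N}$. By lemma \ref{1} and the gradation, $A_{i_1}(n) u$ vanishes for $n$ sufficiently large, so $A_{i_1}(w) u$ has only finitely many negative powers of $w$; multiplying by an appropriate $w^{M}$ places $F(z,w) := w^{M} C(z) A_{i_1}(w) u$ in $H((z))[[w]]$. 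Extracting the coefficient of $w^{m}$ successively from $(z-w)^{N} F(z,w) = 0$ yields at each stage $z^{N} F_{m}(z) = 0$, hence $F_{m} = 0$; thus $F = 0$, whence $C(z) A_{i_1}(w) u = 0$, and reading off the coefficient of $w^{-m_1-1}$ gives $C(z) A_{i_1}(m_1) u = 0$, closing the induction.

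The step I expect to be the main obstacle is precisely this cancellation of $(z-w)^{N}$ in a formal-series setting, where such cancellations are not automatic. The gradation of condition (c) of definition \ref{def} is exactly what legitimises it, by bounding the pole order of $A_{i_1}(w) u$ in $w$ so that the product lives in the power-series ring $H((z))[[w]]$ where the coefficient-by-coefficient argument applies. The remaining ingredients (linearity of $R$, cyclicity of $\Omega$, locality between elements of $\S$) are already in place from preceding lemmas.
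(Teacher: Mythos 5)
Your proof is correct, and it shares the paper's core strategy --- reduce by linearity to $R(C)=0\Rightarrow C=0$, kill $C(z)\Omega$ via $F_C(z)=e^{zT}R(C)=0$, then use locality to move $C(z)$ past operators applied to $\Omega$ --- but the final step is genuinely different. The paper invokes the Existence lemma (surjectivity of $R$): given $e\in H$ it picks one field $E\in\S$ with $R(E)=e$, applies locality of $C$ with $E$ to get $(z-w)^N(C(z)E(w)\Omega,f)=\pm(z-w)^N(E(w)C(z)\Omega,f)=0$, and then evaluates at $w=0$ to conclude $(C(z)e,f)=0$; the cancellation of $(z-w)^N$ is absorbed into the definition of locality via the rational function $X(C,E,\Omega,f)$, which vanishes on the region $|w|>|z|$ and hence identically. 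You instead bypass the Existence lemma entirely, using only the cyclicity of $\Omega$ under $\A$ and an induction on the length of monomials $A_{i_1}(m_1)\cdots A_{i_k}(m_k)\Omega$, commuting $C(z)$ past one generator at a time. Your version costs an induction and forces you to justify the cancellation of $(z-w)^N$ by hand, which you do correctly: the gradation bounds the $w$-poles of $A_{i_1}(w)u$, placing the product in a ring of the form $H[[z^{\pm1}]][[w]]$ (after clearing a power of $w$) where $(z-w)$ is not a zero divisor, and the coefficient-by-coefficient extraction then works since multiplication by $z^N$ is injective there. What your route buys is independence from the surjectivity of $R$ and an explicit treatment of a cancellation the paper leaves implicit; what the paper's route buys is brevity. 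One small point common to both arguments: $C=A-B$ is treated as local with a single sign $\varepsilon$ against the other field, which strictly requires first decomposing $a=R(A)=R(B)$ into its homogeneous (parity and degree) components; this is harmless but worth a sentence.
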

   
 \begin{proof}  Let $C = A-B$, then  $R(C) = R(A) - R(B) = 0$  \\  and $F_{C}(z) = e^{zT}R(C) = 0  $  \\ 
 Now, $\forall e \in H  $, $\exists E \in \S $ such that $R(E) = e$.  \\ Then $\forall f \in H$, $\exists N \in \NNN $ $\exists \varepsilon \in \ZZZ_{2} $  such that :   \\
 $(z-w)^{N} ( C(z)E(w) \Omega , f) = (-1)^{\varepsilon} (z-w)^{N} ( E(w) C(z) \Omega , f)  $  \\
 Now, $ (E(w)C(z) \Omega , f) = (E(w) F_{C}(z), f) = 0 = (C(z)E(w) \Omega , f)$ \\
 So, $(C(z)E(w) \Omega , f)_{\vert w=0} = (C(z)e , f) = 0$  $\forall e, f \in H $ \\
 Finally, $C=0$ and $A=B$
 \end{proof}  
 Now, we can well defined:  
 \begin{definition}  (State-Field correspondence map) 
  \begin{displaymath}   \begin{array}{cccc}
V:  & H &  \longrightarrow  &  \S     \\
    &    a  &    \longmapsto  &  V(a)     \end{array}   \quad   \textrm{  linear.}    \end{displaymath} 
    such that :  
     $  \left(  \begin{array}{cc}   \forall a \in H  &   R(V(a))=a     \\  
                                                      \forall A \in \S    &  V(R(A)) = A   \end{array}  \right. $
                                      \end{definition}

 \begin{notation}
 $V(a)(z)$ is noted $V(a,z)$ and $A(z) = V(R(A), z)$
 \end{notation}
 
 \begin{examples}        $ \begin{array}{c}    \end{array} $ 
\begin{description} 
\item[(a)]  $V(0 , z) = 0  $, \   $V(\Omega ,z) = Id $
\item[(b)] $V'(a,z) = V(T.a, z)$
 \item[(c)]  $(A_{n}B)(z) = V(A(n)R(B),z) $ 
 \end{description}
\end{examples}  

\begin{definition} Let  $ H_{\varepsilon } = \bigoplus_{ n \in  \NNN + \frac{\varepsilon }{2}} H_{n} $ so that   $ H =  H_{\bar{0}} \oplus H_{\bar{1}}$. \end{definition} 

\begin{lemma} $R(\S_{\varepsilon}) = H_{\varepsilon }  \quad   ( \varepsilon \in  \ZZZ_{2}  ) $
\end{lemma}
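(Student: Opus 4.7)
The plan is to prove both inclusions $R(\S_\varepsilon) \subseteq H_\varepsilon$ and $H_\varepsilon \subseteq R(\S_\varepsilon)$ separately.

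For the forward inclusion, I argue by induction on the construction of $S$ that every $A \in S_\varepsilon$ is graded with some $\alpha \in \NNN + \varepsilon/2$, so that $R(A) \in H_\alpha \subseteq H_\varepsilon$ by Lemma \ref{8}. The base cases are $Id$ (graded with $\alpha = 0$) and each generator $A_i$ (graded with $\alpha_i \in \NNN + \varepsilon_{ii}/2$ by Definition \ref{def}). For the inductive step, take $A \in S_{\varepsilon_1}$ and $B \in S_{\varepsilon_2}$ of gradings $\alpha, \beta$. By Lemma \ref{7}(a), $A_n B \in \S_{\varepsilon_1 + \varepsilon_2}$; by Lemma \ref{2}, $A_n B$ is graded with $\gamma = \alpha + \beta - n - 1$, whose parity (modulo $1$) equals $(\varepsilon_1 + \varepsilon_2)/2$. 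When $\gamma \geq 0$ we obtain $R(A_n B) \in H_\gamma \subseteq H_{\varepsilon_1 + \varepsilon_2}$ via Lemma \ref{8}; when $\gamma < 0$, Lemma \ref{1} applied formally shows $(A_n B)(-1): H_0 \to H_\gamma = \{0\}$, so $R(A_n B) = 0$ lies in $H_{\varepsilon_1 + \varepsilon_2}$ trivially. For the derivative operation, differentiating $[D, A(z)] = z A'(z) + \alpha A(z)$ yields $[D, A'(z)] = z A''(z) + (\alpha + 1) A'(z)$, so $A'$ is graded with $\alpha + 1$, of the same parity as $\alpha$; combined with Lemma \ref{5}, $A' \in S_\varepsilon$ whenever $A \in S_\varepsilon$. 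Linearity of $R$ then yields $R(\S_\varepsilon) \subseteq H_\varepsilon$.

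For the reverse inclusion, take any $a \in H_\varepsilon$. By the existence part of the state-field correspondence, there is some $A \in \S$ with $R(A) = a$. Decompose $A = A_{\bar{0}} + A_{\bar{1}}$ along $\S = \S_{\bar{0}} \oplus \S_{\bar{1}}$, so that $a = R(A_{\bar{0}}) + R(A_{\bar{1}})$ with $R(A_{\bar{0}}) \in H_{\bar{0}}$ and $R(A_{\bar{1}}) \in H_{\bar{1}}$ by the forward direction. Since the decomposition $H = H_{\bar{0}} \oplus H_{\bar{1}}$ is direct and $a \in H_\varepsilon$, the component $R(A_{1-\varepsilon})$ must vanish while $R(A_\varepsilon) = a$, exhibiting $A_\varepsilon \in \S_\varepsilon$ as the desired preimage.

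The main subtlety is keeping track of the parity of the half-integer grading through the operation $(A, B) \mapsto A_n B$, together with the degenerate case $\alpha + \beta - n - 1 < 0$ where the image under $R$ must collapse to zero; everything else is routine bookkeeping along the inductive construction of $\S$.
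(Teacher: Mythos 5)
Your proof is correct and follows essentially the same route as the paper's (very terse) argument: an induction over the construction of $\S$, using the grading of the generators from Definition \ref{def}, the grading shift $\alpha+\beta-n-1$ from Lemma \ref{2}, the parity bookkeeping from Lemma \ref{7}, and the equivalence of Lemma \ref{8}. You simply make explicit two points the paper leaves implicit — the degenerate case $\alpha+\beta-n-1<0$ where $R(A_{n}B)=0$, and the reverse inclusion via surjectivity of $R$ and the directness of $H=H_{\bar{0}}\oplus H_{\bar{1}}$ — both of which are handled correctly.
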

\begin{proof} Base step: by definition \ref{def} and lemma \ref{8},  \\   $\forall i$   $A_{i}  \in \S_{\varepsilon_{ii}} $ and $R(A_{i}) \in H_{\alpha_{i}}$ with $\alpha_{i} \in \NNN + \frac{\varepsilon_{ii}}{2}$ \\
Inductive step:  by lemma \ref{7}  \end{proof}

\begin{corollary} (Relation with $T$ and $D$)  \  Let $ a \in H_{\alpha}$,  we have that:  
 \begin{description}
 \item[(a)] $[T, V(a,z) ] = V'(a,z) = V(T.a , z)  \in \S$ 
 \item[(b)]  $[D, V(a,z) ] = z.V'(a,z)  + \alpha.V(a,z)  \quad $  ( $\notin \S$   in general)
\end{description}   \end{corollary}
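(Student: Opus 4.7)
The plan is to read off both identities directly from the structure already set up on $\S$. Given $a \in H$, set $A = V(a,z) \in \S$, so by construction $R(A) = a$.

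For part (a), recall the earlier lemma that $[T,A] = A' \in \S$ for every $A \in \S$ (this was extended from the generators $A_i$ by induction along Dong's lemma and by stability of $\S$ under $A \mapsto A'$). Thus $[T,V(a,z)] = V'(a,z)$. The identification $V'(a,z) = V(T.a,z)$ follows from the state-field examples: we have $R(V'(a,z)) = T.R(V(a,z)) = T.a$, and then unicity of the state-field correspondence gives $V'(a,z) = V(T.a,z)$. Membership in $\S$ is automatic.

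For part (b), the key observation is that every field in $\S$ is graded. Indeed, the generators $A_i$ are graded with $\alpha_i$ by definition~\ref{def}(c), and the operations $(A,B) \mapsto A_nB$ and $A \mapsto A'$ preserve gradedness (by lemma~\ref{2} for the former; the latter shifts $\alpha$ by $1$). Now apply lemma~\ref{8}: $A = V(a,z)$ is graded with $\alpha$ iff $R(A) = a \in H_\alpha$. Hence for $a \in H_\alpha$, definition~\ref{L0} yields $[D,A(z)] = zA'(z) + \alpha A(z)$, which is exactly $[D,V(a,z)] = z\cdot V'(a,z) + \alpha\cdot V(a,z)$.

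There is no real obstacle here; the statement is a direct packaging of lemma~\ref{8}, definition~\ref{L0}, and the $T$-covariance already proved on $\S$. The only thing that requires a moment's thought is to confirm that \emph{every} element of $\S$ (not only the generators) inherits the gradedness condition needed to apply lemma~\ref{8}, but this is ensured by the inductive construction of $\S$ and lemma~\ref{2}, combined with the linearity in definition~\ref{R} of $V$.
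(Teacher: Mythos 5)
Your argument is correct and follows exactly the route the paper intends: the paper states this as an immediate corollary (with no written proof) of the lemma $[T,A]=A'\in\S$ for all $A\in\S$, the example $R(A')=T.R(A)$ together with unicity of the state--field correspondence, and Lemma \ref{8} combined with Definition \ref{L0} for the $D$-relation. Your added check that gradedness propagates through $(A,B)\mapsto A_{n}B$ and $A\mapsto A'$ is precisely the content already recorded in Lemma \ref{2} and the proof of Lemma \ref{8}, so nothing is missing.
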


\subsection{Application to fermion algebra}  \label{sec2}

 $ H = \F_{NS}$, $\psi (z) = \sum_{n \in \ZZZ} \psi_{n+ \1/2} z^{-n-1}$ with $ [  \psi_{m} ,  \psi_{n}  ]_{+} = \delta_{m+n}Id$.

  \begin{proposition}  $ \{ \psi  \} $ is a system of generator.
    \end{proposition}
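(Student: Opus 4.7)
The plan is to check the five conditions (a)--(e) of Definition \ref{def} against the data $(\{\psi\},\,D,\,T,\,\Omega,\,H=\F_{NS})$ assembled in Section \ref{fermion}; each check amounts to a direct citation of an earlier result.

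Condition (a) is Example \ref{ex}: $\psi$ is local with itself with $N_{11}=1$ and $\varepsilon_{11}=\bar{1}$, and of course $\varepsilon_{11}=\varepsilon_{11}\cdot\varepsilon_{11}$. Condition (b) is the third item of the lemma on the relations between $\psi$, $D$ and $T$, giving $[T,\psi]=\psi'$.

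For (c), the earlier lemma on $D$ supplies the decomposition $H=\bigoplus_{n\in\NNN+\frac{1}{2}}H_n$ into finite-dimensional pairwise orthogonal eigenspaces. The identity $[D,\psi]=z\psi'+\frac{1}{2}\psi$ shows, via Definition \ref{L0}, that $\psi$ is graded with $\alpha_1=\frac{1}{2}\in\NNN+\frac{1}{2}=\NNN+\frac{\varepsilon_{11}}{2}$, as required.

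For (d), $\Omega\in H_0$ with $\Vert\Omega\Vert=1$ comes from the Verma module definition together with the associated scalar product; $D\Omega=T\Omega=0$ are immediate from the inductive definitions of $D$ and $T$; and for $m\in\NNN$ we have $\psi(m)\Omega=\psi_{m+\frac{1}{2}}\Omega=0$ since $m+\frac{1}{2}>0$. Finally, condition (e) is the remark that $H=\F_{NS}$ is the unique irreducible unitary highest weight representation of the fermion algebra: the indexing $\psi(m)=\psi_{m+\frac{1}{2}}$, $m\in\ZZZ$, exhausts the generators $\psi_k$, $k\in\ZZZ+\frac{1}{2}$, hence $\A$ acts irreducibly on $H$ and $\langle\A\rangle\cdot\Omega=H$ by the explicit basis $\psi_{-m_1}\cdots\psi_{-m_r}\Omega$.

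No step is a real obstacle; the closest thing to a subtlety is the compatibility between the grading index $\alpha_1=\frac{1}{2}$ and the parity $\varepsilon_{11}=\bar{1}$, which is exactly the shift $\alpha_i\in\NNN+\varepsilon_{ii}/2$ built into condition (c) of Definition \ref{def}.
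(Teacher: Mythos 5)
Your proof is correct and follows the same route as the paper's: both verify conditions (a)--(e) of Definition \ref{def} by direct citation of the locality of $\psi$ with itself ($N=1$, $\varepsilon=\bar{1}$), the commutation relations with $T$ and $D$, the grading $\alpha=\frac{1}{2}\in\NNN+\frac{1}{2}$, the vacuum properties of $\Omega$, and the irreducibility of the action of the $\psi_{n}$ on $\F_{NS}$. Your write-up is somewhat more explicit than the paper's (e.g.\ in spelling out $\psi(m)\Omega=\psi_{m+\frac{1}{2}}\Omega=0$ for $m\in\NNN$), but there is no difference in substance.
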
 
     \begin{proof}
    $ \psi $ is local with itself with $N=1$ and $\varepsilon  =  \bar{1}=  \bar{1} . \bar{1} $ (see definition \ref{def}) \\
    We have construct $D$ and $T$  (p \pageref{D} ), $\Omega \in H_{0}$,  $\Vert  \Omega   \Vert = 1 $,  $D\Omega = T\Omega = 0 $. \\
    $[T, \psi (z) ] = \psi ' (z) $,  $[D, \psi (z)] = z.\psi ' (z) + \1/2 \psi (z) $  and   $\1/2 \in \NNN + \1/2 $\\
    Finally, $\{   \psi_{n} , n \in \1/2 \NNN \} $ acts irreducibly on $H$    \end{proof}
\begin{corollary}  $ \{ \psi  \} $ generates an $\S$ with a state-field correspondence with: 
 \begin{center}  $ R(\psi) = \psi_{-\1/2} \Omega$ and  $ \psi (z)  = V(\psi_{-\1/2} \Omega, z) $  \end{center}  \end{corollary}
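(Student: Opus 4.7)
The plan is to invoke the general machinery of the preceding subsection, now that the previous proposition has established $\{\psi\}$ to be a system of generators in the sense of definition \ref{def}. Everything reduces to reading off two things from the definitions.

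First I would produce $\S$: by the definition of $\S$ associated to a system of generators, we take the minimal subset of $(\text{End}\,H)[[z,z^{-1}]]$ containing $Id$ and $\psi$, closed under $(A,B)\mapsto A_nB$ (for all $n\in\ZZZ$) and $A\mapsto A'$. Previous lemmas guarantee that all these operations are well defined (locality is preserved by Dong's lemma, stability of the grading by lemma \ref{2}, vanishing on $\Omega$ by lemma \ref{6}), and since $\psi$ is local with itself with $\varepsilon=\bar 1$, we have $\psi\in \S_{\bar 1}$ and $\S=\S_{\bar 0}\oplus\S_{\bar 1}$.

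Next I would invoke the state-field correspondence established in the general framework: the Existence and Unicity lemmas provide a linear bijection $V:H\to\S$ satisfying $R(V(a))=a$ and $V(R(A))=A$, where $R$ is defined by $R(A)=A(z)\Omega_{|z=0}$. The irreducibility of the action of $\A=\{\psi_{n+\1/2},n\in\ZZZ\}$ on $H$ (condition (e) of definition \ref{def}), together with $T\Omega=D\Omega=0$, is exactly what was used to prove these lemmas, so nothing new has to be verified.

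It then remains to compute $R(\psi)$. By definition
\begin{displaymath}
R(\psi)=\psi(z)\Omega_{|z=0}=\sum_{n\in\ZZZ}\psi_{n+\1/2}\Omega\cdot z^{-n-1}\Big|_{z=0},
\end{displaymath}
and the only surviving term is that with $n=-1$, giving $R(\psi)=\psi_{-\1/2}\Omega$. Applying $V$ and using $V(R(A))=A$ with $A=\psi$ yields $\psi(z)=V(\psi_{-\1/2}\Omega,z)$.

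There is no real obstacle here: the content of the corollary is essentially bookkeeping, the genuine work having already been done in establishing that $\{\psi\}$ is a system of generators and in the general existence/uniqueness of $V$. The only point requiring a moment of care is checking that the $z=0$ evaluation defining $R$ is legitimate for $\psi$, which is immediate because $\psi(z)\Omega=\sum_{n\geq 0}\psi_{-n-\1/2}\Omega\cdot z^{n}$ has no negative powers of $z$ (thanks to $\psi_m\Omega=0$ for $m>0$, hypothesis (d) of definition \ref{def}).
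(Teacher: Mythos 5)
Your proposal is correct and follows exactly the route the paper intends: the paper gives no separate proof for this corollary, treating it as an immediate consequence of the preceding proposition (that $\{\psi\}$ is a system of generators) together with the general construction of $\S$, $R$, and $V$ from the previous subsection, plus the one-line computation $\psi(z)\Omega_{|z=0}=\psi_{-\1/2}\Omega$. Your added remark that the evaluation at $z=0$ is legitimate because $\psi(z)\Omega$ has only nonnegative powers of $z$ is a correct and welcome piece of care, already implicit in the paper's definition of $R$ via lemma \ref{6}.
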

 
\begin{lemma}  (OPE) \ \   $  \psi (z)\psi (w)  \sim     \frac{Id}{z-w}     $    \end{lemma}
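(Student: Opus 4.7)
The plan is to combine the locality of $\psi$ with itself (established in example \ref{ex}) with the general OPE machinery of corollary \ref{OPE}, and then to identify the single singular term by a direct computation of $\psi_{0}\psi$ via formula \ref{formula}.

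First, I record that $\psi$ is local with itself with $\varepsilon=\bar{1}$ and $N=1$; this is example \ref{ex} (and it is also visible directly from lemma \ref{manifestation}, since $(\psi(z)\psi(w)\Omega,\Omega)=\frac{1}{z-w}$ already lies in $(z-w)^{-1}\CCC[z^{\pm1},w^{\pm1}]$). Applying corollary \ref{OPE} with $N=1$ then collapses the OPE to a single term:
\begin{displaymath}
\psi(z)\psi(w)\ \sim\ \frac{(\psi_{0}\psi)(w)}{z-w}.
\end{displaymath}

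It remains to identify $\psi_{0}\psi$. By formula \ref{formula} with $n=0$ and $\varepsilon=\bar{1}$, one has $(\psi_{0}\psi)(m)=[\psi(0),\psi(m)]_{+}$ for every $m\in\ZZZ$. Using the correspondence $\psi(k)=\psi_{k+\1/2}$ coming from the definition of the fermion field, the canonical anticommutation relation $[\psi_{r},\psi_{s}]_{+}=\delta_{r+s}Id$ yields $(\psi_{0}\psi)(m)=\delta_{m+1}Id$. Reassembling into a formal power series,
\begin{displaymath}
\psi_{0}\psi(w)\ =\ \sum_{m\in\ZZZ}(\psi_{0}\psi)(m)w^{-m-1}\ =\ Id,
\end{displaymath}
so that $\psi(z)\psi(w)\sim\frac{Id}{z-w}$, as required.

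There is essentially no hard step here; this is a clean instance of the general formalism. The only point that demands care is the bookkeeping between the two indexings, namely $A(n)$ as the coefficient of $z^{-n-1}$ in $A(z)$ versus the physicist's labeling $\psi_{n+\1/2}$; once this is pinned down, formula \ref{formula} does all of the work and locality with $N=1$ guarantees that no higher-order poles appear.
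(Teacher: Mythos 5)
Your proof is correct. The skeleton is the same as the paper's --- locality of $\psi$ with itself with $N=1$ kills every term of order $n\ge 1$, and the surviving term is pinned down by the anticommutation relation $[\psi_{\1/2},\psi_{-\1/2}]_{+}=Id$ --- but you identify the field $\psi_{0}\psi$ by a different mechanism. The paper evaluates it on the vacuum, writing $\psi_{n}\psi(w)=V(\psi_{n+\1/2}\psi_{-\1/2}\Omega,w)$ and using $\psi_{\1/2}\psi_{-\1/2}\Omega=\Omega$ together with $V(\Omega,z)=Id$; that is, it leans on the state-field correspondence (a field of $\S$ is determined by its value at the vacuum). You instead compute every Fourier mode $(\psi_{0}\psi)(m)=[\psi(0),\psi(m)]_{+}=\delta_{m+1}Id$ directly from formula \ref{formula}, which avoids invoking the bijectivity of $V$ and is marginally more self-contained; the paper's route is shorter and scales better to the later OPE computations (such as $\psi(z)L(w)$ and $L(z)L(w)$), where evaluating $A(n)R(B)$ on $\Omega$ is far less work than computing all modes. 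One small caveat: your parenthetical remark that locality with $N=1$ is visible from lemma \ref{manifestation} only checks the matrix element against the vacuum vector, whereas locality requires all $c,d\in H$; the actual justification is the proposition closing section \ref{sec1} (recorded as example \ref{ex}), which you do cite, so nothing is missing.
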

\begin{proof}
  $  \quad   \psi_{n}\psi (w) = V(\psi_{n+\1/2}  \psi_{-\1/2} \Omega, w) = 0$  if $n \ge 1 $  ( here $N = 1$ )  \\
 Now, for  $0 \le n \le N-1 $ i.e $n=0$ :  \\
  $\psi_{\1/2}\psi_{-\1/2}  \Omega  =  ( [  \psi_{\1/2} ,  \psi_{-\1/2}  ]_{+} - \psi_{-\1/2}  \psi_{\1/2} ) \Omega = \Omega$,  so \   $\psi_{0}\psi (w) = Id  \  $ \end{proof}
   
\begin{remark}  (Next operator)  $ \psi_{-\1/2} \psi_{-\1/2}  \Omega = 0 $, so $ \psi_{-1 }\psi  =  0 $; and the next operator of the expansion is 
$2 L(w) :=   \psi_{-2}\psi (w)= 2 \sum_{n \in \ZZZ} L_{n} z^{-n-2}$ \\ 
   Now,  $R(L) = \1/2 \psi_{-\3/2}\psi_{-\1/2}  \Omega$,   then $L(w)=   V(\1/2 \psi_{-\3/2}\psi_{-\1/2}  \Omega , w) $.   \end{remark}
 \begin{remark} $L(n) = L_{n-1} $ so, $L_{0}\Omega = L_{-1}\Omega = 0 $   by lemma \ref{6}.   \end{remark} 
 \begin{lemma} (OPE) \ \  $   \label{s1}  \psi (z) L(w)  \sim     \frac{1/2  \psi (w) }{(z-w)^{2}} -   \frac{1/2  \psi '  (w) }{(z-w)}    $   \end{lemma}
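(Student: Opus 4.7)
The plan is to invoke the OPE corollary (Corollary \ref{OPE}) applied to $A=\psi$ and $B=L$. Since $\psi\in\S_{\bar 1}$ and $L=\frac{1}{2}\psi_{-2}\psi\in\S_{\bar 1+\bar 1}=\S_{\bar 0}$, Lemma \ref{7} already tells us that $\psi$ and $L$ are local with $\varepsilon=\bar 1\cdot\bar 0=\bar 0$; only the locality bound $N$ and the explicit fields $\psi_n L$ for $0\leq n<N$ remain to be pinned down.

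The central calculation rests on the identity $R(A_n B)=A(n)R(B)$, which in our case and using the preceding remark $R(L)=\tfrac{1}{2}\psi_{-3/2}\psi_{-1/2}\Omega$ reads
\[
R(\psi_n L)=\psi(n)\cdot R(L)=\tfrac{1}{2}\,\psi_{n+1/2}\,\psi_{-3/2}\psi_{-1/2}\Omega.
\]
I would reduce this expression by repeatedly applying $[\psi_m,\psi_k]_{+}=\delta_{m+k}\mathrm{Id}$ together with the vacuum condition $\psi_k\Omega=0$ for $k>0$. Three cases arise. For $n=0$, the anticommutations $\psi_{1/2}\psi_{-3/2}=-\psi_{-3/2}\psi_{1/2}$ and $\psi_{1/2}\psi_{-1/2}\Omega=\Omega$ yield $R(\psi_0 L)=-\tfrac{1}{2}\psi_{-3/2}\Omega = -\tfrac{1}{2}R(\psi')$. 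For $n=1$, the only surviving contribution comes from the anticommutator $\psi_{3/2}\psi_{-3/2}=\mathrm{Id}-\psi_{-3/2}\psi_{3/2}$, giving $R(\psi_1 L)=\tfrac{1}{2}\psi_{-1/2}\Omega=\tfrac{1}{2}R(\psi)$. For $n\geq 2$, one has $\delta_{n-1}=\delta_{n}=0$, so $\psi_{n+1/2}$ commutes (up to sign) through both $\psi_{-3/2}$ and $\psi_{-1/2}$ and then annihilates $\Omega$; hence $R(\psi_n L)=0$. Injectivity of the state-field correspondence (equivalently, of $R$) then yields $\psi_0 L=-\tfrac{1}{2}\psi'$, $\psi_1 L=\tfrac{1}{2}\psi$, and $\psi_n L=0$ for $n\geq 2$, establishing that $N=2$ is a valid locality bound.

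Substituting these into Corollary \ref{OPE} completes the proof:
\[
\psi(z)L(w)\sim \frac{(\psi_0 L)(w)}{z-w}+\frac{(\psi_1 L)(w)}{(z-w)^2}=\frac{(1/2)\psi(w)}{(z-w)^{2}}-\frac{(1/2)\psi'(w)}{(z-w)}.
\]
The only delicate point is the bookkeeping of fermionic signs when reducing $\psi_{n+1/2}\psi_{-3/2}\psi_{-1/2}\Omega$; no conceptual obstacle arises, since Dong's lemma, the OPE corollary, and the state-field correspondence are already in place and do all the real work.
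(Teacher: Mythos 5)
Your proof is correct and follows essentially the same route as the paper: compute $\psi_{n+\frac{1}{2}}\cdot\frac{1}{2}\psi_{-\frac{3}{2}}\psi_{-\frac{1}{2}}\Omega$ for $n\ge 2$, $n=1$ and $n=0$, then feed the resulting fields into Corollary \ref{OPE}. Your sign and label bookkeeping is in fact slightly more careful than the paper's, whose proof contains a typo (writing $R(\psi')$ where $R(\psi)$ is meant in the $n=1$ computation).
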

 \begin{proof} $    \psi_{n}L(w) = \1/2 V(\psi_{n+\1/2}  \psi_{-\3/2}  \psi_{-\1/2} \Omega, w) = 0$  if $n \ge 2 $  ( here $N = 2$ )  \\
Now,   $  \psi_{\1/2}  \psi_{-\3/2}  \psi_{-\1/2} \Omega =  - \psi_{-\3/2} \Omega = R(\psi ')$ \  ,  \     $ \psi_{\3/2}  \psi_{-\3/2}  \psi_{-\1/2} \Omega =   \psi_{-\1/2} \Omega = R(\psi ')$ 
\end{proof}

\begin{lemma} (Lie bracket)  \ \  $ [L_{m} , \psi_{n} ] = - (n + \1/2 m) \psi_{m+n} $ \end{lemma}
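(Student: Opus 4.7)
The plan is to deduce the commutation relation directly from the operator product expansion $\psi(z)L(w) \sim \frac{(1/2)\psi(w)}{(z-w)^{2}} - \frac{(1/2)\psi'(w)}{z-w}$ established in Lemma \ref{s1}, by applying the Lie bracket Formula \ref{lie}. Since $L$ lives in $\S_{\bar 0}$ (its generating state $\omega=\frac{1}{2}\psi_{-3/2}\psi_{-1/2}\Omega$ has integer grade $2$) and $\psi$ lives in $\S_{\bar 1}$, the relevant $\varepsilon$-bracket between $\psi$ and $L$ is just the ordinary commutator.

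First I would read off from Lemma \ref{s1} the two nontrivial OPE coefficients with $N=2$, namely $\psi_{0}L = -\frac{1}{2}\psi'$ and $\psi_{1}L = \frac{1}{2}\psi$. Plugging these into Formula \ref{lie} for the pair $A=\psi$, $B=L$ gives, in both regimes $m\ge 0$ and $m<0$, the same coefficients $C_{m}^{0}=1$ and $\pm C^{1}_{\cdot}=m$, so uniformly one finds
\begin{displaymath}
[\psi(m),L(n)] \; = \; (\psi_{0}L)(m+n) \; + \; m\,(\psi_{1}L)(m+n-1) \; = \; -\tfrac{1}{2}\psi'(m+n) + \tfrac{m}{2}\psi(m+n-1).
\end{displaymath}
Using the elementary identity $\psi'(k)=-k\,\psi(k-1)$ (obtained by differentiating the formal series $\psi(z)=\sum\psi_{n+1/2}z^{-n-1}$), this collapses to $[\psi(m),L(n)] = \frac{2m+n}{2}\,\psi(m+n-1)$.

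Finally I would translate from the ``operator index'' convention back to the ``mode index'' convention. By the remark following Lemma \ref{s1} we have $L_{s}=L(s+1)$, and by definition of $\psi(z)$ we have $\psi_{r}=\psi(r-\frac{1}{2})$. Setting $s=n$ (the Virasoro index we want) and $r=n'$ (the fermion index we want), i.e. substituting $n \mapsto s+1$ and $m\mapsto r-\frac{1}{2}$ in the displayed formula, yields $[\psi_{r},L_{s}] = (r + \frac{s}{2})\psi_{r+s}$. Negating and relabeling then gives the claimed identity $[L_{m},\psi_{n}] = -(n+\frac{m}{2})\psi_{m+n}$.

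There is no real obstacle here: the proof is a mechanical application of the machinery already built (locality $\Rightarrow$ OPE $\Rightarrow$ Lie bracket). The only point requiring a little attention is checking that the two branches ($m\ge 0$ and $m<0$) in Formula \ref{lie} agree at $N=2$, which they do because $C_{m}^{1}=m$ and $-C_{-m}^{1}=m$; and keeping track of the two shifts $L_{s}=L(s+1)$, $\psi_{r}=\psi(r-\frac{1}{2})$ when converting indices.
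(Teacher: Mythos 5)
Your proposal is correct and follows essentially the same route as the paper: read off the OPE coefficients $\psi_{0}L=-\frac{1}{2}\psi'$ and $\psi_{1}L=\frac{1}{2}\psi$ from Lemma \ref{s1}, apply Formula \ref{lie} with $\varepsilon=\bar{0}$, use $\psi'(k)=-k\psi(k-1)$, and translate indices via $L_{n}=L(n+1)$ and $\psi(m)=\psi_{m+\frac{1}{2}}$. Your explicit check that the $m\ge 0$ and $m<0$ branches agree is a point the paper merely asserts, but otherwise the two arguments coincide.
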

\begin{proof}
By lemma \ref{7}, $\psi$ and $L$ are local with $\varepsilon = \bar{0}$, and by formula \ref{lie}:   \\
$[\psi (m) , L(n+1)  ] = -\1/2 C_{m}^{0} \psi ' (m+n+1)  +  \1/2  C_{m}^{1} \psi  (m+n+1-1)  $   \\
$=  \1/2 (m+n+1) \psi  (m+n)  +  \1/2  m \psi  (m+n)  = (m + \1/2 + \1/2 n)  \psi  (m+n) $ \\ 
We have computed   for $m \ge 0$, we find the same result for $m < 0$. \\
 Now,  $\psi (m) = \psi_{m+\1/2} $ and $L(n+1) = L_{n} $, so the result follows.   \end{proof}
 \begin{lemma}  $D = L_{0}$  and  $T = L_{-1} $  \end{lemma}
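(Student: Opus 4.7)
The plan is to verify each equality on $\Omega$ and then propagate it through $H$ using the commutation relations from the preceding Lie bracket lemma together with the fact (definition \ref{def}(e)) that $H$ is generated from $\Omega$ by the operators $\psi_n$.

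First I would check $L_0\Omega=0$ and $L_{-1}\Omega=0$. Since $L(n)=L_{n-1}$, the relations $L_0=L(1)$ and $L_{-1}=L(0)$ both annihilate $\Omega$ by lemma \ref{6} (applied to the generator $\psi$ with $A_nB = \frac{1}{2}\psi_{-2}\psi$ and $m=1,0\in\NNN$). On the other hand $D\Omega=T\Omega=0$ by definition. So $(L_0-D)\Omega=(L_{-1}-T)\Omega=0$.

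Next, the Lie bracket formula proved in the previous lemma gives $[L_m,\psi_n]=-(n+\tfrac{1}{2}m)\psi_{m+n}$. Specializing $m=0$ yields $[L_0,\psi_n]=-n\psi_n=[D,\psi_n]$, and specializing $m=-1$ yields $[L_{-1},\psi_n]=-(n-\tfrac{1}{2})\psi_{n-1}=[T,\psi_{n}]$ (matching the relations $[D,\psi_n]=-n\psi_n$ and $[T,\psi_m]=-(m-\tfrac{1}{2})\psi_{m-1}$ established in section \ref{sec1}). Hence $L_0-D$ and $L_{-1}-T$ both commute with every $\psi_n$.

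Finally, an element $\psi_{-m_1}\cdots\psi_{-m_r}\Omega$ of the spanning set of $H$ is sent to $0$ by any operator which annihilates $\Omega$ and commutes with every $\psi_n$: one pushes the operator through the $\psi_{-m_i}$ one by one and then hits $\Omega$. Applied to $L_0-D$ and $L_{-1}-T$ this yields $L_0=D$ and $L_{-1}=T$ on a dense spanning set, hence on $H$.

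There is no real obstacle here; the only thing to be careful about is to match signs and the index shift $L(n)=L_{n-1}$ correctly when specializing the bracket formula. The argument is purely mechanical once the commutation relations from the preceding lemma are in hand and the vacuum values are verified by lemma \ref{6}.
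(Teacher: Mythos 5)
Your proof is correct and follows essentially the same route as the paper: both rest on comparing the bracket $[L_{m},\psi_{n}]=-(n+\frac{1}{2}m)\psi_{m+n}$ at $m=0$ and $m=-1$ with $[D,\psi_{n}]=-n\psi_{n}$ and $[T,\psi_{n}]=-(n-\frac{1}{2})\psi_{n-1}$, together with the vanishing of $L_{0}\Omega$, $L_{-1}\Omega$, $D\Omega$, $T\Omega$. The only difference is the last step: the paper invokes irreducibility and Schur's lemma to conclude that $L_{0}-D$ and $L_{-1}-T$ are scalars before evaluating on $\Omega$, whereas you push these operators directly through the spanning vectors $\psi_{-m_{1}}\cdots\psi_{-m_{r}}\Omega$; both are valid.
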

 \begin{proof}
$ [L_{0} , \psi_{n} ] = - n \psi_{n} = [D ,  \psi_{n} ]$ ,    $ [L_{-1} , \psi_{n} ] = - (n - \1/2 ) \psi_{n-1} = [T ,  \psi_{n} ] $  \\
So, by irreducibility and Schur's lemma,    $L_{0} - D and   L_{-1} - T   \in \CCC Id$ \\
Now,  $L_{0}\Omega =  D\Omega =   L_{-1}\Omega =   T\Omega =0$,   then,  $D = L_{0}$  and  $T = L_{-1}  $  \end{proof}
 \begin{corollary}   $\forall  a \in H_{s}$:  
  \begin{description}
 \item[(a)] $[ L_{-1}, V(a,z) ] = V'(a,z) = V( L_{-1}.a , z)  \in \S$ 
 \item[(b)]  $[L_{0}, V(a,z) ] = z.V'(a,z)  + s.V(a,z)  $ \end{description}   \end{corollary}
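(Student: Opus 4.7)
The plan is to invoke the general ``Relation with $T$ and $D$'' corollary established at the end of the \emph{System of generators} subsection, specialized to the fermion setting via the identifications just proved.

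First, recall that in the general framework we showed: for $a \in H_\alpha$,
\[
[T, V(a,z)] = V'(a,z) = V(T.a, z) \in \S, \qquad [D, V(a,z)] = z\,V'(a,z) + \alpha\, V(a,z).
\]
Second, by the immediately preceding lemma, in the case $H = \F_{NS}$ with the system of generators $\{\psi\}$, the operators $D$ and $T$ introduced by induction on $\psi_{-m}$ coincide with the Virasoro modes $L_{0}$ and $L_{-1}$ respectively. Finally, the grading in Definition~\ref{def} identifies $H_\alpha$ (eigenspace of $D$ for eigenvalue $\alpha$) with the $L_0$-eigenspace for eigenvalue $s = \alpha$.

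Substituting $T = L_{-1}$, $D = L_0$, and $\alpha = s$ into the two displayed formulas yields exactly statements (a) and (b). Part (a) gives both the commutation relation $[L_{-1}, V(a,z)] = V'(a,z)$ and the identity $V'(a,z) = V(L_{-1}.a, z)$, together with the closure property $V'(a,z) \in \S$ (which also follows from the fact that $\S$ is stable under $A \mapsto A'$, established earlier). Part (b) gives the weighted grading relation for $L_0$.

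There is no real obstacle: all the work has already been done in the general framework, and the only thing to check is that the two $D$'s (resp. two $T$'s) agree, which is precisely the content of the preceding lemma. Thus the proof reduces to a two-line invocation of previously proved statements together with the identification $D = L_0$, $T = L_{-1}$.
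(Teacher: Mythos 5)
Your proposal is correct and matches the paper's intent exactly: the paper states this corollary without proof, treating it as the immediate specialization of the general ``Relation with $T$ and $D$'' corollary once the preceding lemma has identified $D = L_{0}$ and $T = L_{-1}$ on $\F_{NS}$. Your substitution argument is precisely that reasoning, made explicit.
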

 
 \begin{remark}  $\forall A \in \S$, $A' = (L_{0}A)$,  so, by Dong's lemma, we finally don't need here to $A \mapsto A' $ for the construction of $\S$.      \end{remark} 
 
 \begin{lemma} (OPE) \ \ $ \label{s2}     L(z) L(w)  \sim     \frac{(c/2) Id }{(z-w)^{4}} +   \frac{2  L(w) }{(z-w)^{2}} +   \frac{ L'(w) }{(z-w)}    $    \end{lemma}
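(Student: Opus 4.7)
The approach is to read the OPE directly off Corollary \ref{OPE} and Formula \ref{formula}, computing each relevant $L_nL \in \S$ via the state-field correspondence. First, since $L = \frac{1}{2}\psi_{-2}\psi \in \S_{\bar 0}$ is built from $\psi$, Lemma \ref{7} (two invocations of Dong's lemma) gives that $L$ is local with itself, with some locality constant $N \in \NNN$ and parity $\varepsilon = \bar 0$. Corollary \ref{OPE} then reads
\[ L(z)L(w) \sim \sum_{n=0}^{N-1}\frac{(L_nL)(w)}{(z-w)^{n+1}}, \]
and by the definition of the state-field map one has $(L_nL)(w)=V(L(n)\omega,w)=V(L_{n-1}\omega,w)$ where $\omega = R(L) = \frac{1}{2}\psi_{-3/2}\psi_{-1/2}\Omega \in H_2$.

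The whole proof thus reduces to computing $L_k\omega$ for $k = -1, 0, 1, 2, 3,\dots$. For each $k$ I would expand $L_k(\psi_{-3/2}\psi_{-1/2}\Omega)$ through the two fermion factors by the previous lemma $[L_m,\psi_n] = -(n+m/2)\psi_{m+n}$ (ordinary Leibniz, since $L_m$ is even) together with $L_k\Omega = 0$ for $k \geq -1$. The expected outputs are: $L_{-1}\omega = T\omega$; $L_0\omega = 2\omega$ (also immediate since $\omega\in H_2$); $L_1\omega = 0$, because one term carries $\psi_{-1/2}^2\Omega = 0$ (CAR) and the other $\psi_{1/2}\Omega = 0$ (vacuum); $L_2\omega = \tfrac{1}{4}\Omega$, the sole surviving contribution being $\tfrac{1}{2}\cdot\tfrac{1}{2}\psi_{1/2}\psi_{-1/2}\Omega = \tfrac{1}{4}\Omega$; and $L_k\omega = 0$ for $k\geq 3$, automatically from positive energy since $L_k\omega\in H_{2-k}$ with $2-k<0$ and $H = \bigoplus_{n\geq 0} H_n$.

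Translating back through $V$: the coefficient of $(z-w)^{-1}$ is $V(T\omega,w) = L'(w)$; of $(z-w)^{-2}$ is $V(2\omega,w) = 2L(w)$; of $(z-w)^{-3}$ vanishes; of $(z-w)^{-4}$ is $\tfrac{1}{4}V(\Omega,w) = \tfrac{1}{4}Id$; all higher poles vanish, which simultaneously pins the locality constant to $N=4$. This is exactly the stated OPE, with the central charge identified by $c/2 = 1/4$, i.e.\ $c = \tfrac{1}{2}$. The only mildly delicate step is the $k=2$ calculation, where one must carefully sum the two contributions from $[L_2,\psi_{-3/2}]\psi_{-1/2}\Omega$ and $\psi_{-3/2}[L_2,\psi_{-1/2}]\Omega$ and note that the second one is killed by $\psi_{3/2}\Omega = 0$ — this single surviving CAR term is the mechanism producing the central charge, and the rest is mechanical bookkeeping.
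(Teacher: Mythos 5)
Your proposal is correct and follows essentially the same route as the paper: truncate at $N=4$ by grading, then evaluate $(L_nL)(w)=V(L_{n-1}\omega,w)$ term by term using $[L_m,\psi_n]=-(n+\tfrac{1}{2}m)\psi_{m+n}$. The only cosmetic difference is at the fourth-order pole, where the paper identifies $L_2L_{-2}\Omega=\Vert L_{-2}\Omega\Vert^{2}\,\Omega$ abstractly via grading and unitarity before computing the norm, whereas you compute $L_2\omega=\tfrac{1}{4}\Omega$ directly by commutators --- both give $c=\tfrac{1}{2}$.
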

 \begin{proof}
 $  \quad   L_{n}L (w) =  V(L(n)L(-1) \Omega , w) = V(L_{n-1} L_{-2} \Omega , w) = 0 $  if $n \ge 4 $. \\
 Then, here, $N = 4$, so, for $0 \le n \le N-1 $:  
  \begin{description}
 \item[(a)]    $ V(L_{-1} L_{-2} \Omega , w) = L'(w) $
 \item[(b)]    $L_{0} L_{-2} \Omega =  2 L_{-2} \Omega = 2 R(L)  \quad  $    because   $L_{-2} \Omega  \in H_{2} $
 \item[(c)]    $L_{1} L_{-2} \Omega = \1/2 L_{1} \psi_{-\3/2}\psi_{-\1/2}  \Omega =  \1/2 [ L_{1},  \psi_{-\3/2}]\psi_{-\1/2} \Omega = \1/2  \psi_{-\1/2}^{2}\Omega = 0  $
 \item[(d)]    $ L_{2} L_{-2} \Omega  \in H_{0} = \CCC \Omega $,  so,  $L_{2} L_{-2} \Omega = K  \Omega$   with   $K = \Vert L_{-2} \Omega  \Vert ^{2}$  
 \end{description}  \end{proof} \begin{notation} $c:=  2 \Vert L_{-2} \Omega  \Vert ^{2}, $ the central charge.  \\ 
 $ (  \mathrm{here} \   c =  \1/2 (  \psi_{-\3/2}\psi_{-\1/2}  \Omega ,  \psi_{-\3/2}\psi_{-\1/2}  \Omega ) = \1/2  ) $  \end{notation}
 
  \begin{notation} Let $\delta_{k} = \left\{  \begin{array}{c}     0   \quad  \mathrm{ if  } \quad    k \neq 0   \\   Id    \quad  \mathrm{ if  }  \quad   k = 0   \end{array}   \right.   $    \end{notation}
  
 \begin{lemma} (Lie bracket) \ $   [L_{m} , L_{n} ] = (m-n) L_{m+n}  +  \frac{c}{12} m(m^{2}-1) \delta_{m+n}    $. \end{lemma}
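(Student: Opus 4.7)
The plan is to apply the Lie bracket formula (Formula \ref{lie}) directly with $A = B = L$. First I observe that since $L = \frac{1}{2}\psi_{-2}\psi$ and $\psi \in \S_{\bar 1}$, Lemma \ref{7}(a) gives $L \in \S_{\bar 1 + \bar 1} = \S_{\bar 0}$; hence by Lemma \ref{7}(b), $L$ is local with itself with parameter $\varepsilon = \bar 0$, and by Lemma \ref{s2} the locality order is $N = 4$. The one piece of bookkeeping to set up is the index shift: because $L(w) = \sum L_n w^{-n-2}$, the general convention $A(k) = $ coefficient of $w^{-k-1}$ gives $L(k) = L_{k-1}$, so computing $[L_m, L_n]$ means evaluating $[L(m+1), L(n+1)]$.

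Next I substitute the OPE data from Lemma \ref{s2} into Formula \ref{lie}. Translated to modes, the four fields $L_p L$ become $(L_0 L)(k) = -k L_{k-2}$, $(L_1 L)(k) = 2 L_{k-1}$, $(L_2 L)(k) = 0$, and $(L_3 L)(k) = (c/2)\delta_{k,-1} Id$ (using that the constant field $(c/2) Id$ has only the mode at $k = -1$). Treating the case $m \ge -1$ first, the sum $\sum_{p=0}^{3} \binom{m+1}{p} (L_p L)(m+n+2-p)$ reduces to three non-zero contributions:
\begin{displaymath}
p = 0 : -(m+n+2) L_{m+n}, \qquad p = 1 : 2(m+1) L_{m+n}, \qquad p = 3 : \tbinom{m+1}{3} \tfrac{c}{2} \delta_{m+n} Id.
\end{displaymath}
The $L_{m+n}$-coefficient collapses to $-(m+n+2) + 2(m+1) = m - n$, and $\binom{m+1}{3}\,c/2 = \frac{c}{12} m(m^2-1)$, yielding exactly the claimed identity.

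For $m < -1$ I would use the second branch of Formula \ref{lie}: the relevant binomials become $(-1)^p \binom{p-m-2}{p}$, and a direct check at $p = 1$ and $p = 3$ shows they reproduce the same coefficients $2(m+1)$ and $\frac{1}{6} m(m^2-1)$ as in the first branch, so the formula is uniform in $m$. The only real risk in the proof is careful tracking of the shift $L(k) = L_{k-1}$ and the fact that $Id$, as a field, has its single non-zero mode at $k = -1$; once those bookkeeping matters are fixed, the rest is direct substitution into the machinery already set up in Proposition \ref{litre} and Lemma \ref{s2}.
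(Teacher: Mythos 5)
Your proof is correct and takes essentially the same route as the paper's: substitute the OPE data of Lemma \ref{s2} into the Lie bracket formula of Proposition \ref{litre}, with the index shift $L(k)=L_{k-1}$ and the observation that the constant field $ (c/2)Id$ has its single nonzero mode at $k=-1$. The only difference is that you check the $m<-1$ branch explicitly, where the paper simply asserts ``we find the same result.''
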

 \begin{proof}
By lemma \ref{7},    $L  \in \S_{\bar{0}}$,  and by formula \ref{lie}: \\ 
 If $m+1 \ge 0$, then:    $[ L(m+1) , L(n+1) ] =  \\ 
 C_{m+1}^{0} L'(m+n+2) + 2C_{m+1}^{1} L(m+n+2-1) + \frac{c}{2}  C_{m+1}^{3}  Id(m+n+2-3) \\
  =   -(m+n+2)L(m+n+2) + 2(m+1) L(m+n+1) +  \frac{c}{2} \frac{m(m^{2}-1)}{6} \delta_{m+n} \\
  =   (m-n) L(m+n+1)  +  \frac{c}{12} m(m^{2}-1) \delta_{m+n}$ \\   
    We find the same result for $ m+1 < 0$ \end{proof}
      \begin{remark} \label{star}  $ L_{m}^{\star } = L_{-m}$   \end{remark} 
    \begin{proof}  $[\psi_{-n} ,  L_{m}^{\star }] = [L_{m} ,\psi_{n}  ]^{\star } = - (n + \1/2 m) \psi_{-m-n} = [\psi_{-n} ,  L_{-m} ]   $, 
   then the result follows by irreducibility, Schur's lemma and grading. \end{proof}     \newpage 
    \begin{remark} The ($L_{n}$) generate a Virasoro algebra $\Vir$.  \end{remark} 
 \begin{corollary} $\Vir$  acts on $H =  \F_{NS} $, and admits $L(c,h) = L( \1/2 , 0)$ \\  as minimal submodule containing $ \Omega$. \end{corollary}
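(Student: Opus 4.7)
The plan is to identify the submodule of $H = \F_{NS}$ generated by $\Omega$ as the irreducible quotient $L(1/2, 0)$, using the uniqueness of the $\star$-invariant sesquilinear form on a Verma module together with the fact that $H$ is prehilbert. First, I verify that $\Omega$ is a highest weight vector for $\Vir$ with parameters $(c,h)=(1/2,0)$. Since $L$ is graded with $\alpha = 2$, we have $L_n : H_m \to H_{m-n}$, so $\Omega \in H_0$ forces $L_n \Omega = 0$ for all $n \geq 1$, which is exactly $\Vir^+_{1/2}\Omega = \{0\}$ in the bosonic sense (there are no $G$-modes here because we have not yet introduced the full superalgebra action, but the same argument applies only to the $L_n$'s). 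Moreover $L_0 \Omega = D\Omega = 0$ gives $h=0$, and the central charge $c = 2\Vert L_{-2}\Omega\Vert^2 = 1/2$ was computed in the previous notation. Together with $L_m^\star = L_{-m}$ (remark \ref{star}), these are exactly the defining data of a unitary highest weight representation with $(c,h)=(1/2,0)$.

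Second, let $M \subset H$ be the minimal $\Vir$-submodule containing $\Omega$. By the PBW decomposition $\U(\Vir_{1/2}) = \U(\Vir^-_{1/2})\cdot\U(\Vir^0_{1/2})\cdot\U(\Vir^+_{1/2})$ applied to the purely bosonic Virasoro, $M$ is spanned by vectors $L_{-i_\alpha}\cdots L_{-i_1}\Omega$. Hence the universal property of the Verma module $V(1/2,0)$ yields a surjective $\Vir$-module morphism $\pi : V(1/2,0)\to M$ sending the cyclic vector $\Omega_V$ to $\Omega$.

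Third, I compare the canonical sesquilinear form $(\cdot,\cdot)_V$ on $V(1/2,0)$ with the form obtained by pulling back the inner product $(\cdot,\cdot)_H$ through $\pi$. Both forms are $\star$-invariant, both satisfy $(\Omega_V,\Omega_V)=1$, and both are uniquely determined on a Verma module by those two properties together with sesquilinearity. Hence they coincide: $(u,v)_V = (\pi(u),\pi(v))_H$ for all $u,v \in V(1/2,0)$.

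Finally, since $H$ is prehilbert, the inner product $(\cdot,\cdot)_H$ restricted to $M$ is positive definite, so $\pi(v)=0$ if and only if $(\pi(v),\pi(u))_H = 0$ for every $u$, which by the previous identification is equivalent to $v \in K(1/2,0)$. Thus $\ker \pi = K(1/2,0)$, giving a $\Vir$-module isomorphism $M \cong V(1/2,0)/K(1/2,0) = L(1/2,0)$, and this $M$ is the minimal submodule containing $\Omega$ by construction. The only genuinely nontrivial point is the coincidence of the two sesquilinear forms via $\pi$; everything else is bookkeeping with the formulas already established.
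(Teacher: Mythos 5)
Your proof is correct. The paper itself offers no written proof of this corollary: it is meant to follow immediately from the machinery of Section 2, namely the complete reducibility theorem (any non-zero vector of lowest energy in a unitary positive energy representation generates an irreducible submodule) together with the uniqueness theorem (irreducibles are determined up to unitary equivalence by $(c,h)$), applied to the Virasoro action on $\F_{NS}$ once one knows $L_{n}\Omega=0$ for $n\ge 0$, $L_{m}^{\star}=L_{-m}$ and $c=2\Vert L_{-2}\Omega\Vert^{2}=\1/2$. You instead argue directly: the universal property of the Verma module gives a surjection $V(\1/2,0)\to M$, the uniqueness of the contravariant sesquilinear form identifies the pullback of the inner product of $H$ with the canonical form on $V(\1/2,0)$, and positive definiteness on $M$ forces $\ker\pi = K(\1/2,0)$, hence $M\cong L(\1/2,0)$. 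Both routes are sound and rest on the same ingredients ($\star$-structure, grading, prehilbert structure); yours has the merit of being self-contained and of exhibiting the kernel explicitly, so that the no-ghost property of $V(\1/2,0)$ comes out as a byproduct rather than being presupposed, whereas the paper's route recycles the general theorems already proved for $\WW_{1/2}$ and is shorter. The one point worth flagging is that you correctly restrict the highest-weight condition to the $L_{n}$ only (no $G$-modes exist at this stage); had you invoked the super versions of the Section 2 theorems verbatim you would have needed to note that their proofs go through unchanged for the purely bosonic subalgebra.
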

 \begin{definition}     Let  call $L$ the Virasoro operator, \\  and $\omega =  R(L) = \1/2 \psi_{-\3/2}\psi_{-\1/2}  \Omega $, the Virasoro vector.  \end{definition}     
   
   \subsection{Vertex operator superalgebra}  
  \begin{definition} \label{vertexdef}  A vertex operator superalgebra is an $( H, V,  \Omega, \omega) $  with: 
 \begin{description}
\item[(a)] $H = H_{\bar{0}} \oplus  H_{\bar{1}} $ a prehilbert superspace.
\item[(b)] $ V:    H    \rightarrow      (End H)[[z,z^{-1}]] $ a  linear map.
 
\item[(c)] $ \Omega, \hspace{0,065cm} \omega \in H$  the  vacuum and Virasoro vectors.
 \end{description} Let $\S_{\varepsilon} = V (H_{\varepsilon} )$,  $\S = \S_{\bar{0}} \oplus \S_{\bar{1}}$ and  $  A(z) = V(a ,z) = \sum_{n \in \ZZZ}A(n)z^{-n-1} $,  \\
 then $( H, V, \Omega, \omega) $ satisfies the followings axioms: 
\begin{enumerate}
\item  (vacuum axioms):   $\forall A \in \S$ and $\forall n \in \NNN$, $A(n) \Omega = 0$, \\  $V(a,z)\Omega_{\vert z=0} = a $ and $V(\Omega , z) = Id $  
\item   (irreducibility axiom):   Let $\A = \{ A(n) \vert  A \in \S ,  n \in  \ZZZ \}$ then, \\  $\A$ acts irreducibly on $H$, so that $\A . \Omega = H$
\item   (locality axiom):   $\forall A \in \S_{\varepsilon_{1}}$, $\forall B \in \S_{\varepsilon_{2}}$,  $A$ and $B$ are local \\ 
(see definition \ref{local} and lemma \ref{3}),  with $\varepsilon = \varepsilon_{1} . \varepsilon_{2} $   and $A_{n}B \in \S_{\varepsilon_{1} +  \varepsilon_{2}}$  	
\item (Virasoro axiom):  $V(\omega , z) = L(z) = \sum_{n \in \ZZZ}L_{n} z^{-n-2} $ Virasoro operator  $ (L_{0} \Omega = L_{-1} \Omega =  0$ and $   \omega = L_{-2}\Omega ) $.  Let  $c=  2 \Vert  \omega \Vert^{2}$  the central charge: \\ 
$  [L_{m} , L_{n} ] = (m-n) L_{m+n}  +  \frac{c}{12} m(m^{2}-1) \delta_{m+n}   $ 
\item  ($L_{0}$ axioms)  $L_{0}$ decomposes $H$ into $\bigoplus_{ n  \in \NNN + \frac{1}{2} } H_{n} $ with $dim( H_{n} )< \infty  $,   $ H_{n} \perp H_{m} $   if $n \neq m$,    $ H_{\varepsilon } = \bigoplus_{ n \in  \NNN + \frac{\varepsilon }{2}} H_{n} $, $\Omega \in H_{0}$, $\omega \in H_{2}$, and \\  
 $\forall a \in H_{\alpha}$,  $[L_{0}, V(a,z) ] = z.V'(a,z)  + \alpha.V(a,z)  $  
 \item  ($L_{-1}$ axioms):     $[ L_{-1}, V(a,z) ] = V'(a,z) = V( L_{-1}.a , z)      \in \S   $
\end{enumerate}
 \end{definition}    \newpage
 
 \begin{corollary} A system of generators, generating a Virasoro operator $L \in \S $, with $D = L_{0}$ and $T = L_{-1}$, generates a vertex operator superalgebra.
 \end{corollary}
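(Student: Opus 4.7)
Set $\omega := R(L)$. Since $L(z)=\sum_n L_n z^{-n-2}$ gives $L(n)=L_{n-1}$, we have $\omega = L(-1)\Omega = L_{-2}\Omega$; moreover the identification $D=L_0$ combined with the Virasoro brackets forces $L$ to be graded with $\alpha=2$, so by Lemma \ref{8} we get $\omega\in H_{2}$. The data $(H,V,\Omega,\omega)$ is thus well-defined, and the plan is to verify the six axioms of Definition \ref{vertexdef} in turn, essentially as direct citations of results already assembled.

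Axioms (1), (2), and (3) are immediate. The vacuum identities $A(n)\Omega=0$ (for $n\ge 0$ and $A\in\S$), $V(a,z)\Omega|_{z=0}=a$, and $V(\Omega,z)=Id$ follow respectively from Lemma \ref{6} extended to all of $\S$ by its inductive construction, from the defining property $R\circ V = \mathrm{id}_{H}$ of the state-field correspondence, and from $V(\Omega,z)=V(R(Id),z)=Id$. Irreducibility holds because $\A$ contains the irreducibly-acting family $\{A_i(m)\}$ from Definition \ref{def}(e). The locality axiom, including the parity rule $\varepsilon = \varepsilon_1\varepsilon_2$ and the closure $A_{n}B\in\S_{\varepsilon_{1}+\varepsilon_{2}}$, is exactly Lemma \ref{7}.

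For axiom (4), the hypothesis that $L$ is a Virasoro operator supplies the commutation relations with some central constant $\tilde c$; the task is to identify $\tilde c = 2\|\omega\|^{2}$. Using $L_{0}\Omega = D\Omega = 0$ and $L_{2}\Omega = L(3)\Omega = 0$ (the latter from the vacuum identity of Lemma \ref{6}), the bracket evaluation gives
\[
L_{2}L_{-2}\Omega = [L_{2},L_{-2}]\Omega = \Bigl(4L_{0} + \tfrac{\tilde c}{2}\Bigr)\Omega = \tfrac{\tilde c}{2}\,\Omega .
\]
Pairing with $\Omega$ and invoking the unitarity relation $L_{2}^{\star} = L_{-2}$ (inherited from the $\star$-structure carried by the prehilbert space $H$) yields $\tilde c /2 = (L_{-2}\Omega, L_{-2}\Omega) = \|\omega\|^{2}$, as required. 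Axioms (5) and (6) then follow from the corollary ``Relation with $T$ and $D$'' of the preceding subsection together with the identifications $D=L_0$ and $T=L_{-1}$; the grading and the parity compatibility $H_{\varepsilon} = \bigoplus_{n\in\NNN+\varepsilon/2}H_{n}$ come from Definition \ref{def}(c) and from the lemma $R(\S_{\varepsilon})=H_{\varepsilon}$.

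There is no genuine obstacle here: the corollary is essentially a bookkeeping statement packaging the lemmas of the section into the six points of Definition \ref{vertexdef}. The only step that is not an outright citation is the central-charge identification in axiom (4), which is the one-line commutator computation above, exactly parallel to the fermion-model calculation of Lemma \ref{s2}.
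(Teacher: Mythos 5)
Your verification is correct and follows exactly the route the paper intends: the paper states this corollary without a written proof, treating it as a direct packaging of the preceding lemmas (Lemma \ref{6} and the state-field correspondence for the vacuum axioms, Definition \ref{def}(e) for irreducibility, Lemma \ref{7} for locality, Definition \ref{def}(c) with $R(\S_{\varepsilon})=H_{\varepsilon}$ for the grading, and the corollary on relations with $T$ and $D$ for the $L_{0}$ and $L_{-1}$ axioms). Your only substantive addition, the identification $c=2\Vert\omega\Vert^{2}$ via $L_{2}L_{-2}\Omega=\frac{c}{2}\Omega$, is precisely the computation the paper carries out in the fermion case (point (d) of Lemma \ref{s2}, which tacitly uses $L_{2}^{\star}=L_{-2}$ just as you do), so the two arguments agree.
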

 
 \begin{corollary}
 The fermion operator $\psi$ generates a vertex operator superalgebra , with Virasoro vector $\omega =  \1/2 \psi_{-\3/2}\psi_{-\1/2} \Omega$. 
 \end{corollary}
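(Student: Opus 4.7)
The plan is to invoke the immediately preceding corollary, which says that any system of generators whose closure under $(A,B)\mapsto A_nB$ and $A\mapsto A'$ contains a Virasoro operator $L$ with $D=L_0$, $T=L_{-1}$ automatically yields a vertex operator superalgebra. Everything needed has already been assembled in Section~3.5, so the proof will be a verification/bookkeeping exercise rather than new computation.

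First I would recall that Section~3.5 proved $\{\psi\}$ is a system of generators (checking locality with $N=1$, $\varepsilon=\bar 1$, the grading $\alpha=\1/2$, the relations with $D$ and $T$, and irreducibility of the $\psi_n$). Then I would point to the OPE $\psi(z)\psi(w)\sim Id/(z-w)+2L(w)$ to identify $L=\1/2\,\psi_{-2}\psi\in\S$ as an element of the generated space, with $R(L)=\omega=\1/2\,\psi_{-\3/2}\psi_{-\1/2}\Omega$. The Virasoro relation $[L_m,L_n]=(m-n)L_{m+n}+\tfrac{c}{12}m(m^2-1)\delta_{m+n}$ with $c=2\|\omega\|^2=\1/2$ was already derived via the OPE $L(z)L(w)$ and Formula~\ref{lie}, so $L$ is a Virasoro operator in $\S$.

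Next I would invoke the lemma showing $D=L_0$ and $T=L_{-1}$, proved in Section~3.5 by computing the brackets $[L_0,\psi_n]$ and $[L_{-1},\psi_n]$, matching them against $[D,\psi_n]$ and $[T,\psi_n]$, and concluding by irreducibility plus Schur together with $L_0\Omega=L_{-1}\Omega=0$ (Lemma~\ref{6}). With $L\in\S$ acting as the required Virasoro operator and the identifications $D=L_0$, $T=L_{-1}$ in hand, the preceding corollary delivers directly axioms~(4), (5), (6), (7) of Definition~\ref{vertexdef}, while axioms~(1)--(3) are built into the notion of system of generators (via Lemmas~\ref{6}, \ref{7} and Definition~\ref{def}) and the state-field map $V$ constructed in Section~3.3.

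The only mild subtlety — and the one place I would spend words in the write-up — is confirming that the $\ZZZ_2$-grading of $H$ induced by $L_0$ agrees with the superspace decomposition $H=H_{\bar 0}\oplus H_{\bar 1}$ dictated by parity of $\psi$-strings on $\Omega$, i.e.\ that $R(\S_\varepsilon)=H_\varepsilon$ with $\varepsilon$ matching the locality sign; this is exactly the content of Lemma~\ref{8} and the lemma $R(\S_\varepsilon)=H_\varepsilon$, which together with $\omega\in H_2\subset H_{\bar 0}$ closes the check. Thus the final assertion follows by direct application of the preceding corollary, and no real obstacle remains beyond naming the ingredients in the right order.
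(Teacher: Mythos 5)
Your proposal is correct and matches the paper's (implicit) argument exactly: the paper states this corollary without a separate proof, precisely because it follows from the preceding corollary once the subsection on the fermion algebra has established that $\{\psi\}$ is a system of generators, that $L=\1/2\,\psi_{-2}\psi\in\S$ satisfies the Virasoro relation with $c=\1/2$, and that $D=L_{0}$, $T=L_{-1}$. Your additional remark on matching the $\ZZZ_{2}$-grading via $R(\S_{\varepsilon})=H_{\varepsilon}$ is exactly the bookkeeping the paper delegates to its earlier lemmas, so nothing is missing.
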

  \begin{remark}  The Virasoro operator $L$ alone, generates the minimal vertex operator (super)algebra.
  \end{remark}
  \begin{remark} 
Let $A(z) = V(a,z)$ and  $B(w) = V(b, w) $; the formula \ref{formula} is general, so similary, by vacuum axioms,  $A_{n}B(w) = V(A(n)b , w) $. 
   \end{remark}
 \begin{proposition} (Borcherds associativity)  $ \exists N \in \NNN $ such that $\forall c, d \in H$:  
 \\Ê $(z-w)^{N}(V(a,z)V(b,w)c,d) =  (z-w)^{N} (V(V(a, z-w)b,w)c,d) $
 \end{proposition}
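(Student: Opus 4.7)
The plan is to identify both sides of the identity, after multiplication by the locality factor $(z-w)^{N}$, with one and the same polynomial obtained from the OPE of $A(z) := V(a,z)$ with $B(w) := V(b,w)$. By the locality axiom, $A$ and $B$ are local, so corollary \ref{OPE} supplies an $N \in \NNN$ with $A_{n}B = 0$ for $n \ge N$, and lemma \ref{3} gives a polynomial $P(z,w) \in \CCC[z^{\pm 1}, w^{\pm 1}]$ such that $(z-w)^{N}(A(z)B(w)c,d) = P(z,w)$ as formal series (not merely in the domain $|z|>|w|$). Combining this with the OPE expansion of proposition \ref{investig} yields
$$P(z,w) = \sum_{n=0}^{N-1}((A_{n}B)(w)c,d)(z-w)^{N-n-1},$$
where each positive power of $(z-w)$ is interpreted by the binomial theorem.

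For the right hand side I would unfold $V(V(a,z-w)b,w)$ using the general formula $R(A_{n}B) = A(n)R(B) = A(n)b$ established in the state-field correspondence examples. Writing $V(a,\zeta)b = \sum_{n \in \ZZZ}A(n)b\cdot \zeta^{-n-1}$ and using $A(n)b = R(A_{n}B) = 0$ for $n \ge N$, one gets
$$V(a,z-w)b \;=\; \sum_{n<N} R(A_{n}B)\,(z-w)^{-n-1}.$$
Applying $V(\,\cdot\,,w)$ coefficient-wise by linearity of $V$, together with the tautology $V(R(A_{n}B),w) = (A_{n}B)(w)$, produces
$$V(V(a,z-w)b,w) \;=\; \sum_{n<N}(A_{n}B)(w)(z-w)^{-n-1}.$$
Multiplying by $(z-w)^{N}$ clears all negative powers of $(z-w)$ and gives exactly $P(z,w)$ when paired against $c,d$, which matches the left hand side and finishes the proof.

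The main obstacle is bookkeeping between three different kinds of object: the rational function $X(A,B,c,d) \in (z-w)^{-N}\CCC[z^{\pm 1},w^{\pm 1}]$ of proposition \ref{investig}, its various Laurent expansions valid only in one of the regions $|z|>|w|$ or $|w|>|z|$ or near $z=w$, and the honest formal series $(V(a,z)V(b,w)c,d) \in \CCC[[z^{\pm 1},w^{\pm 1}]]$. The statement only makes formal-series sense once multiplied by $(z-w)^{N}$, and the whole content of lemma \ref{3} is that after this multiplication the ambiguity between the two sides of $|z| = |w|$ disappears and one recovers the polynomial $P(z,w)$. Granted this, the rest is bookkeeping: the vacuum axioms and the identity $R(A_{n}B) = A(n)b$ let one match the expansion of $V(V(a,z-w)b,w)$ with the OPE expansion of $A(z)B(w)$ term by term, and the matching is an equality of Laurent polynomials in $(z-w)$.
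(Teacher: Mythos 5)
Your proof follows the same route as the paper's: the paper's argument is precisely the chain $V(a,z)V(b,w)=\sum_{n}(A_{n}B)(w)(z-w)^{-n-1}=\sum_{n}V(A(n)b,w)(z-w)^{-n-1}=V(V(a,z-w)b,w)$, using the expansion of $X(A,B,c,d)$ around $z=w$ from proposition \ref{investig} together with the identification $A_{n}B=V(A(n)b)$ (equivalently your $R(A_{n}B)=A(n)b$ and $V(R(A_{n}B),w)=(A_{n}B)(w)$). Your version is more careful than the paper's about which objects are formal series and which are Laurent expansions in a region, which is a genuine improvement in rigor.

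There is, however, one concrete error you must fix. You write
\begin{displaymath}
P(z,w)=\sum_{n=0}^{N-1}\bigl((A_{n}B)(w)c,d\bigr)(z-w)^{N-n-1},
\end{displaymath}
but proposition \ref{investig} expands $X(A,B,c,d)$ over \emph{all} $n\in\ZZZ$; only the terms with $n\ge N$ vanish, so the correct range is $n\le N-1$, including all negative $n$. The terms with $n<0$ are the regular part of the OPE (the normal-ordered products $A_{-1}B, A_{-2}B,\dots$), and they are generically nonzero; the corollary labelled OPE with ``$\sim$'' records only the singular part. These negative-$n$ terms are exactly what must match the nonnegative powers of $(z-w)$ coming from your (correct) right-hand expansion $V(V(a,z-w)b,w)=\sum_{n<N}(A_{n}B)(w)(z-w)^{-n-1}$; as written, your two sides have different index sets and the claimed term-by-term identification fails. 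Restoring the full range $n\le N-1$ on the left repairs the proof, at the cost that $(z-w)^{N}$ times either side is not a Laurent polynomial in $(z-w)$ but a formal power series in $(z-w)$ with coefficients in $\CCC[w^{\pm 1}]$ (since $z^{k}$ with $k<0$ expands into infinitely many powers of $(z-w)$); the matching is then an equality of such series, which is all that is needed.
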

 \begin{proof}
 To simplify the proof, we don't write: \\  "$ \exists N \in \NNN $ such that $\forall c, d \in H$ $(z-w)^{N}( \  . \  c,d) $",   but it is implicit. \\ \\
 $V(a,z)V(b,w) = A(z)B(w) = \sum A_{n}B(w) (z-w)^{-n-1} \\ = \sum V(A(n)b , w) (z-w)^{-n-1} = V(\sum A(n)b (z-w)^{-n-1} , w) \\ = V(\sum A(n)(z-w)^{-n-1}  b , w)  = V(V(a, z-w)b,w) $.
 \end{proof}

\newpage 
\section{Vertex $\gg$-superalgebras and modules}   \label{vertex4}
\subsection{Preliminaries }
\subsubsection{Simple  Lie algebra $\gg$}   Let $\gg$ be a simple Lie algebra of dimension $N$,  a basis $( X_{a} )$  \\Êwith 
$ [X_{a} , X_{b} ] = i \sum_{c} \Gamma_{ab}^{c} X_{c}$    with   $\Gamma_{ab}^{c}  \in \RRR$ totally antisymmetric. 
\begin{lemma}   Let   $\C = \sum_{b} X_{b}^{2}$, then $[\gg, \C ] = 0 $
\end{lemma}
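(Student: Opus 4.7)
The plan is to prove this by direct computation, exploiting the total antisymmetry of $\Gamma_{ab}^{c}$ which is stated as a hypothesis on the chosen basis. It suffices by linearity to show $[X_{a}, \C] = 0$ for each generator $X_{a}$, since then $\C$ commutes with every element of $\gg$.

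First I would expand using the Leibniz rule for the bracket:
\begin{displaymath}
[X_{a}, \C] = \sum_{b} [X_{a}, X_{b}^{2}] = \sum_{b} \bigl([X_{a}, X_{b}] X_{b} + X_{b} [X_{a}, X_{b}]\bigr).
\end{displaymath}
Substituting the structure-constant formula $[X_{a}, X_{b}] = i\sum_{c} \Gamma_{ab}^{c} X_{c}$ yields
\begin{displaymath}
[X_{a}, \C] = i \sum_{b,c} \Gamma_{ab}^{c} \bigl(X_{c} X_{b} + X_{b} X_{c}\bigr).
\end{displaymath}

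The key step is then to swap the dummy indices $b \leftrightarrow c$ in the first half of the sum. Total antisymmetry of $\Gamma$ gives $\Gamma_{ac}^{b} = -\Gamma_{ab}^{c}$, so after relabeling the first term becomes $-\sum_{b,c} \Gamma_{ab}^{c} X_{b} X_{c}$, which cancels exactly with the second term. Hence $[X_{a}, \C] = 0$, and by linearity $[\gg, \C] = 0$.

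There is essentially no obstacle here: the only thing to be careful about is the bookkeeping of which pair of indices is being antisymmetrized. The whole content of the lemma is encoded in the total antisymmetry of $\Gamma_{ab}^{c}$, itself a consequence of the well-normalization mentioned in the paper; the computation above is then a two-line verification.
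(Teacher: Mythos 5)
Your proof is correct and follows exactly the same route as the paper's: expand $[X_{a},\C]$ by the Leibniz rule, substitute $[X_{a},X_{b}]=i\sum_{c}\Gamma_{ab}^{c}X_{c}$, and cancel the two resulting sums by relabeling dummy indices and invoking the antisymmetry $\Gamma_{ac}^{b}=-\Gamma_{ab}^{c}$. The index bookkeeping in your relabeling step checks out, so there is nothing to add.
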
   
\begin{proof}
It suffices to prove $[X_{a}, \C ] = 0 $  for each $X_{a}$. \\
 $[X_{a}, \C ] =   \sum_{b} [X_{a} , X_{b}^{2} ]  = \sum_{b} ([X_{a} , X_{b} ]X_{b}  + X_{b}[X_{a} , X_{b} ]) =   i \sum_{b,c} \Gamma_{ab}^{c} X_{c}X_{b}  +$ \\ 
$ i \sum_{b,c} \Gamma_{ab}^{c} X_{b}X_{c} =  i \sum_{b,c}( \Gamma_{ab}^{c} + \Gamma_{ac}^{b} ) X_{c}X_{b}  = 0 $ \quad by  antisymmetry.  \end{proof}
\begin{remark}  \label{casimir} $\C$ is a multiple of the \textbf{Casimir} of $\gg$. We suppose to have well normalized the basis such that $\C$ is exactly the Casimir. 
\end{remark}
\begin{corollary} \label{cas}
By Schur's lemma, $\C $ acts as multiplicative constant $c_{V}$ on each irreducible representation $V$.
\end{corollary}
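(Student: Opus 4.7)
The plan is to lift the previous lemma from the level of $\gg$ acting on itself (via the adjoint representation) to the level of an arbitrary irreducible representation $(V,\pi)$, and then conclude by Schur's lemma. First I would pass to the universal enveloping algebra: the element $\C = \sum_b X_b^2$ lives naturally in $\U(\gg)$, and the previous lemma $[\gg,\C]=0$ shows that $\C$ is central in $\U(\gg)$ (since the $X_a$ generate $\U(\gg)$ as an algebra and commutation with every generator implies commutation with every element).

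Next, I would transport this centrality through the representation: for any Lie algebra morphism $\pi : \gg \to \mathrm{End}(V)$, the induced algebra morphism $\U(\gg) \to \mathrm{End}(V)$ sends $\C$ to $\pi(\C) = \sum_b \pi(X_b)^2$, and this operator commutes with every $\pi(X_a)$. Thus $\pi(\C)$ is a $\gg$-intertwiner of $V$ with itself.

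Finally, I would invoke Schur's lemma (already named as \textbf{\slemma} in the preamble). Since $V$ is an irreducible representation of $\gg$ over $\CCC$ and, in the setting of the paper, is finite dimensional (as the basic building block $H_0 = V_\lambda$ in the highest-weight picture), every $\gg$-intertwiner $V \to V$ is a scalar. Hence $\pi(\C) = c_V \cdot \mathrm{Id}_V$ for a unique $c_V \in \CCC$, which we call the Casimir number of $V$.

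There is no real obstacle here: the only subtle point to acknowledge is the applicability of Schur's lemma, which requires $\mathrm{End}_\gg(V) = \CCC\cdot \mathrm{Id}$; this holds because $V$ is a finite-dimensional irreducible complex representation, so the standard algebraic form of Schur's lemma applies without needing any topological input.
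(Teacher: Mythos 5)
Your proposal is correct and is exactly the argument the paper intends: the corollary carries no separate proof beyond the phrase ``By Schur's lemma'' in its statement, relying on the preceding lemma $[\gg,\C]=0$ to make $\pi(\C)$ a $\gg$-intertwiner of the irreducible $V$. Your write-up merely spells out the standard details (centrality in $\U(\gg)$, finite-dimensionality for Schur), so there is nothing genuinely different to compare.
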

\begin{example} $\gg$ is simple, it  acts irreducibly on  $V = \gg$ with $ad$.  \end{example}
\begin{lemma}   $\sum_{a,c} \Gamma_{ac}^{b} . \Gamma_{ac}^{d} = \delta_{bd} c_{\gg}  $  
\end{lemma}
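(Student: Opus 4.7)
The plan is to apply the Casimir $\C = \sum_{b} X_{b}^{2}$ to a basis vector $X_{d}$ in the adjoint representation. By Corollary~\ref{cas} together with the preceding example (in which $\gg$ acts irreducibly on itself via $\mathrm{ad}$), $\C$ acts on $\gg$ as multiplication by the scalar $c_{\gg}$, so that $\sum_{b}[X_{b},[X_{b},X_{d}]] = c_{\gg}\,X_{d}$. I would then expand the nested bracket by applying $[X_{a},X_{b}] = i\sum_{c}\Gamma_{ab}^{c}X_{c}$ twice; the two factors of $i$ collapse to $-1$, producing
\[ -\sum_{b,e,f}\Gamma_{bd}^{e}\,\Gamma_{be}^{f}\,X_{f} \;=\; c_{\gg}\,X_{d}. \]
Linear independence of the $X_{f}$ yields the preliminary identity $-\sum_{b,e}\Gamma_{bd}^{e}\,\Gamma_{be}^{f} = c_{\gg}\delta_{df}$.

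The remaining step is pure index manipulation exploiting the total antisymmetry of $\Gamma$. The transposition of the first two slots gives $\Gamma_{bd}^{e} = -\Gamma_{db}^{e}$, which absorbs the overall minus sign; and the cyclic shift $(d,b,e)\mapsto(b,e,d)$ is an even permutation, so $\Gamma_{db}^{e} = \Gamma_{be}^{d}$. Plugging both in converts the left side into $\sum_{b,e}\Gamma_{be}^{d}\,\Gamma_{be}^{f}$, and renaming $b\to a$, $e\to c$ gives $\sum_{a,c}\Gamma_{ac}^{d}\,\Gamma_{ac}^{f} = c_{\gg}\delta_{df}$, which is the claim after relabeling $d,f$ as $b,d$.

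There is no serious obstacle here: the Casimir computation is standard, and the rest is mechanical. The only delicate point is keeping careful track of signs when invoking total antisymmetry, in particular remembering that a cyclic shift of three indices is an even permutation and therefore preserves the sign of $\Gamma$, whereas a transposition flips it.
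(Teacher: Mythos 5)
Your proposal is correct and follows essentially the same route as the paper: apply $\sum_a \mathrm{ad}_{X_a}^2$ to a basis vector of the adjoint representation, identify the eigenvalue $c_{\gg}$ via Schur's lemma, expand the double bracket with the structure constants, and use total antisymmetry to rearrange the indices (the paper does this with a single transposition absorbing the $i^2=-1$, whereas you use a transposition plus a cyclic shift, which amounts to the same sign bookkeeping). No gap.
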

 \begin{proof}  
 $ (\sum_{a}ad_{X_{a}}^{2})(X_{b}) = c_{\gg} X_{b} = \sum_{a}[X_{a}, [X_{a} , X_{b} ] ] \\ =  i^{2} \sum_{a,c,d} \Gamma_{ab}^{c} . \Gamma_{ac}^{d} X_{d}=\sum_{a,c,d} \Gamma_{ac}^{b} . \Gamma_{ac}^{d} X_{d} $. \\
Then,  $\sum_{a,c} \Gamma_{ac}^{b} . \Gamma_{ac}^{d}  = \delta_{bd} c_{\gg}  $ 
 \end{proof}
\begin{definition}  $ g= \frac{c_{\gg}}{2} $  is called the dual Coxeter number.

\begin{example}  $\gg = A_{1} =  \sl_{2}$, $dim(\gg) = 3$ \\                                              
 $[E,F] = H $, $[H,E] = 2E $, $[H,F] = -2F $, with Casimir $EF+FE+\1/2H^{2}$ \\      
We choose the  basis:  $X_{1}= \frac{i\sqrt{2}}{2} (E-F)$,  $X_{2}=  \frac{\sqrt{2}}{2}(E+F)$, $X_{3}= \frac{\sqrt{2}}{2}H$, \\   
with relations:  $[X_{1},X_{2}]= i\sqrt{2} X_{3}$, $[X_{3},X_{1}]= i\sqrt{2} X_{2}$,  $[X_{2},X_{3}]= i\sqrt{2}X_{1}$ \\    
$\C = \sum_{a} X_{a}^{2} = EF+FE+\1/2H^{2}$  and      $g= \1/2\sum_{a,b} (\Gamma_{ab}^{c}  )^{2} = 2$
\end{example}
\begin{center} Table (see \cite{8} p 111)   \end{center}  
\begin{tabular}{|c|c|c|c|c|c|c|c|c|c|}
\hline
  $\gg$ & $A_{n}$ & $B_{n}$  & $C_{n}$  & $D_{n}$  & $ E_{6}$ & $E_{7}$  & $E_{8}$ & $F_{4}$ & $G_{2}$    \\
  \hline
  $dim(\gg)$ & $n^{2} +  2n$ &$2n^{2} + n $&$2n^{2} + n $&$2n^{2} - n$ &$78$ & $133$&$ 248$ &$52$ &$14 $ \\
  \hline
   $g$ & $n+1$  & $2n-1$  & $n+1$  & $2n-2$  &$12$ & $18$&$30$ &$9$ & $4$ \\
\hline
\end{tabular} \end{definition}

 \newpage 
\subsubsection{Loop algebra $L \gg$ }    \label{loop}
\begin{definition} Let  $L \gg = C^{\infty}(\SSS^{1} , \gg)$ the loop algebra of $\gg$. \\  It's an infinite dimensional Lie $\star$-algebra,  admitting  the 
$X_{n}^{a} = X_{a} e^{in \theta}$ \\  as basis, with $n \in \ZZZ$ and  $(X_{a}) $ the base of $\gg$;  so:
\begin{displaymath} [X_{m}^{a} , X_{n}^{b}] = [ X_{a} , X_{b}]_{m+n}    \quad  \mathrm{and} \quad      (X_{n}^{a})^{\star} = X_{-n}^{a}
\end{displaymath}   \end{definition}
\begin{proposition} (Boson cocycle)  $L \gg $ has a unique central extension, up to equivalent, i.e. $H_{2}(L \gg , \CCC)$ is $1$-dimensional.
  $H_{2}(L \gg , \CCC)$ is $1$-dimensional. Let $\L$ the central element and $\widehat{\gg} _{+} = L \gg \oplus \CCC \L$ called $\gg$-boson algebra, then: 
\begin{displaymath} [X_{m}^{a} , X_{n}^{b}] = [ X_{a} , X_{b}]_{m+n} + m\delta_{ab}\delta_{m+n}.\L  \end{displaymath}
\end{proposition}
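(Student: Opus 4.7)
The plan is to follow the same overall strategy as in Proposition \ref{suvi}: take an arbitrary 2-cocycle $b : L\gg \times L\gg \to \CCC$, normalize it by subtracting appropriate coboundaries, and then exploit the cocycle identity together with simplicity of $\gg$ to pin down $b$ up to a single scalar. Write $b_{ab}(m,n) := b(X_m^a, X_n^b)$ so antisymmetry of $b$ gives $b_{ab}(m,n) = -b_{ba}(n,m)$ and the cocycle identity applied to $(X_m^a, X_n^b, X_p^c)$ reads
\begin{displaymath}
i\sum_d \Gamma_{ab}^d\, b_{dc}(m+n,p) + i\sum_d \Gamma_{bc}^d\, b_{da}(n+p,m) + i\sum_d \Gamma_{ca}^d\, b_{db}(p+m,n) = 0.
\end{displaymath}

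First I would reduce to the diagonal case $m+n=0$. Unlike $\WW$, the algebra $L\gg$ has no internal grading element, but the adjoint action of $X_0^e \in \gg_0$ on $L\gg$ preserves the grading. Specializing the cocycle identity with $p=0$ gives a $\gg$-equivariance relation among the bi-homogeneous components $b_{ab}(m,n)$ with $m+n=k$ fixed. I would then show that for each $k \ne 0$ the bi-homogeneous piece $b^{(k)}$ is a coboundary: define a $\star$-linear form $f$ supported on $\gg_k$ by $f(X_k^d) = \xi^d$, and note that the map $\xi \mapsto \partial f$ on degree-$(k,0)$ pieces has image $\{(X_a, X_b) \mapsto i\sum_d \Gamma_{ab}^d \xi^d\}$, which is onto because $\gg$ is simple ($[\gg,\gg]=\gg$); the cocycle identity furnishes the compatibility needed to extend this to all $(m, k-m)$. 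After this normalization, $b_{ab}(m,n) = c_{ab}(m)\delta_{m+n,0}$, with $c_{ab}(m) = -c_{ba}(-m)$.

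Next I would extract the flavor structure. Applying the cocycle identity to the triple $(X_0^c, X_m^a, X_{-m}^b)$ yields
\begin{displaymath}
\sum_d \Gamma_{ca}^d\, c_{db}(m) + \sum_d \Gamma_{cb}^d\, c_{ad}(m) = 0,
\end{displaymath}
which expresses that $c_{ab}(m)$ is, for each fixed $m$, an $\mathrm{ad}(\gg)$-invariant bilinear form on $\gg$. Combined with the antisymmetry relation $c_{ab}(m) = -c_{ba}(-m)$, the symmetric part in $(a,b)$ is $\gg$-invariant, and by simplicity of $\gg$ the space of invariant symmetric bilinear forms is one-dimensional (spanned by the Killing form, which is $\delta_{ab}$ in the normalized basis of remark \ref{casimir}). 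So $c_{ab}(m) = \lambda(m)\delta_{ab}$ for some function $\lambda : \ZZZ \to \CCC$ with $\lambda(-m) = -\lambda(m)$. Then I would feed $(X_m^a, X_n^a, X_{-m-n}^b)$ with appropriate choices back into the cocycle identity to obtain a linear recursion on $\lambda$, mirroring the step in the proof of Proposition \ref{suvi} that gave $A(n) = an + bn^3$; here the lower weight of the bracket forces the cubic term to vanish, yielding $\lambda(m) = \alpha m$ for a single constant $\alpha$. Rescaling $\L$ by $\alpha$ gives the claimed extension and shows $H_2(L\gg,\CCC)$ is one-dimensional.

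The main obstacle is the first step: the absence of a degree element inside $L\gg$ itself means the grading reduction must be done by hand, using the cocycle identity with $X_0^c$ and simplicity of $\gg$ to guarantee that the inhomogeneous parts are coboundaries. A secondary subtle point is the invocation of simplicity to reduce the invariant form to $\delta_{ab}$: one must handle both the symmetric and antisymmetric parts of $c_{ab}(m)$ and confirm, via antisymmetry in $(a,b,m,n)$, that the antisymmetric part necessarily vanishes on the diagonal $m+n=0$.
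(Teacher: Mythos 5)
First, a point of comparison: the paper does not actually prove this proposition --- its ``proof'' is the citation ``See \cite{prse} or \cite{1} p 46'' --- so you are supplying an argument the paper delegates entirely to the literature. Your strategy (normalize an arbitrary $2$-cocycle by coboundaries, reduce to the diagonal $m+n=0$, identify the diagonal components as invariant bilinear forms on $\gg$, extract a linear function of $m$) is the standard direct computation, and its second half is sound: once $b_{ab}(m,n)=\lambda(m)\delta_{ab}\delta_{m+n}$, the cocycle identity on $(X_m^a,X_n^b,X_p^c)$ with $m+n+p=0$ and $a,b,c$ chosen so that $\Gamma_{ab}^c\neq 0$ gives $\lambda(m)+\lambda(n)+\lambda(p)=0$, i.e.\ additivity, hence $\lambda(m)=\alpha m$ with no room for a cubic term. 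One correction there: the antisymmetric-in-$(a,b)$ part of the diagonal component vanishes because a simple Lie algebra carries no nonzero invariant \emph{antisymmetric} bilinear form, not merely ``via antisymmetry in $(a,b,m,n)$''.

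The genuine gap is in your first step. The image of $\xi\mapsto\partial f$ on the degree-$(k,0)$ piece is the set of bilinear forms of the shape $(X_a,X_b)\mapsto\phi([X_a,X_b])$ with $\phi\in\gg^{*}$; this is a $\dim\gg$-dimensional subspace of the $(\dim\gg)^{2}$-dimensional space of all bilinear forms on $\gg$, so it is \emph{not} onto --- $[\gg,\gg]=\gg$ only gives injectivity of $\phi\mapsto\phi\circ[\cdot,\cdot]$. To kill $b_{ab}(k,0)$ by a coboundary you must first prove that this component is itself of the form $\phi_{k}([X_a,X_b])$, and that is where the real content of the theorem sits: the cocycle identity on the triple $(X_k^a,X_0^b,X_0^c)$ shows that the antisymmetric part of $(X_a,X_b)\mapsto b_{ab}(k,0)$ is a $2$-cocycle on $\gg$, hence a coboundary by Whitehead's lemma $H^{2}(\gg,\CCC)=0$, while its symmetric part is forced to be an invariant form $\lambda\kappa$ whose Cartan $3$-form $\lambda\kappa([x,y],z)$ must be exact, whence $\lambda=0$ because $H^{3}(\gg,\CCC)\neq 0$. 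The same issue recurs for the $(0,0)$ component: your displayed invariance relation for $c_{ab}(m)$ silently drops the term $\sum_d\Gamma_{ab}^d\,b_{dc}(0,0)$, which must first be normalized away (again by Whitehead). With these insertions --- and a continuity hypothesis on the cocycle, since the $X_n^a$ span $L\gg=C^{\infty}(\SSS^{1},\gg)$ only topologically --- your outline closes up into a complete proof of the uniqueness statement; nontriviality of the exhibited cocycle then follows since any coboundary vanishes on the pairs $(X_m^a,X_{-m}^a)$.
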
  \begin{proof} See \cite{prse} or   \cite{1} p 46. \end{proof}
\begin{theorem}
The unitary highest weight representations   of  $\widehat{\gg} _{+}$ are \\ $H=L(V_{\lambda} , \ell)$ with: 
\begin{description}
\item[(a)]      $ \ell \in \NNN$ such that $\L \Omega  = \ell \Omega $  \ \ (the level of $H$).
\item[(b)]     $H_{0} = V_{\lambda}$ irreducible representation of $\gg$ such that: \\
$(\lambda , \theta) \le \ell$   with $\lambda$ the highest weight and $ \theta$ the highest root.  
\end{description}    \end{theorem}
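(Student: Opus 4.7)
The plan is to extract the necessary conditions on $(\lambda, \ell)$ from unitarity and positive energy, and then realize each admissible pair by a concrete construction.

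First, I would use the positive energy decomposition to analyze the lowest energy subspace $H_{0}$. The zero modes $X_{0}^{a}$ satisfy $[X_{0}^{a}, X_{0}^{b}] = [X_{a}, X_{b}]_{0}$ and preserve $H_{0}$, so $H_{0}$ is a unitary representation of $\gg$; since $H$ is generated from $\Omega \in H_{0}$, the subspace $H_{0}$ is cyclic as a $\gg$-module, hence irreducible by Schur's lemma, so $H_{0} = V_{\lambda}$ for some dominant integral highest weight $\lambda$. Centrality of $\L$ together with $\L^{\star} = \L$ forces $\L$ to act as a real scalar $\ell$, and the computation
\begin{displaymath}
\|X_{-1}^{a} \Omega\|^{2} = \langle [X_{1}^{a}, X_{-1}^{a}]\Omega, \Omega \rangle = \ell \, \|\Omega\|^{2}
\end{displaymath}
using the boson cocycle immediately forces $\ell \ge 0$.

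To obtain $\ell \in \NNN$ together with the bound $(\lambda, \theta) \le \ell$ simultaneously, I would embed a unitary $\sl_{2}$ into $\widehat{\gg}_{+}$ attached to the highest root $\theta$. Passing to a Chevalley basis $(E_{\theta}, F_{\theta}, H_{\theta})$ normalized so $(\theta, \theta) = 2$, the elements
\begin{displaymath}
E' := X_{1}^{F_{\theta}}, \qquad F' := X_{-1}^{E_{\theta}}, \qquad H' := \L - H_{\theta}
\end{displaymath}
form an $\sl_{2}$-triple (a direct check using the cocycle) satisfying $(F')^{\star} = E'$. Any highest weight vector $v_{\lambda} \in V_{\lambda}$ satisfies $E' v_{\lambda} \in H_{-1} = \{0\}$, so $v_{\lambda}$ is a highest weight vector for this $\sl_{2}$ of weight $H' v_{\lambda} = (\ell - (\lambda, \theta))v_{\lambda}$. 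The classification of unitary highest weight $\sl_{2}$-modules forces this weight to lie in $\NNN$, and combined with $(\lambda, \theta) \in \NNN$ (which follows from $\lambda$ dominant integral and $\theta$ a positive root) yields both $\ell \in \NNN$ and $(\lambda, \theta) \le \ell$.

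For uniqueness I would form the Verma module $M(V_{\lambda}, \ell) = U(\widehat{\gg}_{+}) \otimes_{U(\gg \oplus \CCC \L \oplus L^{+}\gg)} V_{\lambda}$, with $L^{+}\gg = \bigoplus_{n>0, a}\CCC X_{n}^{a}$ acting trivially and $\L$ as $\ell$, equip it with the canonical Hermitian form induced by $(X_{n}^{a})^{\star} = X_{-n}^{a}$, and define $L(V_{\lambda}, \ell)$ as the quotient by its kernel; any unitary positive-energy highest weight representation with the given invariants is then isomorphic to this quotient, exactly as in the $\Vir_{1/2}$ case treated earlier. The main obstacle is the sufficiency direction: proving that the canonical Hermitian form is positive semi-definite (equivalently, that $L(V_{\lambda}, \ell)$ is unitary) for every admissible pair $(\lambda, \ell)$. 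Direct Shapovalov-determinant computations are too delicate to perform by hand; I would instead invoke the standard Kac-Moody integrability argument from \cite{1} p.~45, which constructs $L(V_{\lambda}, \ell)$ as an integrable highest weight module of the affine Kac-Moody algebra $\widehat{\gg}_{+}$ whose unitarity is inherited from the compact real form.
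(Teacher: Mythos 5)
Your proposal is correct in outline, but it takes a genuinely different route from the paper for the simple reason that the paper gives no argument at all: its proof of this theorem is the single line ``See \cite{prse} or \cite{1} p 48.'' What you have written is essentially a reconstruction of the standard proof from those very references, so it is strictly more informative than the text it replaces. Your necessity direction is the right one: $\ell \ge 0$ from $\Vert X_{-1}^{a}\Omega\Vert^{2} = \ell$, and then the $\sl_{2}$-triple attached to the highest root $\theta$ (with coroot $\L - H_{\theta}$) applied to a highest weight vector of $V_{\lambda}$, whose $\sl_{2}$-weight $\ell - (\lambda,\theta)$ must lie in $\NNN$ by unitarity of $\sl_{2}$-modules; together with $(\lambda,\theta)\in\NNN$ this yields both integrality of the level and the bound $(\lambda,\theta)\le\ell$. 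Deferring the sufficiency direction (positive semi-definiteness of the contravariant form on the Verma quotient for every admissible pair) to \cite{1} is reasonable and is no worse than what the paper does, since the paper defers the entire statement. One caveat: the step ``$H_{0}$ is cyclic as a $\gg$-module, hence irreducible by Schur's lemma'' is not a valid implication --- a direct sum of two inequivalent irreducibles is cyclic. The correct argument is that a proper $\gg$-submodule $V_{1}\subsetneq H_{0}$ would generate a $\widehat{\gg}_{+}$-submodule $\U(\bigoplus_{n<0,a}\CCC X_{n}^{a})V_{1}$ whose zero-energy subspace is exactly $V_{1}$ (lowering operators only raise the energy), contradicting irreducibility of $H$; this is the same orthogonal-projection argument the paper itself uses in its complete-reducibility theorem for $\WW_{1/2}$, so the fix is immediate.
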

\begin{proof} See \cite{prse} or \cite{1} p 48. 
   \end{proof}
 
 \begin{remark}
Let $\CCCC_{\ell} $  the category of such representations for $\ell$ fixed. \\ $\CCCC_{\ell} $ is a finite set and  $\CCCC_{\ell} \subset  \CCCC_{\ell+1}$
\end{remark}
  \begin{remark}  The irreducible unitary projective positive energy representations of $L \gg $ are  given by the unitary highest weight representation of $\widehat{\gg} _{+}$. \end{remark}
\begin{example} \label{ex}   We take $\gg = \sl_{2} $, then $H = L(j , \ell) $ with: 
\begin{itemize}
\item       $\L \Omega  = \ell \Omega $,   \quad $\ell \in \NNN$
\item     $H_{0} = V_{j}$ with $j \in \1/2\NNN$ the spin and   $j \le \frac{\ell}{2}$, such that \\  $\C \Omega = c_{V_{j}} \Omega$
 with $\C = \sum_{a} (X_{0}^{a})^{2}$ the Casimir and $c_{V_{j}} = 2j^{2}+2j$     \end{itemize}
\end{example}  \newpage

\subsection{$\gg$-vertex operator superalgebras}   \label{546}
\subsubsection{$\gg$-fermion} \label{fer}
\begin{definition}  Let $\widehat{\gg}_{-}$ be the $\gg$-fermion algebra,  generated by $(\psi^{a}_{m})$ with  $a \in \{1,...,N\} $, $ N= dim(\gg) $, 
   $m \in \ZZZ+\1/2 $ and relations: 
    \begin{displaymath} [\psi^{a}_{m},\psi^{b}_{n}  ]_{+} = \delta_{ab} \delta_{m+n}  \quad \textrm{and }   \quad  (\psi^{a}_{m})^{\star} =  \psi^{a}_{-m}    \end{displaymath} \end{definition}
\begin{remark} As for the fermion algebra of section \ref{sec1}, we generate the Verma module   $H =  \F_{NS}^{\gg}$, and  the sesquilinear form $( . Ê, . )$  which is a scalar product;   $\pi(\psi_{n}^{a})^{\star} =  \pi((\psi_{n}^{a})^{\star})$,     $\F_{NS}^{\gg}$ is a prehilbert space, an irreducible representation of $\widehat{\gg}_{-}$ and its unique unitary highest weight representation.   \end{remark}
\begin{definition} Let   $\psi^{a}(z) = \sum_{n \in \ZZZ} \psi^{a}_{n+\frac{1}{2}}.z^ {-n-1}$  the fermion operators. \end{definition}
\begin{remark}    $\psi^{a}(z) \psi^{b}(w) \sim \frac{\delta_{ab}}{(z-w)}  $  \end{remark}
\begin{remark}As for the single fermion operator $ \psi  $, of section \ref{sec2}, \\  $\{ \psi^{a}, a \in \{1,...,N\}  \}$ generates a vertex  operator superalgebra with:
\begin{displaymath}
  \omega = \1/2\sum_{a} \psi_{-\3/2}^{a}\psi_{-\1/2}^{a}\Omega  \quad \mathrm{and} \quad  c=  2 \Vert  \omega \Vert^{2} = \frac{dim(\gg)}{2}
\end{displaymath}    \end{remark}
\begin{definition} \label{S} Let $S^{c}(z) = V(s^{c},z) = \sum_{n \in \ZZZ}S_{n}^{c}z^{-n-1}$ with: 
\begin{center} $s^{c} = -\frac{i}{2}\sum_{a,b}\Gamma_{ab}^{c}\psi_{-\1/2}^{a}\psi_{-\1/2}^{b} \Omega \in H_{1} \subset H_{\bar{0}} $ \end{center}
  \end{definition}
\begin{lemma} \ (OPE and Lie bracket) \label{actiono}
 \begin{displaymath}
\psi^{a}(z)S^{b}(w)    \sim    \frac{ i\sum_{c} \Gamma_{ab}^{c} \psi^{c}(w)}{(z-w)}   \quad    \mathrm{and}   \quad  [\psi^{a}_{m} ,  S^{b}_{n}] = i\sum_{c} \Gamma_{ab}^{c}  \psi^{c}_{m+n}  =  [ S^{a}_{m}, \psi^{b}_{n}]  \end{displaymath}  \end{lemma}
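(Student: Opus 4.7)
The plan is to reduce everything to a single computation of $\psi^{a}_{n+1/2} s^{b}$ for $n \geq 0$, after which the OPE follows from corollary \ref{OPE} and the Lie bracket from formula \ref{lie}. The key bridge is the identity $\psi^{a}_{n} S^{b} = V(\psi^{a}(n) s^{b}, w)$, valid because $R(A_{n}B) = A(n) R(B)$ by the examples following definition \ref{R}, together with $S^{b} = V(s^{b},w)$ and $\psi^{a}(n) = \psi^{a}_{n+1/2}$.

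First I would show that $\psi^{a}_{n} S^{b} = 0$ for $n \geq 1$. Indeed, since $s^{b} = -\frac{i}{2}\sum_{c,d}\Gamma_{cd}^{b}\psi^{c}_{-1/2}\psi^{d}_{-1/2}\Omega$, and for $n \geq 1$ the index $n+1/2 \geq 3/2$ is such that $n+1/2 + (-1/2) \neq 0$, the mode $\psi^{a}_{n+1/2}$ anticommutes cleanly past $\psi^{c}_{-1/2}$ and $\psi^{d}_{-1/2}$ with no $\delta$-contraction, and then annihilates $\Omega$ since $n+1/2 > 0$. Second, for $n = 0$, I would apply the anticommutator $[\psi^{a}_{1/2},\psi^{c}_{-1/2}]_{+} = \delta_{ac}$ twice, producing two surface terms (the second after also commuting past $\psi^{d}_{-1/2}$); the remaining $\psi^{a}_{1/2}\Omega$ piece vanishes. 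This gives
\begin{displaymath}
\psi^{a}_{1/2} s^{b} = -\tfrac{i}{2}\sum_{d}\Gamma_{ad}^{b}\psi^{d}_{-1/2}\Omega + \tfrac{i}{2}\sum_{c}\Gamma_{ca}^{b}\psi^{c}_{-1/2}\Omega.
\end{displaymath}
Using total antisymmetry of $\Gamma$ (specifically $\Gamma_{ca}^{b} = -\Gamma_{ac}^{b}$ and $\Gamma_{ac}^{b} = -\Gamma_{ab}^{c}$), the two terms add rather than cancel, yielding $\psi^{a}_{1/2} s^{b} = i\sum_{c}\Gamma_{ab}^{c}\psi^{c}_{-1/2}\Omega = i\sum_{c}\Gamma_{ab}^{c} R(\psi^{c})$, hence $\psi^{a}_{0} S^{b} = i\sum_{c}\Gamma_{ab}^{c}\psi^{c}$. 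This is the only nonzero term, so the singular order is $N=1$ and corollary \ref{OPE} gives the claimed OPE.

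For the Lie bracket, note that $\psi^{a} \in \S_{\bar{1}}$ while $S^{b} \in \S_{\bar{0}}$ (since $s^{b} \in H_{1} \subset H_{\bar{0}}$), so by lemma \ref{7} they are local with $\varepsilon = \bar{1}\cdot\bar{0} = \bar{0}$, i.e.\ with the ordinary commutator. Formula \ref{lie} with $N=1$ collapses to the single term $[\psi^{a}(m), S^{b}(n)] = (\psi^{a}_{0} S^{b})(m+n)$ for both $m \geq 0$ and $m<0$ (in both branches only $p=0$ survives), which after translating back via $\psi^{a}(m) = \psi^{a}_{m+1/2}$ gives the stated identity $[\psi^{a}_{m}, S^{b}_{n}] = i\sum_{c}\Gamma_{ab}^{c}\psi^{c}_{m+n}$. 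The symmetry $[\psi^{a}_{m}, S^{b}_{n}] = [S^{a}_{m}, \psi^{b}_{n}]$ is then immediate: $[S^{a}_{m}, \psi^{b}_{n}] = -[\psi^{b}_{n}, S^{a}_{m}] = -i\sum_{c}\Gamma_{ba}^{c}\psi^{c}_{m+n} = i\sum_{c}\Gamma_{ab}^{c}\psi^{c}_{m+n}$ by antisymmetry of $\Gamma$ in its first two indices.

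The main obstacle is the careful accounting in the $n=0$ step: one must keep track of the sign produced by a second anticommutation and then recognize that the total antisymmetry of $\Gamma$ converts an apparent cancellation into a doubling. Everything else is a mechanical application of the general framework established earlier in the section.
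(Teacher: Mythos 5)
Your proposal is correct and follows essentially the same route as the paper: the paper's proof consists precisely of the two mode computations $\psi^{d}_{n+1/2}s^{c}=0$ for $n\ge 1$ and $\psi^{d}_{1/2}s^{c}=i\sum_{a}\Gamma^{a}_{dc}\psi^{a}_{-1/2}\Omega$, leaving the passage to the OPE and the Lie bracket to the general framework exactly as you do. Your sign bookkeeping in the $n=0$ step (two contractions adding up via total antisymmetry of $\Gamma$) and your reading of formula \ref{lie} with $N=1$ are both accurate.
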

 \begin{proof}   $\psi_{n+\1/2}^{d}. s^{c} =0$  if  $n \ge 1 $ and $\psi_{\1/2}^{d}. s^{c} = i\sum_{a} \Gamma_{dc}^{a}\psi_{-\1/2}^{a} \Omega$.   \end{proof}
  \begin{remark}       $ [ S^{a}_{m}, \psi^{a}_{n}] = 0 $ \end{remark}
 \begin{lemma}$(S^{b}_{m})^{\star} =  S^{b}_{-m}$   \label{etoile}  \end{lemma}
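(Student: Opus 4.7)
The plan is to follow exactly the template of Remark \ref{star}, which established $L_m^\star = L_{-m}$: use the bracket with the fermion generators plus Schur's lemma and the grading.

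First I would take the adjoint of the bracket relation from Lemma \ref{actiono}. Since $\Gamma_{ab}^c \in \RRR$, $(\psi^a_m)^\star = \psi^a_{-m}$, and $[A,B]^\star = -[A^\star, B^\star]$, the identity $[\psi^a_m, S^b_n] = i\sum_c \Gamma_{ab}^c \psi^c_{m+n}$ yields, after conjugating and rearranging,
\[
[\psi^a_{-m}, (S^b_n)^\star] = i\sum_c \Gamma_{ab}^c \psi^c_{-m-n}.
\]
On the other hand, applying Lemma \ref{actiono} directly with the indices $m \leadsto -m$, $n \leadsto -n$ gives
\[
[\psi^a_{-m}, S^b_{-n}] = i\sum_c \Gamma_{ab}^c \psi^c_{-m-n}.
\]
Subtracting, $(S^b_n)^\star - S^b_{-n}$ commutes with every $\psi^a_k$ ($a \in \{1,\dots,N\}$, $k \in \ZZZ+\1/2$).

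Since $\widehat{\gg}_-$ acts irreducibly on $\F_{NS}^{\gg}$ (remarked in Section \ref{fer}), Schur's lemma forces $(S^b_n)^\star - S^b_{-n} = \lambda_n \cdot Id$ for some $\lambda_n \in \CCC$. It remains to show $\lambda_n = 0$ for every $n$. The point is grading: because $s^b \in H_1$, the field $S^b$ is graded with $\alpha = 1$, so by Lemma \ref{1} one has $S^b_n \colon H_m \to H_{m-n}$, hence $(S^b_n)^\star$ and $S^b_{-n}$ both send $H_m$ into $H_{m+n}$. A scalar operator preserves the grading, so for $n \neq 0$ we must have $\lambda_n = 0$.

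For the remaining case $n=0$, I would evaluate on the vacuum. By the vacuum axiom $S^b(0)\Omega = S^b_0 \Omega = 0$. Moreover $(S^b_0)^\star \Omega \in H_0 = \CCC \Omega$ (again by grading with $n=0$), and
\[
((S^b_0)^\star \Omega, \Omega) = (\Omega, S^b_0 \Omega) = 0,
\]
so $(S^b_0)^\star \Omega = 0$ too. Therefore $\lambda_0 \Omega = ((S^b_0)^\star - S^b_0)\Omega = 0$, giving $\lambda_0 = 0$. Combining these cases yields $(S^b_m)^\star = S^b_{-m}$ for all $m$. There is no real obstacle here: the whole argument is a direct transcription of the Virasoro argument in Remark \ref{star}, and the only point requiring a bit of care is the sign in the adjoint of the commutator when passing from the $\psi^a$-bracket to its starred version.
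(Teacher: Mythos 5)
Your proposal is correct and follows essentially the same route as the paper: take the adjoint of the bracket $[\psi^a_m, S^b_n]$ from Lemma \ref{actiono}, compare with the unstarred relation, and conclude by irreducibility, Schur's lemma and the grading. The only difference is that you explicitly handle the degree-preserving case $m=0$ via the vacuum vector, a detail the paper subsumes under ``grading''; your sign bookkeeping for the adjoint of the commutator is also correct.
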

\begin{proof}  $ [(S^{b}_{n})^{\star},\psi^{a}_{-m}  ] = [\psi^{a}_{m}, S^{b}_{n}  ]^{\star}  = -i\sum_{c}\Gamma_{ab}^{c}\psi^{c}_{-m-n}  
= [S^{b}_{-n},\psi^{a}_{-m}  ] $  \\
 The result follows by irreducibility, Schur's lemma and grading.
\end{proof}
\begin{remark} (Jacobi)  $[X_{a},[X_{b},X_{c}]] = [[X_{a},X_{b}],X_{c}] + [X_{b},[X_{a},X_{c}]]$ \\
$\Leftrightarrow$  $\sum_{d}\Gamma_{bc}^{d}\Gamma_{ad}^{e} = \sum_{d} (\Gamma_{ab}^{d}\Gamma_{dc}^{e}+\Gamma_{ac}^{d}\Gamma_{bd}^{e} ) $  $\Leftrightarrow$
 $\sum_{e} ( \Gamma_{ab}^{e}\Gamma_{cd}^{e}+\Gamma_{da}^{e}\Gamma_{cd}^{e}+\Gamma_{db}^{e}\Gamma_{ac}^{e} )= 0 $ 
\end{remark}
\begin{notation}  $[S^{a}, S^{b}]:= i \sum_{c} \Gamma_{ab}^{c}S^{c}$
\end{notation}

\begin{lemma}  \ (OPE and Lie bracket) 
  \begin{displaymath}  S^{a}(z)S^{b}(w)   \sim   \frac{[S^{a}, S^{b}](w)}{(z-w)} + \frac{ g. \delta_{ab}}{(z-w)^{2}}     \end{displaymath}
\begin{center}and \  $  [S_{m}^{a} , S_{n}^{b}] = [ S^{a} , S^{b}](m+n) + \ell . m \delta_{ab}\delta_{m+n}$   \quad \  (with  \  $\ell = g \in \NNN $) \end{center}   \end{lemma}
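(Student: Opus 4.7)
The strategy is to read off the OPE by computing the singular coefficients $(S^a)_n S^b$ for $n \geq 0$ via the state-field correspondence identity $R(A_n B) = A(n) R(B)$ established earlier, and then feed the result into the general Lie bracket formula \ref{lie}. First I note that $s^a \in H_1 \subset H_{\bar{0}}$, so $S^a, S^b \in \S_{\bar{0}}$, and lemma \ref{7} guarantees they are mutually local with $\varepsilon = \bar{0} \cdot \bar{0} = \bar{0}$. To determine the order $N$ of the pole, recall $S^a$ is graded with $\alpha = 1$ (lemma \ref{8}), so by lemma \ref{1} the operator $S^a(n)$ sends $H_1$ into $H_{1-n}$; since $H_m = \{0\}$ for $m < 0$, we get $S^a(n) s^b = 0$ for $n \geq 2$, hence $N = 2$.

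It then remains to compute the two nonvanishing terms. For $n = 1$: using $S_1^a \Omega = 0$ and the commutation relation $[S_m^a, \psi_n^b] = i\sum_c \Gamma_{ab}^c \psi^c_{m+n}$ from lemma \ref{actiono}, I expand
\begin{displaymath}
S_1^a s^b = -\tfrac{i}{2}\sum_{d,e}\Gamma_{de}^b \, [S_1^a, \psi_{-1/2}^d \psi_{-1/2}^e]\,\Omega,
\end{displaymath}
and the anticommutator $[\psi_{1/2}^c, \psi_{-1/2}^e]_+ = \delta_{ce}$ collapses the sum to $\frac{1}{2}\sum_{d,e} \Gamma_{de}^b \Gamma_{ad}^e \,\Omega$. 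By total antisymmetry of the structure constants together with the normalization identity $\sum_{d,e}\Gamma_{de}^a \Gamma_{de}^b = 2g \delta_{ab}$ (from the lemma preceding definition of $g$), this evaluates to $g\, \delta_{ab}\,\Omega$, giving $(S^a)_1 S^b = g\delta_{ab}\, Id$, which produces the $(z-w)^{-2}$ term. For $n=0$, the analogous expansion uses $[S_0^a, \psi^d_{-1/2}] = i\sum_c \Gamma_{ad}^c \psi^c_{-1/2}$ and yields a combination of the form $\sum \Gamma_{de}^b \Gamma_{ad}^c \psi^c_{-1/2}\psi^e_{-1/2}\Omega$ plus its $d,e$-swap, which must be rewritten as $i\sum_c \Gamma_{ab}^c s^c$.

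This last rewriting is the main obstacle: it is precisely a repackaged Jacobi identity, in the form $\sum_e (\Gamma_{ab}^e \Gamma_{cd}^e + \Gamma_{da}^e \Gamma_{cb}^e + \Gamma_{db}^e \Gamma_{ac}^e) = 0$ recalled in the remark just before the lemma, together with total antisymmetry of $\Gamma$ to symmetrize against the Grassmann-antisymmetry of $\psi^c_{-1/2}\psi^e_{-1/2}$. Once this yields $S_0^a s^b = i\sum_c \Gamma_{ab}^c s^c$, the state-field correspondence gives $(S^a)_0 S^b = [S^a,S^b](w)$, establishing the OPE. The Lie bracket formula \ref{lie}, specialized to $N=2$, $\varepsilon = \bar{0}$, and the values just computed, then reads for $m \geq 0$:
\begin{displaymath}
[S_m^a, S_n^b] = C_m^0\,((S^a)_0 S^b)(m+n) + C_m^1\,((S^a)_1 S^b)(m+n-1) = [S^a,S^b](m+n) + m\, g\, \delta_{ab}\, \delta_{m+n},
\end{displaymath}
using $V(\Omega,z) = Id$ so that $Id(m+n-1) = \delta_{m+n}\,Id$. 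The $m < 0$ branch of formula \ref{lie} gives the same expression thanks to the sign $(-1)^p C_{p-m-1}^p$ at $p=1$ reducing to $m$, so one concludes $\ell = g$, as asserted.
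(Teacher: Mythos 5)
Your proof is correct and follows essentially the same route as the paper: both determine that the pole order is $N=2$, compute $S_1^a s^b = g\delta_{ab}\Omega$ from the normalization $\sum_{d,e}\Gamma_{de}^a\Gamma_{de}^b = 2g\delta_{ab}$, compute $S_0^a s^b = i\sum_c\Gamma_{ab}^c s^c$ via the Jacobi identity for the structure constants, and then read off the OPE and the bracket from the general formula \ref{lie}. The only difference is cosmetic: you justify $N=2$ by the grading lemmas and spell out the $m\ge 0$ and $m<0$ branches of formula \ref{lie}, steps the paper leaves implicit.
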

\begin{proof}
 $S_{n}^{d}s^{c} = 0$  if  $n \ge 2 $ \   and:  
\begin{description}
\item[(a)]  $S_{0}^{d}s^{c} =-\frac{i}{2}\sum_{a,b}\Gamma_{ab}^{c}S_{0}^{d}\psi_{-\1/2}^{a}\psi_{-\1/2}^{b} \Omega \\=  -\frac{i}{2}(i\sum_{a,b,e}\Gamma_{ab}^{c}\Gamma_{da}^{e}\psi_{-\1/2}^{e}\psi_{-\1/2}^{b} \Omega+ i\sum_{a,b,e}\Gamma_{ab}^{c}\Gamma_{db}^{e}\psi_{-\1/2}^{a}\psi_{-\1/2}^{e} \Omega  ) \\Ê=  -\frac{i}{2}(i\sum_{a,b,e}(\Gamma_{eb}^{c}\Gamma_{de}^{a}+\Gamma_{ae}^{c}\Gamma_{de}^{b})\psi_{-\1/2}^{a}\psi_{-\1/2}^{b} \Omega \\= i \sum_{e} \Gamma_{dc}^{e}\frac{-i}{2} \sum_{a,b}\Gamma_{ab}^{e}\psi_{-\1/2}^{a}\psi_{-\1/2}^{b}\Omega 
=  i \sum_{e}\Gamma_{dc}^{e}s^{e} = [S^{d} , S^{c}](-1) $  
\item[(b)]  $S_{1}^{d}s^{c} =-\frac{i}{2}i \sum_{a,b,e}\Gamma_{ab}^{c}\Gamma_{da}^{e}\psi_{\1/2}^{e}\psi_{-\1/2}^{b}\Omega  = \frac{1}{2} \sum_{a,b}\Gamma_{ab}^{c}\Gamma_{ab}^{d} = g. \delta_{cd} $
\end{description}   \end{proof}
\begin{corollary} \label{bosonaction} $(S_{m}^{a}) $ is the basis of a $\gg$-boson algebra.
\\ It admits $L(V_{0}, g)$ as minimal submodule of $\F_{NS}^{\gg} $ containing  $\Omega$ $ \\ ($with $V_{0} = \CCC $ the trivial representation of $\gg)$. 
\end{corollary}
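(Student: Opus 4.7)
The first assertion is essentially immediate from the previous lemma: the commutation relations
$[S_{m}^{a}, S_{n}^{b}] = [S^{a}, S^{b}](m+n) + g \cdot m \delta_{ab}\delta_{m+n}$
with $[S^{a},S^{b}] = i \sum_{c}\Gamma_{ab}^{c}S^{c}$ mirror exactly the defining relations of $\widehat{\gg}_{+}$ under the identification $S^{a}_{m} \leftrightarrow X^{a}_{m}$ and $g \cdot I \leftrightarrow \L$. Combined with the $\star$-structure $(S^{a}_{m})^{\star} = S^{a}_{-m}$ of Lemma \ref{etoile}, this exhibits the $(S_{m}^{a})$ as a $\star$-representation of $\widehat{\gg}_{+}$ at level $\ell = g$ on $\F_{NS}^{\gg}$.

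For the second assertion, let $M \subset \F_{NS}^{\gg}$ denote the minimal $(S_{m}^{a})$-stable subspace containing $\Omega$. My plan is to show that $\Omega$ is a highest weight vector of type $(V_{0}, g)$ and that $M$ inherits unitarity, so that Section \ref{loop}'s classification identifies $M$ with $L(V_{0},g)$. First I verify the highest weight conditions: since $s^{c} \in H_{1}$, the field $S^{a}$ is graded with $\alpha = 1$, hence $S^{a}_{n} : H_{m} \to H_{m-n}$; in particular $S^{a}_{n}\Omega \in H_{-n} = \{0\}$ for $n > 0$, while the vacuum axiom gives $S^{a}(0)\Omega = 0$, i.e.\ $S^{a}_{0}\Omega = 0$. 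Thus $\gg \cdot \Omega = 0$, so the lowest energy subspace $M_{0} = \CCC \Omega$ is the trivial $\gg$-representation $V_{0}$, and the level is $g$ by the central term computed above.

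Next I argue unitarity: the scalar product on $\F_{NS}^{\gg}$ restricts to $M$, and the $\star$-structure $(S^{a}_{m})^{\star} = S^{a}_{-m}$ makes the $\widehat{\gg}_{+}$-action on $M$ unitary. Hence $M$ is a unitary positive energy representation of $\widehat{\gg}_{+}$ at level $g$, generated by a highest weight vector of type $V_{0}$. By the uniqueness/irreducibility theorem for unitary highest weight representations of $\widehat{\gg}_{+}$ (recalled in Section \ref{loop}), any such $M$ is isomorphic to $L(V_{0},g)$; equivalently, $M$ is the quotient of the Verma module by the kernel of the sesquilinear form, which, as in the Neveu-Schwarz discussion of Section \ref{unit}, coincides with the maximal proper submodule.

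The calculation has been done in the previous lemmas, so the only genuine content left is the identification of the representation. The main subtlety I anticipate is ensuring that the lowest energy space of $M$ is exactly $\CCC \Omega$ (i.e.\ that no additional vectors in $H_{0}$ get produced by the $S^{a}_{n}$ action); this is handled uniformly by the grading of $S^{a}$ coming from $s^{c} \in H_{1}$ together with the vacuum axiom $S^{a}(0)\Omega = 0$, so no additional work is needed beyond citing the classification of unitary highest weight representations of $\widehat{\gg}_{+}$.
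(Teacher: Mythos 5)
Your proposal is correct and follows exactly the route the paper intends: the paper states this corollary without proof, as an immediate consequence of the preceding lemma giving $[S_{m}^{a},S_{n}^{b}]=[S^{a},S^{b}](m+n)+g\,m\,\delta_{ab}\delta_{m+n}$ together with Lemma \ref{etoile}, and your write-up simply makes explicit the remaining checks (that $S^{a}_{n}\Omega=0$ for $n\ge 0$ via the vacuum axiom and the grading $s^{c}\in H_{1}$, that the lowest energy space of the cyclic module is $\CCC\Omega$ of level $g$, and that unitarity plus the classification of Section \ref{loop} forces the module to be $L(V_{0},g)$). No gap; the argument is the standard one and matches the paper's implicit reasoning.
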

\begin{lemma}   $\sum_{a}(S_{-1}^{a})^{2}\Omega = 4g \omega$
\end{lemma}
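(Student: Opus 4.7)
The plan is to evaluate $\sum_a(S^a_{-1})^2\Omega$ directly. By the vacuum axiom, $S^a_{-1}\Omega = s^a$, so the target equals $\sum_a S^a_{-1}s^a$; I pull the remaining $S^a_{-1}$ through the explicit fermion expansion of $s^a$ given in Definition \ref{S}. The key mode commutator, extracted from Lemma \ref{actiono} together with total antisymmetry of $\Gamma$, is $[S^a_{-1},\psi^b_{-1/2}] = i\sum_d\Gamma^d_{ab}\,\psi^d_{-3/2}$.

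Substituting $s^a = -\frac{i}{2}\sum_{b,c}\Gamma^a_{bc}\psi^b_{-1/2}\psi^c_{-1/2}\Omega$ and commuting $S^a_{-1}$ past both fermions so that it ultimately annihilates $\Omega$ produces three contributions: a \emph{four-fermion} term with all four modes at $-1/2$, and two \emph{two-fermion} tails (one mode at $-3/2$, the other at $-1/2$) picked up from the commutator. For the two two-fermion tails I recast their inner $\Gamma\Gamma$ contractions, via total antisymmetry of $\Gamma$, into the Casimir form $\sum_{a,c}\Gamma^b_{ac}\Gamma^d_{ac} = c_\gg\,\delta_{bd} = 2g\,\delta_{bd}$ established earlier; then, using the anticommutation $\psi^b_{-1/2}\psi^b_{-3/2} = -\psi^b_{-3/2}\psi^b_{-1/2}$ together with the definition $2\omega = \sum_b \psi^b_{-3/2}\psi^b_{-1/2}\Omega$, each tail evaluates to $2g\omega$, for a combined contribution of $4g\omega$.

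The main obstacle is to show the remaining four-fermion term, proportional to $\sum_{a,b,c,b',c'}\Gamma^a_{bc}\Gamma^a_{b'c'}\,\psi^b_{-1/2}\psi^c_{-1/2}\psi^{b'}_{-1/2}\psi^{c'}_{-1/2}\Omega$, vanishes. Since the $\psi^\bullet_{-1/2}$ pairwise anticommute and square to zero, this monomial is totally antisymmetric in $(b,c,b',c')$, so only the totally antisymmetric part of the coefficient tensor $K_{bcb'c'} := \sum_a\Gamma^a_{bc}\Gamma^a_{b'c'}$ contributes. I verify $K_{[bcb'c']}=0$ directly from the Jacobi identity $\sum_e(\Gamma^e_{ab}\Gamma^e_{cd} + \Gamma^e_{bc}\Gamma^e_{ad} + \Gamma^e_{ca}\Gamma^e_{bd}) = 0$: parametrizing $K$ by the three pair-partitions of the index quadruple yields three independent values $A=K(1,2,3,4)$, $B=K(1,3,2,4)$, $C=K(1,4,2,3)$, and Jacobi is exactly the relation $A-B+C=0$ that forces the total antisymmetrization to collapse. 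Hence the four-fermion term drops out and $\sum_a(S^a_{-1})^2\Omega = 4g\omega$.
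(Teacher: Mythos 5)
Your proof is correct and follows essentially the same route as the paper: split $\sum_a S^a_{-1}s^a$ into the four-fermion piece, killed because the Jacobi identity annihilates the totally antisymmetric part of $K_{bcb'c'}=\sum_e\Gamma^e_{bc}\Gamma^e_{b'c'}$, plus the two commutator tails, each contracting via the Casimir identity to $2g\omega$. One wording slip worth fixing: $S^a_{-1}$ does not annihilate $\Omega$ (only the modes $S^a_n$ with $n\ge 0$ do; indeed $S^a_{-1}\Omega=s^a$ by the vacuum axiom), but since you explicitly retain the resulting four-fermion term this does not affect the computation.
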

\begin{proof}
$ \sum_{e}(S_{-1}^{e})^{2}\Omega  =   -\frac{i}{2}\sum_{a,b,e}\Gamma_{ab}^{c}S_{-1}^{e}\psi_{-\1/2}^{a}\psi_{-\1/2}^{b} \Omega
 \\=  -\frac{1}{4} \sum_{a,b,c,d,e}\Gamma_{ab}^{e}\Gamma_{cd}^{e} \psi_{-\1/2}^{a}\psi_{-\1/2}^{b}\psi_{-\1/2}^{c}\psi_{-\1/2}^{d}\Omega -\frac{i}{2}\sum_{a,b,c}\Gamma_{ab}^{e}[S_{-1}^{e}, \psi_{-\1/2}^{a}\psi_{-\1/2}^{b}   ] \Omega  \\=  - \frac{1}{12}\sum_{a,b,c,d}({\sum_{e}(\Gamma_{ab}^{e}\Gamma_{cd}^{e}+\Gamma_{da}^{e}\Gamma_{cb}^{e}+\Gamma_{db}^{e}\Gamma_{ac}^{e}) }  \psi_{-\1/2}^{a}\psi_{-\1/2}^{b}\psi_{-\1/2}^{c}\psi_{-\1/2}^{d}\Omega) \\   
   +  Ê \sum_{a,b,c,e}\Gamma_{ea}^{b}\Gamma_{ea}^{c}\psi_{-\3/2}^{c}\psi_{-\1/2}^{b} \Omega \   =  \    4g\omega    $
\end{proof}
  \begin{lemma}  \ (OPE and Lie bracket) 
   \begin{displaymath}   S^{a}(z)L(w) \sim  \frac{S^{a}(w)}{(z-w)^{2}}    \quad \mathrm{and}  \quad   [ L_{m}, S_{n}^{a} ] = -n  S_{m+n}^{a}     \end{displaymath}    \end{lemma}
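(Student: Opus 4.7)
The plan is to compute the modes $(S^a)_n L$ for $n \geq 0$ explicitly, read off the OPE from these, and then apply formula \ref{lie} to obtain the Lie bracket. First, since $s^a \in H_1$ and $\omega \in H_2$, lemma \ref{2} together with lemma \ref{8} gives $R((S^a)_n L) = S^a_n \omega \in H_{2-n}$, so by positive energy $(S^a)_n L = 0$ for $n \geq 3$, bounding the locality exponent. The key input is the explicit form $\omega = \frac{1}{2}\sum_b \psi^b_{-3/2}\psi^b_{-1/2}\Omega$, the bracket $[S^a_n, \psi^b_m] = i\sum_c \Gamma^c_{ab} \psi^c_{n+m}$ from lemma \ref{actiono}, and the vacuum condition $S^a_n\Omega = 0$ for $n \geq 0$.

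For $n=2$, pushing $S^a_2$ through the two fermions leaves one surviving term proportional to $\psi^c_{1/2}\psi^b_{-1/2}\Omega = \delta_{bc}\Omega$, giving a coefficient $\sum_b \Gamma^b_{ab}$, which vanishes by total antisymmetry of $\Gamma$. For $n=0$, two nonzero terms appear; relabeling dummy indices in the second and using $\Gamma^b_{ac} = -\Gamma^c_{ab}$ shows it cancels the first. For $n=1$, the only surviving contribution is $\frac{i}{2}\sum_{b,c}\Gamma^c_{ab}\psi^c_{-1/2}\psi^b_{-1/2}\Omega$, and total antisymmetry of $\Gamma$ combined with a swap $b \leftrightarrow c$ identifies this precisely with $s^a = -\frac{i}{2}\sum_{b,c}\Gamma^a_{bc}\psi^b_{-1/2}\psi^c_{-1/2}\Omega$. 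The single nonzero mode $(S^a)_1 L = S^a$ then gives the OPE $S^a(z)L(w) \sim \frac{S^a(w)}{(z-w)^2}$.

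For the Lie bracket, both $L$ and $S^a$ lie in $\S_{\bar{0}}$ (since $\omega, s^a \in H_{\bar{0}}$), so by lemma \ref{7} they are local with $\varepsilon = \bar{0}$ and we use the ordinary commutator in formula \ref{lie}. Taking into account the index shift $L(k) = L_{k-1}$ coming from $L(w) = \sum L_n w^{-n-2}$, the formula applied to $A = S^a$, $B = L$ with $N=2$ and $(S^a)_0 L = 0$, $(S^a)_1 L = S^a$ reduces, for $m \geq 0$, to $[S^a(m), L(n+1)] = C_m^1 \cdot S^a(m+n) = m\, S^a_{m+n}$, i.e., $[L_n, S^a_m] = -m\, S^a_{m+n}$. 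The case $m < 0$ is identical using the second branch of formula \ref{lie} (the binomial $C_{-m}^0 = 1$ in the non-surviving $p=0$ slot and $(-1)^1 C_{-m}^1 = m$ in the surviving $p=1$ slot), yielding the same answer. Renaming indices gives $[L_m, S^a_n] = -n\, S^a_{m+n}$.

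The main obstacle is bookkeeping: carefully matching the three antisymmetry identities for $\Gamma$ against the fermion anticommutation to ensure the two expected cancellations (at $n=0$ and $n=2$) happen with the right signs and the identification $S^a_1\omega = s^a$ emerges with the correct coefficient. Once these computations are done, everything else is a routine application of the general machinery already developed in \textbf{Formula \ref{formula}} and \textbf{Formula \ref{lie}}.
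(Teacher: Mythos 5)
Your proof is correct, and it reaches the conclusion by a genuinely different computation than the paper's. Both arguments share the same skeleton --- kill $S^{a}_{n}\omega$ for $n\ge 3$ by grading, compute $S^{a}_{0}\omega=S^{a}_{2}\omega=0$ and $S^{a}_{1}\omega=s^{a}$ explicitly, read off the OPE, and feed $(S^{a})_{1}L=S^{a}$ into formula \ref{lie} with the index shift $L(n+1)=L_{n}$ --- but you expand $\omega$ in its fermionic form $\frac{1}{2}\sum_{b}\psi^{b}_{-3/2}\psi^{b}_{-1/2}\Omega$ and push $S^{a}_{n}$ through using $[S^{a}_{n},\psi^{b}_{m}]=i\sum_{c}\Gamma_{ab}^{c}\psi^{c}_{n+m}$ from lemma \ref{actiono}, whereas the paper first rewrites $\omega=\frac{1}{4g}\sum_{b}(S^{b}_{-1})^{2}\Omega$ via the immediately preceding lemma and computes entirely inside the $\gg$-boson algebra generated by the $S^{a}_{m}$, arriving at $S^{a}_{1}\omega=\frac{2(\ell+g)}{4g}S^{a}_{-1}\Omega$ with $\ell=g$. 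Your cancellations (total antisymmetry of $\Gamma$ killing the $n=0$ and $n=2$ modes, and the relabelling $\Gamma_{ac}^{b}=-\Gamma_{bc}^{a}$ identifying $S^{a}_{1}\omega$ with $s^{a}$) all check out, as does the $m<0$ branch of the bracket. The trade-off: your route is shorter and needs only the mixed bracket and the free-fermion relations, but it is tied to the realization inside $\F_{NS}^{\gg}$; the paper's Sugawara-style computation uses nothing but the current-algebra relations $[S^{a}_{m},S^{b}_{n}]$, which is precisely why the corollary that follows can assert $X^{a}(z)L(w)\sim X^{a}(w)/(z-w)^{2}$ for the abstract $\gg$-boson algebra at arbitrary level $\ell$ ``by the previous work on $(S^{a})$ and $(\star)$'' --- your argument would not give that generalization for free.
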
 
    \begin{proof}  $S_{n}^{a}.\omega  = 0$  for $n  \ge 3$ \   and: 
\begin{description}
\item[(a)]  $S_{0}^{a}.\omega = \frac{1}{4g} \sum_{b}S_{0}^{a}(S_{-1}^{b})^{2}\Omega =  \frac{1}{4g} \sum_{b}([S_{0}^{a},S_{-1}^{b}]S_{-1}^{b}\Omega + S_{-1}^{b} [S_{0}^{a},S_{-1}^{b}]\Omega) \\ = 
 \frac{i}{4g}\sum_{b,c}(\Gamma_{ab}^{c}+\Gamma_{ac}^{b} )S_{-1}^{c}S_{-1}^{b} \Omega = 0  $
\item[(b)]   $S_{2}^{a}.\omega = \frac{1}{4g} \sum_{b}([S_{2}^{a},S_{-1}^{b}]S_{-1}^{b}\Omega + S_{-1}^{b} [S_{2}^{a},S_{-1}^{b}]\Omega) \\ = \frac{i}{4g}\sum_{b,c}\Gamma_{ab}^{c} S_{1}^{c}S_{-1}^{b} \Omega = 
\frac{i}{4g}\sum_{b,c}\Gamma_{ab}^{c} \delta_{bc} \ell = 0 $  
\item[(c)]  $S_{1}^{a}.\omega = \frac{1}{4g} \sum_{b}([S_{1}^{a},S_{-1}^{b}]S_{-1}^{b}\Omega + S_{-1}^{b} [S_{1}^{a},S_{-1}^{b}]\Omega)\\  = \frac{i}{4g} ( 2 \ell + i \sum_{b,c}\Gamma_{ab}^{c} S_{0}^{c}S_{-1}^{b} \Omega  ) =\frac{2(\ell + g)}{4g} S_{-1}^{a}\Omega =  S_{-1}^{a}\Omega $  \ \ $(\star)$
\end{description}
   \end{proof} 
\begin{corollary} $(S^{a})$ generate a vertex operator (super)algebra with \\Ê $\omega =  \frac{1}{4g} \sum_{a}(S_{-1}^{a})^{2} \Omega $ as Virasoro vector.
\end{corollary}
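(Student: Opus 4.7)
The plan is to apply the corollary following Definition \ref{vertexdef}, which says that any system of generators producing a Virasoro operator inside the generated set $\S$, with $D=L_{0}$ and $T=L_{-1}$, automatically yields a vertex operator superalgebra. I take the ambient space to be $K = L(V_{0},g)$, the minimal submodule of $\F_{NS}^{\gg}$ containing $\Omega$ and stable under $(S_{m}^{a})$ (Corollary \ref{bosonaction}), and verify the axioms of Definition \ref{def} for $\{S^{a}\}$ acting on $K$, with $D=L_{0}\vert_{K}$ and $T=L_{-1}\vert_{K}$ inherited from the ambient fermion vertex operator superalgebra.

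The crucial input is the identity $\sum_{a}(S_{-1}^{a})^{2}\Omega = 4g\omega$ just proved, combined with the state-field relation $R(A_{n}B)=A(n)R(B)$. Applied to $A=B=S^{a}$ and $n=-1$, it yields $R\bigl(\frac{1}{4g}\sum_{a}(S^{a})_{-1}S^{a}\bigr) = \frac{1}{4g}\sum_{a}S^{a}(-1)\,s^{a} = \frac{1}{4g}\sum_{a}(S_{-1}^{a})^{2}\Omega = \omega$, so by the unicity of the state-field correspondence the field $\frac{1}{4g}\sum_{a}(S^{a})_{-1}S^{a}$, which manifestly belongs to $\S$, coincides with the Virasoro field $L$ of the ambient fermion vertex operator superalgebra (the restriction $L\vert_{K}$ makes sense because every $L_{n}$ lies in the algebra generated by the $S_{m}^{a}$, which preserves $K$). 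In particular $L\in\S$, and the candidate Virasoro vector $\frac{1}{4g}\sum_{a}(S_{-1}^{a})^{2}\Omega$ is precisely the vector $\omega$ already in use.

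The remaining axioms of Definition \ref{def} are now routine: pairwise locality of the $S^{a}$ with $N_{ab}=2$ and $\varepsilon_{ab}=\bar{0}$ is the boson-boson OPE computed above; the relations $[T,S^{a}(z)]=(S^{a})'(z)$ and $[D,S^{a}(z)]=z(S^{a})'(z)+S^{a}(z)$, which set $\alpha_{a}=1\in\NNN+\bar{0}/2$, follow from the mode-form bracket $[L_{m},S_{n}^{a}]=-nS_{m+n}^{a}$ together with $s^{a}\in H_{1}$; the vacuum conditions $\Omega\in K_{0}$, $\Vert\Omega\Vert=1$, $S^{a}(m)\Omega=0$ for $m\ge 0$, and $D\Omega=T\Omega=0$ are inherited from the ambient vertex operator superalgebra; and irreducibility of $\{S_{m}^{a}\}$ on $K$ is the defining property of $L(V_{0},g)$ in Corollary \ref{bosonaction}.

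The main obstacle is precisely the membership $L\in\S$: this is why the normalisation $\frac{1}{4g}$ appears, and it is delivered by the identity $\sum_{a}(S_{-1}^{a})^{2}\Omega=4g\omega$ proved just before the statement. With all the axioms of Definition \ref{def} verified and $L\in\S$ secured, the cited corollary produces the desired vertex operator superalgebra structure $(K,V,\Omega,\omega)$ generated by the $(S^{a})$, with Virasoro vector $\omega=\frac{1}{4g}\sum_{a}(S_{-1}^{a})^{2}\Omega$.
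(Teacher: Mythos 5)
Your proposal is correct and is essentially the paper's own (implicit) argument: the corollary is stated without a separate proof precisely because it follows from the preceding lemmas (the $S^{a}S^{b}$ OPE, the identity $\sum_{a}(S_{-1}^{a})^{2}\Omega=4g\omega$, and $[L_{m},S_{n}^{a}]=-nS_{m+n}^{a}$) via the general corollary that a system of generators producing a Virasoro operator in $\S$ with $D=L_{0}$, $T=L_{-1}$ yields a vertex operator superalgebra. Your writeup simply makes explicit the verification of the axioms of Definition \ref{def} on $L(V_{0},g)$ and the identification $L=\frac{1}{4g}\sum_{a}(S^{a})_{-1}S^{a}$ via $R$ and unicity, which is exactly what the paper intends.
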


\subsubsection{$\gg$-boson} 
\begin{definition} Let $  X^{a}(z)  =  \sum_{n \in \ZZZ}X_{n}^{a}z^{-n-1} $  the boson operators \\Êwith  $  [X_{m}^{a} , X_{n}^{b}] = [ X^{a} , X^{b}]_{m+n} + m\delta_{ab}\delta_{m+n}.\L  $ \end{definition}
\begin{corollary} The $\gg$-boson algebra $\widehat{\gg}_{+}$ generates a vertex operator \\  (super)algebra on $H = L(V_{0}, g )$, and also  on $H = L(V_{0}, \ell )$ for any 
$\ell \in \NNN$,   with $\omega =  \frac{1}{2(\ell+g)} \sum_{a}(X_{-1}^{a})^{2} \Omega $ as Virasoro vector; and:
\begin{displaymath}  X^{a}(z)X^{b}(w)   \sim   \frac{[X^{a}, X^{b}](w)}{(z-w)} + \frac{ g. \delta_{ab}}{(z-w)^{2}}     \end{displaymath}
\begin{displaymath}   X^{a}(z)L(w) \sim  \frac{X^{a}(w)}{(z-w)^{2}}    \quad \mathrm{and}  \quad   [ L_{m}, X_{n}^{a} ] = -n  X_{m+n}^{a}     \end{displaymath}   \end{corollary}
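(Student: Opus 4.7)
The plan is to invoke the preceding corollary: any system of generators on a prehilbert space which produces a Virasoro operator $L \in \S$ with $D = L_0$ and $T = L_{-1}$ yields a vertex operator superalgebra. I apply it to the candidate system $\{X^1,\ldots,X^N\} \subset (\mathrm{End}\,H)[[z,z^{-1}]]$ acting on $H = L(V_0,\ell)$, with proposed Virasoro vector $\omega := \frac{1}{2(\ell+g)}\sum_a (X^a_{-1})^2 \Omega \in H_2$. The case $\ell = g$ will then be consistent with the $(S^a)$-construction of the previous subsection.

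The easy framework inputs are immediate: from $[X^a_m,X^b_n] = [X^a,X^b]_{m+n} + m\ell\delta_{ab}\delta_{m+n}$ and lemma \ref{3}, the fields $X^a, X^b$ are local with $N=2$ and $\varepsilon = \bar{0}$, giving the OPE $X^a(z)X^b(w) \sim \frac{[X^a,X^b](w)}{(z-w)} + \frac{\ell\delta_{ab}}{(z-w)^2}$. Each $X^a$ is graded with $\alpha = 1$; the operator $D$ is the energy operator built into $L(V_0,\ell)$; the operator $T$ is constructed inductively as in section \ref{sec1}; $X^a(m)\Omega = 0$ for $m \geq 0$, and the family $\{X^a(m)\}$ acts irreducibly on $H$ by the definition of $L(V_0,\ell)$.

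The main step is verifying $X^a_n\omega = 0$ for $n \neq 1$ and $X^a_1\omega = X^a_{-1}\Omega$, which gives the OPE $X^a(z)L(w) \sim \frac{X^a(w)}{(z-w)^2}$. The cases $n \geq 2$ and $n = 0$ follow from weight considerations and total antisymmetry of $\Gamma^c_{ab}$, together with $X^c_0\Omega = 0$ (since $V_0$ is trivial). For $n = 1$, one expands
\[
\sum_b X^a_1 (X^b_{-1})^2 \Omega = \sum_b \bigl([X^a_1, X^b_{-1}] X^b_{-1}\Omega + X^b_{-1}[X^a_1, X^b_{-1}]\Omega\bigr),
\]
substitutes $[X^a_1, X^b_{-1}] = i\sum_c \Gamma^c_{ab} X^c_0 + \ell\delta_{ab}$, and uses $X^c_0\Omega = 0$. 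The term with $X^c_0$ surviving contracts via $X^c_0 X^b_{-1}\Omega = i\sum_d \Gamma^d_{cb} X^d_{-1}\Omega$ and the Casimir identity $\sum_{b,c}\Gamma^c_{ab}\Gamma^d_{cb} = -c_\gg \delta_{ad} = -2g\,\delta_{ad}$ (a rephrasing of $\mathcal{C}$ acting as $c_\gg$ on the adjoint $\gg$) to contribute $2g\, X^a_{-1}\Omega$, while the two central pieces together contribute $2\ell\, X^a_{-1}\Omega$. The total is $2(\ell+g)X^a_{-1}\Omega$, and the Sugawara normalization $\frac{1}{2(\ell+g)}$ is precisely what is needed to obtain $X^a_1\omega = X^a_{-1}\Omega$. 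Formula \ref{lie} then gives $[L_m, X^a_n] = -n\, X^a_{m+n}$.

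Once this is in place, the Virasoro relations $[L_m, L_n] = (m-n)L_{m+n} + \frac{c}{12}m(m^2-1)\delta_{m+n}$ with Sugawara central charge $c = \frac{\ell\,\dim(\gg)}{\ell+g}$ follow by computing $L(z)L(w)$ via Dong's lemma and formula \ref{lie}, in parallel with the $S^a$-computation of the previous subsection. Both $L_0 - D$ and $L_{-1} - T$ commute with every $X^a_n$ and annihilate $\Omega$, hence vanish by irreducibility and Schur's lemma, so that $D = L_0$ and $T = L_{-1}$. The generation corollary then delivers the vertex operator superalgebra. The only genuine obstacle is the single Sugawara identity $X^a_1\omega = X^a_{-1}\Omega$; everything else reduces to the general machinery already developed. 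Its proof hinges on two inputs — the trivial $\gg$-action on $V_0$ (which kills boundary terms) and the adjoint Casimir identity — which together single out the denominator $\ell+g$.
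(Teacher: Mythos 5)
Your proposal is correct and follows essentially the same route as the paper: the paper's entire proof is the single line ``by the previous work on $(S^{a})$ and $(\star)$'', and your argument is precisely the unpacking of that reference --- the starred identity is exactly the computation $\sum_{b} X^{a}_{1}(X^{b}_{-1})^{2}\Omega = 2(\ell+g)X^{a}_{-1}\Omega$ that you single out as the one genuine obstacle, derived from the same two inputs (the level term and the adjoint Casimir identity), with everything else delegated to the system-of-generators corollary. Note in passing that the second-order pole $\ell\delta_{ab}/(z-w)^{2}$ you write in the $X^{a}(z)X^{b}(w)$ OPE is the correct one at level $\ell$; the $g\,\delta_{ab}$ appearing in the paper's statement is carried over verbatim from the $(S^{a})$ case, where $\ell=g$.
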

\begin{proof}    By the previous work on $(S^{a})$ and $(\star)$.   \end{proof}
\begin{lemma}  $c = 2 \Vert \omega \Vert ^{2} = \frac{\ell dim(\gg)}{\ell + g}$
\end{lemma}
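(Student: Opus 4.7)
The plan is a direct computation of $\Vert \omega \Vert^2 = (\omega,\omega)$ using only the boson commutation relations $[X_m^a,X_n^b]=[X^a,X^b]_{m+n}+m\ell\delta_{ab}\delta_{m+n}$, the adjoint rule $(X_n^a)^\star=X_{-n}^a$, the vacuum conditions $X_n^a\Omega=0$ for $n\ge 0$ (which hold because $V_0=\CCC$ is the trivial $\gg$-module, so zero modes also annihilate $\Omega$), and the structure-constant identity $\sum_{a,c}\Gamma_{ac}^b\Gamma_{ac}^d=2g\delta_{bd}$ established earlier together with the total antisymmetry of $\Gamma$.

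First I would write $(\omega,\omega)=\frac{1}{4(\ell+g)^2}\sum_{a,b}(X_1^bX_1^bX_{-1}^aX_{-1}^a\Omega,\Omega)$, so the problem reduces to extracting the $\Omega$-coefficient of $X_1^bX_1^bX_{-1}^aX_{-1}^a\Omega$. The basic building block is $X_1^bX_{-1}^a\Omega=\ell\delta_{ab}\Omega+[X^b,X^a]_0\Omega=\ell\delta_{ab}\Omega$, the second term vanishing since $X_0^c\Omega=0$. Commuting $X_1^b$ past the leftmost $X_{-1}^a$ in $X_1^bX_{-1}^aX_{-1}^a\Omega$ yields
\begin{displaymath}
X_1^bX_{-1}^aX_{-1}^a\Omega \;=\; 2\ell\delta_{ab}X_{-1}^a\Omega \;+\; i\sum_c\Gamma_{ba}^c X_0^cX_{-1}^a\Omega,
\end{displaymath}
and then $X_0^cX_{-1}^a\Omega=i\sum_d\Gamma_{ca}^dX_{-1}^d\Omega$. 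Applying a second $X_1^b$ and using $X_1^bX_{-1}^d\Omega=\ell\delta_{bd}\Omega$ I get the $\Omega$-coefficient $2\ell^2\delta_{ab}-\ell\sum_c\Gamma_{ba}^c\Gamma_{ca}^b$.

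Summing over $a,b$, the diagonal piece contributes $2N\ell^2$ with $N=\dim(\gg)$. For the second piece, total antisymmetry gives $\Gamma_{ba}^c\Gamma_{ca}^b=(-\Gamma_{ab}^c)(-\Gamma_{ac}^b)=\Gamma_{ab}^c\Gamma_{ac}^b=-(\Gamma_{ab}^c)^2$; then the identity $\sum_{a,c}(\Gamma_{ac}^b)^2=2g$ (for each fixed $b$) summed over $b$ gives $\sum_{a,b,c}\Gamma_{ba}^c\Gamma_{ca}^b=-2gN$. Combining, $\sum_{a,b}(X_1^bX_1^bX_{-1}^aX_{-1}^a\Omega,\Omega)=2N\ell^2+2gN\ell=2N\ell(\ell+g)$, hence
\begin{displaymath}
\Vert\omega\Vert^2 \;=\; \frac{2N\ell(\ell+g)}{4(\ell+g)^2} \;=\; \frac{N\ell}{2(\ell+g)}, \qquad c\;=\;2\Vert\omega\Vert^2\;=\;\frac{\ell\,\dim(\gg)}{\ell+g}.
\end{displaymath}

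The only real obstacle is disciplined bookkeeping of the three nested commutators and recognising that the second term in $X_1^bX_{-1}^a\Omega$ vanishes because of the triviality of $V_0$; once these points are fixed, the identification of the cross-term with $-2gN$ via antisymmetry plus the Casimir identity is mechanical. No new structural input beyond the material already developed for $(S^a)$ in the fermionic case is required; indeed the calculation is formally the same, only with the level $\ell$ of $\widehat\gg_+$ replacing the level $g$ of the fermionic bosons $(S^a)$.
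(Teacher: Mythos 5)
Your computation is correct and takes essentially the same route as the paper: both reduce $4(\ell+g)^{2}\Vert\omega\Vert^{2}$ to $\sum_{a,b}(\Omega,(X_{1}^{a})^{2}(X_{-1}^{b})^{2}\Omega)$, evaluate it by nested commutators using the vanishing of the non-negative modes (in particular $X_{0}^{c}\Omega=0$ on the vacuum module), and convert the cross term via total antisymmetry together with $\sum_{a,c}\Gamma_{ac}^{b}\Gamma_{ac}^{d}=2g\delta_{bd}$ to obtain $2\ell\,dim(\gg)(\ell+g)$. The only difference is cosmetic bookkeeping (which pair of modes you commute first), so nothing further is needed.
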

\begin{proof}
$4(\ell+g)^{2} \Vert \omega \Vert ^{2} = \sum_{a,b}( (X_{-1}^{a})^{2} \Omega , (X_{-1}^{b})^{2} \Omega ) =  \sum_{a,b}(  \Omega , (X_{1}^{a})^{2}(X_{-1}^{b})^{2} \Omega )  \\ =  \sum_{a,b}(  \Omega , X_{1}^{a} X_{-1}^{b} [X_{1}^{a} ,  X_{-1}^{b} ] \Omega + X_{1}^{a} [X_{1}^{a} ,  X_{-1}^{b} ] X_{-1}^{b}\Omega ) \\ = Ê(\sum_{a,b,c}i\Gamma_{ab}^{c}( \Omega , X_{1}^{a}X_{0}^{c}X_{-1}^{b} \Omega )) + 2 \ell \sum_{a}(\Omega , X_{1}^{a}X_{-1}^{a} \Omega) \\  = (\sum_{a,b,c,d}(-1)\Gamma_{ab}^{c}\Gamma_{cb}^{d}(\Omega , X_{1}^{a}X_{-1}^{d}\Omega) + 2 \ell^{2} dim(\gg)) \\ =  (2g\ell dim(\gg) + 2 \ell^{2} dim(\gg)  ) = 2\ell dim(\gg)(\ell + g) $
\end{proof}
\begin{remark}  By vacuum axiom of vertex operator superalgebra, $X_{0}^{a}\Omega = 0$, then, the representation $H_{0} = V_{\lambda}$ of $\gg$ is necessary the trivial one $V_{0} $.  \\ 
At section  \ref{mod}, we  see that general $L(V_{\lambda} , \ell)$ admits the structure of vertex module over  $L(V_{0} , \ell)$. 
\end{remark}

\subsubsection{$\gg$-supersymmetry}  \label{super}
By lemma \ref{actiono}, the $\gg$-boson algebra $\widehat{\gg}_{+}$  acts on the $\gg$-fermion algebra $\widehat{\gg}_{+}$, then, we can build their semi-direct product:
\begin{definition}  \label{superg}
Let $\widehat{\gg} = \widehat{\gg}_{+} \ltimes \widehat{\gg}_{-} $ the $\gg$-supersymmetric algebra. \end{definition}
\begin{proposition}  \label{irredsl}
The unitary highest weight representations (irreducible) of $\widehat{\gg}$ are  $H = L(V_{\lambda}, \ell) \otimes \F_{NS}^{\gg}$ (see \cite{8b}).
\end{proposition}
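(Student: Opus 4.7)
The natural strategy is to decouple $\widehat{\gg}$ into commuting boson and fermion subalgebras via a Sugawara-style subtraction, and then invoke the classifications of UHWRs already established for $\widehat{\gg}_+$ in Section~\ref{loop} and for $\widehat{\gg}_-$ in Section~\ref{fer}.

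First, I would introduce the shifted currents $Y^a_n := X^a_n - S^a_n$, where the $S^a_n$ are the Sugawara currents of Definition~\ref{S} built from the fermions. Since Lemma~\ref{actiono} establishes $[S^a_m,\psi^b_n] = i\sum_c \Gamma^c_{ab}\psi^c_{m+n}$, which is precisely the commutator $[X^a_m,\psi^b_n]$ dictated by the semidirect structure of $\widehat{\gg}_+ \ltimes \widehat{\gg}_-$, the new currents $Y^a_n$ commute with every fermion mode, and therefore with the entire fermion subalgebra $\widehat{\gg}_-$ and with every element of $\U(\widehat{\gg}_-)$ including the $S^b_n$ themselves. Using the relation $[X^a_m, X^b_n] = i\sum_c \Gamma^c_{ab} X^c_{m+n} + \ell\, m\, \delta_{ab}\delta_{m+n}$, the Sugawara relations $[S^a_m, S^b_n] = i\sum_c \Gamma^c_{ab} S^c_{m+n} + g\, m\, \delta_{ab}\delta_{m+n}$, and the equality $[X^a_m, S^b_n] = [S^a_m, S^b_n]$ (both realize the same derivation of $\U(\widehat{\gg}_-)$ on $S^b_n$), a direct computation gives
\begin{displaymath}
[Y^a_m, Y^b_n] = i\sum_c \Gamma^c_{ab}\, Y^c_{m+n} + (\ell - g)\, m\, \delta_{ab}\delta_{m+n}.
\end{displaymath}
Combined with Lemma~\ref{etoile} and $(X^a_n)^\star = X^a_{-n}$, this shows that $(Y^a_n)$ is a $\star$-subalgebra isomorphic to $\widehat{\gg}_+$ at level $\ell - g$ and commuting with $\widehat{\gg}_-$.

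Given a UHWR $H$ of $\widehat{\gg}$ with cyclic vector $\Omega$, I would then form the two commuting subrepresentations $\U(\widehat{\gg}_-)\Omega$ and $\U(Y)\Omega$: both are unitary positive-energy highest-weight modules on the same vacuum $\Omega$ (annihilation by positive modes, appropriate zero-mode action) and, by the uniqueness statements established earlier, they are isomorphic respectively to $\F_{NS}^{\gg}$ and to $L(V_\lambda, \ell - g)$, where $V_\lambda := \U(\gg)\Omega$ is the lowest-energy $\gg$-module (well-defined because the vacuum axiom gives $S^a_0\Omega = 0$, whence $Y^a_0\Omega = X^a_0\Omega$). Writing $X^a_n = Y^a_n + S^a_n$ exhibits $\U(\widehat{\gg})\Omega = \U(Y)\cdot\U(\widehat{\gg}_-)\Omega$, so the commutation $[Y^a_n,\psi^b_m]=0$ yields a surjective $\widehat{\gg}$-equivariant map $\Phi : L(V_\lambda, \ell - g) \otimes \F_{NS}^{\gg} \to H$, $\xi\otimes\eta \mapsto Y(\xi)\psi(\eta)\Omega$. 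Injectivity of $\Phi$ follows from the faithful action of the two commuting factors together with the preservation of the positive-definite inner products on each. Conversely, on any such tensor product one defines $\psi^a_n := 1\otimes\psi^a_n$ and $X^a_n := Y^a_n \otimes 1 + 1 \otimes S^a_n$, and the relations of $\widehat{\gg}$ at level $\ell$ together with unitarity follow by a direct check from the relations for each factor.

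The main obstacle is the level-label reconciliation: the decoupled boson factor $Y$ carries level $\ell - g$ rather than $\ell$, so the notation $L(V_\lambda, \ell)\otimes \F_{NS}^{\gg}$ in the statement is to be read with $\ell$ recording the level of the original $X$-action (equivalently the eigenvalue of the central $\L$) on the combined tensor product, while the first factor is internally the $Y$-representation at the shifted level $\ell - g$; in particular this restricts attention to $\ell \geq g$, the regime in which $L(V_\lambda, \ell - g)$ exists as a UHWR of $\widehat{\gg}_+$. Once this translation is fixed, the bijectivity of $\Phi$ reduces to a character comparison, which holds because the $Y$- and $\psi$-creation operators act independently on commuting subalgebras — a PBW-type statement for $\U(Y)\otimes \U(\widehat{\gg}_-)$ applied to $\Omega$.
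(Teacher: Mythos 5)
Your proposal is correct and rests on essentially the same mechanism as the paper's proof: the decoupled currents $Y^a_n = X^a_n - S^a_n$ (the paper phrases this as ``the difference commutes with $\widehat{\gg}_-$''), the fact (Corollary \ref{bosonaction}) that the Sugawara modes $S^a_n$ realize $\widehat{\gg}_+$ at level $g$ on $\F_{NS}^{\gg}$, and the uniqueness of the unitary highest weight representation of $\widehat{\gg}_-$. Where you differ is in the finishing step and in bookkeeping. The paper first uses uniqueness of the fermionic irreducible to write $H = M \otimes \F_{NS}^{\gg}$ with $M$ a multiplicity space, and then places the $Y^a_n$ on $M$ by Schur's lemma, since the commutant of $\widehat{\gg}_-$ is $End(M)\otimes\CCC$; this commutant argument replaces, and is cleaner than, your explicit intertwiner $\Phi$, whose injectivity and surjectivity you only sketch (the appeals to ``faithfulness'' and ``character comparison'' are the weakest points of your write-up, though they can be made rigorous). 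Second, your level convention is the opposite of the paper's: by Remark \ref{B} the paper labels the first tensor factor $L(V_\lambda,\ell)$ by the level $\ell$ of the decoupled ($Y$-type) currents, so that the copy of $\widehat{\gg}_+$ inside $\widehat{\gg}$ acts at level $d=\ell+g$ and every $\ell\in\NNN$ occurs; your reading, in which $\ell$ is the eigenvalue of $\L$ in $\widehat{\gg}$ and the first factor sits at the shifted level $\ell-g\ge 0$, describes the same family of modules but disagrees with the labels used throughout Section \ref{super} (for instance in the central charge formula $c=\frac{3}{2}\cdot\frac{\ell+\t1 g}{\ell+g}\dim(\gg)$). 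Neither point is a mathematical gap, but you should align your $\ell$ with Remark \ref{B} before quoting the statement.
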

\begin{proof}
Let $H$ be such a representation of $\widehat{\gg}$,  then, $\widehat{\gg}_{-}$ acts on, but it admits a unique irreducible representation: $\F_{NS}^{\gg}$, so $H =  M \otimes \F_{NS}^{\gg}$, with $M$ a multiplicity space.
Now, $\widehat{\gg}_{+}$ acts on $H$ and on $\F_{NS}^{\gg}$  (corollary  \ref{bosonaction} ), and the difference commutes with $\widehat{\gg}_{-}$; but 
$\widehat{\gg}_{-}$ acts irreducibly on $\F_{NS}^{\gg}$, so, the commutant of $\widehat{\gg}_{-}$ is $End(M) \otimes \CCC $ by Schur's lemma. So, $\widehat{\gg}_{+}$ acts on $M$, and this action is necessarily irreducible. Finally, by unitary highest weight context, $\exists \lambda$ such that  $M =  L(V_{\lambda}, \ell)$.
\end{proof} 
\begin{remark}  \label{B}
Using the previous notations, $\widehat{\gg}_{+}$  acts on $L(V_{\lambda}, \ell) \otimes \F_{NS}^{\gg}$  as  $B_{n}^{a} =  X_{n}^{a} + S_{n}^{a}$, bosons of level $d = \ell + g $. \end{remark}
\begin{corollary}
From $(\psi^{a}(z))$ and $(B^{a}(z))$, we  generate $S^{a}(z)$ and $X^{a} = B^{a} - S^{a}$ a vertex operator superalgebra on $H  = L(V_{0}, \ell) \otimes \F_{NS}^{\gg} $ with the Virasoro vector: 
\begin{displaymath}
\omega =  \1/2\sum_{a} \psi_{-\3/2}^{a}\psi_{-\1/2}^{a}\Omega +   \frac{1}{2(\ell+g)} \sum_{a}(X_{-1}^{a})^{2} \Omega \quad \mathrm{and:}
\end{displaymath}
\begin{displaymath}
c= 2 \Vert  \omega  \Vert ^{2} = \frac{dim(\gg)}{2} + \frac{\ell dim(\gg)}{\ell + g} =  \frac{3}{2}  \cdot \frac{\ell +\t1 g}{\ell + g} dim(\gg) 
\end{displaymath}  \end{corollary}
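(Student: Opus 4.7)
The plan is to invoke the corollary already established, namely that a system of generators which contains a Virasoro field $L \in \S$ with $D = L_0$ and $T = L_{-1}$ automatically generates a vertex operator superalgebra. So the work reduces to two tasks: (i) showing $\{\psi^a, B^a : a = 1,\dots,N\}$ is a system of generators on $H = L(V_0,\ell) \otimes \F_{NS}^{\gg}$ in the sense of Definition \ref{def}, and (ii) exhibiting, inside the space $\S$ so generated, the claimed Virasoro vector $\omega$ and computing its central charge.

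For task (i), I would check the axioms of Definition \ref{def} using the tensor product structure. The prehilbert, grading, derivation $T$ and vacuum properties are built componentwise: on the first factor we use the unitary highest weight representation $L(V_0,\ell)$ of $\widehat{\gg}_+$ from section \ref{loop}, and on the second factor the unique irreducible $\F_{NS}^{\gg}$ of $\widehat{\gg}_-$ from section \ref{fer}. Irreducibility of the joint action of $\widehat{\gg}$ on $H$ is exactly Proposition \ref{irredsl} (via the Schur's lemma argument given there). Pairwise locality of $\psi^a$, $\psi^b$ and of $B^a$, $B^b$ is immediate from the OPEs in section \ref{fer} and the corollary of section on $\gg$-bosons; locality of $\psi^a$ with $B^b$ is addressed below.

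The crucial move is to rewrite $B^a = X^a + S^a$ with $X^a$ acting on the first tensor factor and $S^a = V(s^a, \cdot)$ on the second (Remark \ref{B}). Since $S^a \in \S$ is generated from the $\psi$'s (Definition \ref{S}), the difference $X^a = B^a - S^a$ also lies in $\S$. The key technical point, which I expect to be the main obstacle, is to prove that $X^a$ commutes with $\psi^b$ (equivalently, they are local with $N=0$ and $\varepsilon=\bar 1$): this reduces to the identity $[B^a,\psi^b] = [S^a,\psi^b]$ on $H$, which holds because $B^a = X^a \otimes 1 + 1 \otimes S^a$ and $\psi^b = 1 \otimes \psi^b$, so the $X^a$-piece commutes by tensor-factor separation. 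As a consequence, from $[X^a_m, X^b_n] = [X^a,X^b]_{m+n} + \ell\, m\, \delta_{ab}\delta_{m+n}$ (level $\ell$), the Sugawara-type corollary of the previous subsection yields a Virasoro field with vector $\omega_b = \frac{1}{2(\ell+g)}\sum_a (X^a_{-1})^2 \Omega$ and central charge $c_b = \ell\,\dim(\gg)/(\ell+g)$, while the fermion side provides $\omega_f = \frac{1}{2}\sum_a \psi^a_{-3/2}\psi^a_{-1/2}\Omega$ with $c_f = \dim(\gg)/2$.

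Finally, because $X^a$ commutes with $\psi^b$, the corresponding Virasoro fields $L_b$ and $L_f$ commute, so their sum $L := L_b + L_f$ is again Virasoro with vector $\omega = \omega_b + \omega_f$ and additive central charge
\[
c \;=\; \frac{\dim(\gg)}{2} + \frac{\ell\,\dim(\gg)}{\ell+g} \;=\; \frac{(3\ell+g)\dim(\gg)}{2(\ell+g)} \;=\; \frac{3}{2}\cdot\frac{\ell + \t1 g}{\ell + g}\,\dim(\gg).
\]
The identifications $L_0 = D$ and $L_{-1} = T$ follow by comparing the commutators $[L_0,\psi^a_n]$, $[L_0,X^a_n]$, $[L_{-1},\psi^a_n]$, $[L_{-1},X^a_n]$ (computed summand-by-summand from the OPEs) with those of $D$ and $T$, then applying irreducibility and Schur's lemma as in section \ref{sec2}. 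With these verifications in place, the cited corollary delivers the vertex operator superalgebra structure on $H$ with the stated $\omega$ and $c$.
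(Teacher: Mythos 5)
Your argument is correct and follows essentially the route the paper intends: the paper states this corollary without a separate proof, as a direct consequence of the $\gg$-fermion vertex superalgebra, the Sugawara corollary for $\widehat{\gg}_{+}$ at level $\ell$, and the decomposition $B^{a}=X^{a}+S^{a}$ with $X^{a}$ and $\psi^{b}$ acting on different tensor factors, exactly as you reconstruct. One trivial slip: two commuting fields, one even and one odd, are local with $N=0$ and $\varepsilon=\bar{0}$ (not $\bar{1}$), which is consistent with the paper's convention $\varepsilon_{ij}=\varepsilon_{ii}\cdot\varepsilon_{jj}=\bar{0}\cdot\bar{1}=\bar{0}$ and with what your tensor-factor argument actually proves.
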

\begin{definition} (SuperVirasoro operator)  \\  \label{supertau}
Let $\tau_{1} = \sum_{a} \psi_{-\1/2}^{a}X_{-1}^{a}\Omega$,  $\tau_{2} =\frac{1}{3}  \sum_{a} \psi_{-\1/2}^{a}S_{-1}^{a}\Omega $ and $\tau = (\ell + g)^{-\1/2}(\tau_{1} +\tau_{2} ) $. \\
Let $G(z) = V(\tau , z) = \sum_{n \in \ZZZ}G_{n-\1/2}z^{-n-1} = \sum_{n \in \ZZZ + \1/2} G_{n}z^{-n-\3/2}  $
\end{definition}

\begin{proposition} (Supersymmetry boson-fermion) \label{susybf}  
\begin{displaymath}
  B^{a}(z)G(w) \sim   d^{\1/2} \frac{\psi^{a}(w)}{(z-w)^{2}}  \quad   \textrm{and}   \quad   \psi^{a}(z)G(w) \sim   d^{-\1/2} \frac{B^{a}(w)}{(z-w)} 
  \end{displaymath}
  \begin{displaymath}
   [ G_{m} ,  B^{a}_{n}] = -n d^{\1/2}  \psi^{a}_{m+n}   \quad \textrm{and}   \quad   [ G_{m} ,  \psi^{a}_{n}]_{+} =  d^{-\1/2}  B^{a}_{m+n}    
\end{displaymath}
\end{proposition}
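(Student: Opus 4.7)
The plan is to use formula \ref{formula}, the identity $V(A(n)b,w) = (A_nB)(w)$, and the OPE corollary: once the modes $B^a_n\tau$ and $\psi^a_{n+1/2}\tau$ are known for $n\ge 0$, both OPEs read off directly, and formula \ref{lie} converts them into the commutation relations. So everything reduces to computing the action of the annihilation modes of $B^a$ and $\psi^a$ on $\tau = d^{-1/2}(\tau_1 + \tau_2)$.

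First I compute $B^a_n\tau$ for $n\ge 0$, writing $B^a = X^a + S^a$ and using that $X^a_m$ commutes with $\psi^b_r$ and $S^b_r$, $[S^a_m,\psi^b_r] = i\sum_c \Gamma^c_{ab}\psi^c_{m+r}$, together with the boson relations $S^a_1 S^b_{-1}\Omega = g\delta_{ab}\Omega$ and $X^a_1 X^b_{-1}\Omega = \ell\delta_{ab}\Omega$. For $n\ge 2$ all terms vanish via the vacuum axioms. For $n = 0$ both $B^a_0\tau_1$ and $B^a_0\tau_2$ are proportional to $\sum_{b,c}(\Gamma^c_{ab}+\Gamma^b_{ac})(\cdots)$, which is $0$ by total antisymmetry of $\Gamma$. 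For $n=1$, $X^a_1\tau_1 = \ell\psi^a_{-1/2}\Omega$; in $S^a_1\tau_2$ one term is $g\psi^a_{-1/2}\Omega$ (from $S^a_1 S^b_{-1}\Omega$) and the cross-term $i\sum_c\Gamma^c_{ab}\psi^c_{1/2}S^b_{-1}\Omega$ contributes $-\sum_{b,c,d}\Gamma^c_{ab}\Gamma^d_{cb}\psi^d_{-1/2}\Omega$, which by total antisymmetry and $c_\gg=2g$ equals $2g\psi^a_{-1/2}\Omega$; the $\tfrac13$ normalization then gives $S^a_1\tau_2 = g\psi^a_{-1/2}\Omega$. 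Adding, $B^a_1\tau = d^{-1/2}(\ell+g)\psi^a_{-1/2}\Omega = d^{1/2}\psi^a_{-1/2}\Omega$, i.e. $V(B^a_1\tau,w) = d^{1/2}\psi^a(w)$. Hence $B^a(z)G(w)\sim d^{1/2}\psi^a(w)/(z-w)^2$.

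Next I compute $\psi^a_{m+1/2}\tau$ for $m\ge 0$. For $m\ge 1$, $\psi^a_{m+1/2}$ anticommutes past $\psi^b_{-1/2}$ (the anticommutator is $\delta_{ab}\delta_m = 0$) and, for $\tau_2$, produces $i\sum_c\Gamma^c_{ab}\psi^c_{m-1/2}\Omega = 0$, so the result vanishes. For $m=0$, $\psi^a_{1/2}\tau_1 = X^a_{-1}\Omega$, and $\psi^a_{1/2}\tau_2 = \tfrac13 S^a_{-1}\Omega - \tfrac{i}{3}\sum_{b,c}\Gamma^c_{ab}\psi^b_{-1/2}\psi^c_{-1/2}\Omega$. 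Using total antisymmetry $\Gamma^c_{ab} = \Gamma^a_{bc}$ and the definition $s^a = -\tfrac{i}{2}\sum_{b,c}\Gamma^a_{bc}\psi^b_{-1/2}\psi^c_{-1/2}\Omega$, the second term equals $-\tfrac{i}{3}\cdot 2iS^a_{-1}\Omega = \tfrac{2}{3}S^a_{-1}\Omega$, so $\psi^a_{1/2}\tau_2 = S^a_{-1}\Omega$. Therefore $\psi^a_{1/2}\tau = d^{-1/2}B^a_{-1}\Omega$ and $V(\psi^a_{1/2}\tau,w) = d^{-1/2}B^a(w)$, giving $\psi^a(z)G(w)\sim d^{-1/2}B^a(w)/(z-w)$.

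Finally, the Lie brackets follow from formula \ref{lie} with the parities $B^a \in \S_{\bar 0}$, $\psi^a,G\in\S_{\bar 1}$. For $B^a$-$G$ ($N=2$, $\varepsilon=\bar 0$) only $p=1$ survives and yields $[B^a_m,G_{n-1/2}] = md^{1/2}\psi^a_{m+n-1/2}$, equivalent after reindexing to $[G_m,B^a_n] = -nd^{1/2}\psi^a_{m+n}$. For $\psi^a$-$G$ ($N=1$, $\varepsilon=\bar 1$) the $p=0$ term gives $[\psi^a_p,G_q]_+ = d^{-1/2}B^a_{p+q}$. The main technical obstacle is the $n=1$ step for $\tau_2$: the two contributions $g\psi^a_{-1/2}\Omega$ and $2g\psi^a_{-1/2}\Omega$ must combine with the coefficient $\tfrac13$ to give exactly $g\psi^a_{-1/2}\Omega$, and the structure-constant identity $\sum_{b,c}\Gamma^c_{ab}\Gamma^d_{cb} = -2g\delta_{ad}$ is precisely what makes the level $\ell$ of $X^a$ and the level $g$ of $S^a$ combine into the common level $d=\ell+g$ of $B^a$ with the correct square-root normalization.
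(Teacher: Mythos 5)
Your proposal is correct and follows essentially the same route as the paper: compute the annihilation modes $B^a_n\tau$ and $\psi^a_{n+1/2}\tau$ for $n\ge 0$ using the cancellations from total antisymmetry of $\Gamma$ and the identity $\sum_{b,c}\Gamma_{ac}^{b}\Gamma_{ac}^{d}=2g\delta_{bd}$, then read off the OPEs and convert them to (anti)commutators via formula \ref{lie}. The paper presents the same mode computations, merely keeping $X^a$ and $S^a$ separate rather than grouping them into $B^a$ from the start.
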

\begin{proof}    
 $ \psi_{n+\1/2}^{a}\tau_{i} = 0$ for $n \ge 2 $ \  and:  
\begin{description}
\item[(a)]  $ \psi_{\1/2}^{a}\tau_{1} =   X_{-1}^{a} \Omega $ 
\item[(b)]  $ \psi_{\1/2}^{a}\tau_{2} =  \t1 (S_{-1}^{a} \Omega - \sum_{b}\psi_{-\1/2}^{b}\psi_{\1/2}^{a}S_{-1}^{b}\Omega ) =
 \t1 ( S_{-1}^{a} \Omega -i \sum_{b,c}\Gamma_{ab}^{c}\psi_{-\1/2}^{b}\psi_{-\1/2}^{c}\Omega) \\  = S_{-1}^{a} \Omega$ 
\item[(c)]   $\psi_{\3/2}^{a}\tau_{1} = \psi_{\3/2}^{a}\tau_{2}  = 0$. \end{description} 
$ S_{n}^{a}\tau_{i},  X_{n}^{a}\tau_{i} = 0$ for $n \ge 2 $ \  and: 
\begin{description}
\item[(a)]  $S_{0}^{a}\tau_{1}  =  \sum_{b}S_{0}^{a}\psi_{-\1/2 }^{b}X_{-1}^{b}\Omega = i\sum_{b,c} \Gamma_{ab}^{c}\psi_{-\1/2 }^{c}X_{-1}^{b}\Omega $
\item[(b)]  $ S_{0}^{a}\tau_{2}  =  \frac{1}{3}  \sum_{b}S_{0}^{a} \psi_{-\1/2}^{b}S_{-1}^{b}\Omega =   \frac{1}{3} ( i\sum_{b,c}\Gamma_{ab}^{c} \psi_{-\1/2}^{c}S_{-1}^{b}\Omega +  \sum_{b} \psi_{-\1/2}^{b}S_{0}^{a}S_{-1}^{b}\Omega) \\ = \frac{1}{3} ( i\sum_{b,c}\Gamma_{ab}^{c} \psi_{-\1/2}^{c}S_{-1}^{b}\Omega + i\sum_{b,c}\Gamma_{ab}^{c} \psi_{-\1/2}^{b}S_{-1}^{c}\Omega) \\=  \frac{i}{3}  \sum_{b,c}(\Gamma_{ab}^{c}+ \Gamma_{ac}^{b}) \psi_{-\1/2}^{c}S_{-1}^{b}\Omega  = 0 $
\item[(c)]   $X_{0}^{a}\tau_{1}  =  \sum_{b}\psi_{-\1/2 }^{b}X_{0}^{a}X_{-1}^{b} \Omega =   i \sum_{b,c}\Gamma_{ab}^{c}\psi_{-\1/2 }^{b}X_{-1}^{c}\Omega = -S_{0}^{a}\tau_{1}$
\item[(d)]  $X_{0}^{a}\tau_{2}  = X_{1}^{a}\tau_{2} = S_{1}^{a}\tau_{1} = 0 $
\item[(e)]  $X_{1}^{a}\tau_{1} =  \ell \psi_{-\1/2}^{a} \Omega$
\item[(f)]  $ S_{1}^{a}\tau_{2}  =  \frac{1}{3}  \sum_{b}S_{1}^{a} \psi_{-\1/2}^{b}S_{-1}^{b}\Omega =   \frac{1}{3} ( i\sum_{b,c}\Gamma_{ab}^{c} \psi_{\1/2}^{c}S_{-1}^{b}\Omega +  \sum_{b} \psi_{-\1/2}^{b}S_{1}^{a}S_{-1}^{b}\Omega) \\  =  \frac{1}{3} (\sum_{b,c,d} \Gamma_{ab}^{c}\Gamma_{bc}^{d}\psi_{-\1/2}^{d} \Omega + g\psi_{-\1/2}^{a} \Omega) =  g\psi_{-\1/2}^{a} \Omega$
\end{description}
\end{proof}

\begin{remark}   $G_{m}^{\star} =  G_{-m}$  (as lemma \ref{etoile})
\end{remark}

\begin{lemma}  \ (OPE and Lie bracket)
\begin{displaymath}
  L(z)G(w) \sim    \frac{G'(w)}{(z-w)} +  \frac{\3/2 G(w)}{(z-w)^{2}}  \quad   \textrm{and}   \quad  [G_{m },  L_{n} ] = (m-\1/2n)G_{m+n}
\end{displaymath}  \end{lemma}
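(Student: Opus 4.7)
The plan is to mimic Lemmas~\ref{s1} and~\ref{s2}: compute the states $R(L_{n}G) = L(n)\tau = L_{n-1}\tau$ for $n \ge 0$ (recalling $L(z) = \sum L_k z^{-k-2}$, so $L(n) = L_{n-1}$), then apply Formula~\ref{formula} to read off the OPE and Formula~\ref{lie} for the Lie bracket.

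First I would observe that $\tau \in H_{3/2}$: by Lemma~\ref{1} applied with gradings $\alpha = \frac{1}{2}$ for $\psi^a$ and $\alpha = 1$ for $X^a, S^a$, the operators $\psi^a_{-1/2}, X^a_{-1}, S^a_{-1}$ raise the $L_0$-weight by $\frac{1}{2}, 1, 1$ respectively, so $\tau_1, \tau_2 \in H_{3/2}$. Positive energy ($H_m = 0$ for $m < 0$) then yields $L_k\tau \in H_{3/2-k} = 0$ for $k \ge 2$, bounding the pole order by $N = 2$. The $L_0$ axiom gives $L_0\tau = \frac{3}{2}\tau$, and the $L_{-1}$ axiom gives $V(L_{-1}\tau, w) = V(T\tau, w) = G'(w)$.

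The key step is to prove $L_1\tau = 0$. Splitting $\omega$ into its fermion part $\frac{1}{2}\sum_a \psi^a_{-3/2}\psi^a_{-1/2}\Omega$ and its boson part $\frac{1}{2(\ell+g)}\sum_a (X^a_{-1})^2\Omega$, whose Virasoro generators act on disjoint tensor factors of $H$ and hence commute with the opposite species, the earlier lemmas yield $[L_m, \psi^a_n] = -(n + \frac{m}{2})\psi^a_{m+n}$, $[L_m, X^a_n] = -n X^a_{m+n}$, and $[L_m, S^a_n] = -n S^a_{m+n}$. In particular $[L_1,\psi^a_{-1/2}] = 0$, $[L_1, X^a_{-1}] = X^a_0$, $[L_1, S^a_{-1}] = S^a_0$; combined with $L_1\Omega = 0$, this gives $L_1\tau_1 = \sum_a \psi^a_{-1/2} X^a_0\Omega = 0$ (since $V_0$ is the trivial $\gg$-representation) and $L_1\tau_2 = \frac{1}{3}\sum_a \psi^a_{-1/2} S^a_0\Omega = 0$ (by the vacuum axiom for the sub-boson generators $S^a$). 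Hence $L_1\tau = 0$.

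Feeding these states into Formula~\ref{formula} produces the vertex-algebra products $L_{0}G = G'$, $L_{1}G = \frac{3}{2}G$, and $L_{n}G = 0$ for $n \ge 2$, which is the claimed OPE. Since $L$ is even and $G$ is odd (so $\varepsilon = \bar{0}$), Formula~\ref{lie} with $N = 2$ gives, for both $m \ge 0$ and $m < 0$, $[L(m), G(n)] = G'(m+n) + \frac{3m}{2} G(m+n-1) = (\frac{m}{2} - n) G(m+n-1)$. Translating via $L(k) = L_{k-1}$ and $G(k) = G_{k-1/2}$ yields $[L_M, G_N] = (\frac{M}{2} - N) G_{M+N}$, equivalent to $[G_m, L_n] = (m - \frac{n}{2}) G_{m+n}$. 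The main obstacle is the verification that $L_1\tau = 0$: it requires the careful boson--fermion decomposition of $\omega$ together with the triviality of $V_0$ and the vacuum axiom applied to the bilinear-in-fermion generators $S^a$.
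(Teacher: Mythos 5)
Your proof is correct and follows essentially the same route as the paper: compute the states $L(n)\tau = L_{n-1}\tau$ using the $L_0$ and $L_{-1}$ axioms, establish the key vanishing $L_{1}\tau = \sum_{a}\psi^{a}_{-1/2}(X^{a}_{0} + \frac{1}{3}S^{a}_{0})\Omega = 0$ by commuting $L_{1}$ past the generators and invoking the vacuum axiom, and then read off the OPE and the bracket from formulas \ref{formula} and \ref{lie}. (One small nitpick: the grading argument alone only bounds the pole order by $3$, since $L_{1}\tau \in H_{1/2}$ need not vanish a priori; it is precisely your subsequent computation $L_{1}\tau = 0$ that brings $N$ down to $2$, so that claim should come after, not before, the key step.)
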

\begin{proof} $L(n)\tau =  L_{n-1} \tau = 0  $ for $n \ge 3 $ \  and:
\begin{description}
\item[(a)]  $L_{-1} \tau =  R(G') $   (see $L_{-1}$ axioms and definition \ref{R}) 
\item[(b)]   $L_{0} \tau = \3/2 R(G) $  (see $L_{0}$ axioms)
\item[(c)]   $L_{1} (\tau_{1}+\tau_{2} ) = \sum_{a}L_{1} \psi_{-\1/2}^{a} (X_{-1}^{a} + \t1 S_{-1}^{a}  ) \Omega = \sum_{a} \psi_{-\1/2}^{a}L_{1} (X_{-1}^{a} + \t1 S_{-1}^{a}  ) \Omega \\  = \sum_{a} \psi_{-\1/2}^{a} (X_{0}^{a} + \t1 S_{0}^{a}  ) \Omega = 0 $
\end{description} \end{proof}

\begin{remark}
$[[A ,B ]_{+}, C  ] = [  A , [B,  C]_{+}  ] +[  B , [A,  C]_{+}  ] \\ 
\begin{array}{c} \end{array}  \hspace{5cm} =   [  A , [B,  C]  ]_{+} +[  B , [A,  C]  ] _{+} $ 
\end{remark}
\begin{lemma}  \ (OPE and Lie bracket)
\begin{displaymath}
  G(z) G(w)  \sim     \frac{\frac{2}{3} c}{(z-w)^{3}} +   \frac{2  L(w) }{(z-w)}    \quad   \textrm{and}   \quad  \lbrack G_{m},G_{n}\rbrack _{+} = 2L_{m+n} + \frac{c}{3}(m^{2}- \frac{1}{4})\delta_{m+n}  \end{displaymath}  \end{lemma}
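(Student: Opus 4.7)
The plan is to follow the same OPE template used earlier for $L(z)L(w)$ and $L(z)G(w)$. Writing $G(z) = V(\tau,z) = \sum_n G_{n-\1/2}z^{-n-1}$, so $G(n) = G_{n-\1/2}$ and $(G_nG)(w) = V(G(n)\tau, w)$ by Borcherds associativity, one sees that $G(n)\tau \in H_{2-n}$ (since $\tau = G_{-\3/2}\Omega \in H_{3/2}$), hence $G(n)\tau = 0$ for $n \geq 3$. So the locality bound is $N = 3$ and
$$G(z)G(w) \sim \sum_{n=0}^{2}\frac{(G_nG)(w)}{(z-w)^{n+1}}.$$
The whole OPE therefore reduces to computing the three vectors $G_{-\1/2}\tau$, $G_{\1/2}\tau$, $G_{\3/2}\tau$, which according to the stated formula should equal $2\omega$, $0$, and $\frac{2c}{3}\Omega$ respectively.

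The constant coefficient is the easiest. Since $G_{\3/2}\tau \in H_0 = \CCC\Omega$, the relation $G_{\3/2}^\star = G_{-\3/2}$ gives $G_{\3/2}\tau = \Vert\tau\Vert^2\Omega$. Because $\tau_1$ and $\tau_2$ live in different bigraded pieces of $L(V_0,\ell) \otimes \F_{NS}^{\gg}$, we have $\Vert\tau\Vert^2 = (\ell+g)^{-1}(\Vert\tau_1\Vert^2 + \Vert\tau_2\Vert^2)$. A quick boson calculation gives $\Vert\tau_1\Vert^2 = \ell\dim(\gg)$. For $\Vert\tau_2\Vert^2$, one uses $[S^a_m, \psi^b_n] = i\sum_c\Gamma^c_{ab}\psi^c_{m+n}$ together with the Casimir identity $\sum_{a,c}\Gamma^b_{ac}\Gamma^d_{ac} = 2g\delta_{bd}$ to get $g\dim(\gg)/3$. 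Summing produces $\Vert\tau\Vert^2 = (3\ell+g)\dim(\gg)/(3(\ell+g)) = 2c/3$, matching the stated central charge.

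For $G_{\pm\1/2}\tau$, I expand $\tau = (\ell+g)^{-\1/2}(\tau_1 + \tau_2)$ and apply $G_m$ using the supersymmetry brackets from proposition~\ref{susybf}, namely $\{G_m, \psi^a_n\}_+ = d^{-\1/2}B^a_{m+n}$ and $[G_m, B^a_n] = -nd^{\1/2}\psi^a_{m+n}$ with $d = \ell+g$. Splitting $B^a = X^a + S^a$ (the two summands acting on separate tensor factors) reduces everything to explicit (anti)commutators among $\psi^a_n$, $X^a_n$ and $S^a_n$. After contracting via the Jacobi identity on $\Gamma^c_{ab}$ and the Casimir identities, all terms of $G_{\1/2}\tau$ cancel, while
$$G_{-\1/2}\tau = \sum_a \psi^a_{-\3/2}\psi^a_{-\1/2}\Omega + \frac{1}{\ell+g}\sum_a (X^a_{-1})^2\Omega = 2\omega.$$
The coefficient $1/3$ built into $\tau_2$ is precisely what makes these two cancellations hold simultaneously.

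The Lie bracket then follows from formula~\ref{lie} with $\varepsilon = \bar{1}$. Setting $r = m - \1/2$, $s = n - \1/2$, the $m \geq 0$ branch reads
$$\{G_r, G_s\}_+ = \sum_{p=0}^{2}\binom{m}{p}(G_pG)(m+n-p) = 2L_{r+s} + \binom{m}{2}\tfrac{2c}{3}\delta_{r+s} = 2L_{r+s} + \tfrac{c}{3}\bigl(r^2 - \tfrac{1}{4}\bigr)\delta_{r+s},$$
and the $m < 0$ branch yields the same expression. The main obstacle is identifying $G_{-\1/2}\tau$ with $2\omega$: even with the supersymmetry in hand, one must carefully track the anticommutations between $\psi^a_{-\1/2}$ and $B^a_{-1}$ (resp.\ $S^a_{-1}$) and reduce a quartic $\psi$ expression to $\omega_\psi = \frac{1}{2}\sum_a\psi^a_{-\3/2}\psi^a_{-\1/2}\Omega$ using the Jacobi identity, in the spirit of the earlier identity $\sum_a(S^a_{-1})^2\Omega = 4g\omega$.
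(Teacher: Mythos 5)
Your proof is correct and all the intermediate values you assert do check out ($\Vert\tau_{1}\Vert^{2} = \ell\, dim(\gg)$, $\Vert\tau_{2}\Vert^{2} = \frac{1}{3}g\, dim(\gg)$, hence $G_{\frac{3}{2}}\tau = \frac{2}{3}c\,\Omega$, and $C_{m}^{2} = r^{2}-\frac{1}{4}$ for $r = m-\frac{1}{2}$ in both branches of formula \ref{lie}), but your route is genuinely different from the paper's. The paper never computes $G_{-\frac{1}{2}}\tau$ or $G_{\frac{1}{2}}\tau$: using proposition \ref{susybf} it checks that $[G_{m},G_{n}]_{+}-2L_{m+n}$ commutes with every $B^{a}_{r}$ and every $\psi^{a}_{r}$, hence equals a scalar $k_{m,n}I$ by irreducibility and Schur's lemma, and then determines that scalar from the single computation $G_{\frac{3}{2}}\tau = (\ell+g)^{-1}\sum_{a}(X_{1}^{a}+S_{1}^{a})(X_{-1}^{a}+\frac{1}{3}S_{-1}^{a})\Omega = \frac{2}{3}c\,\Omega$ (obtained from the supersymmetry bracket rather than from $\Vert\tau\Vert^{2}$ as you do, though the two computations are equivalent), after which formulas \ref{formula} and \ref{lie} yield $k_{m,n}=\frac{c}{3}(m^{2}-\frac{1}{4})\delta_{m+n}$. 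Your coefficient-by-coefficient computation of the OPE instead forces you through the identity $G_{-\frac{1}{2}}\tau = 2\omega$, a quartic-fermion calculation in the spirit of $\sum_{a}(S_{-1}^{a})^{2}\Omega = 4g\omega$ that you rightly flag as the main obstacle but leave as a sketch; this is exactly the work the Schur argument is built to bypass. What you gain is stronger output --- the explicit field identities $G_{0}G = 2L$ and $G_{1}G=0$ rather than only the anticommutator --- but if you keep this route you should write out the cancellation of the cross terms $X_{-1}^{a}S_{-1}^{a}\Omega$ and the reduction of the quartic $\psi$ expression, since that is precisely where the coefficient $\frac{1}{3}$ in $\tau_{2}$ is used.
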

\begin{proof}By supersymmetry:  
\begin{description}
\item[(a)]   $[[G_{m } ,G_{n } ]_{+}, B^{a}_{r}  ]  =  -2r B^{a}_{m+n+r} = [2L_{m+n} , B^{a}_{r}  ]$
\item[(b)]  $[[G_{m } ,G_{n } ]_{+}, \psi^{a}_{r}  ]  =  -2(r+\1/2(m+n)) \psi^{a}_{m+n+r} = [2L_{m+n} , \psi^{a}_{r}  ]$
\end{description}
Then, $[[G_{m } ,G_{n } ]_{+} - 2L_{m+n}, B^{a}_{r}  ] = [[G_{m } ,G_{n } ]_{+} -2L_{m+n}, \psi^{a}_{r}  ]  = 0   $. \\
Now, $(B^{a}_{r}  )$, $ ( \psi^{a}_{r} )$  act irreducibly on $H$, so by Schur's lemma:   
\begin{displaymath}  [G_{m } ,G_{n } ]_{+} - 2L_{m+n} =  k_{m,n} I  \end{displaymath}
Now, among the $G_{n }\tau $,  $G_{\3/2 }\tau $ is the only to give a constant term and: \\
 $G_{\3/2 }\tau = (\ell + g)^{-1} \sum_{a} G_{\3/2 }\psi^{a}_{-\1/2} (X_{-1}^{a} + \t1 S_{-1}^{a} )\Omega \\  =  (\ell + g)^{-1} \sum_{a} (X_{1}^{a} +  S_{1}^{a} ) (X_{-1}^{a} + \t1 S_{-1}^{a} )\Omega
\\ =  (\ell + g)^{-1} dim(\gg) (\ell +\t1 g )\Omega = \frac{2}{3} c \Omega $. \\ 
Finally, by  formulas \ref{formula} and  \ref{lie}, \    $k_{m,n} =  \frac{c}{3} (m^{2}-\frac{1}{4}) \delta_{m+n} $. \end{proof}

\begin{summary}
\begin{displaymath} 
\left\{    \begin{array}{l}   L(z) L(w)  \sim     \frac{(c/2)}{(z-w)^{4}} +   \frac{2  L(w) }{(z-w)^{2}} +   \frac{ L'(w) }{(z-w)} 
       \\  L(z)G(w) \sim    \frac{G'(w)}{(z-w)} +  \frac{\3/2 G(w)}{(z-w)^{2}}    \\     G(z) G(w)  \sim     \frac{\frac{2}{3} c}{(z-w)^{3}} +   \frac{2  L(w) }{(z-w)}   
      \end{array}  \right.
\end{displaymath}
and: 
\begin{displaymath} 
\left\{     \begin{array}{l}  \lbrack L_{m},L_{n} \rbrack  = (m-n)L_{m+n} +\frac{c}{12}(m^{3} - m) \delta_{m+n}    
 \\   \lbrack G_{m},L_{n}\rbrack  = (m-\frac{n}{2})G_{m+n}      
   \\      \lbrack G_{m},G_{n}\rbrack _{+} = 2L_{m+n} + \frac{c}{3}(m^{2}- \frac{1}{4})\delta_{m+n}   \end{array}  \right.  
 \end{displaymath}
 \begin{displaymath}L_{n}^{\star} =  L_{-n},  \  G_{m}^{\star} =  G_{-m},  \ \  \textrm{and}  \   c =  \frac{3}{2}  \cdot \frac{\ell +\t1 g}{\ell + g} dim(\gg)  \end{displaymath}
the SuperVirasoro algebra of sector (NS), or Neveu-Schwarz algebra     $ \Vir_{1/2}$.  \end{summary}

\begin{corollary}
$ \Vir_{\1/2}$ acts unitarily on $H = L(V_{0}, \ell)\otimes \F_{NS}^{\gg}$ and admits $L(c,0)$  as minimal submodule containing  Ê$\Omega$ (see definition \ref{Lhc} ).
\end{corollary}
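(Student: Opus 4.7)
The plan is to read off the corollary from the summary above, with no new computation beyond checking the hypotheses of the uniqueness/complete reducibility results of Section 2. First, the summary already gives the Neveu-Schwarz relations for the operators $L_m,G_m$ acting on $H=L(V_0,\ell)\otimes\F_{NS}^{\gg}$, together with the central charge $c=\tfrac{3}{2}\cdot\tfrac{\ell+\tfrac{1}{3}g}{\ell+g}\dim(\gg)$. Combined with $L_n^\star=L_{-n}$, $G_m^\star=G_{-m}$ (the first already proved for the single-fermion case and extended verbatim to the $\gg$-fermion fields via Schur, the second noted as an analogue of Lemma~\ref{etoile} for $G$), this exhibits an action of $\Vir_{1/2}$ on the prehilbert space $H$ which is unitary in the sense of Definition~\ref{proj}(a).

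Next I verify the projective/positive energy axioms: projectivity is automatic because the relations are honest central-extension relations with $C=cI$; positive energy follows because $H$ inherits an $L_0$-decomposition $H=\bigoplus_{n\in\tfrac12\NNN}H_n$ with finite-dimensional homogeneous components from the vertex operator superalgebra structure (axiom (5) of Definition~\ref{vertexdef}), and because $\Omega\in H_0$ with $L_0\Omega=0$ follows from $\omega=L_{-2}\Omega$ together with $L_0\Omega=L_{-1}\Omega=0$, established already in Section 3.4.

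Now I identify the minimal submodule $K\subseteq H$ containing $\Omega$ and stable under $\Vir_{1/2}$. By the vacuum axioms, $L_n\Omega=G_m\Omega=0$ for $n,m>0$ and $C\Omega=c\Omega$, $L_0\Omega=0$, so there is a surjective $\Vir_{1/2}$-equivariant map $V(c,0)\twoheadrightarrow K$ sending the cyclic vector of the Verma module to $\Omega$. Since $H$ is prehilbert and the action is unitary, $K$ is unitary, hence ghost-free; by the lemma of Section~\ref{unit}, the set of null vectors in $V(c,0)$ coincides with the kernel $K(c,0)$. The surjection then factors through $V(c,0)/K(c,0)=L(c,0)$ and is injective on the quotient because no nonzero vector of $L(c,0)$ has zero norm. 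Therefore $K\cong L(c,0)$.

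The only delicate point is verifying that the surjection $L(c,0)\to K$ is isometric (equivalently, that there are no additional relations in $K$ beyond those generated by null vectors in $V(c,0)$). This is forced by the uniqueness theorem from Section 2.2: both $L(c,0)$ and $K$ are irreducible unitary highest weight representations of $\Vir_{1/2}$ with the same central charge $c$ and lowest energy $h=0$, so they are unitarily equivalent. This is the step where one really uses the full framework of Section~2; the rest is bookkeeping on the relations already obtained.
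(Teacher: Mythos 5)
Your argument is correct and follows the route the paper intends: the corollary is stated as an immediate consequence of the Summary (which supplies the Neveu--Schwarz relations, the $\star$-structure and the value of $c$) together with the complete-reducibility and uniqueness results of Section 2, and your Verma-module factorization $V(c,0)\twoheadrightarrow K$ through $L(c,0)$ is just the standard way of making that identification explicit. No gap; the only redundancy is that once you know the form pulled back to $V(c,0)$ is positive semidefinite with radical $K(c,0)$, the final appeal to the uniqueness theorem is not needed.
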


\subsection{Vertex modules}  \label{mod}
\begin{remark} If $\ell = 0$, then $\lambda = 0$ and  $L(V_{0}, 0) = \CCC$ trivial,  and what we will show is ever proved by the previous section.
So, we  suppose $\ell \in \NNN^{\star}$ fixed.
\end{remark}
\subsubsection{Summary}
Let  $H = L(V_{0}, \ell)\otimes \F_{NS}^{\gg}$,  the vacuum representation of the $\gg$-supersymmetric algebra $\widehat{\gg}$,  with   $ \pi : \widehat{\gg}  \longrightarrow    End(H)$.  \\
We have construct the vertex operator superalgebra $(H, \Omega, \omega, V )$ with  \\ $V: H   \longrightarrow   (End H)[[z, z^{-1}]]$ the state-field correspondance map. \\
$\S = V(H)$ is generated by  $(V(\psi_{-\1/2}^{a}\Omega))_{a}$,  $(V(X_{-1}^{b}\Omega))_{b}$, and  $V(\L \Omega) $,  pairwise local,   with the operations, $(A,B) \mapsto A_{n}B $ and linear combinations. \\
We write   $V(\psi_{-\1/2}^{a}\Omega , z) = \sum_{n \in \ZZZ} \pi (\psi_{n+\1/2}^{a} ) z^{-n-1} $,  \\   $V(X_{-1}^{b}\Omega , z) = \sum_{n \in \ZZZ} \pi (X_{n}^{b} ) z^{-n-1} $ and $V(\L \Omega , z) = \pi ( \L) \  ( = \ell Id_{H} ) $.
 \subsubsection{Modules}
Let  $H^{\lambda} = L(V_{\lambda}, \ell)\otimes \F_{NS}^{\gg}$ a unitary highest weight representation of $\widehat{\gg}$ and $ \pi^{\lambda} : \widehat{\gg}  \longrightarrow    End(H^{\lambda})$
\begin{remark}
$H^{\lambda}$ is  itself the minimal subspace containing  $\Omega^{\lambda} $ and stable by the action of  $\widehat{\gg}$:  $\Omega^{\lambda} $ is  the cyclic vector of $H^{\lambda}$.  \\
On the vacuum representation,   $\Omega $ is called the vacuum vector.
\end{remark}
\begin{lemma}   \label{faithful}
$( \sum_{n \in \ZZZ} \pi^{\lambda} (\psi_{n+\1/2}^{a} ) z^{-n-1})_{a}$, $(\sum_{n \in \ZZZ} \pi^{\lambda} (X_{n}^{b} ) z^{-n-1})_{b}$ and  $\pi^{\lambda} ( \L) $ are pairwise local (definition \ref{local}).
\end{lemma}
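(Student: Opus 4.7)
The plan is to reduce locality on $H^{\lambda}$ to an algebraic statement on commutators of modes, which is already known from the vacuum case. By Lemma \ref{3}, once each of the three fields on $H^{\lambda}$ is shown to be graded with respect to a suitable degree operator $D^{\lambda}$ on $H^{\lambda}$, locality of $A$ and $B$ becomes equivalent to the existence of $N \in \NNN$ and $\varepsilon \in \ZZZ_{2}$ such that
\begin{displaymath}
(z-w)^{N} [A(z),B(w)]_{\varepsilon} = 0
\end{displaymath}
as formal power series in $z^{\pm1},w^{\pm1}$, i.e.\ a finite list of identities on the mode (anti)commutators $[A(m),B(n)]_{\varepsilon}$.

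First I would fix the grading. The $L_{0}$ on $L(V_{\lambda},\ell)$ coming from the Sugawara--type construction, tensored with $L_{0}$ on $\F_{NS}^{\gg}$, gives a diagonalisable operator $D^{\lambda}$ on $H^{\lambda}$ with finite-dimensional eigenspaces and spectrum bounded below; with respect to it, $\pi^{\lambda}(\psi^{a}_{n+\1/2})$, $\pi^{\lambda}(X^{b}_{n})$ and $\pi^{\lambda}(\L)$ lower degree by the same amounts as in the vacuum (this follows from $[L_{0},\psi^{a}_{m}]$ and $[L_{0},X^{b}_{m}]$ computed in Section \ref{546}, applied on $H^{\lambda}$). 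Thus each of the three generating fields is graded on $H^{\lambda}$ with the same $\alpha$ as in the vacuum.

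Next, for each pair, I would write down the explicit (anti)commutator of modes that governs locality. Concretely, $[\pi^{\lambda}(\psi^{a}_{m}),\pi^{\lambda}(\psi^{b}_{n})]_{+} = \delta_{ab}\delta_{m+n}I$, $[\pi^{\lambda}(X^{a}_{m}),\pi^{\lambda}(X^{b}_{n})] = \pi^{\lambda}([X^{a},X^{b}])_{m+n} + m\delta_{ab}\delta_{m+n}\ell I$, $[\pi^{\lambda}(X^{a}_{m}),\pi^{\lambda}(\psi^{b}_{n})]=0$ (because the action of $\widehat{\gg}_{+}$ on the fermion factor in the semi-direct product $\widehat{\gg}$ is realised by $S^{a}$, which has already been absorbed into the definition of $B^{a}$ on $H^{\lambda}$, leaving the ``bare'' $X^{a}$ acting on the first factor only), and $\pi^{\lambda}(\L)=\ell I$ commutes with everything. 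These identities are the same algebraic relations that hold in the vacuum representation; multiplying the corresponding fields by $(z-w)^{N_{ij}}$ with $N_{ij}$ the $N$'s already found in Section \ref{546} kills the singular part, so the formal-series identity of Lemma \ref{3} holds.

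The main obstacle, and essentially the only substantive point, is clarifying the precise meaning of the field $V^{\lambda}(X^{b}_{-1}\Omega,z)$ on $H^{\lambda}$: one must check that the state--field correspondence extends from the vacuum module to $H^{\lambda}$ so that these modes really are $\pi^{\lambda}(X^{b}_{n})$ (and similarly for $\psi^{a}$ and $\L$). This is straightforward from the construction of $H^{\lambda}$ as a unitary highest weight module for $\widehat{\gg}$ (Proposition \ref{irredsl}) together with the decomposition $B^{a}=X^{a}+S^{a}$ of Remark \ref{B}: the generators $\pi^{\lambda}(X^{b}_{n})$ are well-defined and satisfy the same commutation relations as their vacuum counterparts, so the locality identities pull back unchanged, giving the lemma.
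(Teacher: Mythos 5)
Your proof is correct and follows essentially the same route as the paper: reduce locality to the formal-series criterion of Lemma \ref{3} by exhibiting a grading operator on $H^{\lambda}$ under which the three generating fields are graded, and then observe that the mode (anti)commutators are the same in $\pi$ and $\pi^{\lambda}$ (the paper phrases this via faithfulness of the two representations of $\widehat{\gg}$, while you write the relations out explicitly). The only cosmetic difference is that the paper defines the grading operator $D$ on $H^{\lambda}$ inductively from $\Omega^{\lambda}$ rather than via the Sugawara $L_{0}$ --- which it identifies with $D$ only afterwards, in Lemma \ref{exo} --- but this does not change the argument.
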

\begin{proof}
Let $A$, $B  \in \widehat{\gg} [[z, z^{-1}]]$;  $\pi $ and $\pi^{\lambda}$ are faithful representations of  $\widehat{\gg}$. \\  Then, as formal power series,  with $N \in  \NNN$ and $\varepsilon \in \ZZZ_{2} $: \\
  $    (z-w)^{N}\pi(A(z))\pi(B(w))c,d) \\ = (-1)^{\varepsilon}   (z-w)^{N}(\pi(B(w))\pi(A(z))c,d)  \quad     \forall c, d \in H    \quad    \quad \textrm{if and only if} $  \\ 
  $  (z-w)^{N}(\pi^{\lambda}(A(z))\pi^{\lambda}(B(w))e,f)  \\ Ê= (-1)^{\varepsilon}   (z-w)^{N}(\pi^{\lambda}(B(w))\pi^{\lambda}(A(z))e,f)  \quad  \forall e, f \in H^{\lambda} $  \\ \\
We generate inductively an operator $D$ decomposing $H^{\lambda}$ into $\bigoplus H_{n}^{\lambda} $ by: \\
 $D \Omega^{\lambda} = 0$,   $D\psi_{-m}^{a} \xi = \psi_{-m}^{a} D \xi + m\psi_{-m}^{a} \xi $,  $DX_{-n}^{b} \xi = X_{-n}^{b} D \xi + nX_{-n}^{b} \xi$, $\xi \in H^{\lambda} $, clearly well defined; 
but, $\psi_{m}^{a}:  H_{p}^{\lambda}   \to  H_{p-m}^{\lambda}   $ and  $X_{n}^{b}:  H_{p}^{\lambda}   \to  H_{p-n}^{\lambda}   $,  so, by lemmas \ref{1}, \ref{2}, \ref{3},  the result follows.
\end{proof}

\begin{lemma} \label{exo}
$D = L_{0} -  \frac{c_{V_{\lambda}}}{2(\ell + g)}$ , \\  with $c_{V_{\lambda}}$ the Casimir number of $V_{\lambda}$ (see corollary \ref{cas})
 \end{lemma}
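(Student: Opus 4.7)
The plan is to show that $L_0-D$ is central in the action of $\widehat{\gg}$ on $H^{\lambda}$, invoke Schur's lemma to conclude that it is a scalar, and then evaluate this scalar on the lowest-energy subspace $V_{\lambda}$ using the Sugawara form of $\omega$.

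First I would record the commutators of $L_0$ with the generators. By construction $L_0$ is the $(-1)$-Fourier mode shift of $V^{\lambda}(\omega,z)$. The OPEs $\psi^{a}(z)L(w)\sim \tfrac12 \psi^{a}(w)/(z-w)^{2}-\tfrac12\psi^{a\prime}(w)/(z-w)$ and $X^{a}(z)L(w)\sim X^{a}(w)/(z-w)^{2}$, established in section \ref{546} on the vacuum module, transfer to $H^{\lambda}$ by lemma \ref{faithful} (the OPEs are polynomial identities in $\widehat{\gg}$ that $\pi^{\lambda}$ must satisfy because $\pi$ is faithful). Extracting coefficients via formula \ref{lie} and specializing to $m=0$ yields
\[
[L_0,\pi^{\lambda}(\psi_n^{a})]=-n\,\pi^{\lambda}(\psi_n^{a}),\qquad [L_0,\pi^{\lambda}(X_n^{a})]=-n\,\pi^{\lambda}(X_n^{a}).
\]
On the other hand, from the inductive definition of $D$ one reads off $[D,\pi^{\lambda}(\psi_n^{a})]=-n\,\pi^{\lambda}(\psi_n^{a})$ and $[D,\pi^{\lambda}(X_n^{a})]=-n\,\pi^{\lambda}(X_n^{a})$ (including $n=0$, since $D$ preserves the grading and $V_{\lambda}=H^{\lambda}_{0}$). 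Hence $L_0-D$ commutes with every generator of $\widehat{\gg}$.

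Second, since $\widehat{\gg}$ acts irreducibly on $H^{\lambda}$ (proposition \ref{irredsl}), Schur's lemma forces $L_0-D=\kappa\,Id$ for some $\kappa\in\CCC$. It remains to identify $\kappa$. Pick any vector $v\in V_{\lambda}\otimes\Omega\subset H^{\lambda}_{0}$, so that $Dv=0$ and $\kappa\,v=L_0 v$. Splitting $\omega=\omega_{\mathrm{ferm}}+\omega_{\mathrm{bos}}$ with $\omega_{\mathrm{ferm}}=\tfrac12\sum_a\psi_{-\tfrac32}^{a}\psi_{-\tfrac12}^{a}\Omega$ and $\omega_{\mathrm{bos}}=\tfrac{1}{2(\ell+g)}\sum_a(X_{-1}^{a})^{2}\Omega$, linearity of $V^{\lambda}$ writes $L_0=L_0^{\mathrm{ferm}}+L_0^{\mathrm{bos}}$. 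The fermionic zero-mode kills $v$ because every term in the expansion of $V^{\lambda}(\omega_{\mathrm{ferm}},z)$ contains a $\psi_n^{a}$ with $n\geq\tfrac12$ acting on $\Omega_{\F}$.

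Third, the boson zero-mode is computed from the Sugawara formula
\[
L_0^{\mathrm{bos}}=\frac{1}{2(\ell+g)}\sum_{a}\Bigl[(X_0^{a})^{2}+2\sum_{m\geq 1}X_{-m}^{a}X_m^{a}\Bigr],
\]
which follows by applying proposition \ref{investig} to $X^{a}_{-1}X^{a}=V(X_{-1}^{a}\Omega,z)_{-1}X^{a}$ and taking the zero Fourier coefficient (this is the standard normal-ordered reorganization). On $v\in V_{\lambda}$ the annihilation modes $X_m^{a}$ with $m\geq 1$ vanish since $V_{\lambda}=H^{\lambda}_{0}$, leaving $L_0^{\mathrm{bos}}v=\tfrac{1}{2(\ell+g)}\C v=\tfrac{c_{V_{\lambda}}}{2(\ell+g)}\,v$ by corollary \ref{cas}. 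Therefore $\kappa=\tfrac{c_{V_{\lambda}}}{2(\ell+g)}$, proving the lemma.

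The main technical obstacle is justifying rigorously the Sugawara rewriting of $L_0^{\mathrm{bos}}$ as a normal-ordered sum: one has to verify, via formula \ref{formula} and an index manipulation, that the $n=-1$ product $(X^{a}{}_{-1}X^{a})(0)$ equals $(X_0^{a})^{2}+2\sum_{m\geq 1}X_{-m}^{a}X_m^{a}$. Once this is accepted, the remaining arguments (Schur, evaluation on $V_{\lambda}$) are immediate.
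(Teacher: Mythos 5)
Your proposal is correct and follows essentially the same route as the paper: verify that $L_{0}-D$ commutes with the generators $\pi^{\lambda}(\psi_{n}^{a})$ and $\pi^{\lambda}(X_{n}^{a})$, apply irreducibility and Schur's lemma to get $L_{0}-D=\kappa\,Id$, and identify $\kappa$ by evaluating the explicit (Sugawara) form of $L_{0}$ on the lowest-energy subspace, where only $\sum_{a}(X_{0}^{a})^{2}=\C$ survives and acts by $c_{V_{\lambda}}$. You merely spell out the normal-ordered rewriting and the vanishing of the fermionic zero mode, which the paper compresses into ``writing explicitly $L_{0}$ with formula \ref{formula}.''
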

 \begin{proof}
$[L_{0}, \psi_{n}^{a}] = [D, \psi_{n}^{a}] $ and $[L_{0}, X_{n}^{a}] = [D, X_{n}^{a}] $, so, by irreducibility and Schur's lemma,  $ L_{0} - D \in \CCC Id_{H^{\lambda}}$.
Now, $D \Omega^{\lambda} = 0 $ and $L_{0} \Omega^{\lambda} = h \Omega^{\lambda} \neq 0$  in general.  Now, writing explicitly $L_{0}$  with formula \ref{formula}, we obtain: 

 \quad \quad  \quad \quad  $ 2(\ell + g) L_{0} \Omega^{\lambda} =  \sum_{a} (X_{0}^{a})^{2} \Omega^{\lambda} =  \C . \Omega^{\lambda} =  c_{V_{\lambda}} \Omega^{\lambda} $ 
  \end{proof} 
  \newpage 
  \begin{theorem}  \label{fari}
$ \Vir_{\1/2}$ acts unitarily on $H^{\lambda} = L(V_{\lambda}, \ell)\otimes \F_{NS}^{\gg}$ \\  and admits $L(c,h)$  as minimal submodule containing  Ê$\Omega^{\lambda}$, \\    with $c = \frac{3}{2}  \cdot \frac{\ell +\t1 g}{\ell + g} dim(\gg)$     and $h = \frac{c_{V_{\lambda}}}{2(\ell + g)}  $.  
 \end{theorem}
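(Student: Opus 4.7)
The plan is to transport the entire vertex-operator-superalgebra construction of Section~\ref{super} from the vacuum module $H = L(V_0,\ell)\otimes\F_{NS}^{\gg}$ to $H^{\lambda}$ via the representation $\pi^{\lambda}$, and then read off $c$ and $h$ from identifications that are essentially already in place.

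First I would use $\pi^{\lambda}$ to define on $H^{\lambda}$ the fermion and boson fields $\psi^{a,\lambda}(z) = \sum_{n\in\ZZZ} \pi^{\lambda}(\psi^a_{n+1/2})z^{-n-1}$ and $X^{a,\lambda}(z) = \sum_{n\in\ZZZ} \pi^{\lambda}(X^a_n)z^{-n-1}$. By Lemma~\ref{faithful} these are pairwise local on $H^{\lambda}$ with the same $N$ and $\varepsilon$ as on $H$. The derived fields $S^{a,\lambda}$, $B^{a,\lambda} = X^{a,\lambda} + S^{a,\lambda}$, the Virasoro field $L^{\lambda}$ and the SuperVirasoro field $G^{\lambda}$ are then produced by the same universal formulas of Definitions~\ref{S} and \ref{supertau} together with the operation $(A,B)\mapsto A_nB$ of Proposition~\ref{investig}. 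Since that operation depends only on the defining commutation relations of $\widehat{\gg}$, which are identical on $H$ and on $H^{\lambda}$, every OPE and Lie bracket computation of Section~\ref{super} transfers verbatim; in particular the Neveu--Schwarz relations of the Summary hold on $H^{\lambda}$ with the same central charge $c = \frac{3}{2}\cdot\frac{\ell+\frac{1}{3}g}{\ell+g}dim(\gg)$.

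Next I would check unitarity: since $\pi^{\lambda}$ is a $\star$-representation of $\widehat{\gg}$, the adjoint relations on the generating modes are preserved, and the identities $L_n^{\star}=L_{-n}$, $G_m^{\star}=G_{-m}$ follow by Schur's lemma applied to the irreducible action of $\widehat{\gg}$ on $H^{\lambda}$, exactly as in Remark~\ref{star}. To identify the minimal $\Vir_{1/2}$-submodule containing $\Omega^{\lambda}$, I would invoke Lemma~\ref{exo}, which gives $L_0\Omega^{\lambda} = h\Omega^{\lambda}$ with $h = \frac{c_{V_{\lambda}}}{2(\ell+g)}$; the relation $D = L_0 - hI$ then yields $[D,L_n] = -nL_n$ and $[D,G_m] = -mG_m$, forcing $L_n\Omega^{\lambda}\in H^{\lambda}_{-n} = \{0\}$ and $G_m\Omega^{\lambda}\in H^{\lambda}_{-m} = \{0\}$ for all $n,m>0$. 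Thus $\Omega^{\lambda}$ is a $\Vir_{1/2}$-highest-weight vector of weight $(c,h)$, and the cyclic submodule it generates is a unitary highest-weight module: as a subspace of the prehilbert space $H^{\lambda}$ its inherited inner product is positive semidefinite, so there are no ghosts, and by the characterisation preceding Definition~\ref{Lhc} this submodule coincides with $L(c,h) = V(c,h)/K(c,h)$.

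The main obstacle is the transfer of locality and of all the algebraic identities among fields from $H$ to $H^{\lambda}$: this is precisely what Lemma~\ref{faithful} accomplishes, by reducing locality on $H^{\lambda}$ to a formal-series identity forced by the common commutation relations of $\widehat{\gg}$. Once that bridge is crossed, the rest is bookkeeping, namely propagating the universal OPE computations of Section~\ref{super} and invoking the unitary highest-weight characterisation of $L(c,h)$ already established.
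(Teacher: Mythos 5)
Your proposal is correct and follows essentially the same route as the paper: both transport the vertex structure from the vacuum module to $H^{\lambda}$ using Lemma \ref{faithful} and the representation-independence of formula \ref{formula} to identify $\S$ with $\S^{\lambda}$ (the paper packages this as the map $V^{\lambda}= i\circ V$), then read off the Neveu--Schwarz relations and use Lemma \ref{exo} for $h$. Your closing paragraph merely spells out the highest-weight and no-ghost bookkeeping that the paper leaves implicit.
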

 
 \begin{proof}

  We generate $\S^{\lambda} $ from generators of previous lemma, with the operations   $(A, B) \mapsto A_{n}B $ (now available) and linear combinations. The formula \ref{formula} is independant of the choice between the faithful representations  $\pi $ and $\pi^{\lambda}$. So, we identify $\S$ and $\S^{\lambda}$, which gives the isomorphism  $i : \S \to    \S^{\lambda}$; we compose it with the state-field correspondence map $V : H \to \S$ to give:  
  \begin{displaymath}
   \begin{array}{cccc}
V^{\lambda}:  & H &  \longrightarrow  &  (End H^{\lambda})[[z,z^{-1}]]     \\
    &    a  &    \longmapsto  &  i(V(a))     \end{array}    \hspace{0,5cm}Ê(1)
 \end{displaymath}
 Then,  \quad   $\sum_{n \in \ZZZ} \pi^{\lambda} (\psi_{n+\1/2}^{a} ) z^{-n-1} =V^{\lambda}(\psi_{-\1/2}^{a}\Omega , z) $, \\  $\sum_{n \in \ZZZ} \pi^{\lambda} (X_{n}^{b} ) z^{-n-1}= V^{\lambda}(X_{-1}^{b}\Omega , z)$ \  and  \  $\pi^{\lambda} ( \L) = V^{\lambda}(\L \Omega , z) $
 
 Now,  $V(a)_{n}V(b)  = V(V(a,n)b)$  $\forall  a,  b \in H$, so, by construction:
  \begin{displaymath}  V^{\lambda}(a)_{n}V^{\lambda}(b)  = V^{\lambda}(V(a,n)b)  \hspace{0,5cm} (2)  \end{displaymath}
 Then, $V^{\lambda}(\omega , z) = \sum L_{n} z^{-n-2}$ ,  $V^{\lambda}(\tau , z  ) = \sum G_{m- \1/2} z^{-m-1} $,   $L_{n}^{\star} =  L_{-n}$   and   $G_{m}^{\star} =  G_{-m}$, with $(L_{n})$,  $(G_{m})$ verifying superVirasoro  relations. $  \hspace{0,5cm}  (3)$
 \end{proof}

\begin{remark}     $[L_{m}, \psi_{n}^{a}] =  -(n+\1/2 m ) \psi_{m+n}^{a}$  and  $[L_{m}, X_{n}^{a}] =  -n X_{m+n}^{a} $, so:  
\begin{displaymath}  \left\{   \begin{array}{l}   \lbrack L_{-1} , V^{\lambda}(a,z) \rbrack  = (V^{\lambda})'(a,z) \\    \lbrack L_{0} , V^{\lambda}(a,z) \rbrack = z.(V^{\lambda})'(a,z) + r V^{\lambda}(a,z)  \quad  (a \in H_{r})  \end{array} \right.   \hspace{0,5cm} (4) \end{displaymath}  \end{remark}

\begin{remark}
$V^{\lambda}( \Omega , z) = Id_{H^{\lambda}}$ because $\pi $ and $\pi^{\lambda}$ are at same level $\ell$.  $\hspace{0,5cm} (5)$

 \end{remark}
 
 \begin{definition} \label{moduledef} By $(1) ... (5)$, $(H^{\lambda} , V^{\lambda} )$ is called a \textbf{vertex module} of $(H , V , \Omega, \omega) $.
\end{definition}
We now apply the theorem \ref{fari} to GKO  construction with $\gg = \sl_{2}$.

 \newpage


\begin{thebibliography}{99}    
\bibitem{3d}R.  E. Borcherds, \emph{Vertex algebras, Kac-Moody algebras, and the Monster.}  Proc. Nat. Acad. Sci. U.S.A.  83  (1986),  no. 10, 3068--3071.  
\bibitem{3a} P. Goddard, A. Kent, D. Olive, \emph{Unitary representations of the Virasoro and super-Virasoro algebras.}  Comm. Math. Phys.  103  (1986),  no. 1, 105--119.
\bibitem{3c} P. Goddard, \emph{Meromorphic conformal field theory. } Infinite-dimensional Lie algebras and groups (Luminy-Marseille, 1988),  556--587, Adv. Ser. Math. Phys., 7, World Sci. Publ., Teaneck, NJ, 1989.
\bibitem{24b}  L. Guieu, C.  Roger, \emph{L'alg\`ebre et le groupe de Virasoro. Aspects g\'eom\'etriques et alg\'ebriques, g\'en\'eralisations.} Les Publications CRM, Montreal, QC, 2007.
\bibitem{2c} V.F.R. Jones, \emph{Fusion en alg\`ebres de von Neumann et groupes de lacets (d'apr\`es A. Wassermann).},  SŽminaire Bourbaki, Vol. 1994/95.  Ast\'erisque  No. 237  (1996), Exp. No. 800, 5, 251--273.
\bibitem{8b} V. G. Kac, I. T. Todorov, \emph{Superconformal current algebras and their unitary representations.}  Comm. Math. Phys.  102  (1985),  no. 2, 337--347.
\bibitem{8} V. G. Kac, A. K. Raina, \emph{Bombay lectures on highest weight representations of infinite-dimensional Lie algebras. }Advanced Series in Mathematical Physics, 2. World Scientific Publishing Co., Inc., Teaneck, NJ, 1987.
\bibitem{kacv} V. G. Kac, J. W. van de Leur, \emph{On classification of superconformal algebras.} Strings '88 (College Park, MD, 1988), 77--106, World Sci. Publ., Teaneck, NJ, 1989.
\bibitem{6}V. G. Kac, \emph{Vertex algebras for beginners. }University Lecture Series, 10. American Mathematical Society, Providence, RI, 1997. 
\bibitem{loke}  T. Loke, \emph{Operator algebras and conformal field theory for the discrete series representations of $\textrm{Diff}(\SSS^{1})$}, thesis, Cambridge 1994. 
\bibitem{NSOAII} S. Palcoux, \emph{Neveu-Schwarz and operators algebras II : \textit{Unitary series and characters}},  arXiv:1010.0077 (2010)
\bibitem{NSOAIII} S. Palcoux, \emph{Neveu-Schwarz and operators algebras III : \textit{Subfactors and Connes fusion}}, arXiv:1010.0076 (2010)
\bibitem{prse} A. Pressley, G. Segal, \emph{Loop groups.}  Oxford Mathematical Monographs. Oxford Science Publications. The Clarendon Press, Oxford University Press, New York, 1986.
\bibitem{vtl} V. Toledano Laredo, \emph{Fusion of Positive Energy Representations of LSpin(2n) }, thesis, Cambridge 1997, arXiv:math/0409044 (2004)
\bibitem{verrill}R. W. Verrill, \emph{Positive energy representations of $L^{\sigma}SU(2r)$ and orbifold fusion.}  thesis, Cambridge 2001.
\bibitem{2b}A. J. Wassermann, \emph{Operator algebras and conformal field theory.}  Proceedings of the International Congress of Mathematicians, Vol. 1, 2 (ZŸrich, 1994),  966--979, BirkhŠuser, Basel, 1995.
\bibitem{2} A. J. Wassermann, \emph{Operator algebras and conformal field theory. III. Fusion of positive energy representations of ${\rm LSU}(N)$ using bounded operators.}  
Invent. Math.  133  (1998),  no. 3, 467--538.  
\bibitem{1} A. J. Wassermann, \emph{Kac-Moody and Virasoro algebras}, 1998, arXiv:1004.1287 (2010)
\bibitem{wass2} A. J. Wassermann, \emph{Subfactors and Connes fusion for twisted loop groups}, arXiv:1003.2292 (2010)
\end{thebibliography}
  \end{document}